\newtheorem{theorem}{Theorem}[section]
\newtheorem{proposition}[theorem]{Proposition}
\newtheorem{lemma}[theorem]{Lemma}
\theoremstyle{definition}
\newtheorem{example}[theorem]{Example}
\numberwithin{equation}{section}
\newcommand{\ba}{\mathbf{a}}
\newcommand{\bb}{\mathbf{b}}
\newcommand{\bu}{\mathbf{u}}
\newcommand{\bv}{\mathbf{v}}
\newcommand{\bsigma}{\boldsymbol{\sigma}}
\newcommand{\btau}{\boldsymbol{\tau}}
\newcommand{\supn}{\sup\nolimits_0}
\newcommand{\infn}{\inf\nolimits_1}
\begin{document}
\title{Unique double base expansions}
\author[V. Komornik]{Vilmos Komornik}
\address{D\'{e}partement de math\'{e}matique,
Universit\'{e} de Strasbourg,
7 rue Ren\'{e} Descartes,
67084 Strasbourg Cedex, France}
\email{komornik@math.unistra.fr}
\author[W. Steiner]{Wolfgang Steiner}
\address{Universit\'{e} Paris Cit\'{e}, CNRS, IRIF, F--75006 Paris, France}
\email{steiner@irif.fr}
\author[Y. Zou]{Yuru Zou}
\address{College of Mathematics and Statistics,
Shenzhen University,
Shenzhen 518060,
People's Republic of China}
\email{yuruzou@szu.edu.cn}

\begin{abstract}
For two real bases $q_0, q_1 > 1$, we consider expansions of real numbers of the form $\sum_{k=1}^{\infty} i_k/(q_{i_1}q_{i_2}\cdots q_{i_k})$ with $i_k \in \{0,1\}$, which we call $(q_0,q_1)$-expansions. 
A sequence $(i_k)$ is called a unique $(q_0,q_1)$-expansion if all other sequences have different values as $(q_0,q_1)$-expansions, and the set of unique $(q_0,q_1)$-expansions is denoted by $U_{q_0,q_1}$. 
In the special case $q_0 = q_1 = q$, the set $U_{q,q}$ is trivial if $q$ is below the golden ratio and uncountable if $q$ is above the Komornik--Loreti constant. 
The curve separating pairs of bases $(q_0, q_1)$ with trivial $U_{q_0,q_1}$ from those with non-trivial $U_{q_0,q_1}$ is the graph of a function $\mathcal{G}(q_0)$ that we call generalized golden ratio. 
Similarly, the curve separating pairs $(q_0, q_1)$ with countable $U_{q_0,q_1}$ from those with uncountable $U_{q_0,q_1}$ is the graph of a function $\mathcal{K}(q_0)$ that we call generalized Komornik--Loreti constant. 
We show that the two curves are symmetric in $q_0$ and $q_1$, that $\mathcal{G}$ and $\mathcal{K}$ are continuous, strictly decreasing, hence almost everywhere differentiable on $(1,\infty)$, and that the Hausdorff dimension of the set of $q_0$ satisfying $\mathcal{G}(q_0)=\mathcal{K}(q_0)$ is zero.
We give formulas for $\mathcal{G}(q_0)$ and $\mathcal{K}(q_0)$ for all $q_0 > 1$, using characterizations of when a binary subshift avoiding a lexicographic interval is trivial, countable, uncountable with zero entropy and uncountable with positive entropy respectively. 
Our characterizations in terms of $S$-adic sequences including Sturmian and the Thue--Morse sequences are simpler than those of Labarca and Moreira (2006) and Glendinning and Sidorov (2015), and are relevant also for other open dynamical systems.

\bigskip
\noindent\textbf{Keywords:} alphabet--base system, unique expansion, generalized golden ratio, Komornik--Loreti constant, Thue--Morse sequence, Sturmian sequence, topological entropy, Hausdorff dimension, open dynamical system

\medskip
\noindent MSC: 11A63, 28A78, 11B83, 37B10, 37B40,  68R15
\end{abstract}

\thanks{This work was supported by the Agence Nationale de la Recherche through the projects ANR-18-CE40-0007 and ANR-22-CE40-0011, by the National Natural Science Foundation of China (NSFC) \#11871348 and \#61972265, by Shenzhen Basis Research Project \#JCYJ20180305125521534,  by Natural Science Foundation of Guangdong Province of China \#2023A1515011691, and by the grants CAPES: No.\ 88881.520205/2020-01 and MATH AMSUD: 21-MATH-03. Data sharing not applicable to this article as no datasets were generated or analysed during the current study.}
\maketitle

\section{Introduction}
Non-integer base expansions of real numbers
\begin{equation*}
x = \sum_{k=1}^{\infty}\frac{i_k}{q^k},\quad (i_k)\in\{0,1,\ldots,m\}^\infty,
\end{equation*}
where $q>1$ is a given non-integer real number and $m$ is a given positive integer,
have been intensively studied ever since their introduction by R\'{e}nyi~\cite{Ren1957}.
While most integer base expansions are unique, in non-integer bases a number has typically infinitely many expansions \cite{EggVanEyn1966,ErdJooKom1990,ErdHorJoo1991,Sid2003,Bak2014}.
However, the sets of numbers $\mathcal{U}_q(m)$ with \emph{unique expansions}, also called \emph{univoque sets}, have many interesting  properties \cite{KomLor1998,DevKom2009,All2017,AllKon2021a,AllKon2021b,ZouLiLuKom2021,DevKomLor2022}.
In particular, $\mathcal{U}_q(m)$ is the \emph{survivor set} of a dynamical system with a hole, and this kind of \emph{open dynamical systems} have received a lot of attention in recent years \cite{Alc2014,GleSid2015,Cla2016,BakKon2020,KalKonLanLi2020}, involving connections to several mathematical fields such as fractal geometry, ergodic theory, symbolic dynamics and number theory.

Let us recall a  remarkable  theorem of Glendinning and Sidorov~\cite{GleSid2001} concerning the two-digit case $m=1$, where $\varphi\approx 1.618$ denotes the \emph{golden ratio}, and $q_{KL}\approx 1.787$ denotes the \emph{Komornik--Loreti constant}, i.e., the smallest base $q>1$ in which $x=1$ has a unique expansion:
\begin{itemize}
\item if $1<q\le\varphi$, then $\mathcal{U}_q(1)$ has two elements;
\item if $\varphi<q<q_{KL}$, then $\mathcal{U}_q(1)$ is countably infinite;
\item if $q=q_{KL}$, then $\mathcal{U}_q(1)$ is uncountable but of zero Hausdorff dimension;
\item if $q>q_{KL}$, then $\mathcal{U}_q(1)$ has positive Hausdorff dimension.
\end{itemize}
This theorem was generalized for $m>1$ \cite{Dev2009,DevKom2009,Bak2014,DevKomLor2022}, and the Hausdorff dimension of $\mathcal{U}_q(m)$ as well as the topological entropy of the underlying set of digit sequenes $U_q(m)$ have been the subject of a large number of research articles \cite{KomKonLi2017,AlcBarBakKon2019,AllKon2019,AllBakKon2019,KalKonLiLu2019,KonLiLuWanXu2020,AllKon2021a,AllKon2021b}. 
In particular, we know that the topological entropy of $U_q(m)$ is constant (as a function of~$q$) on infinitely many disjoint intervals \cite{KomKonLi2017,AlcBarBakKon2019}, the first of these entropy plateaus being $(1, q_{KL}]$.

The main tools in these investigations are the lexicographic characterizations of unique and related expansions that generalize a classical theorem of Parry \cite{Par1960}.
They have been extended by Pedicini~\cite{Ped2005} to more general alphabets $\{d_0,d_1,\ldots, d_m\}$ with \emph{real digits}.
Based on this theorem, the threshold separating bases with trivial and non-trivial univoque sets, called \emph{generalized golden ratio}, was determined in~\cite{KomLaiPed2011} (and later in~\cite{BakSte2017}) for all ternary alphabets $\{d_0, d_1, d_2\}$.
The determination of the next threshold, which separates bases with countable and uncountable univoque sets and is called \emph{generalized  Komornik--Loreti constant}, seems to be a difficult problem for ternary alphabets, but a closely related question was solved partially in~\cite{KomPed2017} and then completely in~\cite{Ste2020}.

Recently, some of the preceding results were generalized to \emph{multiple-base} expansions of the form
\begin{equation*}
x = \sum_{k=1}^{\infty}\frac{i_k}{q_{i_1}q_{i_2}\cdots q_{i_k}},
\quad (i_k)\in\{0,1,\ldots,m\}^\infty,
\end{equation*}
where $m$ is a positive integer and $q_0,q_1,\ldots, q_m>1$ are given bases \cite{Li2021,Neu2021,Neu2024}, and furthermore to expansions of the form
\begin{equation*}
x = \sum_{k=1}^{\infty}\frac{d_{i_k}}{q_{i_1}q_{i_2}\cdots q_{i_k}},
\quad (i_k)\in\{0,1,\ldots,m\}^\infty,
\end{equation*}
where $\mathcal{S} = \{(d_0,q_0), (d_1,q_1),\ldots,(d_m,q_m)\}$  is a given finite \emph{alphabet--base system} of pairs of real numbers with $q_0,q_1,\ldots,q_m>1$ \cite{KomLuZou2022}; this contains all preceding expansions as special cases.

The purpose of this paper is to extend results on the cardinality of univoque sets to alphabet--base systems.
We restrict to the binary case $\mathcal{S} = \{(d_0,q_0), (d_1, q_1)\}$, as we have seen above that the ternary case is already difficult when all bases are equal.
We first show that this 4-dimensional problem can be reduced to 2~dimensions because the system $\{(d_0,q_0), (d_1, q_1)\}$ is isomorphic to $\{(0,q_0), (1, q_1)\}$ (except in the degenerate case $\frac{d_0}{q_0-1} = \frac{d_1}{q_1-1}$, where $x = \frac{d_0}{q_0-1}$ for all expansions).
Now, we for each fixed $q_0 > 1$, two critical values $\mathcal{G}(q_0), \mathcal{K}(q_0) > 1$, called generalized golden ratio and generalized Komornik--Loreti constant, which separate pairs of bases $(q_0,q_1)$ according to whether the univoque set is trivial, uncountable or in-between. 
We give various properties of the functions $\mathcal{G}$ and 
$\mathcal{K}$ and formulas for $\mathcal{G}(q_0)$ and $\mathcal{K}(q_0)$ for all $q_0 > 1$ in Theorems~\ref{t:1}--\ref{t:3} below. 

To prove our main results, we study the critical values of the survivor set in dynamical systems with a hole.
More precisely, we characterize in Theorem~\ref{t:lex} below when a binary subshift never hitting a lexicographic interval is trivial, countable, uncountable with zero entropy and uncountable with positive entropy  respectively. 
This improves and simplifies results of Labarca and Moreira~\cite{LabMor2006}, which were used as tools for understanding the properties of Lorenz maps. 
Our characterizations are also simpler than those of Glendinning and Sidorov \cite{Sid2014,GleSid2015}, who investigated the critical values of the symmetric and asymmetric holes for the doubling map.
Note that positive entropy of binary subshifts with a hole plays also a crucial role in the characterization of critical itineraries of maps with constant slope and one discontinuity, also called intermediate $\beta$-shifts~\cite{BarSteVin2014}.
Based on our results, the first entropy plateau is also given by the generalized Komornik--Loreti constant. 
The investigation of further fractal and dynamical properties of the univoque set is left as an open problem.

\section{Statement of the main results} \label{s:intro0}
Fix a finite \emph{alphabet--base system} $\mathcal{S}:=\{(d_0,q_0),(d_1,q_1),\ldots, (d_m, q_m)\}$, and set
\begin{equation*}
\pi_{\mathcal{S}}((i_k)) := \sum_{k=1}^{\infty} \frac{d_{i_k}}{q_{i_1}q_{i_2}\cdots q_{k_k}}, \quad (i_k)\in\{0,1,\ldots,m\}^\infty.
\end{equation*}
If $\pi_{\mathcal{S}}((i_k))=x$, then $(i_k)$ is called an \emph{$\mathcal{S}$-expansion} of the real number $x$.
The system $\mathcal{S}$ is called \emph{regular} if
\begin{align*}
&\pi_{\mathcal{S}}(\overline{0}) < \pi_{\mathcal{S}}(1\overline{0}) < \cdots < \pi_{\mathcal{S}}(m\overline{0}), \\ &\pi_{\mathcal{S}}(0\overline{m}) < \pi_{\mathcal{S}}(1\overline{m}) < \cdots < \pi_{\mathcal{S}}(\overline{m}), \\
&\pi_{\mathcal{S}}((i{+}1)\overline{0}) \le \pi_{\mathcal{S}}(i\,\overline{m})  \text{ for all}\ 0 \le i < m.
\end{align*}
Here and in the following, $\overline{c}$ denotes the infinite repetition of a digit (or a finite sequence of digits)~$c$.
Regular systems~$\mathcal{S}$ were studied in~\cite{KomLuZou2022}, where it was shown that $x$ has an $\mathcal{S}$-expansion if and only if $x\in [\pi_{\mathcal{S}}(\overline{0}), \pi_{\mathcal{S}}(\overline{m})]$.
Let
\begin{equation*}
U_{\mathcal{S}} := \big\{\bu \in \{0,1,\ldots, m\}^\infty \,:\, \pi_{\mathcal{S}}(\bu) \ne \pi_{\mathcal{S}}(\bv) \ \text{for all} \ \bv \ne \bu\big\}
\end{equation*}
be the set of \emph{unique $\mathcal{S}$-expansions}.
The points $\pi_{\mathcal{S}}(\overline{0})$ and $\pi_{\mathcal{S}}(\overline{m})$ trivially have unique $\mathcal{S}$-expansions, and we call $U_{\mathcal{S}}$ \emph{trivial} if $U_{\mathcal{S}} = \{\overline{0}, \overline{m}\}$.
The set $U_{\mathcal{S}}$ is shift-invariant, and we denote its \emph{(topological) entropy} by
\begin{equation*}
h(U_{\mathcal{S}}) := \lim_{n\rightarrow\infty}\frac{1}{n} \log A_n(U_{\mathcal{S}}),
\end{equation*}
where $A_n(U_{\mathcal{S}})$ is the number of different blocks of $n$ letters appearing in the sequences of~$U_{\mathcal{S}}$.\footnote{This is the usual definition of topological entropy for subshifts. The set $U_{\mathcal{S}}$ need not be closed but taking the closure adds only countably many points, hence the entropy defined by Bowen~\cite{Bow1973} is equal to the entropy of the closure.}

In this paper, we study regular systems $\mathcal{S} = \{(d_0,q_0),(d_1,q_1)\}$ with $d_0,d_1 \in \mathbb{R}$, $q_0,q_1 > 1$; the case of larger systems is more complex.
We show in Lemma~\ref{l:01} that the structure of the $\mathcal{S}$-expansions is isomorphic to those in the alphabet-base system $\{(0,q_0),(1,q_1)\}$.
Hence in the following we can restrict ourselves without loss of generality to the case where $d_0 = 0$ and $d_1 = 1$.
For simplicity, we write $\pi_{q_0,q_1}$ and $U_{q_0,q_1}$ instead of $\pi_{\{(0,q_0),(1,q_1)\}}$ and $U_{\{(0,q_0),(1,q_1)\}}$, i.e.,
\begin{equation*}
\begin{aligned}
& \pi_{q_0,q_1}(i_1i_2\cdots) = \sum_{k=1}^\infty \frac{i_k}{q_{i_1}q_{i_2}\cdots q_{i_k}}, \\ & U_{q_0, q_1} = \big\{\bu \in \{0,1\}^\infty \,:\, \pi_{q_0,q_1}(\bu) \ne \pi_{q_0,q_1}(\bv) \ \text{for all} \ \bv \ne \bu\big\}.
\end{aligned}
\end{equation*}
The goal of this paper is to characterize
\begin{equation*}
\begin{aligned}
\mathcal{G}(q_0) & := \inf\{q_1>1 \,:\, U_{q_0, q_1} \ne \{\overline{0},\overline{1}\}\}, \\
\mathcal{K}(q_0) & := \inf\{q_1>1 \,:\, U_{q_0, q_1} \text{ is uncountable}\}.
\end{aligned}
\end{equation*}
We first state results that do not require further notation, before going into details.

\begin{theorem} \label{t:1} \mbox{}

\begin{enumerate}[\upshape (i)]
\itemsep.5ex
\item \label{i:t11}
The functions $\mathcal{G}$ and $\mathcal{K}$ are continuous, strictly decreasing on $(1,\infty)$, and hence almost everywhere differentiable.
\item \label{i:t12}
For all $q_0>1$, we have
\begin{equation*}
\begin{aligned}
\mathcal{G}(\mathcal{G}(q_0)) & = q_0, & & \max\bigg\{\frac{1}{q_0+1}, \frac{1}{\mathcal{G}(q_0)+1}\bigg\} \le (q_0-1) \big(\mathcal{G}(q_0)-1\big) \le \frac{1}{2}, \\
\mathcal{K}(\mathcal{K}(q_0)) & = q_0, & & \frac{1}{2} \le (q_0-1) \big(\mathcal{K}(q_0)-1\big) < \min\bigg\{\frac{q_0}{q_0+1}, \frac{\mathcal{K}(q_0)}{\mathcal{K}(q_0)+1}\bigg\}. \\
\end{aligned}
\end{equation*}
\item \label{i:t13}
For $q_0 > 1$, we have
\begin{equation*}
\begin{gathered}
(q_0-1) \big(\mathcal{G}(q_0)-1\big) = \frac{1}{2} \ \Longleftrightarrow \ \mathcal{G}(q_0) = \mathcal{K}(q_0) \ \Longleftrightarrow \ (q_0-1) \big(\mathcal{K}(q_0)-1\big) = \frac{1}{2}, \\
\hspace{.25em} (q_0-1)(\mathcal{G}(q_0)-1) = \left\{\hspace{-.5em}\begin{array}{cl} \frac{1}{q_0+1} & \hspace{-.5em}\text{if and only if $q_0^{k} = q_0{+}1$ for some integer $k \ge 2$,}\\
\frac{1}{\mathcal{G}(q_0)+1} & \hspace{-.5em}\text{if and only if $q_0 = \frac{q_1^2}{q_1^2-1}$ with $q_1>1$ such that} \\ & \quad \text{$q_1^k=q_1{+}1$ for some integer $k \ge 2$.}
\end{array}\right.
\end{gathered}
\end{equation*}
\item \label{i:14_2} 
The Hausdorff dimension of $\{q_0 > 1 \,:\, \mathcal{G}(q_0) = \mathcal{K}(q_0)\}$ is zero.
\item \label{i:t14}
For all $q_0 > 1$, $q_1>\mathcal{G}(q_0)$, the set $U_{q_0,q_1}$ is infinite.
\item \label{i:t15}
For all $q_0 > 1$, $q_1>\mathcal{K}(q_0)$, the shift-invariant set $U_{q_0,q_1}$ has positive entropy, and $\pi_{q_0, q_1}(U_{q_0, q_1})$ has positive Hausdorff dimension.
\end{enumerate}
\end{theorem}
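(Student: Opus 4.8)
The plan is to reduce every assertion to the combinatorics of a binary subshift avoiding one lexicographic interval and then invoke Theorem~\ref{t:lex}, using three structural facts about the family $\{U_{q_0,q_1}\}$. By Lemma~\ref{l:01} we may take $(d_0,d_1)=(0,1)$. \emph{(a) Monotonicity.} Rescaling the value interval $[0,\pi_{q_0,q_1}(\overline1)]$ to $[0,1]$, the first digit of $y$ is forced unless $y$ lies in the branching interval $[1-\tfrac1{q_1},\tfrac1{q_0}]$, nonempty by regularity; the Parry-type lexicographic description then gives $\bu\in U_{q_0,q_1}\Leftrightarrow\sigma^n\bu\notin[\ba,\bb]$ for all $n\ge0$, where $\ba=\ba(q_0,q_1)$ and $\bb=\bb(q_0,q_1)$ are assembled from the quasi-greedy and quasi-lazy expansions of the endpoints of the branching interval; since that interval shrinks when $q_0$ or $q_1$ grows, so does $[\ba,\bb]$, and $U_{q_0,q_1}$ only gains sequences. \emph{(b) Reflection symmetry.} The affine bijection $\phi(x)=\tfrac1{q_0-1}-\tfrac{q_1-1}{q_0-1}x$ conjugates the $(q_0,q_1)$-dynamics to the $(q_1,q_0)$-dynamics while interchanging the digits, so $\bu\mapsto\overline\bu$ is a shift-commuting bijection $U_{q_0,q_1}\to U_{q_1,q_0}$ that preserves cardinality and entropy. \emph{(c)} Theorem~\ref{t:lex} applies verbatim to the subshift avoiding $[\ba,\bb]$, so that $\mathcal G(q_0)$ (resp.\ $\mathcal K(q_0)$) is the value of $q_1$ at which this subshift leaves the trivial (resp.\ the countable) regime, and Theorems~\ref{t:2}--\ref{t:3} record the ensuing closed formulas.

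For part~\ref{i:t11} and the functional equations of part~\ref{i:t12}: by (a) the functions $\mathcal G,\mathcal K$ are non-increasing and $\{q_1>1 : U_{q_0,q_1}\text{ is nontrivial}\}$ (resp.\ uncountable) is a half-line with left endpoint $\mathcal G(q_0)$ (resp.\ $\mathcal K(q_0)$), and by (b) the two planar regions are symmetric in $(q_0,q_1)$. Continuity I would read off from Theorems~\ref{t:2}--\ref{t:3}; alternatively, being monotone, $\mathcal G$ and $\mathcal K$ could only jump, and a jump at $q_0^*$ would by (b) force a horizontal plateau of height $q_0^*$, impossible because the loci in the $(q_0,q_1)$-plane along which $(\ba,\bb)$ reaches a boundary case of Theorem~\ref{t:lex} are all strictly monotone curves (e.g.\ $\ba=0\overline{10}$ holds only along $q_1=1/(q_0-1)$). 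With continuity in hand, the symmetric, $q_1$-upward-closed shape of the nontriviality region yields $q_1\ge\mathcal G(q_0)\Leftrightarrow q_0\ge\mathcal G(q_1)$, and letting $q_1\to\mathcal G(q_0)$ from either side gives $\mathcal G(\mathcal G(q_0))=q_0$; the same works for $\mathcal K$. A continuous involution of $(1,\infty)$ is a strictly monotone self-bijection, and it is the identity if increasing; since $\mathcal G(q_0)\to\infty$ as $q_0\to1^+$ by the lower bound of part~\ref{i:t12}, $\mathcal G$ is not the identity, hence strictly decreasing, hence a.e.\ differentiable, and likewise $\mathcal K$.

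For the two-sided estimates in part~\ref{i:t12} I would isolate the extremal configurations in Theorem~\ref{t:lex}: evaluating $\pi_{q_0,q_1}$ at the short periodic words at the edges of the trichotomy — the period-two word $\overline{10}$, the words $\overline{10^{k-1}}$, and their reflections — shows that $U_{q_0,q_1}$ stays trivial while $(q_0-1)(q_1-1)\le\tfrac1{q_0+1}$, which together with (b) yields the lower bound for $\mathcal G$, and becomes nontrivial once $(q_0-1)(q_1-1)>\tfrac12$ via a Sturmian witness, which yields $(q_0-1)(\mathcal G(q_0)-1)\le\tfrac12$; the parallel bookkeeping on the uncountability side gives $\tfrac12\le(q_0-1)(\mathcal K(q_0)-1)<\min\{\tfrac{q_0}{q_0+1},\tfrac{\mathcal K(q_0)}{\mathcal K(q_0)+1}\}$. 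For part~\ref{i:t13}: since $\mathcal G\le\mathcal K$ and $(q_0-1)(\mathcal G(q_0)-1)\le\tfrac12\le(q_0-1)(\mathcal K(q_0)-1)$, the equality $\mathcal G(q_0)=\mathcal K(q_0)$ forces both products to equal $\tfrac12$; conversely, when $(q_0-1)(\mathcal G(q_0)-1)=\tfrac12$ the sequences $\ba,\bb$ at $q_1$ just above $\mathcal G(q_0)$ are not eventually periodic of the shape that the countably-infinite case of Theorem~\ref{t:lex} requires, so $U_{q_0,q_1}$ jumps straight from trivial to uncountable, i.e.\ $\mathcal K(q_0)=\mathcal G(q_0)$; the equalities $(q_0-1)(\mathcal G(q_0)-1)=\tfrac1{q_0+1}$ and $(q_0-1)(\mathcal G(q_0)-1)=\tfrac1{\mathcal G(q_0)+1}$ come from deciding exactly when the relevant extremal word is purely periodic, namely $q_0^k=q_0+1$. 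For part~\ref{i:14_2}: combining part~\ref{i:t13} with Theorems~\ref{t:2}--\ref{t:3}, $\mathcal G(q_0)=\mathcal K(q_0)$ forces the critical sequences governing both constants to coincide and to be Sturmian; the set of such $q_0$ then injects into the parameter space of Sturmian sequences, and since there are only polynomially many Sturmian factors of each length — against exponentially many for any non-Sturmian subshift of the relevant type — a covering argument shows this set has Hausdorff dimension zero.

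Finally, parts~\ref{i:t14} and~\ref{i:t15} sharpen part~\ref{i:t11}: by Theorem~\ref{t:lex} the borderline behaviours — nontrivial but finite, and uncountable but of zero entropy — occur \emph{only} on the respective critical curves, so for $q_1>\mathcal G(q_0)$ the subshift $U_{q_0,q_1}$ is infinite and for $q_1>\mathcal K(q_0)$ it has positive entropy $h$ (using (a) to slide $q_1$ down to a value just above the curve). For the last clause of part~\ref{i:t15}, one extracts inside $\overline{U_{q_0,q_1}}$ a subshift of finite type of positive entropy on which $\pi_{q_0,q_1}$ is a bi-Lipschitz coding by piecewise-linear maps of slopes $q_0,q_1$, whence $\dim_H\pi_{q_0,q_1}(U_{q_0,q_1})\ge h/\log\max\{q_0,q_1\}>0$. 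I expect the two genuine obstacles to be the continuity of $\mathcal G,\mathcal K$ — excluding jumps needs tight control of how $(\ba(q_0,q_1),\bb(q_0,q_1))$ moves with the parameters, which is exactly what makes Theorem~\ref{t:lex} usable here — and, hardest of all, the dimension statement in part~\ref{i:14_2}, where one must convert $\mathcal G=\mathcal K$ into the combinatorial rigidity of Sturmian data and then cover the corresponding set of bases efficiently; the extremal-word computations behind parts~\ref{i:t12}--\ref{i:t13}, though elementary, are the most laborious.
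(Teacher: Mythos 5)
Your overall route is the same as the paper's: reduce to the hole subshift, use the symmetry of Lemma~\ref{l:01} and the monotonicity of $(\ba_{q_0,q_1},\bb_{q_0,q_1})$ in the bases (Lemma~\ref{l:ab}), take the formulas of Theorem~\ref{t:2} as input (they are indeed proved independently of Theorem~\ref{t:1}), deduce the involution identities and strict decrease from symmetry plus continuity, and get (\ref{i:t14})--(\ref{i:t15}) from Theorem~\ref{t:lex}. The decisive gap is part~(\ref{i:14_2}). Knowing that along $E=\{q_0:\mathcal G(q_0)=\mathcal K(q_0)\}$ the critical sequence $\hat\ba_{q_0}:=\ba_{q_0,1+\frac{1}{2(q_0-1)}}$ is $01$ followed by a mechanical (Sturmian-type) word, hence has only polynomially many prefixes of each length, is only half of a covering argument: you must also prove that the set of $q_0\in E$ whose critical sequence begins with a fixed word of length $n$ is contained in an interval of length decaying exponentially in $n$. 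This is exactly where the double-base setting differs from the single-base argument of \cite{BakSte2017}: along $E$ one has $q_1=1+\frac{1}{2(q_0-1)}$ \emph{decreasing} in $q_0$, so the products $q_{i_1}\cdots q_{i_n}$ controlling the expansion of the parameter map are not monotone in $q_0$, and the naive distortion estimate fails. The paper has to localize to the intervals $[\mu_{L^kM(0\overline1),L^kM(\overline1)},\mu_{L^{k-1}M(\overline0),L^{k-1}M(1\overline0)}]$, run a mean-value estimate on the compositions $T_{q_{i_n},i_n}\cdots T_{q_{i_1},i_1}$ with explicit inequalities for each $k$, and only then transfer to all of $E$ via the locally bi-Lipschitz map $q_0\mapsto1+\frac{1}{2(q_0-1)}$. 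You acknowledge this is the hardest point but give no substitute, so (\ref{i:14_2}) is not proved.

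There are smaller but genuine issues elsewhere. Continuity cannot simply be ``read off'' Theorem~\ref{t:2}: the parameter intervals accumulate at the points $\mu_{\bsigma(\overline0),\bsigma(\overline1)}$ with primitive $\bsigma$, and one needs a squeeze argument there (the paper bounds $\mathcal K$ between $\tilde g_{\bsigma(1\overline0)}$ and $g_{\bsigma(\overline0)}$ on the left of such points); your jump/plateau alternative can be made rigorous, but only by invoking the covering of $(1,\infty)$ by these intervals (Lemma~\ref{l:cover}) together with the strict monotonicity of $g_{\bu}$, $\tilde g_{\bv}$, which you do not spell out. The two-sided bounds in (\ref{i:t12}) and the equality cases in (\ref{i:t13}) are not obtained by testing a few short periodic words: for every $q_0$ one needs $\sigma(0)=01w$, $\sigma(1)=10w$ for $\sigma\in\{L,R\}^*M$, the identity \eqref{e:q1q0}, and quasi-greedy comparisons such as $\overline{w01}\ge\overline{0w1}$ via Lemma~\ref{l:qgql}, plus the computations of Example~\ref{ex:Lk}; ``the extremal word is purely periodic'' is not the right criterion, since $\sigma(\overline0)$ is periodic for every $\sigma\in\{L,R\}^*M$, whereas equality with $\frac{1}{q_0+1}$ forces $w=0^k$, i.e.\ $\sigma=L^kM$. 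Finally, in (\ref{i:t15}) the map $\pi_{q_0,q_1}$ is not bi-Lipschitz on an arbitrary positive-entropy subshift of the closure of $U_{q_0,q_1}$; one needs the separation coming from uniqueness of the expansions -- the paper takes $Y=\{\sigma(0(01)^k),\sigma(0(01)^{k+1})\}^\infty\subset U_{q_0,q_1}$, checks the open set condition and applies \cite[Theorem~9.3]{Fal2003} -- and your bound $h/\log\max\{q_0,q_1\}$ would in any case require proof.
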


Our main result is a precise description of $\mathcal{G}$ and~$\mathcal{K}$ in terms of equations $q_1\, \pi_{q_0,q_1}(\bu) = 1$ and $q_0\, \tilde{\pi}_{q_0,q_1}(\bv) = 1$ for certain expansions $\bu, \bv \in \{0,1\}^\infty$, where
\begin{equation*}
\tilde{\pi}_{q_0,q_1}(i_1i_2\cdots) := \sum_{k=1}^\infty \frac{1-i_k}{q_{i_1}q_{i_2}\cdots q_{i_k}} = \pi_{q_1,q_0}((1{-}i_1)(1{-}i_2)\cdots).
\end{equation*}
If the equation $q_1\, \pi_{q_0,q_1}(\bu) = 1$ has a unique solution $q_1 > 1$, then we denote this solution by~$g_{\bu}(q_0)$.
If the equation $q_0\, \tilde{\pi}_{q_0,q_1}(\bv) = 1$ has a unique solution $q_1 > 1$, then we denote this solution by~$\tilde{g}_{\bv}(q_0)$.
If the equation $g_{\bu}(q_0) = \tilde{g}_{\bv}(q_0)$ has a unique solution $q_0 > 1$, then we denote this solution by~$\mu_{\bu,\bv}$.
In Lemmas~\ref{l:functiong}, \ref{l:functiongt} and~\ref{l:mu}, we will show that $g_{\bu}(q_0)$, $\tilde{g}_{\bv}(q_0)$ and $\mu_{\bu,\bv}$ are well defined for all $\bu, \bv, q_0$ that are relevant for us.

The involved words $\bu, \bv$ are defined by the substitutions (or morphisms)
\begin{equation*}
\begin{aligned}
L:\ & 0 \mapsto 0, & \quad M:\ & 0 \mapsto 01, & \quad R:\ & 0 \mapsto 01, \\
& 1 \mapsto 10, & & 1 \mapsto 10, & & 1 \mapsto 1,
\end{aligned}
\end{equation*}
which act on finite and infinite words by $\sigma(i_1i_2\cdots) = \sigma(i_1) \sigma(i_2) \cdots$.
We use the notation $S^*$ for the monoid generated by a set of substitutions~$S$ (with the composition as product).
For a sequence of substitutions $\bsigma = (\sigma_n)_{n\ge1} \in \{L,M,R\}^\infty$ and any $\bu \in \{0,1\}^\infty$, the \emph{limit word}
\begin{equation*}
\bsigma(\bu) := \lim_{n\to\infty} \sigma_1\sigma_2 \cdots \sigma_n(\bu)
\end{equation*}
exists because $\sigma(i)$ starts with~$i$ for all $\sigma \in \{L,M,R\}$, $i \in \{0,1\}$.
A~sequence of substitutions $(\sigma_n)_{n\ge1}$ is \emph{primitive} if for each $n \ge 1$ there exists $k \ge n$ such that the image by $\sigma_n \sigma_{n+1} \cdots \sigma_k$ of each letter contains all letters; for $\bsigma \in \{L,M,R\}^\infty$, this means that $\bsigma$ does not end with $\overline{L}$ or~$\overline{R}$.
Note that the limit words of~$\overline{M}$ are the Thue--Morse word and its reflection by $0 \leftrightarrow 1$; limit words of primitive sequences in $\{L,R\}^\infty$ are \emph{Sturmian} words, and limit words of primitive sequences in $\{L,M,R\}^\infty$ are called \emph{Thue--Morse--Sturmian words} according to~\cite{Ste2020}.\footnote{The substitution $L$ in~\cite{Ste2020} is rotationally conjugate to our substitution~$L$, thus the limit words have the same dynamical properties.}

\begin{theorem} \label{t:2}
The map $\mathcal{G}$ is given on $(1,\infty)$ by
\begin{equation}\label{e:ggr}
\mathcal{G}(q_0) = \begin{cases}g_{\sigma(\overline{0})}(q_0) & \text{if}\ q_0 \in [\mu_{\sigma(\overline{0}),\sigma(1\overline{0})}, \mu_{\sigma(\overline{0}),\sigma(\overline{1})}],\, \sigma \in \{L,R\}^*M, \\[.5ex]
\tilde{g}_{\sigma(\overline{1})}(q_0) & \text{if}\ q_0 \in [\mu_{\sigma(\overline{0}),\sigma(\overline{1})}, \mu_{\sigma(0\overline{1}),\sigma(\overline{1})}],\, \sigma \in \{L, R\}^*M, \\[.5ex]
g_{\bsigma(\overline{0})}(q_0) & \text{if}\ q_0 = \mu_{\bsigma(\overline{0}),\bsigma(\overline{1})},\, \bsigma \in \{L,R\}^\infty\ \text{primitive}. \end{cases}
\end{equation}
The map $\mathcal{K}$ is given on $(1, \infty)$ by
\begin{equation}\label{e:klc}
\mathcal{K}(q_0) =
\begin{cases}\tilde{g}_{\sigma(1\overline{0})}(q_0) & \text{if}\ q_0 \in [\mu_{\sigma(\overline{0}),\sigma(1\overline{0})}, \mu_{\sigma(01\overline{0}),\sigma(1\overline{0})}],\, \sigma \in \{L,M,R\}^*M, \\[.5ex]
g_{\sigma(0\overline{1})}(q_0) & \text{if}\ q_0 \in [\mu_{\sigma(0\overline{1}),\sigma(10\overline{1})}, \mu_{\sigma(0\overline{1}),\sigma(\overline{1})}],\, \sigma \in \{L,M,R\}^*M, \\[.5ex]
g_{\bsigma(\overline{0})}(q_0) & \text{if}\ q_0 = \mu_{\bsigma(\overline{0}),\bsigma(\overline{1})},\, \bsigma \in \{L,M,R\}^\infty\ \text{primitive}. \end{cases}
\end{equation}
\end{theorem}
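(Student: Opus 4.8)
The plan is to recognise $U_{q_0,q_1}$ as a binary subshift avoiding a lexicographic interval, to invoke Theorem~\ref{t:lex}, and to translate the resulting combinatorics into the bases through the functions $g_{\bu}$, $\tilde g_{\bv}$, $\mu_{\bu,\bv}$. First I would set down the lexicographic model: from the lexicographic description of $(q_0,q_1)$-expansions (the analogue of Parry's theorem underlying the study of regular systems in~\cite{KomLuZou2022}), a sequence $\bu \in \{0,1\}^\infty$ lies in $U_{q_0,q_1}$ exactly when every shift of $\bu$ avoids the lexicographic interval $[\boldsymbol{\alpha}(q_0,q_1),\boldsymbol{\beta}(q_0,q_1)]$, where $\boldsymbol{\alpha}(q_0,q_1)$ is the quasi-greedy $(q_0,q_1)$-expansion of the left endpoint $1/q_1$ of the switch region, so that it begins with $0$, and $\boldsymbol{\beta}(q_0,q_1)$ is the quasi-lazy $(q_0,q_1)$-expansion of the right endpoint $1/(q_0(q_1-1))$, so that it begins with $1$; by the $0\leftrightarrow 1$, $q_0\leftrightarrow q_1$ symmetry of Lemma~\ref{l:01}, $\boldsymbol{\beta}(q_0,q_1)$ is the reflection of $\boldsymbol{\alpha}(q_1,q_0)$. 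Thus $\boldsymbol{\alpha}(q_0,q_1)\prec\boldsymbol{\beta}(q_0,q_1)$, the pair is admissible in the sense required by Theorem~\ref{t:lex}, and $U_{q_0,q_1}$ is trivial (respectively at most countable) precisely when the subshift attached to $[\boldsymbol{\alpha}(q_0,q_1),\boldsymbol{\beta}(q_0,q_1)]$ is.

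Next, fixing $q_0>1$, I would reduce the computation of $\mathcal G(q_0)$ and $\mathcal K(q_0)$ to locating a crossing. The regularity constraint confines $q_1$ to $\bigl(1,q_0/(q_0-1)\bigr]$, and as $q_1$ increases the interval $[\boldsymbol{\alpha}(q_0,q_1),\boldsymbol{\beta}(q_0,q_1)]$ shrinks monotonically --- its left endpoint increases and its right endpoint decreases in the lexicographic order --- so that, together with Lemmas~\ref{l:functiong}, \ref{l:functiongt} and~\ref{l:mu}, the admissible pairs $\bigl(\boldsymbol{\alpha}(q_0,q_1),\boldsymbol{\beta}(q_0,q_1)\bigr)$ are traversed in order. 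Hence there is a unique $q_1$ at which the pair lands on the triviality boundary of Theorem~\ref{t:lex}, and that value is $\mathcal G(q_0)$; and a unique $q_1$ at which it lands on the countability boundary, and that value is $\mathcal K(q_0)$. Those lemmas also identify $g_{\bu}(q_0)$ with the base $q_1$ at which $\boldsymbol{\alpha}(q_0,q_1)=\bu$ (for admissible $\bu$), $\tilde g_{\bv}(q_0)$ with the base at which $\boldsymbol{\beta}(q_0,q_1)=\bv$, and $\mu_{\bu,\bv}$ with the base $q_0$ at which $\boldsymbol{\alpha}(q_0,q_1)=\bu$ and $\boldsymbol{\beta}(q_0,q_1)=\bv$ both hold, at the common value $q_1=g_{\bu}(q_0)=\tilde g_{\bv}(q_0)$.

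The core step is to read the two boundaries off Theorem~\ref{t:lex}. That theorem classifies the subshifts avoiding a lexicographic interval; in particular it should exhibit the critically trivial admissible pairs --- those on the boundary between trivial and non-trivial --- as exactly the $S$-adic pairs $\bigl(\sigma(\overline{0}),\sigma(\overline{1})\bigr)$ with $\sigma\in\{L,R\}^*M$ (at such a pair the ``leaf'' golden structure $M(\overline{0})=\overline{01}$, $M(\overline{1})=\overline{10}$ makes one of the equalities $\boldsymbol{\alpha}=\sigma(\overline{0})$, $\boldsymbol{\beta}=\sigma(\overline{1})$ tight), together with the limit pairs $\bigl(\bsigma(\overline{0}),\bsigma(\overline{1})\bigr)$ for primitive $\bsigma\in\{L,R\}^\infty$ (the genuinely Sturmian pairs); and likewise the critically countable pairs as those built from $\{L,M,R\}^*M$ together with the shifted kneading words $\sigma(0\overline{1})$, $\sigma(1\overline{0})$, and the limit pairs $\bigl(\bsigma(\overline{0}),\bsigma(\overline{1})\bigr)$ for primitive $\bsigma\in\{L,M,R\}^\infty$ (the Thue--Morse--Sturmian pairs). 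Translating $\boldsymbol{\alpha}(q_0,q_1)=\bu$ into $q_1=g_{\bu}(q_0)$, $\boldsymbol{\beta}(q_0,q_1)=\bv$ into $q_1=\tilde g_{\bv}(q_0)$, and ``both tight'' into $q_0=\mu_{\bu,\bv}$, and reading off from the position of $q_0$ among the relevant $\mu$-values which of $\boldsymbol{\alpha}$ or $\boldsymbol{\beta}$ is the binding equality on a given piece of the boundary, yields \eqref{e:ggr} and \eqref{e:klc}. It then remains to check that the listed $q_0$-intervals --- indexed by $\sigma\in\{L,R\}^*M$ in \eqref{e:ggr} and $\sigma\in\{L,M,R\}^*M$ in \eqref{e:klc} --- together with the isolated points $\mu_{\bsigma(\overline{0}),\bsigma(\overline{1})}$ for primitive $\bsigma$, cover $(1,\infty)$ and overlap only at shared endpoints, where the two prescribed formulas agree because their defining equations coincide; this is the Stern--Brocot-type bookkeeping of the $\{L,R\}$- (respectively $\{L,M,R\}$-) substitution tree, the ``rational'' cells accumulating exactly on the Cantor set of limit points, with the monotonicity of the previous step ensuring each $q_0$ is hit once.

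I expect the main obstacle to be precisely this core step. Extracting from Theorem~\ref{t:lex} the exact list of critically trivial and critically countable endpoint pairs, keeping track of which of $\boldsymbol{\alpha}(q_0,q_1)=\sigma(\overline{0})$ or $\boldsymbol{\beta}(q_0,q_1)=\sigma(\overline{1})$ (respectively the shifted versions) is binding as $q_0$ moves, and verifying that the resulting cells tile $(1,\infty)$ up to the measure-zero Cantor set, all amount to a delicate renormalisation (desubstitution) analysis in which the $\{L,M,R\}$-structure of admissible kneading pairs must be matched against the triviality and countability thresholds of Theorem~\ref{t:lex}; the analytic Lemmas~\ref{l:functiong}, \ref{l:functiongt} and~\ref{l:mu} on $g_{\bu}$, $\tilde g_{\bv}$ and $\mu_{\bu,\bv}$ take care of the passage between sequences and bases, but the combinatorial alignment is where the real work lies.
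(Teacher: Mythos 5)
Your proposal follows essentially the same route as the paper: identify $U_{q_0,q_1}$ with the lexicographic subshift determined by the quasi-greedy/quasi-lazy kneading pair $(\ba_{q_0,q_1},\bb_{q_0,q_1})$, apply Theorem~\ref{t:lex}, use the monotonicity of this pair in $q_1$ (the paper's Lemma~\ref{l:ab}) to isolate the unique critical base, and translate the boundary conditions into the functions $g_{\bu}$, $\tilde g_{\bv}$, $\mu_{\bu,\bv}$. The ``combinatorial alignment'' you correctly flag as the remaining work is exactly what the paper carries out via the map $s$, Proposition~\ref{p:lex}, the covering Lemma~\ref{l:cover}, and Propositions~\ref{p:K} and~\ref{p:G}.
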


Note that $\mathcal{G}(\mu_{\bsigma(\overline{0}),\bsigma(\overline{1})}) = g_{\bsigma(\overline{0})}(\mu_{\bsigma(\overline{0}),\bsigma(\overline{1})})$ holds for non-primitive $\bsigma \in \{L,R\}^\infty \setminus \{\overline{L}, \overline{R}\}$ too because $L \overline{R}(\overline{0}) = M(\overline{0}) = R \overline{L}(\overline{0})$ and $L \overline{R}(\overline{1}) = M(\overline{1}) = R \overline{L}(\overline{1})$. 
However, we need $\bsigma \in \{L,M,R\}^\infty$ to be primitive for $\mathcal{K}(\mu_{\bsigma(\overline{0}),\bsigma(\overline{1})}) = g_{\bsigma(\overline{0})}(\mu_{\bsigma(\overline{0}),\bsigma(\overline{1})})$.

\begin{theorem} \label{t:3} \mbox{}
\begin{enumerate}[\upshape (i)]
\itemsep.5ex
\item \label{i:t31}
We have $\mathcal{G}(q_0) \,{=}\, \mathcal{K}(q_0)$ if and only if $q_0 \,{=}\, \mu_{\bsigma(\overline{0}),\bsigma(\overline{1})}$ for a primitive $\bsigma \,{\in}\, \{L,R\}^\infty$ or $q_0 \in \{\mu_{\sigma(\overline{0}),\sigma(1\overline{0})}, \mu_{\sigma(0\overline{1}),\sigma(\overline{1})}\}$ for some $\sigma \in \{L, R\}^*M$; this is also equivalent to $(q_0{-}1)(\mathcal{G}(q_0){-}1) = \frac{1}{2}$ as well as to $(q_0{-}1)(\mathcal{K}(q_0){-}1) = \frac{1}{2}$.
In particular, we have $\mathcal{G}(q_0) < \mathcal{K}(q_0)$ for all $q_0 > 1$ except a set of zero Hausdorff dimension.
\item \label{i:t32}
$U_{q_0,\mathcal{G}(q_0)} \ne \{\overline{0}, \overline{1}\}$ if and only if $q_0 = \mu_{\bsigma(\overline{0}),\bsigma(\overline{1})}$ for a primitive $\bsigma \in \{L,R\}^\infty$; the set of $q_0 > 1$ with this property has zero Hausdorff dimension.
\item \label{i:t33}
$U_{q_0,\mathcal{K}(q_0)}$ is trivial if and only if $q_0 \in \bigcup_{\sigma\in\{L, R\}^*M} \{\mu_{\sigma(\overline{0}),\sigma(1\overline{0})}, \mu_{\sigma(0\overline{1}),\sigma(\overline{1})}\}$, uncountable with zero entropy if and only if $q_0 = \mu_{\bsigma(\overline{0}),\bsigma(\overline{1})}$ for some primitive $\bsigma \in \{L,M,R\}^\infty$, countably infinite otherwise.
\end{enumerate}
\end{theorem}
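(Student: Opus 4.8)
The plan is to derive all three parts as consequences of Theorem~\ref{t:2} together with the underlying characterization of binary subshifts (Theorem~\ref{t:lex}), which tells us precisely when the survivor set is trivial, countable, uncountable with zero entropy, or uncountable with positive entropy. The key observation is that the three cases in the formulas~\eqref{e:ggr} and~\eqref{e:klc} are not exclusive: the endpoints of consecutive intervals coincide, and at those endpoints one has $q_0 = \mu_{\bsigma(\overline0),\bsigma(\overline1)}$ for a sequence $\bsigma$ that is \emph{not} primitive (it ends in $\overline L$ or $\overline R$, or in the $\mathcal{K}$ case also $\overline M$). For part~\eqref{i:t31}, I would argue that $\mathcal{G}(q_0) = \mathcal{K}(q_0)$ forces $q_0$ to lie in the overlap of the relevant interval families. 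Concretely, the first line of~\eqref{e:ggr} gives $\mathcal{G}$ on $[\mu_{\sigma(\overline0),\sigma(1\overline0)},\mu_{\sigma(\overline0),\sigma(\overline1)}]$ for $\sigma\in\{L,R\}^*M$ while the first line of~\eqref{e:klc} gives $\mathcal{K}$ on $[\mu_{\sigma(\overline0),\sigma(1\overline0)},\mu_{\sigma(01\overline0),\sigma(1\overline0)}]$ for $\sigma\in\{L,M,R\}^*M$; comparing these (and their $\tilde g$-counterparts) and using strict monotonicity from Theorem~\ref{t:1}\eqref{i:t11}, equality can only happen at the shared endpoints $\mu_{\sigma(\overline0),\sigma(1\overline0)}$ and $\mu_{\sigma(0\overline1),\sigma(\overline1)}$ with $\sigma\in\{L,R\}^*M$, or on the ``diagonal'' set $q_0 = \mu_{\bsigma(\overline0),\bsigma(\overline1)}$ with $\bsigma\in\{L,R\}^\infty$ primitive, where both $\mathcal G$ and $\mathcal K$ are given by the same formula $g_{\bsigma(\overline0)}(q_0)$. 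The equivalence with $(q_0-1)(\mathcal G(q_0)-1) = \tfrac12$ would be checked by a direct computation: one verifies that for $\sigma\in\{L,R\}^*$ the identity $q_1\pi_{q_0,q_1}(\sigma M(\overline0)) = 1$ together with its reflected companion is equivalent to $(q_0-1)(q_1-1) = \tfrac12$, using that $M$-images give the ``balanced'' Thue--Morse-type words; this ties back to Theorem~\ref{t:1}\eqref{i:t13}, which can then be invoked rather than reproved. The final sentence of~\eqref{i:t31} follows from Theorem~\ref{t:1}\eqref{i:14_2}.

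For part~\eqref{i:t32}, the question is whether the survivor set is still trivial \emph{at} the generalized golden ratio. By Theorem~\ref{t:lex}, triviality of $U_{q_0,q_1}$ corresponds to the associated subshift being $\{\overline0,\overline1\}$, i.e.\ the kneading data being exactly of the critical Sturmian/periodic form. When $q_0$ lies strictly inside one of the intervals in the first two lines of~\eqref{e:ggr}, the defining word is of the form $\sigma(\overline0)$ or $\sigma(\overline1)$ with $\sigma$ ending in $M$, and one checks that $U_{q_0,\mathcal G(q_0)}$ still equals $\{\overline0,\overline1\}$ — the hole is exactly large enough. Only when $q_0 = \mu_{\bsigma(\overline0),\bsigma(\overline1)}$ with $\bsigma\in\{L,R\}^\infty$ primitive does the limit word $\bsigma(\overline0)$ itself become a nontrivial element of $U_{q_0,\mathcal G(q_0)}$ (it is a Sturmian word, hence lexicographically admissible at the threshold), making the survivor set non-trivial — in fact countably infinite by Theorem~\ref{t:lex}, consistent with the claimed dividing line. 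I would carry this out by invoking the lexicographic admissibility criterion: $\bsigma(\overline0) \in U_{q_0,q_1}$ exactly when certain shifted sequences stay in a prescribed lexicographic interval, and at $q_1 = \mathcal G(q_0) = g_{\bsigma(\overline0)}(q_0)$ the boundary case is attained by primitivity of $\bsigma$. The zero-Hausdorff-dimension claim is again Theorem~\ref{t:1}\eqref{i:14_2} (the set in question is contained in $\{\mathcal G = \mathcal K\}$, or can be shown to be so small directly).

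Part~\eqref{i:t33} is the most delicate and I expect it to be the main obstacle: one must determine not just \emph{whether} $U_{q_0,\mathcal K(q_0)}$ is uncountable but \emph{whether it has zero or positive entropy}, exactly at the critical base. The strategy is to read off from~\eqref{e:klc} which of the three regimes $q_0$ falls in, and then apply Theorem~\ref{t:lex}. If $q_0$ is an endpoint of the form $\mu_{\sigma(\overline0),\sigma(1\overline0)}$ or $\mu_{\sigma(0\overline1),\sigma(\overline1)}$ with $\sigma\in\{L,R\}^*M$ (the cases also appearing in~\eqref{i:t31}), the kneading word degenerates so that the survivor set collapses to $\{\overline0,\overline1\}$, i.e.\ trivial. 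If $q_0 = \mu_{\bsigma(\overline0),\bsigma(\overline1)}$ with $\bsigma\in\{L,M,R\}^\infty$ primitive, the defining word is a Thue--Morse--Sturmian word; by the entropy dichotomy in Theorem~\ref{t:lex} such a subshift is uncountable with zero entropy (the $S$-adic structure with $S=\{L,M,R\}$ gives subexponential complexity, generalizing the Komornik--Loreti phenomenon). In all remaining cases $q_0$ sits in the interior of a $\tilde g_{\sigma(1\overline0)}$- or $g_{\sigma(0\overline1)}$-interval with $\sigma$ ending in $M$, where the finitely presented nature of the kneading data yields exactly countably infinitely many unique expansions. The hard part is establishing the entropy statement at the Thue--Morse--Sturmian points rigorously — bounding the block-complexity growth of $\bsigma(\overline0)$ from above by something subexponential for arbitrary primitive $\bsigma\in\{L,M,R\}^\infty$, and from below showing uncountability; this is precisely where Theorem~\ref{t:lex} (our improvement over Labarca--Moreira and Glendinning--Sidorov) is invoked in an essential way, so the proof reduces to verifying that the kneading sequences produced by~\eqref{e:klc} match the $S$-adic hypotheses of that theorem. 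The final ``countably infinite otherwise'' is then the complement, settled by exhausting the trichotomy.
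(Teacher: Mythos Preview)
Your overall strategy---reducing everything to Theorems~\ref{t:2} and~\ref{t:lex}---is correct, but there is a genuine gap in parts~\eqref{i:t32} and~\eqref{i:t33}: you conflate $U_{q_0,q_1}$ with the closed set $V_{q_0,q_1}=\Omega_{\ba_{q_0,q_1},\bb_{q_0,q_1}}$. Theorem~\ref{t:lex} characterizes~$\Omega_{\ba,\bb}$, defined with \emph{weak} inequalities, whereas membership in $U_{q_0,q_1}$ requires \emph{strict} ones, and at the critical base this distinction is decisive. Concretely, your claim that ``the limit word $\bsigma(\overline 0)$ itself becomes a nontrivial element of $U_{q_0,\mathcal G(q_0)}$'' is false: at $q_1=g_{\bsigma(\overline 0)}(q_0)$ one has $\ba_{q_0,q_1}=\bsigma(\overline 0)$, so $\supn(\bsigma(\overline 0))=\ba_{q_0,q_1}$ violates the strict inequality and $\bsigma(\overline 0)\in V_{q_0,q_1}\setminus U_{q_0,q_1}$. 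The paper argues instead that $V_{q_0,q_1}$ contains the entire Sturmian subshift $X_{\bsigma(\overline 0)}$, which is uncountable, and then uses that $V\setminus U$ is always countable (Lemma~\ref{l:UV}) to conclude that $U$ is uncountable---not ``countably infinite'' as you wrote.

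The same issue bites in the opposite direction. For $q_0$ in the interior of $[\mu_{\sigma(\overline 0),\sigma(1\overline 0)},\mu_{\sigma(\overline 0),\sigma(\overline 1)}]$ with $\sigma\in\{L,R\}^*M$, you assert that $U_{q_0,\mathcal G(q_0)}=\{\overline 0,\overline 1\}$ because ``the hole is exactly large enough,'' but Theorem~\ref{t:lex}\eqref{i:lex1} actually gives $V_{q_0,\mathcal G(q_0)}\ne\{\overline 0,\overline 1\}$ here, since $\ba_{q_0,q_1}=\sigma(\overline 0)$ and $\bb_{q_0,q_1}\le\sigma(\overline 1)$. The missing mechanism, supplied in the paper's proof, is that by the argument for Theorem~\ref{t:lex} every $\bu\in V_{q_0,q_1}\setminus\{\overline 0,\overline 1\}$ \emph{ends with} $\sigma(\overline 0)=\ba_{q_0,q_1}$; such a $\bu$ eventually has a shift equal to $\ba_{q_0,q_1}$, hence fails the strict inequality defining~$U$. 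This ``all nontrivial elements of $V$ end with~$\ba$ (or~$\bb$)'' observation, combined with Lemma~\ref{l:UV}, is what separates $U$ from $V$ at the threshold and drives the trichotomy in~\eqref{i:t33}; it is absent from your sketch. (For part~\eqref{i:t31} your idea of comparing the interval families is workable, though the paper proceeds more directly by proving $(q_0{-}1)(\mathcal G(q_0){-}1)\le\tfrac12\le(q_0{-}1)(\mathcal K(q_0){-}1)$ via the identity $\sigma(0)=01w$, $\sigma(1)=10w$ for $\sigma\in\{L,R\}^*M$ together with~\eqref{e:q1q0}, and tracks the equality cases; be careful that invoking Theorem~\ref{t:1}\eqref{i:t13} is not circular, since in the paper it is proved simultaneously with~\eqref{i:t31}.)
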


We conjecture that the set of $q_0 > 1$ where $U_{q_0,\mathcal{K}(q_0)}$ is not countably infinite has zero Hausdorff dimension (and thus zero Lebesgue measure); see the Open Problem~(\ref{op:1}) in Section~\ref{sec:op}.

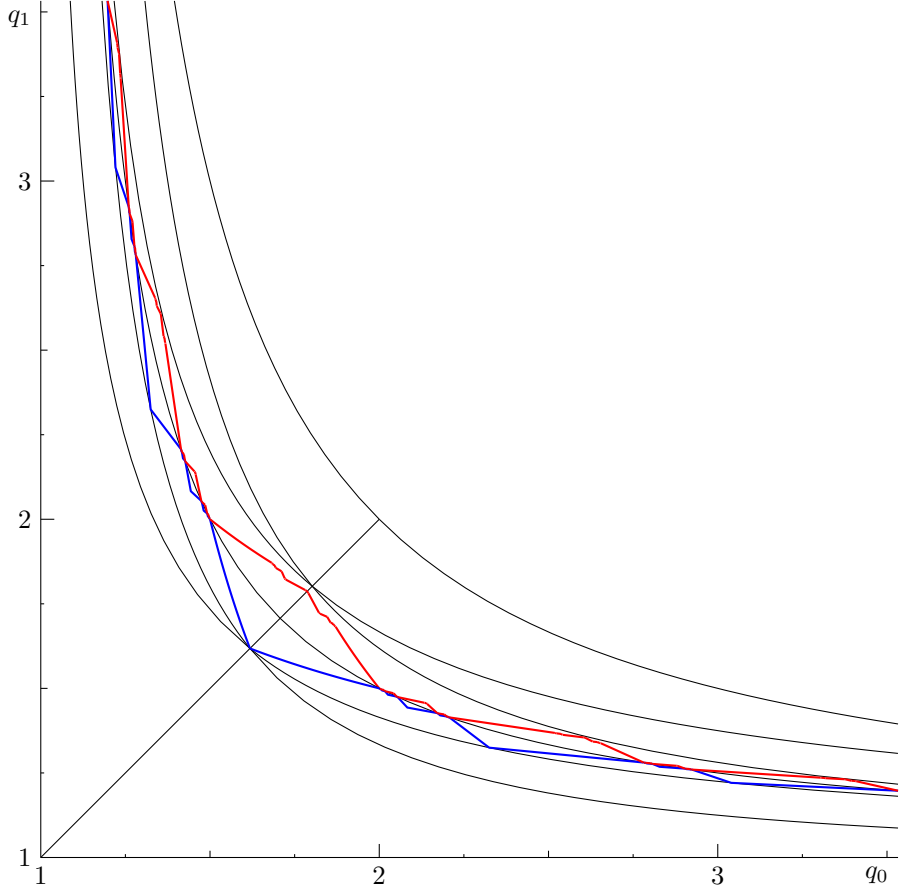
\begin{figure}[ht]
\centering
\begin{tikzpicture}[scale=4.5,join=bevel]
\draw(1,3.5326)node[below left]{$q_1$}--(1,1)node[below]{1}node[left]{1}--(3.5326,1)node[below left]{$q_0$};
\foreach \i in {2,3} \draw(1.04,\i)--(1,\i)node[left]{\i} (\i,1.04)--(\i,1)node[below]{\i};
\foreach \i in {1.5,2.5,3.5} \draw(1.02,\i)--(1,\i) (\i,1.02)--(\i,1);
\foreach \i in {1.25,1.75,...,3.25} \draw(1.01,\i)--(1,\i) (\i,1.01)--(\i,1);
\draw(1,1)--(2,2);

\draw[domain=1.18104:1.618] plot (\x,{1/(\x^2-1)+1});
\draw[domain=1.618:3.5326] plot ({1/(\x^2-1)+1},\x);
\draw[domain=1.70711:3.5326] plot ({.5/(\x-1)+1},\x);
\draw[domain=1.21673:1.80194] plot (\x,{1+\x/(\x^2-1)});
\draw[domain=1.80194:3.5326] plot ({1+\x/(\x^2-1)},\x);
\draw[domain=2:3.5326] plot ({\x/(\x-1)},\x);

\draw[thick,blue](1.1974,3.5326)--(1.2207,3.0399)--(1.2599,2.9237); 
\draw[thick,blue](1.26,2.9231)--(1.2601,2.9209)--(1.2604,2.9203); 
\draw[thick,blue](1.2604,2.9203)--(1.2613,2.9077)--(1.2626,2.904); 
\draw[thick,blue](1.2627,2.9034)--(1.2682,2.8282)--(1.2767,2.8071); 
\draw[thick,blue](1.2768,2.8064)--(1.2781,2.7908)--(1.2799,2.7864); 
\draw[thick,blue](1.2799,2.7863)--(1.2802,2.7829)--(1.2806,2.782); 
\draw[thick,blue](1.2808,2.7808)--(1.3247,2.3247)--(1.4142,2.2071); 
\draw[thick,blue](1.4143,2.2067)--(1.4145,2.2057)--(1.4148,2.2054); 
\draw[thick,blue](1.4148,2.2054)--(1.4155,2.201)--(1.4169,2.1994); 
\draw[thick,blue](1.4169,2.1993)--(1.4202,2.1799)--(1.4262,2.1731); 
\draw[thick,blue](1.4262,2.173)--(1.4265,2.1716)--(1.4269,2.1712); 
\draw[thick,blue](1.427,2.171)--(1.4433,2.083)--(1.4743,2.0542); 
\draw[thick,blue](1.4744,2.054)--(1.4748,2.0521)--(1.4755,2.0514); 
\draw[thick,blue](1.4756,2.0514)--(1.4813,2.0256)--(1.4917,2.0169); 
\draw[thick,blue](1.4918,2.0166)--(1.4938,2.0083)--(1.4972,2.0055); 
\draw[thick,blue](1.4973,2.0055)--(1.4979,2.0028)--(1.4991,2.0018); 
\draw[thick,blue](1.4991,2.0018)--(1.4993,2.0009)--(1.4997,2.0006); 
\draw[thick,blue,domain=1.5:1.61804] plot (\x,{1/(\x-1)}); 

\draw[thick,red](1.1974,3.5326)--(1.2255,3.409) (1.236,3.3033)--(1.2599,2.9237); 
\draw[thick,red](1.2256,3.4086)--(1.2257,3.4082) (1.2257,3.4082)--(1.2257,3.4067); 
\draw[thick,red](1.2257,3.4067)--(1.2264,3.4039) (1.2264,3.4039)--(1.2269,3.3946); 
\draw[thick,red](1.2269,3.3943)--(1.2312,3.3769) (1.2313,3.3749)--(1.2345,3.3178); 
\draw[thick,red](1.2312,3.3768)--(1.2313,3.3764) (1.2313,3.3764)--(1.2313,3.375); 
\draw[thick,red](1.2345,3.3174)--(1.2352,3.3147) (1.2352,3.3147)--(1.2358,3.3056); 
\draw[thick,red](1.2358,3.3056)--(1.2359,3.3051) (1.2359,3.3051)--(1.236,3.3037); 
\draw[thick,red](1.26,2.9231)--(1.2602,2.9225) (1.2602,2.9224)--(1.2604,2.9203); 
\draw[thick,red](1.2604,2.9203)--(1.2617,2.9166) (1.2617,2.9165)--(1.2626,2.904); 
\draw[thick,red](1.2627,2.9034)--(1.2706,2.8819) (1.2711,2.8784)--(1.2767,2.8071); 
\draw[thick,red](1.2706,2.8818)--(1.2709,2.881) (1.2709,2.881)--(1.2711,2.8785); 
\draw[thick,red](1.2768,2.8064)--(1.2786,2.8019) (1.2786,2.8018)--(1.2799,2.7864); 
\draw[thick,red](1.2799,2.7863)--(1.2803,2.7854) (1.2803,2.7854)--(1.2806,2.782); 
\draw[thick,red](1.2808,2.7808)--(1.3377,2.6547) (1.3683,2.5188)--(1.4142,2.2071); 
\draw[thick,red](1.3379,2.6536)--(1.3384,2.6526) (1.3384,2.6526)--(1.3387,2.6498); 
\draw[thick,red](1.3387,2.6497)--(1.3411,2.645) (1.3411,2.6448)--(1.3426,2.6313); 
\draw[thick,red](1.3428,2.6304)--(1.3541,2.608) (1.355,2.6041)--(1.3627,2.5419); 
\draw[thick,red](1.3541,2.6078)--(1.3546,2.6069) (1.3546,2.6069)--(1.3549,2.6042); 
\draw[thick,red](1.3629,2.5411)--(1.3654,2.5366) (1.3654,2.5364)--(1.3671,2.5235); 
\draw[thick,red](1.3671,2.5235)--(1.3677,2.5225) (1.3677,2.5225)--(1.368,2.5198); 
\draw[thick,red](1.4143,2.2067)--(1.4146,2.2064) (1.4146,2.2064)--(1.4148,2.2054); 
\draw[thick,red](1.4148,2.2054)--(1.4161,2.2038) (1.4161,2.2038)--(1.4169,2.1994); 
\draw[thick,red](1.4169,2.1993)--(1.4228,2.1924) (1.4229,2.192)--(1.4262,2.1731); 
\draw[thick,red](1.4262,2.173)--(1.4267,2.1725) (1.4267,2.1725)--(1.4269,2.1712); 
\draw[thick,red](1.427,2.171)--(1.4542,2.1413) (1.4576,2.1328)--(1.4743,2.0542); 
\draw[thick,red](1.4544,2.1407)--(1.4563,2.1388) (1.4563,2.1387)--(1.4574,2.1334); 
\draw[thick,red](1.4744,2.054)--(1.4751,2.0533) (1.4751,2.0533)--(1.4755,2.0514); 
\draw[thick,red](1.4756,2.0514)--(1.4854,2.0426) (1.4858,2.0417)--(1.4917,2.0169); 
\draw[thick,red](1.4918,2.0166)--(1.4952,2.0138) (1.4952,2.0137)--(1.4972,2.0055); 
\draw[thick,red](1.4973,2.0055)--(1.4984,2.0046) (1.4984,2.0046)--(1.4991,2.0018); 
\draw[thick,red,domain=1.8712:2] plot (\x,{(2*\x-1)/(\x^2-\x)}); 
\draw[thick,red](1.6827,1.8707)--(1.6832,1.8704) (1.6832,1.8704)--(1.6835,1.8697); 
\draw[thick,red](1.6835,1.8697)--(1.6851,1.8687) (1.6851,1.8687)--(1.686,1.8666); 
\draw[thick,red](1.686,1.8665)--(1.6912,1.8633) (1.6913,1.8632)--(1.694,1.8567); 
\draw[thick,red](1.6942,1.8563)--(1.711,1.8464) (1.7118,1.8454)--(1.7207,1.8251); 
\draw[thick,red](1.711,1.8463)--(1.7115,1.846) (1.7115,1.846)--(1.7118,1.8454); 
\draw[thick,red](1.7208,1.825)--(1.7225,1.824) (1.7226,1.824)--(1.7235,1.822); 
\draw[thick,red](1.781,1.7908)--(1.7816,1.7905) (1.7816,1.7905)--(1.7818,1.79); 
\draw[thick,red](1.7238,1.8217)--(1.781,1.7909); 
\draw[thick,red](1.7818,1.79)--(1.7872,1.7873); 
\draw[thick,red](1.7872,1.7873)--(1.7872,1.7872); 

\begin{scope}[cm={0,1,1,0,(0,0)}]
\draw[domain=1.18104:1.618] plot (\x,{1/(\x^2-1)+1});
\draw[domain=1.618:3.5326] plot ({1/(\x^2-1)+1},\x);
\draw[domain=1.70711:3.5326] plot ({.5/(\x-1)+1},\x);
\draw[domain=1.21673:1.80194] plot (\x,{1+\x/(\x^2-1)});
\draw[domain=1.80194:3.5326] plot ({1+\x/(\x^2-1)},\x);
\draw[domain=2:3.5326] plot ({\x/(\x-1)},\x);

\draw[thick,blue](1.1974,3.5326)--(1.2207,3.0399)--(1.2599,2.9237); 
\draw[thick,blue](1.26,2.9231)--(1.2601,2.9209)--(1.2604,2.9203); 
\draw[thick,blue](1.2604,2.9203)--(1.2613,2.9077)--(1.2626,2.904); 
\draw[thick,blue](1.2627,2.9034)--(1.2682,2.8282)--(1.2767,2.8071); 
\draw[thick,blue](1.2768,2.8064)--(1.2781,2.7908)--(1.2799,2.7864); 
\draw[thick,blue](1.2799,2.7863)--(1.2802,2.7829)--(1.2806,2.782); 
\draw[thick,blue](1.2808,2.7808)--(1.3247,2.3247)--(1.4142,2.2071); 
\draw[thick,blue](1.4143,2.2067)--(1.4145,2.2057)--(1.4148,2.2054); 
\draw[thick,blue](1.4148,2.2054)--(1.4155,2.201)--(1.4169,2.1994); 
\draw[thick,blue](1.4169,2.1993)--(1.4202,2.1799)--(1.4262,2.1731); 
\draw[thick,blue](1.4262,2.173)--(1.4265,2.1716)--(1.4269,2.1712); 
\draw[thick,blue](1.427,2.171)--(1.4433,2.083)--(1.4743,2.0542); 
\draw[thick,blue](1.4744,2.054)--(1.4748,2.0521)--(1.4755,2.0514); 
\draw[thick,blue](1.4756,2.0514)--(1.4813,2.0256)--(1.4917,2.0169); 
\draw[thick,blue](1.4918,2.0166)--(1.4938,2.0083)--(1.4972,2.0055); 
\draw[thick,blue](1.4973,2.0055)--(1.4979,2.0028)--(1.4991,2.0018); 
\draw[thick,blue](1.4991,2.0018)--(1.4993,2.0009)--(1.4997,2.0006); 
\draw[thick,blue,domain=1.5:1.61804] plot (\x,{1/(\x-1)}); 

\draw[thick,red](1.1974,3.5326)--(1.2255,3.409) (1.236,3.3033)--(1.2599,2.9237); 
\draw[thick,red](1.2256,3.4086)--(1.2257,3.4082) (1.2257,3.4082)--(1.2257,3.4067); 
\draw[thick,red](1.2257,3.4067)--(1.2264,3.4039) (1.2264,3.4039)--(1.2269,3.3946); 
\draw[thick,red](1.2269,3.3943)--(1.2312,3.3769) (1.2313,3.3749)--(1.2345,3.3178); 
\draw[thick,red](1.2312,3.3768)--(1.2313,3.3764) (1.2313,3.3764)--(1.2313,3.375); 
\draw[thick,red](1.2345,3.3174)--(1.2352,3.3147) (1.2352,3.3147)--(1.2358,3.3056); 
\draw[thick,red](1.2358,3.3056)--(1.2359,3.3051) (1.2359,3.3051)--(1.236,3.3037); 
\draw[thick,red](1.26,2.9231)--(1.2602,2.9225) (1.2602,2.9224)--(1.2604,2.9203); 
\draw[thick,red](1.2604,2.9203)--(1.2617,2.9166) (1.2617,2.9165)--(1.2626,2.904); 
\draw[thick,red](1.2627,2.9034)--(1.2706,2.8819) (1.2711,2.8784)--(1.2767,2.8071); 
\draw[thick,red](1.2706,2.8818)--(1.2709,2.881) (1.2709,2.881)--(1.2711,2.8785); 
\draw[thick,red](1.2768,2.8064)--(1.2786,2.8019) (1.2786,2.8018)--(1.2799,2.7864); 
\draw[thick,red](1.2799,2.7863)--(1.2803,2.7854) (1.2803,2.7854)--(1.2806,2.782); 
\draw[thick,red](1.2808,2.7808)--(1.3377,2.6547) (1.3683,2.5188)--(1.4142,2.2071); 
\draw[thick,red](1.3379,2.6536)--(1.3384,2.6526) (1.3384,2.6526)--(1.3387,2.6498); 
\draw[thick,red](1.3387,2.6497)--(1.3411,2.645) (1.3411,2.6448)--(1.3426,2.6313); 
\draw[thick,red](1.3428,2.6304)--(1.3541,2.608) (1.355,2.6041)--(1.3627,2.5419); 
\draw[thick,red](1.3541,2.6078)--(1.3546,2.6069) (1.3546,2.6069)--(1.3549,2.6042); 
\draw[thick,red](1.3629,2.5411)--(1.3654,2.5366) (1.3654,2.5364)--(1.3671,2.5235); 
\draw[thick,red](1.3671,2.5235)--(1.3677,2.5225) (1.3677,2.5225)--(1.368,2.5198); 
\draw[thick,red](1.4143,2.2067)--(1.4146,2.2064) (1.4146,2.2064)--(1.4148,2.2054); 
\draw[thick,red](1.4148,2.2054)--(1.4161,2.2038) (1.4161,2.2038)--(1.4169,2.1994); 
\draw[thick,red](1.4169,2.1993)--(1.4228,2.1924) (1.4229,2.192)--(1.4262,2.1731); 
\draw[thick,red](1.4262,2.173)--(1.4267,2.1725) (1.4267,2.1725)--(1.4269,2.1712); 
\draw[thick,red](1.427,2.171)--(1.4542,2.1413) (1.4576,2.1328)--(1.4743,2.0542); 
\draw[thick,red](1.4544,2.1407)--(1.4563,2.1388) (1.4563,2.1387)--(1.4574,2.1334); 
\draw[thick,red](1.4744,2.054)--(1.4751,2.0533) (1.4751,2.0533)--(1.4755,2.0514); 
\draw[thick,red](1.4756,2.0514)--(1.4854,2.0426) (1.4858,2.0417)--(1.4917,2.0169); 
\draw[thick,red](1.4918,2.0166)--(1.4952,2.0138) (1.4952,2.0137)--(1.4972,2.0055); 
\draw[thick,red](1.4973,2.0055)--(1.4984,2.0046) (1.4984,2.0046)--(1.4991,2.0018); 
\draw[thick,red,domain=1.8712:2] plot (\x,{(2*\x-1)/(\x^2-\x)}); 
\draw[thick,red](1.6827,1.8707)--(1.6832,1.8704) (1.6832,1.8704)--(1.6835,1.8697); 
\draw[thick,red](1.6835,1.8697)--(1.6851,1.8687) (1.6851,1.8687)--(1.686,1.8666); 
\draw[thick,red](1.686,1.8665)--(1.6912,1.8633) (1.6913,1.8632)--(1.694,1.8567); 
\draw[thick,red](1.6942,1.8563)--(1.711,1.8464) (1.7118,1.8454)--(1.7207,1.8251); 
\draw[thick,red](1.711,1.8463)--(1.7115,1.846) (1.7115,1.846)--(1.7118,1.8454); 
\draw[thick,red](1.7208,1.825)--(1.7225,1.824) (1.7226,1.824)--(1.7235,1.822); 
\draw[thick,red](1.781,1.7908)--(1.7816,1.7905) (1.7816,1.7905)--(1.7818,1.79); 
\draw[thick,red](1.7238,1.8217)--(1.781,1.7909); 
\draw[thick,red](1.7818,1.79)--(1.7872,1.7873); 
\draw[thick,red](1.7872,1.7873)--(1.7872,1.7872); 
\end{scope}
\end{tikzpicture}
\caption{The functions $\mathcal{G}(q_0)$ (blue), $\mathcal{K}(q_0)$ (red), and the curves $(q_0{-}1)(q_1{-}1) = \frac{1}{q_0+1}, \frac{1}{q_1+1}, \frac{1}{2}, \frac{q_1}{q_1+1}, \frac{q_0}{q_0+1}, 1$.} \label{f:GL}
\end{figure}

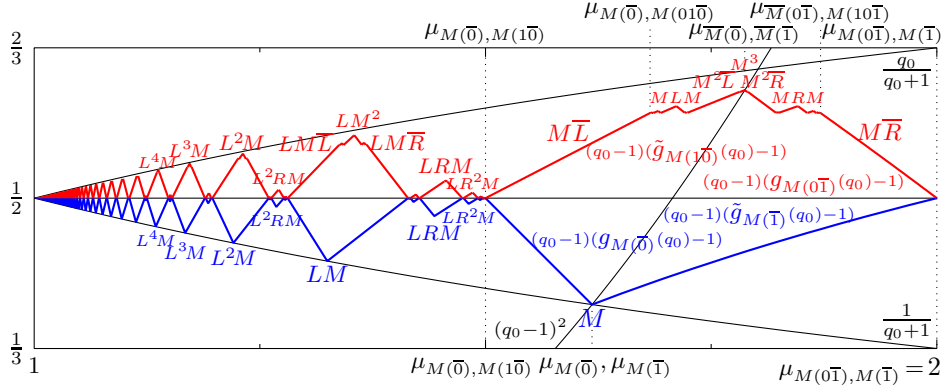
\begin{figure}[ht]
\begin{tikzpicture}[scale=12,join=bevel]
\draw(2,.6567)--(2,.6667)--(1,.6667)node[left]{$\frac{2}{3}$}--(1,.3333)node[left]{$\frac{1}{3}$}node[below]{\small 1}--(2,.3333)--(2,.3433);
\draw(1.25,.3383)--(1.25,.3333) (1.25,.6617)--(1.25,.6667) (1.5,.3433)--(1.5,.3333) (1.5,.6567)--(1.5,.6667) (1.75,.3383)--(1.75,.3333) (1.75,.6617)--(1.75,.6667) (1,.5)node[left]{$\frac{1}{2}$}--(2,.5);
\draw[domain=1:2] plot (\x,{1/(\x+1)});
\node[above] at (1.9667,.3333){$\frac{1}{q_0+1}$};
\draw[domain=1:1.7] plot (\x,{\x/(\x+1)});
\draw[domain=1.7:2] plot (\x,{\x/(\x+1)});
\node[below] at (1.9667,.6667){$\frac{q_0}{q_0+1}$};
\draw[domain=1.57735:1.8167] plot (\x,{(\x-1)^2});
\node[above] at (1.55,.3333){$\scriptstyle (q_0-1)^2$};

\draw[dotted](1.5,.5)--(1.5,.3333);
\node[below] at (1.485,.3333){\small$\mu_{M(\overline{0}),M(1\overline{0})}$};
\draw[dotted](1.618,.382)--(1.618,.3333);
\node[below] at (1.633,.3333){\small$\mu_{M(\overline{0})},\mu_{M(\overline{1})}$};
\draw[dotted](2,.5)--(2,.3333);
\node[below] at (1.915,.3333){{\small$\mu_{M(0\overline{1}),M(\overline{1})}\,{=}\,$}$2$};
\fill[blue](1,.5)--(1.0404,.4901)--(1.0393,.5);
\draw[thick,blue](1.0393,.5)--(1.0404,.4901)--(1.0416,.5); 
\draw[thick,blue](1.0417,.5)--(1.0429,.4895)--(1.0443,.5); 
\draw[thick,blue](1.0444,.5)--(1.0458,.4888)--(1.0473,.5); 
\draw[thick,blue](1.0474,.5)--(1.049,.488)--(1.0508,.5); 
\draw[thick,blue](1.0509,.5)--(1.0527,.4872)--(1.0548,.5); 
\draw[thick,blue](1.055,.5)--(1.0571,.4861)--(1.0595,.5); 
\draw[thick,blue](1.0597,.5)--(1.0622,.4849)--(1.065,.5); 
\draw[thick,blue](1.0654,.5)--(1.0683,.4835)--(1.0718,.5); 
\draw[thick,blue](1.0723,.5)--(1.0758,.4817)--(1.0801,.5); 
\draw[thick,blue](1.0807,.5)--(1.0851,.4796)--(1.0905,.5); 
\draw[thick,blue](1.0915,.5)--(1.097,.4769)--(1.1041,.5); 
\draw[thick,blue](1.1055,.5)--(1.1128,.4733)--(1.1225,.5); 
\draw[thick,blue](1.1227,.5)--(1.1235,.4972)--(1.1245,.5); 
\draw[thick,blue](1.1248,.5)--(1.1347,.4684)node[below=-.5ex]{\tiny$L^4\!M$}--(1.1487,.5); 
\draw[thick,blue](1.1491,.5)--(1.1505,.4961)--(1.1523,.5); 
\draw[thick,blue](1.1901,.5)--(1.1927,.4942)--(1.1962,.5); 
\draw[thick,blue](1.1528,.5)--(1.1673,.4614)node[below]{\scriptsize$L^3\!M$}--(1.1892,.5); 
\draw[thick,blue](1.1974,.5)--(1.2207,.4503)node[below=-.5ex]{\footnotesize$L^2\!M$}--(1.2599,.5); 
\draw[thick,blue](1.2604,.5)--(1.2613,.4984)--(1.2626,.5); 
\draw[thick,blue](1.2627,.5)--(1.2682,.4903)node[below=-.5ex]{\tiny$L^2\!RM$}--(1.2767,.5); 
\draw[thick,blue](1.2768,.5)--(1.2781,.4979)--(1.2799,.5); 
\draw[thick,blue](1.2808,.5)--(1.3247,.4302)node[below=-.5ex]{\small$LM$}--(1.4142,.5); 
\draw[thick,blue](1.4148,.5)--(1.4155,.4991)--(1.4169,.5); 
\draw[thick,blue](1.4169,.5)--(1.4202,.4958)--(1.4262,.5); 
\draw[thick,blue](1.427,.5)--(1.4433,.4801)node[below]{\footnotesize$LRM$}--(1.4743,.5); 
\draw[thick,blue](1.4756,.5)--(1.4813,.4937)node[below=-.5ex]{\tiny$LR^2\!M$}--(1.4917,.5); 
\draw[thick,blue](1.4918,.5)--(1.4938,.4979)--(1.4972,.5); 
\draw[thick,blue](1.4973,.5)--(1.4979,.4993)--(1.4991,.5); 
\draw[thick,blue](1.5,.5)--(1.618,.382)node[below=-.5ex]{$M$}; 
\node[blue,right] at (1.54,.45){\small${\scriptstyle(q_0-1)(}g_{M(\overline{0})}{\scriptstyle(q_0)-1)}$};
\draw[thick,blue,domain=1.61804:2] plot (\x,{1-1/\x}); 
\node[blue,left] at (1.92,.48){\small${\scriptstyle(q_0-1)(}\tilde{g}_{M(\overline{1})}{\scriptstyle(q_0)-1)}$};

\draw[dotted](1.5,.5)--(1.5,.6667)node[above=-.5ex]{\small$\mu_{M(\overline{0}),M(1\overline{0})}$};
\draw[dotted](1.6823,.5944)--(1.6823,.69)node[above=-.5ex]{\small$\mu_{M(\overline{0}),M(01\overline{0})}$};
\draw[dotted](1.7872,.6197)--(1.7872,.6667)node[above=-.5ex]{\small$\mu_{\overline{M}(\overline{0}),\overline{M}(\overline{1})}$};
\draw[dotted](1.8712,.594)--(1.8712,.69)node[above=-.5ex]{\small$\mu_{\overline{M}(0\overline{1}),M(10\overline{1})}$};
\draw[dotted](2,.5)--(2,.6667);
\node[above=-.5ex] at (1.94,.6667){\small$\mu_{M(0\overline{1}),M(\overline{1})}$};
\fill[red](1,.5)--(1.0393,.5)--(1.0405,.5095);
\draw[thick,red](1.0393,.5)--(1.0405,.5095) (1.0405,.5096)--(1.0416,.5); 
\draw[thick,red](1.0417,.5)--(1.043,.5101) (1.0431,.5101)--(1.0443,.5); 
\draw[thick,red](1.0444,.5)--(1.0458,.5107) (1.0459,.5107)--(1.0473,.5); 
\draw[thick,red](1.0474,.5)--(1.0491,.5114) (1.0492,.5115)--(1.0508,.5); 
\draw[thick,red](1.0509,.5)--(1.0528,.5123) (1.053,.5123)--(1.0548,.5); 
\draw[thick,red](1.055,.5)--(1.0572,.5132) (1.0574,.5132)--(1.0595,.5); 
\draw[thick,red](1.0597,.5)--(1.0623,.5143) (1.0626,.5143)--(1.065,.5); 
\draw[thick,red](1.0654,.5)--(1.0685,.5156) (1.0689,.5156)--(1.0718,.5); 
\draw[thick,red](1.0723,.5)--(1.076,.5171) (1.0765,.5172)--(1.0801,.5); 
\draw[thick,red](1.0807,.5)--(1.0854,.519) (1.0862,.5191)--(1.0905,.5); 
\draw[thick,red](1.0915,.5)--(1.0975,.5214) (1.0985,.5215)--(1.1041,.5); 
\draw[thick,red](1.1055,.5)--(1.1136,.5245) (1.1152,.5245)--(1.1225,.5); 
\draw[thick,red](1.1227,.5)--(1.1237,.5027) (1.1237,.5027)--(1.1245,.5); 
\draw[thick,red](1.1248,.5)--(1.136,.5286) (1.1386,.5287)--(1.1487,.5); 
\node[red,above=-.5ex] at (1.137,.529){\tiny$L^4\!M$};
\draw[thick,red](1.1363,.5286)--(1.1374,.5314) (1.1374,.5314)--(1.1384,.5287); 
\draw[thick,red](1.1491,.5)--(1.1508,.5038) (1.1509,.5038)--(1.1523,.5); 
\draw[thick,red](1.1701,.5344)--(1.1722,.5384) (1.1722,.5384)--(1.1739,.5345); 
\draw[thick,red](1.1528,.5)--(1.1696,.5344) (1.1744,.5346)--(1.1892,.5); 
\node[red,above] at (1.172,.535){\scriptsize$L^3\!M$};
\draw[thick,red](1.1901,.5)--(1.1935,.5056) (1.1937,.5056)--(1.1962,.5); 
\draw[thick,red](1.1974,.5)--(1.2255,.5433) (1.236,.5436)--(1.2599,.5); 
\node[red,above] at (1.23,.543){\footnotesize$L^2\!M$};
\draw[thick,red](1.2257,.5433)--(1.2264,.5443) (1.2264,.5443)--(1.2269,.5433); 
\draw[thick,red](1.2269,.5433)--(1.2312,.5494) (1.2313,.5494)--(1.2345,.5435); 
\draw[thick,red](1.2345,.5435)--(1.2352,.5445) (1.2352,.5445)--(1.2358,.5436); 
\draw[thick,red](1.2604,.5)--(1.2617,.5016) (1.2617,.5016)--(1.2626,.5); 
\draw[thick,red](1.2627,.5)--(1.2706,.5093) (1.2711,.5093)--(1.2767,.5); 
\node[red,above=-.25ex] at (1.271,.509){\tiny$L^2\!RM$};
\draw[thick,red](1.2768,.5)--(1.2786,.502) (1.2786,.502)--(1.2799,.5); 
\draw[thick,red](1.2808,.5)--(1.3377,.5588) (1.3683,.5593)--(1.4142,.5); 
\node[red,above=1ex] at (1.3,.529){\footnotesize$LM\overline{L}$};
\node[red,above=1ex] at (1.401,.53){\footnotesize$LM\overline{R}$};
\draw[thick,red](1.3379,.5588)--(1.3384,.5593) (1.3384,.5593)--(1.3387,.5588); 
\draw[thick,red](1.3387,.5588)--(1.3411,.561) (1.3411,.561)--(1.3426,.5589); 
\draw[thick,red](1.3428,.5589)--(1.3541,.5694) (1.355,.5694)--(1.3627,.5592); 
\node[red,above] at (1.359,.569){\scriptsize$LM^2$};
\draw[thick,red](1.3541,.5694)--(1.3546,.5698) (1.3546,.5698)--(1.3549,.5694); 
\draw[thick,red](1.3629,.5592)--(1.3654,.5615) (1.3654,.5615)--(1.3671,.5593); 
\draw[thick,red](1.3671,.5593)--(1.3677,.5598) (1.3677,.5598)--(1.368,.5593); 
\draw[thick,red](1.4148,.5)--(1.4161,.5009) (1.4161,.5009)--(1.4169,.5); 
\draw[thick,red](1.4169,.5)--(1.4228,.5041) (1.4229,.5041)--(1.4262,.5); 
\draw[thick,red](1.427,.5)--(1.4542,.5184) (1.4576,.5184)--(1.4743,.5); 
\node[red,above] at (1.456,.518){\footnotesize$LRM$};
\draw[thick,red](1.4544,.5184)--(1.4563,.5196) (1.4563,.5196)--(1.4574,.5184); 
\draw[thick,red](1.4756,.5)--(1.4854,.5061) (1.4858,.5061)--(1.4917,.5); 
\node[red,above=-.5ex] at (1.486,.506){\tiny$LR^2\!M$};
\draw[thick,red](1.4918,.5)--(1.4952,.502) (1.4952,.502)--(1.4972,.5); 
\draw[thick,red](1.4973,.5)--(1.4984,.5007) (1.4984,.5007)--(1.4991,.5); 
\draw[thick,red] (1.5,.5)--(1.6823,.5944); 
\node[red,above=.5ex] at (1.591,.547){\small$M\overline{L}$};
\node[red,right] at (1.6,.55){\small${\scriptstyle(q_0-1)(}\tilde{g}_{M(1\overline{0})}{\scriptstyle(q_0)-1)}$};
\draw[thick,red](1.6827,.5944)--(1.6832,.5947) (1.6832,.5947)--(1.6835,.5944); 
\draw[thick,red](1.6835,.5944)--(1.6851,.5952) (1.6851,.5952)--(1.686,.5944); 
\draw[thick,red](1.686,.5944)--(1.6912,.5967) (1.6913,.5967)--(1.694,.5945); 
\draw[thick,red](1.6942,.5945)--(1.711,.6018) (1.7118,.6018)--(1.7207,.5947); 
\node[red,above=-.5ex] at (1.711,.602){\tiny$MLM$};
\draw[thick,red](1.711,.6018)--(1.7115,.602) (1.7115,.602)--(1.7118,.6018); 
\draw[thick,red](1.7208,.5947)--(1.7225,.5954) (1.7226,.5954)--(1.7235,.5947); 
\draw[thick,red](1.7238,.5947)--(1.781,.6177) (1.7909,.6177)--(1.8217,.5947); 
\node[red,above=.5ex] at (1.752,.606){\scriptsize$M^2\!\overline{L}$};
\node[red,above=.5ex] at (1.806,.606){\scriptsize$M^2\!\overline{R}$};
\draw[thick,red](1.781,.6177)--(1.7816,.6179) (1.7816,.6179)--(1.7818,.6177); 
\draw[thick,red](1.7818,.6177)--(1.7872,.6197) (1.7873,.6197)--(1.79,.6177); 
\node[red,above=1ex] at (1.787,.619){\tiny$M^3$};
\draw[thick,red](1.7872,.6197)--(1.7872,.6197) (1.7872,.6197)--(1.7873,.6197); 
\draw[thick,red](1.79,.6177)--(1.7905,.6179) (1.7905,.6179)--(1.7908,.6177); 
\draw[thick,red](1.822,.5947)--(1.824,.5954) (1.824,.5954)--(1.825,.5947); 
\draw[thick,red](1.8251,.5947)--(1.8454,.6018) (1.8464,.6018)--(1.8563,.5945); 
\node[red,above=-.5ex] at (1.846,.602){\tiny$MRM$};
\draw[thick,red](1.8454,.6018)--(1.846,.602) (1.846,.602)--(1.8463,.6018); 
\draw[thick,red](1.8567,.5945)--(1.8632,.5967) (1.8633,.5967)--(1.8665,.5944); 
\draw[thick,red](1.8666,.5944)--(1.8687,.5952) (1.8687,.5952)--(1.8697,.5944); 
\draw[thick,red](1.8697,.5944)--(1.8704,.5947) (1.8704,.5947)--(1.8707,.5944); 
\draw[thick,red,domain=1.8712:2] plot (\x,{3-\x-1/\x}); 
\node[red,above=.5ex] at (1.936,.548){\small$M\overline{R}$};
\node[red,left] at (1.975,.515){\small${\scriptstyle(q_0-1)(}g_{M(0\overline{1})}{\scriptstyle(q_0)-1)}$};
\end{tikzpicture}
\caption{The maps $(q_0{-}1)(\mathcal{G}(q_0){-}1)$ (blue), $(q_0{-}1)(\mathcal{K}(q_0){-}1)$ (red).} \label{f:GL2}
\end{figure}

The functions $\mathcal{G}$ and $\mathcal{K}$ are drawn in Figure~\ref{f:GL}, the functions $(q_0{-}1)(\mathcal{G}(q_0){-}1)$  and $(q_0{-}1)(\mathcal{K}(q_0){-}1)$ are drawn in Figure ~\ref{f:GL2}, and the calculation of $\mathcal{G}(q_0)$ and $\mathcal{K}(q_0)$ is worked out in the following example for the case $\sigma = L^k M$, $k \ge 0$.
In particular, we have $\mathcal{G}(\frac{1+\sqrt{5}}{2}) = \frac{1+\sqrt{5}}{2}$, and $q_0 = \frac{1+\sqrt{5}}{2}$ is the unique value with $\mathcal{G}(q_0) = q_0$ since
$\mathcal{G}$ is strictly decreasing by Theorem~\ref{t:1}.
Since $\overline{M}(\overline{0})$ and $\overline{M}(\overline{1})$ are the Thue-Morse words, we have $g_{\overline{M}(\overline{0})}(q_{KL}) = q_{KL} = \tilde{g}_{\overline{M}(\overline{1})}(q_{KL})$ for the Komornik--Loreti constant~$q_{KL}$, hence $\mu_{\overline{M}(\overline{0}), \overline{M}(\overline{1})} = q_{KL} = \mathcal{K}(q_{KL})$, with no other~$q_0$ satisfying $\mathcal{K}(q_0) = q_0$.
Consequently, Theorem~\ref{t:2} can be seen as a generalization of the theorem of Glendinning and Sidorov~\cite{GleSid2001}.

\begin{example} \label{ex:Lk}
Let $\sigma = L^k M$, $k \ge 0$.
Then
\begin{equation*}
\sigma(0) = 010^k, \qquad \sigma(1) = 100^k.
\end{equation*}
In order to determine $g_{\bu}(q_0)$ for $\bu \in \{\sigma(\overline{0}), \sigma(0\overline{1}), \sigma(01\overline{0})\}$, we use that
\begin{equation*}
\begin{aligned}
& q_0\, \big(q_1\, \pi_{q_0,q_1}(01i_1\cdots i_r\,\overline{i_{r+1}\cdots i_{r+p}}) - 1\big) = 1 - q_0 + \pi_{q_0,q_1}(i_1\cdots i_r\,\overline{i_{r+1}\cdots i_{r+p}}) \\
& = 1 - q_0 + \sum_{n=1}^r \frac{i_n\,q_{i_{n+1}}\cdots q_{i_r}}{q_{i_1}\cdots q_{i_r}} + \sum_{n=1}^p \frac{i_{r+n}\,q_{i_{r+n+1}}\cdots q_{i_{r+p}}}{q_{i_1}\cdots q_{i_r}\, (q_{i_{r+1}}\cdots q_{i_p}-1)}.
\end{aligned}
\end{equation*}
Since $\sigma(\overline{0}) = 01\overline{0^k01}$, $\sigma(0\overline{1}) = 01\overline{0^k10}$, $\sigma(01\overline{0}) = 010^k10\overline{0^k01}$, we obtain that
\begin{equation*}
\begin{aligned}
g_{\sigma(\overline{0})}(q_0) = q_1 \ & \Longleftrightarrow \ (1-q_0) (q_0^{k+1} q_1-1) + 1 = 0, \\
g_{\sigma(0\overline{1})}(q_0) = q_1 \ & \Longleftrightarrow \ (1-q_0) (q_0^{k+1} q_1-1) + q_0 = 0, \\
 g_{\sigma(01\overline{0})}(q_0) = q_1 \ & \Longleftrightarrow \ \big((1-q_0) q_0^{k+1} q_1 + q_0\big) (q_0^{k+1} q_1-1) + 1 = 0, \\
\end{aligned}
\end{equation*}
i.e.,
\begin{equation*}
g_{\sigma(\overline{0})}(q_0) = \frac{1}{q_0^k(q_0{-}1)},\ g_{\sigma(0\overline{1})}(q_0)  = \frac{2q_0-1}{q_0^{k+1}(q_0{-}1)}, \
g_{\sigma(01\overline{0})}(q_0) = \frac{2q_0{-}1{+}\sqrt{4q_0{-}3}}{2\,q_0^{k+1}(q_0-1)}.
\end{equation*}
For $\tilde{g}_{\bv}(q_0)$, $\bv \in \{\sigma(\overline{1}), \sigma(1\overline{0}), \sigma(10\overline{1})\}$, we use similarly that
\begin{equation*}
\begin{aligned}
& q_1\, \big(q_0\, \tilde{\pi}_{q_0,q_1}(10i_1\cdots i_r\,\overline{i_{r+1}\cdots i_{r+p}}) - 1\big) = 1 - q_1 + \tilde{\pi}_{q_0,q_1}(i_1\cdots i_r\,\overline{i_{r+1}\cdots i_{r+p}}) \\
& = 1 - q_1 + \sum_{n=1}^r \frac{(1-i_n)\,q_{i_{n+1}}\cdots q_{i_r}}{q_{i_1}\cdots q_{i_r}} + \sum_{n=1}^p \frac{(1-i_{r+n})\,q_{i_{r+n+1}}\cdots q_{i_{r+p}}}{q_{i_1}\cdots q_{i_r}\, (q_{i_{r+1}}\cdots q_{i_p}-1)}.
\end{aligned}
\end{equation*}
We have $\sigma(\overline{1}) = 10\overline{0^k10}$, $\sigma(1\overline{0}) = 10\overline{0^k01}$, $\sigma(10\overline{1}) = 100^k01\overline{0^k10}$, thus
\begin{equation*}
\begin{aligned}
\tilde{g}_{\sigma(\overline{1})}(q_0) = q_1 & \Longleftrightarrow \ (1-q_1) (q_0^{k+1} q_1-1) + q_0 q_1 \frac{q_0^k-1}{q_0-1}+ 1 = 0, \\
\tilde{g}_{\sigma(1\overline{0})}(q_0) = q_1 & \Longleftrightarrow \ (1-q_1) (q_0^{k+1} q_1-1) + q_1 \frac{q_0^{k+1}-1}{q_0-1} = 0, \\
\tilde{g}_{\sigma(10\overline{1})}(q_0) = q_1 & \Longleftrightarrow \Big((1{-}q_1) q_0^{k+1} q_1 + q_1 \frac{q_0^{k+1}{-}1}{q_0{-}1}\Big) (q_0^{k+1} q_1{-}1) + q_0 q_1 \frac{q_0^k{-}1}{q_0{-}1} + 1= 0.
\end{aligned}
\end{equation*}
This gives that
\begin{equation*}
\begin{aligned}
\tilde{g}_{\sigma(\overline{1})}(q_0) & = \frac{q_0^{k+2}-1}{q_0^{k+1}(q_0-1)}, \\
\tilde{g}_{\sigma(1\overline{0})}(q_0) & = \frac{1}{2q_0^{k+1}}\, \Bigg(\frac{q_0^{k+2}-1}{q_0-1}+1+\sqrt{\bigg(\frac{q_0^{k+3}-1}{q_0-1}-q_0+3\bigg)\frac{q_0^{k+1}-1}{q_0-1}}\Bigg),
\end{aligned}
\end{equation*}
and a formula for $\tilde{g}_{\sigma(10\overline{1})}(q_0)$ with cubic roots that we do not need for the calculation of $\mu_{\sigma(0\overline{1}),\sigma(10\overline{1})}$.
Evaluating equations of the form $g_{\bu}(q_0) = \tilde{g}_{\bv}(q_0)$, we obtain that
\begin{gather*}
\mu_{\sigma(\overline{0}),\sigma(\overline{1})}^{k+2} =  \mu_{\sigma(\overline{0}),\sigma(\overline{1})} + 1, \qquad 2\,\mu_{\sigma(\overline{0}),\sigma(1\overline{0})}^{k+1} = \mu_{\sigma(\overline{0}),\sigma(1\overline{0})}^k + 2, \qquad \mu_{\sigma(0\overline{1}),\sigma(\overline{1})}^{k+1} = 2, \\
\mu_{\sigma(01\overline{0}),\sigma(1\overline{0})}^{2k+3} = \mu_{\sigma(01\overline{0}),\sigma(1\overline{0})}^{2k+2} + 2\, \mu_{\sigma(01\overline{0}),\sigma(1\overline{0})}^{k+2} - 3\, \mu_{\sigma(01\overline{0}),\sigma(1\overline{0})}^{k+1} - \mu_{\sigma(01\overline{0}),\sigma(1\overline{0})} + 3, \\
3\, \mu_{\sigma(0\overline{1}),\sigma(10\overline{1})}^{k+3} = 2\, \mu_{\sigma(0\overline{1}),\sigma(10\overline{1})}^{k+2} + 6\, \mu_{\sigma(0\overline{1}),\sigma(10\overline{1})}^2 - 5\, \mu_{\sigma(0\overline{1}),\sigma(10\overline{1})} + 1.
\end{gather*}
In particular, for $k=0$, Theorem~\ref{t:2} gives that
\begin{align*}
\mathcal{G}(q_0) & =
\begin{cases}\frac{1}{q_0-1} & \text{if}\ q_0 \in [\frac{3}{2},\frac{1+\sqrt{5}}{2}], \\[.5ex]
\frac{q_0+1}{q_0} & \text{if}\ q_0 \in [\frac{1+\sqrt{5}}{2},2],\end{cases} \\ \mathcal{K}(q_0) & =
\begin{cases}\frac{q_0+2+\sqrt{q_0^2+4}}{2q_0}  & \text{if}\ q_0 \in [\frac{3}{2},1.6823278], \\[.5ex]
\frac{2q_0-1}{q_0(q_0-1)} & \text{if}\ q_0 \in [1.8711568,2].\end{cases}
\end{align*}
\end{example}

\smallskip
An essential ingredient of the proofs of Theorems~\ref{t:1}--\ref{t:3} is the characterization of pairs $\ba\in 0\{0,1\}^\infty$, $\bb \in 1\{0,1\}^\infty$ for which the lexicographically defined subshift
\begin{equation*}
\Omega_{\ba,\bb} := \big\{i_1i_2 \cdots \in \{0,1\}^\infty \,:\, i_n i_{n+1}\cdots \le \ba\ \text{or}\ i_n i_{n+1}\cdots \ge \bb \ \text{for all} \ n \ge 1\big\},
\end{equation*}
i.e., the union of shift-orbits avoiding the open interval $(\ba,\bb)$, is trivial, countable or uncountable (with zero or positive entropy).

\begin{theorem} \label{t:lex}
Let $\ba \in 0\{0,1\}^\infty$, $\bb \in  1\{0,1\}^\infty$. Then we have the following.
\begin{enumerate}[\upshape (i)]
\itemsep.5ex
\item \label{i:lex1}
$\Omega_{\ba,\bb} \ne \{\overline{0}, \overline{1}\}$ if and only if $\ba \ge \sigma(\overline{0})$, $\bb \le \sigma(\overline{1})$ for some $\sigma \in \{L,R\}^*M$, or $\ba = \bsigma(\overline{0})$, $\bb = \bsigma(\overline{1})$ for some primitive $\bsigma \in \{L,R\}^\infty$, or $\ba = 0\overline{1}$, or $\bb = 1\overline{0}$.
\item \label{i:lex2}
$\Omega_{\ba,\bb} = \{\overline{0}, \overline{1}\}$ if and only if $\ba < \sigma(\overline{0})$, $\bb \ge \sigma(1\overline{0})$, or $\ba \le \sigma(0\overline{1})$, $\bb > \sigma(\overline{1})$ for some $\sigma \in \{L,R\}^*M$.
\item \label{i:lex3}
$\Omega_{\ba,\bb}$ is uncountable with positive entropy if and only if $\ba \ge \sigma(\overline{0})$, $\bb < \sigma(1\overline{0})$, or $\ba > \sigma(0\overline{1})$, $\bb \le \sigma(\overline{1})$ for some $\sigma \in \{L,M,R\}^* M$. 
\item \label{i:lex4}
$\Omega_{\ba,\bb}$ is uncountable with zero entropy if and only if $\ba = \bsigma(\overline{0})$, $\bb = \bsigma(\overline{1})$ for some primitive $\bsigma \in \{L,M,R\}^\infty$.
\item \label{i:lex5}
$\Omega_{\ba,\bb}$ is countable if and only if $\ba \,{\le}\, \sigma(01\overline{0})$, $\bb \,{\ge}\, \sigma(1\overline{0})$, or $\ba \,{\le}\, \sigma(0\overline{1})$, $\bb \,{\ge}\, \sigma(10\overline{1})$ for some $\sigma \in \{L,M,R\}^*$. 
\end{enumerate}
\end{theorem}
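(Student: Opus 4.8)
The plan is to prove Theorem~\ref{t:lex} by a \emph{renormalization} (desubstitution) scheme that reduces an arbitrary admissible pair $(\ba,\bb)$, one substitution at a time, to a pair much closer to $(\overline{0},\overline{1})$, in a way that preserves membership in each of the four classes. Write $T$ for the shift $i_1i_2\cdots\mapsto i_2i_3\cdots$. First I would record the combinatorial preliminaries on $L,M,R$: each $\sigma\in\{L,M,R\}$ is strictly order-preserving on $\{0,1\}^\infty$ and recognizable (a one-sided sequence has at most one $\sigma$-preimage, with determined cutting points), and the six families of boundary words $\sigma(\overline{0}),\sigma(0\overline{1}),\sigma(01\overline{0})$ and $\sigma(1\overline{0}),\sigma(10\overline{1}),\sigma(\overline{1})$ are arranged in a Stern--Brocot pattern. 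On this basis I would verify that the conditions in (\ref{i:lex2})--(\ref{i:lex5}) are jointly exhaustive over all admissible pairs and pairwise compatible only in the expected way (namely (\ref{i:lex2}) implies (\ref{i:lex5})), so that each pair is assigned an unambiguous class. Granting this, (\ref{i:lex1}) is exactly the complement of (\ref{i:lex2}), so it remains to prove (\ref{i:lex2})--(\ref{i:lex5}); moreover (\ref{i:lex3}) is the complement of (\ref{i:lex4})$\cup$(\ref{i:lex5}) within the nontrivial pairs, so the ``only if'' of (\ref{i:lex3}) will follow from (\ref{i:lex4}) and (\ref{i:lex5}).

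The heart of the argument is the \emph{renormalization lemma}. Fix $\sigma\in\{L,M,R\}$ and a self-admissible pair $(\ba,\bb)$ with $\ba\ge\sigma(\overline{0})$ and $\bb\le\sigma(\overline{1})$ (plus the side conditions making the desubstitution well defined: e.g.\ for $\sigma=M$ one needs $\overline{01}\le\ba$ and $\bb\le\overline{10}$). I claim there are uniquely determined desubstitutions $\ba',\bb'$, again a self-admissible pair, such that every $x\in\Omega_{\ba,\bb}$ other than the finitely many periodic orbits $\overline{0},\overline{1}$ (and $\overline{01},\overline{10}$ when $\sigma=M$) has a unique factorization $x=T^j\sigma(y)$ with $0\le j<|\sigma(y_1)|$ and $y\in\Omega_{\ba',\bb'}$, while conversely $\sigma(\Omega_{\ba',\bb'})\subseteq\Omega_{\ba,\bb}$; the decisive point is that a tail of $x$ falls in $(\ba,\bb)$ precisely when the corresponding tail of $y$ falls in $(\ba',\bb')$, so the factorization respects the avoidance condition (this is the standard ``avoiding $(\ba,\bb)$ forces the $\sigma$-block structure'' argument). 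Since $y\mapsto\sigma(y)$ is injective and passing to $T$-orbits adds only boundedly many orbits, $\Omega_{\ba,\bb}$ is trivial (resp.\ countable) iff $\Omega_{\ba',\bb'}$ is; and since each $\sigma$-tile has length in $\{1,2\}$ and $\sigma$ is recognizable, the block-complexity functions of $\Omega_{\ba,\bb}$ and $\Omega_{\ba',\bb'}$ are polynomially comparable, so one has zero entropy iff the other does, and hence positive entropy iff the other does.

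With the lemma in hand, I would iterate: read the outermost substitution off the initial segments of $\ba$ and $\bb$, desubstitute, and repeat, obtaining a finite or infinite word $\bsigma\in\{L,M,R\}^\infty$. If the iteration terminates at a pair $(\ba^{(k)},\bb^{(k)})$ in the non-peelable region $\ba^{(k)}<\overline{01}$, $\bb^{(k)}>\overline{10}$, a direct case analysis shows that $\Omega_{\ba^{(k)},\bb^{(k)}}$ equals $\{\overline{0},\overline{1}\}$ unless $\ba^{(k)}=0\overline{1}$ or $\bb^{(k)}=1\overline{0}$, in which case it is countably infinite, or, when both hold, the full shift; unwinding the finitely many renormalizations and matching with the Stern--Brocot ordering yields (\ref{i:lex1}), (\ref{i:lex2}) and the countable cases of (\ref{i:lex5}). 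If the iteration does not terminate, monotonicity of the peels forces $\ba=\bsigma(\overline{0})$, $\bb=\bsigma(\overline{1})$: if $\bsigma$ eventually equals $\overline{L}$ or $\overline{R}$ then $\Omega_{\ba,\bb}$ is trivial; if $\bsigma$ is primitive in $\{L,R\}^\infty$ then $\Omega_{\ba,\bb}$ is the $T$-orbit closure of a Sturmian word, hence uncountable with zero entropy; and if $M$ occurs in $\bsigma$ infinitely often then it is a Thue--Morse--Sturmian subshift, uncountable with zero entropy (subexponential complexity, cf.~\cite{Ste2020}); this gives (\ref{i:lex4}) and the last case of (\ref{i:lex1}). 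Finally, the ``if'' of (\ref{i:lex3}) is obtained directly: given $\sigma\in\{L,M,R\}^*M$ with $\ba\ge\sigma(\overline{0})$ and $\bb<\sigma(1\overline{0})$, one builds from $\sigma$ two finite words $u,v$ of equal length with $\{u,v\}^\infty\subseteq\Omega_{\ba,\bb}$, whence $h(\Omega_{\ba,\bb})\ge(\log 2)/|u|>0$, and the ``only if'' is the complementation noted above.

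The step I expect to be the main obstacle is the renormalization lemma itself: pinning down the exact desubstitutions $\ba',\bb'$ and the precise self-admissibility they must carry so that the factorization $x=T^j\sigma(y)$ is a genuine bijection, including the degenerate extremes where $\ba=0\overline{1}$ or $\bb=1\overline{0}$, and verifying that zero entropy is preserved by the non-length-uniform substitutions $L$ and $R$ — which is exactly where recognizability and the boundedness of the tile lengths are essential. Once the lemma is established in this sharp form, everything else is an induction driven by the Stern--Brocot combinatorics of the first step.
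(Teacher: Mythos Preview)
Your strategy is the same as the paper's --- both prove Theorem~\ref{t:lex} by desubstitution --- but the paper organizes the argument more robustly. Rather than iterating a bijective renormalization lemma, the paper proves only a one-sided version (Lemma~\ref{l:Omegasigma}: every $\bu\in\Omega_{\sigma(\ba),\sigma(\bb)}\setminus\{\overline{0},\overline{1}\}$ ends in $\sigma(\bv)$ for some $\bv\in\Omega_{\ba,\bb}$), and couples it with a global address map $s\colon\{0,1\}^\infty\to\{L,M,R\}^\infty$ built from the partition Lemma~\ref{l:partition}, which assigns to $\ba$ and to $\bb$ their full directive sequences at once. The trichotomy is then read off by comparing $s(\ba)$ with $s(\bb)$ (Proposition~\ref{p:lex}). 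This sidesteps precisely the obstacle you flag: one never has to manufacture exact desubstitutes $\ba',\bb'$ when $\ba,\bb$ are not literally in the image of~$\sigma$, nor decide which of $L,M,R$ to peel when several are available. For uncountability in~(\ref{i:lex4}) the paper gives an explicit construction of continuum many elements of $X_{\bsigma(\overline{0})}$ rather than quoting minimality, and for zero entropy it does not use complexity bounds but passes to $\Sigma_{\ba,\bb}$ via Theorem~\ref{t:LM}, observes $\Sigma$ is countable for all strictly smaller parameters by~(\ref{i:lex5}), and invokes continuity of $h(\Sigma_{\ba,\bb})$ from~\cite{LabMor2006}. Your complexity route would also work but needs an argument (or reference) for general Thue--Morse--Sturmian subshifts.

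There is one genuine slip. You assert that if the iteration does not terminate and $\bsigma$ is eventually $\overline{L}$ or $\overline{R}$, then $\Omega_{\ba,\bb}$ is trivial. This is false: already $\Omega_{\overline{0},1\overline{0}}=\{1^k\overline{0}:k\ge0\}\cup\{\overline{1}\}$ is countably infinite (note $\overline{0}=\overline{L}(\overline{0})$ and $1\overline{0}=\overline{L}(\overline{1})$ are fixed by~$L$, so your iteration loops here rather than terminating). The correct statement, which is Proposition~\ref{p:lex}~(\ref{i:plex5}'), is that $s(\ba)=s(\bb)$ ending in $\overline{L}$ or $\overline{R}$ yields a \emph{countable} $\Omega_{\ba,\bb}$; these pairs belong to case~(\ref{i:lex5}), not~(\ref{i:lex2}). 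Relatedly, your terminal clause ``unless $\ba^{(k)}=0\overline{1}$ or $\bb^{(k)}=1\overline{0}$'' is inconsistent with the stated non-peelable region $\ba^{(k)}<\overline{01}$, $\bb^{(k)}>\overline{10}$, since $0\overline{1}>\overline{01}$ and $1\overline{0}<\overline{10}$. Both issues are repaired once you track, as the paper does, that the degenerate tails $\overline{L},\overline{R}$ land in the countable class and that the genuinely trivial pairs are exactly those with $s(\ba)$ and $s(\bb)$ already disagreeing at some position within $\{L,R\}^*$ in the ``wrong'' order.
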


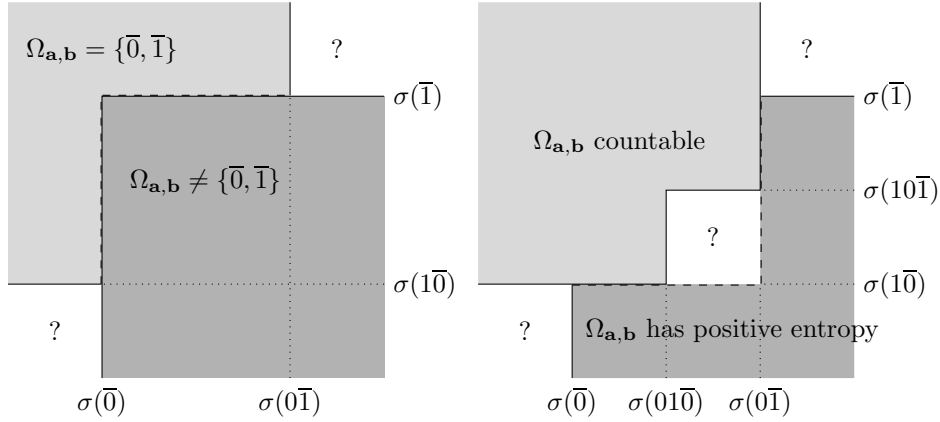
\begin{figure}[ht]
\centerline{\begin{tikzpicture}[scale=1.25]
\fill[black!15](0,1)--(1,1)--(1,3)--(3,3)--(3,4)--(0,4);
\fill[black!30](1,0)--(4,0)--(4,3)--(1,3);
\draw(0,1)--(.99,1) (3,3.01)--(3,4);
\draw[dashed](.99,1)--(.99,3.01)--(3,3.01);
\draw(1,0)node[below]{$\sigma(\overline{0})$}--(1,3)--(4,3)node[right]{$\sigma(\overline{1})$};
\draw[dotted](3,0)node[below]{$\sigma(0\overline{1})$}--(3,3);
\draw[dotted](4,1)node[right]{$\sigma(1\overline{0})$}--(1,1);
\node at (1,3.5){$\Omega_{\ba,\bb} = \{\overline{0}, \overline{1}\}$};
\node at (2.1,2.1){$\Omega_{\ba,\bb} \ne \{\overline{0}, \overline{1}\}$};
\node at (.5,.5){?};
\node at (3.5,3.5){?};
\begin{scope}[shift={(5,0)}]
\fill[black!15](0,1)--(2,1)--(2,2)--(3,2)--(3,4)--(0,4);
\fill[black!30](1,0)--(4,0)--(4,3)--(3,3)--(3,1)--(1,1);
\draw(0,1)--(2,1)--(2,2)--(3,2)--(3,4);
\draw(1,0)node[below]{$\sigma(\overline{0})$}--(1,.99) (3.01,3)--(4,3)node[right]{$\sigma(\overline{1})$};
\draw[dashed](1,.99)--(3.01,.99)--(3.01,3);
\draw[dotted](2,0)node[below]{$\sigma(01\overline{0})$}--(2,1);
\draw[dotted](3,0)node[below]{$\sigma(0\overline{1})$}--(3,1);
\draw[dotted](4,1)node[right]{$\sigma(1\overline{0})$}--(3,1);
\draw[dotted](4,2)node[right]{$\sigma(10\overline{1})$}--(3,2);
\node at (2.7,.5){$\Omega_{\ba,\bb}$ has positive entropy};
\node at (1.5,2.5){$\Omega_{\ba,\bb}$ countable};
\node at (.5,.5){?};
\node at (2.5,1.5){?};
\node at (3.5,3.5){?};
\end{scope}
\end{tikzpicture}}
\caption{The cardinality of $\Omega_{\ba,\bb}$ according to Theorem~\ref{t:lex}, for $\sigma = \tilde{\sigma} M$ with $\tilde{\sigma} \in \{L,R\}^*$ (left) and $\tilde{\sigma} \in \{L,M,R\}^*$ (right). 
In the regions with question marks, we have to consider substitutions starting with $\tilde{\sigma} L$ (in the lower left corners), with $\tilde{\sigma} R$ (in the upper right corners) and with $\tilde{\sigma} M$ (in the middle of the right picture).}
\end{figure}

This also improves and simplifies results of Labarca and Moreira~\cite{LabMor2006} for subshifts of the form
\begin{equation*}
\Sigma_{\ba,\bb} := \big\{i_1i_2 \cdots \in \{0,1\}^\infty \,:\, \ba \le i_n i_{n+1}\cdots \le \bb \ \text{for all}\ n \ge 1\big\},
\end{equation*}
i.e., the union of shift-orbits staying in the closed interval $[\ba,\bb]$.

\begin{theorem} \label{t:LM}
We have $\Omega_{0\bb,1\ba} = 0^* \Sigma_{\ba,\bb} \cup 1^* \Sigma_{\ba,\bb} \cup \{\overline{0}, \overline{1}\}$ and, hence, $\Sigma_{\ba,\bb} = \emptyset$ if and only if $\Omega_{0\bb,1\ba} = \{\overline{0}, \overline{1}\}$, $\Sigma_{\ba,\bb}$ is countable if and only if $\Omega_{0\bb,1\ba}$ is countable, $h(\Sigma_{\ba,\bb}) = h(\Omega_{0\bb,1\ba})$.
\end{theorem}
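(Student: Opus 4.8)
The plan is to derive the set identity first --- from which all three ``hence'' statements follow with little effort --- and to reduce the identity itself to two elementary lexicographic observations plus one localized version of the identity. Write $T$ for the shift $T(u_1u_2\cdots)=u_2u_3\cdots$, so that the tail $u_nu_{n+1}\cdots$ of $\bu=u_1u_2\cdots$ equals $T^{n-1}(\bu)$. The first observation is a monotonicity fact: for $\mathbf{w}\in\{0,1\}^\infty$, if $\mathbf{w}$ begins with $0$ then $\mathbf{w}\le T\mathbf{w}$, and if it begins with $1$ then $\mathbf{w}\ge T\mathbf{w}$ (compare first letters); iterating, if $\mathbf{w}$ begins with at least $j\ge1$ zeros then $\mathbf{w}\le T^j\mathbf{w}$, and dually for ones. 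The second is a reformulation of membership in $\Omega_{0\bb,1\ba}$: since no sequence beginning with $0$ can be $\ge1\ba$, and $0\mathbf{x}\le0\bb\Leftrightarrow\mathbf{x}\le\bb$ (and symmetrically for $1\mathbf{x}$ and $1\ba$), one has $\bu\in\Omega_{0\bb,1\ba}$ if and only if for every $n\ge1$, $u_n=0\Rightarrow T^n\bu\le\bb$ and $u_n=1\Rightarrow T^n\bu\ge\ba$. In particular $\overline{0},\overline{1}\in\Omega_{0\bb,1\ba}$, and $\Omega_{0\bb,1\ba}$ is shift-invariant because the tails of $T\bu$ form a subset of those of $\bu$.

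The key localized statement I would prove is $\Sigma_{\ba,\bb}=\Omega_{0\bb,1\ba}\cap[\ba,\bb]$. The inclusion ``$\subseteq$'' is immediate from the reformulation: if every tail of $\bu$ lies in $[\ba,\bb]$ then both implications hold. For ``$\supseteq$'' I would take $\bu\in\Omega_{0\bb,1\ba}$ with $\ba\le\bu\le\bb$ and prove by induction on $k\ge0$ that $T^k\bu\in[\ba,\bb]$, the case $k=0$ being the hypothesis. For the inductive step put $\mathbf{y}=T^k\bu\in[\ba,\bb]$; if $y_1=0$, then $\ba\le\mathbf{y}$ forces $\ba$ to begin with $0$, so $\ba=0\,T\ba$ and $\mathbf{y}=0\,T\mathbf{y}$ give $T\ba\le T\mathbf{y}$, which with $T\ba\ge\ba$ (monotonicity) yields $T\mathbf{y}\ge\ba$, while $T\mathbf{y}\le\bb$ comes from the reformulation at position $k+1$ (where $u_{k+1}=y_1=0$); the case $y_1=1$ is symmetric. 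Hence all tails of $\bu$ lie in $[\ba,\bb]$, i.e.\ $\bu\in\Sigma_{\ba,\bb}$.

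Now the identity. For ``$\supseteq$'': $\overline{0},\overline{1}\in\Omega_{0\bb,1\ba}$ as noted, and for $\bu=0^m\mathbf{w}$ with $\mathbf{w}\in\Sigma_{\ba,\bb}$, $m\ge0$, I check the reformulated condition: for $n>m$ we have $T^n\bu=T^{n-m}\mathbf{w}\in[\ba,\bb]$, so both required bounds hold whatever $u_n$ is; for $n\le m$ we have $u_n=0$ and $T^n\bu=0^{m-n}\mathbf{w}$, and $0^j\mathbf{w}\le\bb$ for every $j\ge0$, since this is automatic if $\bb$ begins with fewer than $j$ zeros, and otherwise $0^j\mathbf{w}\le\bb\Leftrightarrow\mathbf{w}\le T^j\bb$, which holds because $\mathbf{w}\le\bb\le T^j\bb$ by monotonicity (the case $\bu=1^m\mathbf{w}$ is symmetric). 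For ``$\subseteq$'': given $\bu\in\Omega_{0\bb,1\ba}$ with $\bu\ne\overline{0},\overline{1}$, let $m\ge1$ be the length of the maximal constant initial block, so $\bu=0^m\mathbf{w}$ or $\bu=1^m\mathbf{w}$ with $\mathbf{w}=T^m\bu$ beginning with the opposite letter, and $\mathbf{w}\in\Omega_{0\bb,1\ba}$ by shift-invariance. In the first case the reformulation at position $m$ gives $\mathbf{w}\le\bb$, and $\mathbf{w}\ge\ba$ holds trivially if $\ba$ begins with $0$, while if $\ba$ begins with $1$ the reformulation at position $m+1$ gives $T\mathbf{w}\ge\ba$, and $\ba\ge T\ba$ (monotonicity), so $\mathbf{w}=1\,T\mathbf{w}\ge1\,T\ba=\ba$. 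Thus $\mathbf{w}\in\Omega_{0\bb,1\ba}\cap[\ba,\bb]=\Sigma_{\ba,\bb}$ by the key statement, i.e.\ $\bu\in0^*\Sigma_{\ba,\bb}$; the case $\bu=1^m\mathbf{w}$ is symmetric.

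The three consequences then follow from the identity: if $\Sigma_{\ba,\bb}=\emptyset$ the right-hand side is $\{\overline{0},\overline{1}\}$, and conversely if $\mathbf{w}\in\Sigma_{\ba,\bb}$ then $\mathbf{w}$ itself, or else $1\overline{0}$ or $0\overline{1}$, lies in $\Omega_{0\bb,1\ba}\setminus\{\overline{0},\overline{1}\}$; $0^*\Sigma_{\ba,\bb}$ is a countable union over $m$ of injective images of $\Sigma_{\ba,\bb}$ and $\Sigma_{\ba,\bb}\subseteq\Omega_{0\bb,1\ba}$ (take $m=0$), so the two sets are countable together; and every length-$n$ block occurring in $\Omega_{0\bb,1\ba}$ is $0^n$, $1^n$, a block of $\Sigma_{\ba,\bb}$, or of the form $0^pv$ or $1^pv$ with $1\le p<n$ and $v$ a prefix (hence a block) of a sequence in $\Sigma_{\ba,\bb}$, whence $A_n(\Omega_{0\bb,1\ba})$ is at most a polynomial in $n$ times $A_n(\Sigma_{\ba,\bb})$ (using that $A_\ell(\Sigma_{\ba,\bb})$ is non-decreasing, $\Sigma_{\ba,\bb}$ being a subshift), while $A_n(\Sigma_{\ba,\bb})\le A_n(\Omega_{0\bb,1\ba})$ from the inclusion, so $h(\Sigma_{\ba,\bb})=h(\Omega_{0\bb,1\ba})$ (the empty case being trivial). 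I expect the main obstacle to be the ``$\supseteq$'' half of the key statement: one must upgrade the one-sided data that $\Omega_{0\bb,1\ba}$ supplies (``$\le\bb$'' only at a $0$, ``$\ge\ba$'' only at a $1$) to two-sided membership in $[\ba,\bb]$ at every tail, and the asymmetric behaviour of the two inequalities under the shift --- handled by the monotonicity fact --- is exactly the delicate point, recurring also in the leading-block bookkeeping above.
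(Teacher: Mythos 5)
Your proposal is correct and follows essentially the same route as the paper: the same tail-by-tail induction that upgrades the one-sided constraints of $\Omega_{0\bb,1\ba}$ to two-sided membership in $[\ba,\bb]$ (using that a sequence starting with $0$ is $\le$ its shift, and dually for $1$), plus the same block-counting bound $A_n(\Omega_{0\bb,1\ba}) \le 2\sum_{k\le n} A_k(\Sigma_{\ba,\bb})$ for the entropy. Packaging the induction as the identity $\Sigma_{\ba,\bb} = \Omega_{0\bb,1\ba} \cap [\ba,\bb]$ rather than as the paper's implications $01\bu \in \Omega_{0\bb,1\ba} \Rightarrow 1\bu \in \Sigma_{\ba,\bb}$ (and its mirror) is only a cosmetic difference.
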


Here, $c^* = \{c^k : k \ge 0\}$ is the set of words containing only the letter $c \in \{0,1\}$.
Note that Theorem~\ref{t:lex} gives a semi-algorithm for deciding triviality, countability and zero entropy of~$\Omega_{\ba,\bb}$, which terminates for all $\ba \in 0\{0,1\}^\infty$, $\bb \in 1\{0,1\}^\infty$ except for $\ba = \bsigma(\overline{0})$, $\bb = \bsigma(\overline{1})$ with primitive $\bsigma \in \{L,R\}^\infty$ resp.\ $\bsigma \in \{L,M,R\}^\infty$.
Glendinning and Sidorov~\cite{GleSid2015} have similar results to Theorem~\ref{t:lex}; their set of substitutions, which are defined by the lexicographically smallest and largest cyclically balanced words with rational frequencies, is equal to $\{L,R\}^*M$. 

\smallskip
The rest of the paper is organised as follows.
In Sections~\ref{s:lex} and~\ref{s:g}, we study some relevant properties of the set~$U_{q_0, q_1}$, the functions~$g$ and~$\tilde{g}$, and prove Theorems~\ref{t:lex} and~\ref{t:LM}.
In Section~\ref{sec:proof-main-results}, we prove Theorems~\ref{t:1}, \ref{t:2} and~\ref{t:3}.
We end the paper by raising some open problems.

\section{Lexicographic world} \label{s:lex}
In this section, we first show that an alphabet-base system $\{(d_0,q_0),(d_1,q_1)\}$ with $d_1(q_0-1) \ne d_0(q_1-1)$ is isomorphic to $\{(0,q_0),(1,q_1)\}$.
Therefore, we call the expansions in the system $\{(0,q_0),(1,q_1)\}$ simply \emph{$(q_0,q_1)$-expansions}.
For this result, we do not require $q_0, q_1$ to be real numbers, but only that $|q_0|, |q_1| > 1$.
Set
\begin{equation*}
T_{d,q}:\, \mathbb{C} \to \mathbb{C}, \qquad x \mapsto q x - d \qquad (d,q \in \mathbb{C}).
\end{equation*}

\begin{lemma} \label{l:01}
Let $d_0,d_1,q_0,q_1 \in \mathbb{C}$ with $|q_0|,|q_1| > 1$, $i_1 i_2 \cdots \in \{0,1\}^\infty$.
Then
\begin{equation} \label{e:di}
\sum_{k=1}^\infty \frac{d_{i_k}}{q_{i_1}q_{i_2}\cdots q_{i_k}} = \frac{d_0}{q_0-1} + \bigg(d_1-d_0\frac{q_1-1}{q_0-1}\bigg)\, \pi_{q_0,q_1}(i_1i_2\cdots).
\end{equation}
In particular, we have
\begin{equation} \label{e:q1q0}
(q_1-1)\, \pi_{q_0,q_1}(i_1i_2\cdots)  + (q_0-1)\, \pi_{q_1,q_0}\big((1{-}i_1)(1{-}i_2)\cdots) = 1,
\end{equation}
hence $i_1i_2\cdots U_{q_0,q_1}$ if and only if $(1{-}i_1)(1{-}i_2)\cdots \in U_{q_1,q_0}$.
\end{lemma}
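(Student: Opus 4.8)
The plan is to establish the two displayed identities \eqref{e:di} and \eqref{e:q1q0} by direct manipulation of the defining series, and then read off the claimed equivalence for unique expansions from \eqref{e:q1q0}.

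First I would prove \eqref{e:di}. The series $\sum_{k\ge1} d_{i_k}/(q_{i_1}\cdots q_{i_k})$ converges absolutely since $|q_0|,|q_1|>1$, so all rearrangements below are legitimate. The idea is to split $d_{i_k}$ using the affine relation $d_i = d_0 + (d_1-d_0)\,i$, valid for $i\in\{0,1\}$, which gives
\begin{equation*}
\sum_{k=1}^\infty \frac{d_{i_k}}{q_{i_1}\cdots q_{i_k}} = d_0 \sum_{k=1}^\infty \frac{1}{q_{i_1}\cdots q_{i_k}} + (d_1-d_0)\sum_{k=1}^\infty \frac{i_k}{q_{i_1}\cdots q_{i_k}}.
\end{equation*}
The second sum is exactly $\pi_{q_0,q_1}(i_1i_2\cdots)$ by definition. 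For the first sum, I would apply the same decomposition to the constant sequence of digits $1$, i.e. use that $1 = (q_{i_k}-1)/q_{i_k} + \cdots$; more directly, writing $S := \sum_{k\ge1} 1/(q_{i_1}\cdots q_{i_k})$, one has the telescoping-type identity obtained by pairing $q_{i_k} = (q_{i_k}-1)+1$, which yields $S = \sum_{k\ge1}(q_{i_k}-1)/(q_{i_1}\cdots q_{i_k}) + \sum_{k\ge2} 1/(q_{i_1}\cdots q_{i_k}) \cdot$, hence collapsing to an expression in terms of $\pi$. Concretely, using $q_i - 1 = (q_0-1) + (q_1-1-q_0+1)i = (q_0-1) - (q_0-q_1)i$, a short computation gives
\begin{equation*}
\sum_{k=1}^\infty \frac{q_{i_k}-1}{q_{i_1}\cdots q_{i_k}} = 1, \qquad \text{hence}\qquad (q_0-1)\,S - (q_0-q_1)\,\pi_{q_0,q_1}(i_1i_2\cdots) = 1,
\end{equation*}
so $S = \tfrac{1}{q_0-1}\big(1 + (q_0-q_1)\pi_{q_0,q_1}(i_1i_2\cdots)\big) = \tfrac{1}{q_0-1} - \tfrac{q_1-1}{q_0-1}\pi_{q_0,q_1}(i_1i_2\cdots)$. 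Substituting back gives \eqref{e:di}.

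Next, \eqref{e:q1q0} is the special case of the telescoping identity above applied simultaneously to both bases: apply \eqref{e:di} with $(d_0,d_1)=(0,1)$ to the sequence $(i_k)$ (which just returns $\pi_{q_0,q_1}$), and separately compute $\sum_{k\ge1}(1-i_k)/(q_{i_1}\cdots q_{i_k})$; since $i_k + (1-i_k) = 1$, adding the two sums gives $S = \pi_{q_0,q_1}(i_1i_2\cdots) + \sum_{k\ge1}(1-i_k)/(q_{i_1}\cdots q_{i_k})$. Now observe that $\sum_{k\ge1}(1-i_k)/(q_{i_1}\cdots q_{i_k})$, where the $k$-th denominator uses $q_{i_1}\cdots q_{i_k}$ with $i_j\in\{0,1\}$, equals $\pi_{q_1,q_0}((1-i_1)(1-i_2)\cdots)$ because replacing every $i_j$ by $1-i_j$ swaps the roles of $q_0$ and $q_1$ in each denominator. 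Combining with the formula $S = \tfrac{1}{q_0-1} - \tfrac{q_1-1}{q_0-1}\pi_{q_0,q_1}(i_1i_2\cdots)$ derived above, multiplying through by $q_0-1$, and rearranging yields precisely \eqref{e:q1q0}. (Alternatively one can get \eqref{e:q1q0} from \eqref{e:di} by choosing $(d_0,d_1)=(q_0-1,\,-(q_1-1))$, or $(1,0)$, to make the left side collapse.)

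Finally, the equivalence for unique expansions is immediate from \eqref{e:q1q0}. Define the involution $\theta$ on $\{0,1\}^\infty$ by $\theta(i_1i_2\cdots) = (1-i_1)(1-i_2)\cdots$. Identity \eqref{e:q1q0} says $(q_1-1)\pi_{q_0,q_1}(\bu) + (q_0-1)\pi_{q_1,q_0}(\theta(\bu)) = 1$ for every $\bu$, so the map $\bu \mapsto \pi_{q_0,q_1}(\bu)$ determines and is determined by $\bu \mapsto \pi_{q_1,q_0}(\theta(\bu))$ via an affine bijection of $\mathbb{R}$ (the coefficient $q_0-1\ne0$). Hence $\pi_{q_0,q_1}(\bu)=\pi_{q_0,q_1}(\bv)$ holds if and only if $\pi_{q_1,q_0}(\theta(\bu))=\pi_{q_1,q_0}(\theta(\bv))$; since $\theta$ is a bijection, $\bu$ is the unique preimage of its value under $\pi_{q_0,q_1}$ if and only if $\theta(\bu)$ is the unique preimage of its value under $\pi_{q_1,q_0}$, i.e. $\bu\in U_{q_0,q_1} \Leftrightarrow \theta(\bu)\in U_{q_1,q_0}$. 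This proves the last assertion.

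I do not expect any genuine obstacle here; the only point requiring a little care is the convergence and rearrangement of the series (handled by absolute convergence from $|q_0|,|q_1|>1$) and getting the bookkeeping right in the telescoping identity $\sum_{k\ge1}(q_{i_k}-1)/(q_{i_1}\cdots q_{i_k})=1$, which is the computational heart of the argument.
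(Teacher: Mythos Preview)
Your approach is correct in spirit and genuinely different from the paper's, but there is an algebraic slip that propagates. From the telescoping identity $(q_0-1)S - (q_0-q_1)\pi_{q_0,q_1}(\cdots)=1$ you correctly get $S=\tfrac{1}{q_0-1}\bigl(1+(q_0-q_1)\pi_{q_0,q_1}(\cdots)\bigr)$, but this is \emph{not} equal to $\tfrac{1}{q_0-1}-\tfrac{q_1-1}{q_0-1}\pi_{q_0,q_1}(\cdots)$; the correct second term is $\tfrac{q_0-q_1}{q_0-1}\pi_{q_0,q_1}(\cdots)$. With the right $S$, substituting into $d_0S+(d_1-d_0)\pi_{q_0,q_1}(\cdots)$ does give \eqref{e:di}, and combining $S=\pi_{q_0,q_1}(\cdots)+\pi_{q_1,q_0}(\theta(\cdots))$ with the corrected formula for $S$ yields \eqref{e:q1q0} after one more line. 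Your wrong formula for $S$ would instead produce $(q_0+q_1-2)\pi_{q_0,q_1}(\cdots)+(q_0-1)\pi_{q_1,q_0}(\theta(\cdots))=1$, which is false. So the method is fine; just fix that one equality.

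As for the comparison: the paper argues dynamically. It defines the affine map $\varphi(x)=\tfrac{d_0}{q_0-1}+\bigl(d_1-d_0\tfrac{q_1-1}{q_0-1}\bigr)x$, checks the conjugacy $T_{d_i,q_i}\circ\varphi=\varphi\circ T_{i,q_i}$ for $i\in\{0,1\}$, and then iterates to get $\varphi(\pi_{q_0,q_1}(\cdots))=\sum_k d_{i_k}/(q_{i_1}\cdots q_{i_k})$ as a limit of partial sums. Your route is purely series-based: the key is the telescoping identity $\sum_{k\ge1}(q_{i_k}-1)/(q_{i_1}\cdots q_{i_k})=1$, which expresses $S$ linearly in $\pi_{q_0,q_1}$ and makes both \eqref{e:di} and \eqref{e:q1q0} drop out by elementary algebra. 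The paper's conjugacy viewpoint explains \emph{why} the affine relation exists (the two iterated function systems are conjugate), whereas your computation is shorter and more self-contained; both are valid and the telescoping identity you isolate is the computational core either way.
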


\begin{proof}
Let
\begin{equation*}
\varphi:\, \mathbb{C} \to \mathbb{C}, \quad x \mapsto \frac{d_0}{q_0-1} + \bigg(d_1-d_0\frac{q_1-1}{q_0-1}\bigg)\, x.
\end{equation*}
Then
\begin{equation*}
\begin{aligned}
T_{d_0,q_0} \circ \varphi(x) & = q_0\, \bigg(\frac{d_0}{q_0-1} + \bigg(d_1-d_0\frac{q_1-1}{q_0-1}\bigg)\, x\bigg) - d_0 \\
& = \frac{d_0}{q_0-1} + \bigg(d_1-d_0\frac{q_1-1}{q_0-1}\bigg)\, q_0x = \varphi \circ T_{0,q_0}(x), \\
T_{d_1,q_1} \circ \varphi(x) & = q_1\, \bigg(\bigg(d_1-d_0\frac{q_1-1}{q_0-1}\bigg)\, x + \frac{d_0}{q_0-1}\bigg) - d_1 \\
& = \bigg(d_1-d_0\frac{q_1-1}{q_0-1}\bigg)\, (q_1 x - 1) + \frac{d_0}{q_0-1} = \varphi \circ T_{1,q_1}(x).
\end{aligned}
\end{equation*}
For $n \ge 0$, let $x_n := \pi_{q_0,q_1}(i_{n+1}i_{n+2} \cdots)$.
Then
\begin{equation*}
\begin{aligned}
\varphi(x_0) & = T_{d_{i_1},q_{i_1}}^{-1} \circ \cdots \circ T_{d_{i_n},q_{i_n}}^{-1} \circ \varphi \circ T_{i_n,q_{i_n}} \\
& =  T_{d_{i_1},q_{i_1}}^{-1} \circ \cdots \circ T_{d_{i_n},q_{i_n}}^{-1} \circ \varphi(x_n) \\
& = \sum_{k=1}^n \frac{d_{i_k}}{q_{i_1}q_{i_2}\cdots q_{i_k}} + \frac{\varphi(x_n)}{q_{i_1}q_{i_2}\cdots q_{i_n}}
\end{aligned}
\end{equation*}
for all $n \ge 1$.
Since $|q_0|,|q_1|>1$ and $\varphi(x_n)$ is bounded, this implies that $\varphi(x_0) = \sum_{k=1}^\infty d_{i_k}/(q_{i_1}q_{i_2}\cdots q_{i_k})$, which proves \eqref{e:di}.

Using \eqref{e:di} with $d_0 = 1$, $d_1 = 0$, we obtain that
\begin{equation*}
\pi_{q_1,q_0}\big((1{-}i_1)(1{-}i_2)\cdots\big) = \sum_{k=1}^\infty \frac{1-i_k}{q_{i_1}\cdots q_{i_k}} = \frac{1}{q_0-1} - \frac{q_1-1}{q_0-1}\, \pi_{q_0,q_1}(i_1i_2\cdots),
\end{equation*}
i.e., \eqref{e:q1q0} holds.
\end{proof}

Now, we return to real bases $q_0,q_1>1$.
The action of $T_{0,q_0}$ and $T_{1,q_1}$ on the interval $[0,\frac{1}{q_1-1}]$ is depicted in Figure~\ref{f:T}.
Each number in this interval has a $(q_0,q_1)$-expansion, i.e., $\{(0,q_0),(1,q_1)\}$ is regular, if and only if $\frac{1}{q_1} \le \frac{1}{q_0(q_1-1)}$, which is equivalent to $q_0 + q_1 \ge q_0 q_1$.

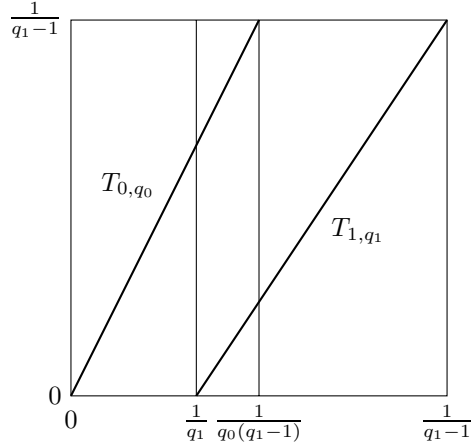
\begin{figure}[ht]
\begin{tikzpicture}[scale=2.5]
\draw(0,0)node[left]{$0$}node[below]{$\vphantom{\frac{1}{q_1-1}}0$}--(2,0)node[below]{$\frac{1}{q_1-1}$}--(2,2)--(0,2)node[left]{$\frac{1}{q_1-1}$}--cycle (.6667,0)node[below]{$\frac{1}{q_1}$}--(.6667,2) (1,0)node[below]{$\frac{1}{q_0(q_1-1)}$}--(1,2);
\draw[thick](0,0)--node[above left]{$T_{0,q_0}$}(1,2) (.6667,0)--node[below right]{$T_{1,q_1}$}(2,2);
\end{tikzpicture}
\caption{The maps $T_{0,q_0}$ and $T_{1,q_1}$; here, $q_0=2$ and $q_1=3/2$.} \label{f:T}
\end{figure}

We have $i_1 i_2 \cdots \in U_{q_0,q_1}$ if and only if
\begin{equation} \label{e:Uint}
\pi_{q_0,q_1}(i_{n+1}i_{n+2}\cdots) \not\in [\tfrac{1}{q_1}, \tfrac{1}{q_0(q_1-1)}] \quad \text{for all}\ n \ge 0.
\end{equation}
In particular, we have $U_{q_0,q_1} = \{0,1\}^\infty$ if $q_0 + q_1 < q_0 q_1$.
Therefore, we assume that $q_0 + q_1 \ge q_0 q_1$ in the following.

Denote the \emph{quasi-greedy} $(q_0,q_1)$-expansion of~$\frac{1}{q_1}$ by~$\ba_{q_0,q_1}$ and the \emph{quasi-lazy} $(q_0,q_1)$-expansion of $\frac{1}{q_0(q_1-1)}$ by~$\bb_{q_0,q_1}$.
(An expansion of a number is quasi-greedy if it is the lexicographically largest expansion not ending with~$\overline{0}$; it is quasi-lazy if it is the lexicographically smallest expansion not ending with~$\overline{1}$.)
Since $\frac{1}{q_1} = \pi_{q_0,q_1}(1\overline{0})$ and $\frac{1}{q_0(q_1-1)} = \pi_{q_0,q_1}(0\overline{1})$, $\ba_{q_0,q_1}$ starts with~$01$ and $\bb_{q_0,q_1}$ starts with~$10$.
We recall the following properties of quasi-greedy and quasi-lazy expansions from~\cite{KomLuZou2022}.

\begin{lemma} \label{l:qgql}
For $0 \le x < y \le \frac{1}{q_1-1}$, the quasi-greedy $(q_0,q_1)$-expansion of $x$ is lexicographically smaller than that of~$y$ and the quasi-lazy $(q_0,q_1)$-expansion of $x$ is lexicographically smaller than that of~$y$.
\end{lemma}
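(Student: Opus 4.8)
The statement is recorded in \cite{KomLuZou2022} as a basic property of quasi-greedy and quasi-lazy expansions, so one legitimate option is simply to cite it; below I indicate how I would prove it directly. First I would deal with the endpoints: if $x=0$, then its quasi-greedy expansion is $\overline{0}$, the lexicographically smallest sequence, whereas that of $y>0$ is not $\overline{0}$, so the inequality is strict; dually for the quasi-lazy expansions when $y=\frac{1}{q_1-1}$. Hence I may assume $x>0$ when treating the quasi-greedy case.

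The plan for the quasi-greedy case is to use its dynamical description. Recall from \cite{KomLuZou2022} that, under the standing hypothesis $q_0+q_1\ge q_0q_1$ (which makes $(0,\frac{1}{q_1-1}]$ invariant and keeps the orbit away from $0$), the quasi-greedy $(q_0,q_1)$-expansion $(a_n)$ of $x\in(0,\frac{1}{q_1-1}]$ is produced by
\begin{equation*}
a_n = 1 \ \text{if}\ x_{n-1} > \tfrac{1}{q_1}, \quad a_n = 0 \ \text{otherwise}, \qquad x_0 = x, \quad x_n = T_{a_n,q_{a_n}}(x_{n-1}).
\end{equation*}
Two elementary observations drive the argument: each of $T_{0,q_0}$ and $T_{1,q_1}$ is strictly increasing on $\mathbb{R}$, and the $n$-th digit is a non-decreasing function of $x_{n-1}$, since it equals $1$ precisely when $x_{n-1}>\frac{1}{q_1}$, a threshold that does not depend on the starting point. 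Now take $0<x<y\le\frac{1}{q_1-1}$ with quasi-greedy expansions $(a_n)$, $(b_n)$ and respective iterates $(x_n)$, $(y_n)$. Since $\pi_{q_0,q_1}((a_n))=x\ne y=\pi_{q_0,q_1}((b_n))$, these sequences differ; let $n$ be the first index with $a_n\ne b_n$. For $k<n$ we have $a_k=b_k$, so by strict monotonicity of the maps and induction $x_k=T_{a_k,q_{a_k}}(x_{k-1})<T_{a_k,q_{a_k}}(y_{k-1})=y_k$, whence $x_{n-1}<y_{n-1}$; by the second observation $a_n\le b_n$, and since $a_n\ne b_n$ this forces $a_n=0<1=b_n$. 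Thus $(a_n)$ is lexicographically smaller than $(b_n)$.

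For the quasi-lazy case I would reduce to the quasi-greedy one through the duality in \eqref{e:q1q0}. Writing $\bar{\bu}$ for the digitwise complement of a sequence $\bu$, equation \eqref{e:q1q0} shows that $\bu$ is a $(q_0,q_1)$-expansion of $x$ if and only if $\bar{\bu}$ is a $(q_1,q_0)$-expansion of $\psi(x):=\frac{1}{q_0-1}-\frac{q_1-1}{q_0-1}\,x$, while complementation reverses the lexicographic order and interchanges ``not ending with $\overline{1}$'' and ``not ending with $\overline{0}$''; consequently $\bu$ is the quasi-lazy $(q_0,q_1)$-expansion of $x$ exactly when $\bar{\bu}$ is the quasi-greedy $(q_1,q_0)$-expansion of $\psi(x)$. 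As $\psi$ is a strictly decreasing bijection of $[0,\frac{1}{q_1-1}]$ onto $[0,\frac{1}{q_0-1}]$ and $q_1+q_0\ge q_1q_0$, applying the already proven quasi-greedy statement to the bases $(q_1,q_0)$ and the points $\psi(y)<\psi(x)$ shows that the quasi-greedy $(q_1,q_0)$-expansion of $\psi(x)$ is lexicographically larger than that of $\psi(y)$; complementing reverses this and yields the asserted inequality between the quasi-lazy $(q_0,q_1)$-expansions of $x$ and $y$. The combinatorial part of the argument is routine; the main (and rather mild) obstacle is to invoke the dynamical description of the quasi-greedy expansion and the invariance of $(0,\frac{1}{q_1-1}]$ in exactly the form in which they are established in \cite{KomLuZou2022} under the hypothesis $q_0+q_1\ge q_0q_1$, and to be careful with the endpoint conventions.
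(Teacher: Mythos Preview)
Your proposal is correct. The paper does not actually prove this lemma; it simply records it as a property taken from \cite{KomLuZou2022}, which you note at the outset. The direct argument you supply---via the threshold/dynamical description of the quasi-greedy algorithm together with monotonicity of $T_{0,q_0}$ and $T_{1,q_1}$, followed by the complementation duality \eqref{e:q1q0} to deduce the quasi-lazy statement---is the standard route and is sound under the standing hypothesis $q_0+q_1\ge q_0q_1$ (needed so that the quasi-greedy orbit remains in $(0,\tfrac{1}{q_1-1}]$).
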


From \eqref{e:Uint} and Lemma~\ref{l:qgql} (see also \cite[Corollary~1.3]{KomLuZou2022}), we get that
\begin{equation*}
U_{q_0,q_1} = \big\{i_1i_2 \cdots \in \{0,1\}^\infty \,:\, i_n i_{n+1}\cdots < \ba_{q_0,q_1}\ \text{or}\ i_n i_{n+1}\cdots > \bb_{q_0,q_1} \ \forall n \ge 1\big\}.
\end{equation*}
The following lemmas show that we can consider the closed set
\begin{equation*}
V_{q_0,q_1} := \big\{i_1i_2 \cdots \in \{0,1\}^\infty \,:\, i_n i_{n+1}\cdots \le \ba_{q_0,q_1}\ \text{or}\ i_n i_{n+1}\cdots \ge \bb_{q_0,q_1} \ \forall n \ge 1\big\}
\end{equation*}
instead of $U_{q_0,q_1}$.

\begin{lemma} \label{l:ab}
For $q'_0 \ge q_0$, $q'_1 \ge q_1$, with $q'_0+q'_1 \ge q'_0 q'_1 > q_0 q_1$, we have
\begin{equation*}
\ba_{q_0,q_1} < \ba_{q'_0,q'_1}  \quad \text{and} \quad \bb_{q'_0,q'_1} < \bb_{q_0,q_1},
\end{equation*}
thus $U_{q_0,q_1} \subseteq V_{q_0,q_1}  \subseteq U_{q'_0,q'_1} \subseteq V_{q'_0,q'_1}$.
\end{lemma}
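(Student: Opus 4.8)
The plan is to reduce the whole statement to the single strict inequality $\ba_{q_0,q_1} < \ba_{q'_0,q'_1}$. Granting this and the symmetric statement $\bb_{q'_0,q'_1} < \bb_{q_0,q_1}$, the inclusion chain is immediate: $U_{q_0,q_1}\subseteq V_{q_0,q_1}$ and $U_{q'_0,q'_1}\subseteq V_{q'_0,q'_1}$ because strict lexicographic inequalities imply the non-strict ones, and for $V_{q_0,q_1}\subseteq U_{q'_0,q'_1}$ one observes that every tail $i_ni_{n+1}\cdots$ of a sequence in $V_{q_0,q_1}$ satisfies $i_ni_{n+1}\cdots\le\ba_{q_0,q_1}<\ba_{q'_0,q'_1}$ or $i_ni_{n+1}\cdots\ge\bb_{q_0,q_1}>\bb_{q'_0,q'_1}$. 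Moreover, the $\bb$-inequality reduces to the $\ba$-inequality: by \eqref{e:q1q0}, the reflection $i_1i_2\cdots\mapsto(1{-}i_1)(1{-}i_2)\cdots$ is an order-reversing bijection of $\{0,1\}^\infty$ carrying the $(q_0,q_1)$-expansions of $\frac1{q_0(q_1-1)}$ onto the $(q_1,q_0)$-expansions of $\frac1{q_0}$ and interchanging "does not end in $\overline0$" with "does not end in $\overline1$" and "lexicographically smallest" with "lexicographically largest", so it sends $\bb_{q_0,q_1}$ to $\ba_{q_1,q_0}$; since $(q_1,q_0)\le(q'_1,q'_0)$ componentwise, both pairs are regular, and $q'_1q'_0>q_1q_0$, the desired $\bb_{q'_0,q'_1}<\bb_{q_0,q_1}$ is precisely $\ba_{q_1,q_0}<\ba_{q'_1,q'_0}$.

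To prove $\ba_{q_0,q_1}<\ba_{q'_0,q'_1}$, I would first note that the hypotheses force $(q_0,q_1)$ into the open part of the regular region, $q_0+q_1>q_0q_1$ (on the boundary $\frac1{q_0}+\frac1{q_1}=1$ no admissible $(q'_0,q'_1)$ with strictly larger product exists), while $(q'_0,q'_1)$ may lie on the boundary. Since $\frac1{q_1}=\pi_{q_0,q_1}(1\overline0)$, the expansion $\ba_{q_0,q_1}$ starts with $01$, and $T_{1,q_1}\big(T_{0,q_0}(\tfrac1{q_1})\big)=q_1\tfrac{q_0}{q_1}-1=q_0-1$, so $\ba_{q_0,q_1}=01\bu$ with $\bu$ the quasi-greedy $(q_0,q_1)$-expansion of $q_0-1\in(0,\frac1{q_1-1})$; likewise $\ba_{q'_0,q'_1}=01\bv$ with $\bv$ the quasi-greedy $(q'_0,q'_1)$-expansion of $q'_0-1\in(0,\frac1{q'_1-1}]$, and $0<q_0-1\le q'_0-1$.

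The core step is the following monotonicity of quasi-greedy expansions: for regular pairs $(r_0,r_1)\le(r'_0,r'_1)$ (componentwise) and reals $0<v\le v'$ with $v\le\frac1{r_1-1}$, $v'\le\frac1{r'_1-1}$, the quasi-greedy $(r_0,r_1)$-expansion of $v$ is lexicographically at most the quasi-greedy $(r'_0,r'_1)$-expansion of $v'$. I would prove this by running the two greedy-from-below algorithms in parallel, using the standard dynamical description (as in \cite{KomLuZou2022}): the current digit is $1$ if the current value exceeds $\frac1{r_1}$ resp.\ $\frac1{r'_1}$, with residual $r_1v-1$ resp.\ $r'_1v'-1$, and $0$ otherwise, with residual $r_0v$ resp.\ $r'_0v'$. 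Since $v\le v'$ and $r_1\le r'_1$ give $\frac1{r_1}\ge\frac1{r'_1}$, the first algorithm can never produce digit $1$ where the second produces $0$; if both produce $0$ the new values are $r_0v\le r'_0v'$, if both produce $1$ they are $r_1v-1\le r'_1v'-1$, and in either case one checks, using regularity, that the invariant $0<v\le v'$ together with the two range bounds is preserved. Hence the two expansions agree until, possibly, a position where the first has $0$ and the second $1$, giving the inequality. Applying this with $(r_0,r_1,v)=(q_0,q_1,q_0-1)$, $(r'_0,r'_1,v')=(q'_0,q'_1,q'_0-1)$ yields $\bu\le\bv$, hence $\ba_{q_0,q_1}\le\ba_{q'_0,q'_1}$.

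The remaining, and I expect most delicate, point is strictness. If $\bu=\bv=:(d_k)_{k\ge1}$, the parallel computation takes identical digits throughout, so $\pi_{q_0,q_1}((d_k))=q_0-1$ and $\pi_{q'_0,q'_1}((d_k))=q'_0-1$. But $\pi_{q'_0,q'_1}(i_1i_2\cdots)\le\pi_{q_0,q_1}(i_1i_2\cdots)$ for every $i_1i_2\cdots\in\{0,1\}^\infty$ since each partial product $q'_{i_1}\cdots q'_{i_k}$ dominates $q_{i_1}\cdots q_{i_k}$; this forces $q'_0-1\le q_0-1$, hence $q'_0=q_0$ and then $q'_1>q_1$. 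Since $(d_k)\ne\overline0$ (else $q_0-1=0$), the sequence $(d_k)$ contains a $1$, so with $q'_1>q_1$ and $q'_0=q_0$ the inequality $\pi_{q'_0,q'_1}((d_k))\le\pi_{q_0,q_1}((d_k))$ is strict, contradicting $q'_0-1=q_0-1$. Thus $\bu<\bv$ and $\ba_{q_0,q_1}<\ba_{q'_0,q'_1}$, which with the reductions of the first paragraph proves the lemma. A point to watch in the write-up is the boundary case where $(q'_0,q'_1)$ is regular with equality: then $\bv=\overline1$ and $\ba_{q'_0,q'_1}=0\overline1$ is the largest sequence starting with $01$, and strictness comes precisely from $(q_0,q_1)$ being interior, so $\bu\ne\overline1$; in the parallel-run argument this is handled automatically because the residual ranges for the two systems are tracked separately.
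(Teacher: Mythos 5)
Your proposal is correct, but it takes a different route from the paper's. The paper's proof is a short value comparison: since $q'_1\,\pi_{q'_0,q'_1}(\ba_{q_0,q_1}) < q_1\,\pi_{q_0,q_1}(\ba_{q_0,q_1}) = 1$ (strict because $\ba_{q_0,q_1}$ starts with $01$ and does not end in $\overline{0}$, while at least one base strictly increases), the sequence $\ba_{q_0,q_1}$, not ending in $\overline{0}$, is at most the quasi-greedy $(q'_0,q'_1)$-expansion of $\pi_{q'_0,q'_1}(\ba_{q_0,q_1})$, which by Lemma~\ref{l:qgql} is lexicographically smaller than $\ba_{q'_0,q'_1}$; the $\bb$-inequality is handled symmetrically with $\tilde{\pi}$ and quasi-lazy expansions (your reflection reduction via \eqref{e:q1q0}, sending $\bb_{q_0,q_1}$ to $\ba_{q_1,q_0}$, is that same symmetry made explicit). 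You instead peel off the prefix $01$, prove a joint monotonicity of quasi-greedy expansions in the value and in both bases by running the two quasi-greedy algorithms in parallel, and recover strictness by a separate contradiction using the strict decrease of $\pi$ in the bases. This is sound: the parallel-run invariant only needs to persist up to the first disagreement, the asymmetric digit case immediately decides the lexicographic comparison, and your equality-exclusion argument (forcing $q'_0=q_0$, then $q'_1>q_1$, then a strict drop of the value of a sequence containing a $1$) is valid, including the boundary case $\bv=\overline{1}$. What the paper's route buys is brevity: it reuses the already quoted Lemma~\ref{l:qgql} and the maximality property of quasi-greedy expansions, and strictness comes for free from the strict value inequality. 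What your route buys is a more general, reusable statement (two-parameter monotonicity of quasi-greedy expansions), at the cost of invoking the dynamical description of the quasi-greedy algorithm from \cite{KomLuZou2022}, which the paper itself never needs to state.
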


\begin{proof}
Since
$q'_1\, \pi_{q'_0,q'_1}(\ba_{q_0,q_1}) < q_1\, \pi_{q_0,q_1}(\ba_{q_0,q_1}) = 1 = q'_1\, \pi_{q'_0,q'_1}(\ba_{q'_0,q'_1})$,
the quasi-greedy $(q'_0,q'_1)$-expansion of $\pi_{q'_0,q'_1}(\ba_{q_0,q_1})$ is smaller than $\ba_{q'_0,q'_1}$ by Lemma~\ref{l:qgql}, hence \mbox{$\ba_{q_0,q_1} < \ba_{q'_0,q'_1}$}.
Symetrically, we have $q'_0\, \tilde{\pi}_{q'_0,q'_1}(\bb_{q_0, q_1}) < q_0\, \tilde{\pi}_{q_0,q_1}(\bb_{q_0,q_1}) = 1 = q'_0\, \tilde{\pi}_{q'_0,q'_1}(\bb_{q'_0, q'_1})$, hence by \eqref{e:q1q0} the quasi-lazy expansion of $\pi_{q'_0,q'_1}(\bb_{q_0,q_1})$ is larger than $\bb_{q'_0,q'_1}$, thus $\bb_{q_0,q_1} > \bb_{q'_0,q'_1}$.
\end{proof}

\begin{lemma} \label{l:UV} \mbox{}

\begin{enumerate}[\upshape (i)]
\itemsep.5ex
\item \label{i:UU1}
$U_{q_0, q_1}$ is infinite if and only if $U_{q_0, q_1} \ne \{\overline{0},\overline{1}\}$.
\item \label{i:UU2}
$U_{q_0, q_1}$ is uncountable if and only if $V_{q_0, q_1}$ is uncountable.
\item \label{i:UU3}
$U_{q_0, q_1}$ and $V_{q_0, q_1}$ have the same entropy.
\end{enumerate}
\end{lemma}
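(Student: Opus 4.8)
The plan is to establish Lemma~\ref{l:UV} by exploiting the inclusions $U_{q_0,q_1} \subseteq V_{q_0,q_1} \subseteq U_{q_0',q_1'}$ from Lemma~\ref{l:ab} together with a continuity-type argument: $V_{q_0,q_1}$ can be approximated from above by $U_{q_0',q_1'}$ and from below by $U_{q_0,q_1}$, so the three sets differ only by ``boundary'' sequences whose orbits eventually coincide with $\ba_{q_0,q_1}$ or $\bb_{q_0,q_1}$. The key observation is that $V_{q_0,q_1} \setminus U_{q_0,q_1}$ consists precisely of the sequences $i_1 i_2 \cdots$ for which there is some $n$ with $i_n i_{n+1}\cdots = \ba_{q_0,q_1}$ or $i_n i_{n+1} \cdots = \bb_{q_0,q_1}$, and every such sequence is determined by a finite prefix followed by a shift of $\ba_{q_0,q_1}$ or $\bb_{q_0,q_1}$; there are only countably many of these.

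For part~(\ref{i:UU1}), I would argue that if $U_{q_0,q_1}$ contains a sequence $\bu \ne \overline{0},\overline{1}$, then $\bu$ contains infinitely many occurrences of the pattern $01$ or of $10$ (otherwise it ends in $\overline 0$ or $\overline 1$, but sequences ending in $\overline0$ other than $\overline 0$ itself are not in $U_{q_0,q_1}$ since $\overline 0$ is not the quasi-greedy expansion reached — more carefully, a sequence in $U_{q_0,q_1}$ ending in $\overline 0$ must be $\overline 0$ because otherwise some shift equals $\overline 0 < \ba_{q_0,q_1}$ is fine but a preceding $1$ creates a shift $10\overline0 \le \ba_{q_0,q_1}$? one needs the concrete structure). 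Rather than this, the clean route: by Lemma~\ref{l:ab} applied with a slightly smaller pair, $U_{q_0,q_1}$ contains $V_{q_0'',q_1''}$ for $q_0'' q_1'' < q_0 q_1$, wait — the inclusion goes the other way. Instead I would use that $U_{q_0,q_1}$ is shift-invariant, so if it contains $\bu$ with $\bu \ne \overline 0, \overline 1$, it contains the whole shift-orbit closure of $\bu$, which is infinite unless $\bu$ is eventually periodic with all shifts in a finite set; and one checks directly that an eventually periodic sequence in $U_{q_0,q_1}$ generates infinitely many distinct sequences of the form $0^k \bv$ or that its orbit already has size $\ge 2$ plus we can prepend blocks $0^j$, $0^j\bv$, yielding infinitely many elements since prepending $0$ to any element of $U_{q_0,q_1}$ that starts suitably stays in $U_{q_0,q_1}$. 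The cleanest: if $\bu \in U_{q_0,q_1}$, $\bu \neq \overline{0}$, then $\bu$ has a first $1$; its shift $\sigma^k(\bu)$ starting with that $1$ satisfies $\sigma^k(\bu) \ge \bb_{q_0,q_1} > 1\overline 0$, so $\sigma^k(\bu)$ begins $10$ and more; then $0^j\,\sigma^k(\bu) \in U_{q_0,q_1}$ for every $j \ge 0$ (each new shift is either $0^i \sigma^k(\bu) \le \ba_{q_0,q_1}$ since $\ba_{q_0,q_1}$ begins $01$ and $0^i\sigma^k(\bu)$ begins $0^i 10\cdots$, which for $i\ge 1$ is $< 01\cdots$ wait only if the comparison works — $0 0 \cdots < 0 1 \cdots$, yes), giving infinitely many distinct elements. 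This establishes~(\ref{i:UU1}).

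For parts~(\ref{i:UU2}) and~(\ref{i:UU3}), the main point is that $V_{q_0,q_1} \setminus U_{q_0,q_1}$ is countable. I would show: $\bu \in V_{q_0,q_1} \setminus U_{q_0,q_1}$ iff $\bu \in V_{q_0,q_1}$ and for some $n \ge 1$, $\sigma^{n-1}(\bu) \in \{\ba_{q_0,q_1},\bb_{q_0,q_1}\}$ (this uses that a sequence fails the strict inequalities of $U_{q_0,q_1}$ at position $n$ but satisfies the non-strict ones of $V_{q_0,q_1}$ exactly when equality holds there). Since $\ba_{q_0,q_1}$ and $\bb_{q_0,q_1}$ are two fixed sequences, and $\bu$ is then a finite word of length $n-1$ followed by one of these two tails, and the finite prefix is constrained but in any case lies in a countable set, $V_{q_0,q_1}\setminus U_{q_0,q_1}$ is countable; together with the footnote's remark (closure adds only countably many points) this gives that $U_{q_0,q_1}$ and $V_{q_0,q_1}$ have the same cardinality up to countable sets, proving~(\ref{i:UU2}), and the same number of length-$n$ blocks up to a bounded-in-$n$ or at worst polynomial correction, hence the same entropy, proving~(\ref{i:UU3}). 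The main obstacle I anticipate is the careful bookkeeping in the block-counting for~(\ref{i:UU3}): one must verify that adding the countably many ``boundary'' sequences to $U_{q_0,q_1}$ to form $V_{q_0,q_1}$ does not introduce new length-$n$ factors beyond those already counted asymptotically — this follows because any factor of a boundary sequence is, for $n$ large, a factor of $\ba_{q_0,q_1}$ or $\bb_{q_0,q_1}$ preceded by a bounded piece, contributing only linearly many extra factors, which washes out in the $\frac{1}{n}\log$ limit; making this precise is routine but needs the subword-complexity estimate stated explicitly.
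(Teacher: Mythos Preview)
Your approach for parts~(\ref{i:UU2}) and~(\ref{i:UU3}) is correct and matches the paper's: the key point is that every element of $V_{q_0,q_1}\setminus U_{q_0,q_1}$ has a tail equal to $\ba_{q_0,q_1}$ or $\bb_{q_0,q_1}$, so the difference is countable. For~(\ref{i:UU3}) the paper simply invokes Bowen's definition of entropy (a countable set contributes zero), while you sketch a direct factor-counting argument; both are fine, and your block-counting estimate is the right idea if one wants to avoid citing Bowen.

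For part~(\ref{i:UU1}) you have the correct core idea---prepending $0$'s to produce infinitely many elements---but the execution has a gap. You choose a shift $\sigma^k(\bu)$ starting with~$1$ and claim $0^j\sigma^k(\bu)\in U_{q_0,q_1}$ for all $j\ge 0$. The problematic shift is $0\,\sigma^k(\bu)$, which starts with $01$; since $\ba_{q_0,q_1}$ also starts with $01$, the inequality $0\,\sigma^k(\bu)<\ba_{q_0,q_1}$ is not automatic. Your assertion that $\sigma^k(\bu)$ begins with $10$ does not follow from $\sigma^k(\bu)>\bb_{q_0,q_1}$: since $\bb_{q_0,q_1}$ starts with $10$, the shift could equally well start with $11$. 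The paper's remedy is simply to prepend $0$'s to a shift $\bv$ of $\bu$ that starts with~$0$ (such a shift exists and is $\ne\overline{0}$ because $\bu\notin\{\overline{0},\overline{1}\}$ and $1^m\overline{0}\notin U_{q_0,q_1}$, the latter since $1\overline{0}\not>\bb_{q_0,q_1}$). Then every new shift $0^i\bv$ with $i\ge 1$ starts with $00$ and is trivially below $\ba_{q_0,q_1}$.
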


\begin{proof}
(\ref{i:UU1}) If $\bu \in U_{q_0, q_1}$ starts with~$0$, then we have $0^k\, \bu \in U_{q_0, q_1}$ for all $k \ge 0$.
If moreover $\bu \ne \overline{0}$, then all sequences $0^k\, \bu$ are different.
Therefore, $U_{q_0, q_1} \ne \{\overline{0},\overline{1}\}$ implies that $U_{q_0, q_1}$ is infinite.
The converse is trivial.

\noindent
(\ref{i:UU2}) We have $U_{q_0, q_1} \subseteq V_{q_0, q_1}$ and all elements of $V_{q_0, q_1} \setminus U_{q_0, q_1}$ end with $\ba_{q_0,q_1}$ or~$\bb_{q_0,q_1}$, hence this difference is countable.

\noindent
(\ref{i:UU3}) The set $V_{q_0,q_1}$ differs from $U_{q_0,q_1}$ only by a countable set, which does not affect the entropy according to Bowen's definition~\cite{Bow1973}, i.e., $h(U_{q_0,q_1}) = h(V_{q_0,q_1})$.
See \cite[Proposition~2.6]{AllKon2019} for a similar statement, with a more complicated proof.
\end{proof}

Since $V_{q_0,q_1} = \Omega_{\ba_{q_0,q_1},\bb_{q_0,q_1}}$, its cardinality is given by Theorem~\ref{t:lex}, which we prove in the rest of the section.
For $\ba \in 0\{0,1\}^\infty$, $\bb \in 1\{0,1\}^\infty$, we can write
\begin{equation*}
\Omega_{\ba,\bb} = \big\{\bu \in \{0,1\}^\infty \setminus \{\overline{0}, \overline{1}\} \,:\, \supn(\bu) \le \ba,\, \infn(\bu) \ge \bb\big\} \cup \{\overline{0}, \overline{1}\} ,
\end{equation*}
with
\begin{equation*}
\begin{aligned}
\supn(i_1i_2\cdots) & := \sup\{i_ni_{n+1} \cdots \,:\, n \ge 1,\, i_n = 0\}, \\
\infn(i_1i_2\cdots) & := \inf\{i_ni_{n+1} \cdots \,:\, n \ge 1,\, i_n = 1\}.
\end{aligned}
\end{equation*}
In other words, $\supn(\bu)$ is the maximal element of $X_{\bu}$ starting with~$0$ and $\infn(\bu)$ is the minimal element of $X_{\bu}$ starting with~$1$, where
\begin{equation*}
X_{i_1i_2\cdots} := \mathrm{closure}(\{i_n i_{n+1} \cdots : n \ge 1\})
\end{equation*}
is the \emph{subshift generated by the word} $i_1i_2\cdots \in \{0,1\}^\infty$.
We have
\begin{equation} \label{e:infsup}
\begin{aligned}
\supn(\sigma(\overline{0})) & = \sigma(\overline{0}), & \supn(\sigma(0\overline{1})) & = \sigma(0\overline{1}), & \supn(\bsigma(\overline{0})) & = \bsigma(\overline{0}), \\
\infn(\sigma(\overline{1})) & = \sigma(\overline{1}), & \infn(\sigma(1\overline{0})) & = \sigma(1\overline{0}), & \infn(\bsigma(\overline{1})) & = \bsigma(\overline{1}),
\end{aligned}
\end{equation}
for all $\sigma \in \{L,M,R\}^*$, $\bsigma \in \{L,M,R\}^\infty$; more precisely, the following lemma holds.

\begin{lemma} \label{l:infsup}
For all $\sigma \in \{L,M,R\}^*$, $\bsigma \in \{L,M,R\}^\infty$, $\bu \in \{0,1\}^\infty$, we have
\begin{equation*}
\begin{aligned}
\supn(\sigma(0\bu)) & = \sigma(\supn(0\bu)), & \infn(\sigma(1\bu)) & = \sigma(\infn(1\bu)), \\
\supn(\bsigma(0\bu)) & = \bsigma(\supn(0\bu)), & \infn(\bsigma(1\bu)) & = \bsigma(\infn(1\bu)).
\end{aligned}
\end{equation*}
\end{lemma}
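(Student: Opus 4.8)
The plan is to reduce the whole statement, by an induction on length and by the $0\leftrightarrow 1$ symmetry, to the single-substitution identity $\supn(\sigma(0\bu))=\sigma(\supn(0\bu))$ for $\sigma\in\{L,M,R\}$, and then to treat $L$, $M$, $R$ separately, with $L$ the only genuinely delicate case. For the reductions: recall that $\supn(\bu)$ is the largest element of $X_{\bu}$ beginning with $0$ and $\infn(\bu)$ the smallest beginning with $1$. Since $\sigma(0\bv)$ begins with $0$ and $\sigma(1\bv)$ with $1$ for each $\sigma\in\{L,M,R\}$, an induction on the length of $\sigma$ reduces the two finite-word identities to the case $\sigma\in\{L,M,R\}$; and since the reflection $0\leftrightarrow 1$ conjugates $L$ with $R$, fixes $M$, and interchanges $\supn$ with $\infn$, it is enough to prove
\[
\supn(\sigma(0\bu)) = \sigma(\supn(0\bu)) \qquad(\sigma\in\{L,M,R\}).
\]
The infinite-word statements then follow from the finite ones: with $\bsigma=(\sigma_n)_{n\ge1}$ and $\bsigma^{(n)}=(\sigma_m)_{m>n}$, repeated use of the single-step identity gives $\supn(\bsigma(0\bu))=\sigma_1\cdots\sigma_n\big(\supn(\bsigma^{(n)}(0\bu))\big)$ for every $n$; if $\bsigma$ ends in $\overline L$ this collapses to a finite computation, because $\overline L(\bv)=0^{p-1}1\overline 0$ where $p$ is the position of the first $1$ of $\bv$ (and $\overline L(\overline0)=\overline0$), while $\supn(0\bu)$ begins with $01$ whenever $0\bu\ne\overline0$; otherwise $|\sigma_1\cdots\sigma_n(0)|\to\infty$, and since both $\sigma_1\cdots\sigma_n(\supn(\bsigma^{(n)}(0\bu)))$ and $\sigma_1\cdots\sigma_n(\supn(0\bu))$ begin with the word $\sigma_1\cdots\sigma_n(0)$, they agree on prefixes of unbounded length, so the common value $\supn(\bsigma(0\bu))$ equals $\lim_n\sigma_1\cdots\sigma_n(\supn(0\bu))=\bsigma(\supn(0\bu))$.

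For the single step, the key point is that each $\sigma\in\{L,M,R\}$ is strictly increasing for the lexicographic order and injective (the pair $\{\sigma(0),\sigma(1)\}$ being a prefix code), hence continuous and order preserving, and so commutes with closures and with maxima of compact sets; in particular $\sigma(\supn(w))=\max\sigma(X_w\cap[0])$ belongs to $X_{\sigma(w)}\cap[0]$, which yields $\supn(\sigma(w))\ge\sigma(\supn(w))$ for free (here $[i]$ is the cylinder of sequences beginning with $i$). For the reverse inequality I would desubstitute: every suffix of $\sigma(w)$ is either a \emph{boundary} suffix $\sigma(w_{j+1}w_{j+2}\cdots)$ or an \emph{interior} suffix lying inside a length-$2$ block $\sigma(w_j)$. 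For $\sigma=R$ the only length-$2$ block is $R(0)=01$, whose interior suffix begins with $1$; thus $X_{R(w)}\cap[0]=R(X_w\cap[0])$ and the identity is immediate. For $\sigma=M$ there are also interior suffixes $0\,M(w_{j+1}w_{j+2}\cdots)$ coming from $M(1)=10$; but when $w\ne\overline0$ the sequence $\supn(w)$ begins with $01$, so $M(\supn(w))$ begins with $011$, which exceeds every such interior suffix (these begin with $001$ or $010$), and when $w=\overline0$ no such suffix occurs.

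The remaining case $\sigma=L$ is the main obstacle: here the interior suffixes $0\,L(w_{j+1}w_{j+2}\cdots)$ (from $L(1)=10$) have to be compared with $L(\supn(w))$, which itself begins with $010$, so a finer argument is needed. The auxiliary fact I would establish is: \emph{if $w\in0\{0,1\}^\infty$ contains a $1$, then $\supn(w)$ begins with $0$ and, deleting that leading $0$, one obtains exactly $\max(X_w\cap[1])$}. Granting it, any interior suffix $0\,L(z)$ with $z$ in the closure of $\{w_{j+1}w_{j+2}\cdots:w_j=1\}\subseteq X_w$ satisfies $z\le\max(X_w\cap[1])$ — trivially if $z$ begins with $0$, by the auxiliary fact if $z$ begins with $1$ — hence $0z\le\supn(w)$, so $0\,L(z)=L(0z)\le L(\supn(w))$ by monotonicity of $L$; together with the boundary suffixes this gives $\supn(L(w))=L(\supn(w))$. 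The auxiliary fact itself I would prove by two inclusions: since $w$ begins with $0$ and contains a $1$, the factor $01$ occurs in $w$, which forces $\supn(w)$ to begin with $01$ and its tail to lie in $X_w\cap[1]$, giving $\supn(w)\le 0\cdot\max(X_w\cap[1])$; conversely, writing $\max(X_w\cap[1])=1^k0\cdots$ with $k\in\{1,2,\dots\}\cup\{\infty\}$, one checks that in $w$ this prefix can be preceded only by a $0$ — if $k<\infty$ a preceding $1$ would produce the strictly larger sequence $1^{k+1}0\cdots$ in $X_w\cap[1]$, contradicting maximality, while if $k=\infty$ every finite run of $1$'s in $w$ is necessarily preceded by a $0$ since $w$ begins with $0$ — so that $0\cdot\max(X_w\cap[1])\in X_w\cap[0]$ and hence $\le\supn(w)$. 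This maximal-run bookkeeping for $L$ is the crux; the reductions and the cases $R$ and $M$ are routine.
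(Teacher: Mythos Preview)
Your proof is correct and follows the same overall scheme as the paper's: reduce to a single $\sigma\in\{L,M,R\}$ by induction and the $0\leftrightarrow1$ symmetry, and derive the infinite-$\bsigma$ case from the finite one (the paper also splits off the tail $\overline{L}$ case and uses $|\sigma_1\cdots\sigma_n(0)|\to\infty$ otherwise, exactly as you do).

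The one genuine difference is in the single-step argument. The paper treats $L$ and $M$ uniformly: if $\supn(\sigma(0\bu))=0\sigma(\bv)$ is an \emph{interior} tail with $1\bv\in X_{0\bu}$, one picks $k\ge1$ with $01^k\bv\in X_{0\bu}$ and observes directly that $\sigma(01^k\bv)\ge 0\sigma(\bv)$, so the interior maximum equals a boundary term and one is done. Your route separates the two cases: for $M$ you use the quicker first-letter comparison ($M(\supn(w))$ starts with $011$, interior tails with $001$ or $010$), while for $L$ you isolate the standalone fact $\supn(w)=0\cdot\max(X_w\cap[1])$. This auxiliary lemma is exactly the backward-extension idea behind the paper's $01^k\bv$ trick, packaged as a statement about $X_w$ rather than about $\sigma$. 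The paper's version is more compact and handles $L,M$ in one stroke; yours makes the combinatorics behind the $L$ case more explicit and gives a cleaner reusable lemma, at the cost of a separate (if trivial) argument for $M$.
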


\begin{proof}
We first show that $\sigma \in \{L,M,R\}^*$ and $\supn$ commute on $0\{0,1\}^\infty$.
It suffices to consider $\sigma \in \{L,M,R\}$.
Since the only occurrence of $0$ in $R(0)$ and $R(1)$ is at the beginning of $R(0)$, we have $\supn(R(0\bu)) = R(0\bv)$ for some $\bv \in X_{0\bu}$, and $0\bv = \supn(\bu)$ because $R$ is order-preserving.
For $\sigma \in \{L,M\}$, the letter $0$ also occurs at the end of~$\sigma(1)$.
Suppose that $\supn(\sigma(\bu)) = 0\sigma(\bv)$ with $1\bv \in X_{0\bu}$, and let $k \ge 1$ be such that $01^k\bv \in X_{0\bu}$.
Since $\sigma(01^k\bv) \ge 0\sigma(\bv)$, we have $\sigma(01^k\bv) = \supn(\sigma(\bu))$, thus $01^k\bv = \supn(\bu)$, i.e., $L$ and $M$ also commute with $\supn$ on $0\{0,1\}^\infty$.

The proof that $\sigma \in \{L,M,R\}^*$ and $\infn$ commute on $1\{0,1\}^\infty$ is symmetric; it suffices to exchange $0$ and $1$, $L$ and $R$, $\supn$ and $\infn$.

For $\bsigma = (\sigma_n)_{n\ge1} \in \{L,M,R\}^\infty$, $\bu \in \{0,1\}^\infty$, we have already proved that
\begin{equation*}
\supn(\bsigma(0\bu)) = \supn(\sigma_{[1,n)}(\sigma_{[n,\infty)}(0\bu))) = \sigma_{[1,n)}(\supn(\sigma_{[n,\infty)}(0\bu)))
\end{equation*}
for all $n \ge 1$.
If $\bsigma$ is primitive or ends with~$\overline{R}$, then the length of $\sigma_{[1,n)}(0)$ is unbounded, thus $\supn(\bsigma(0\bu)) = \bsigma(\overline{0}) = \bsigma(\supn(0\bu))$.
Otherwise, $\bsigma$ ends with~$\overline{L}$.
Since $\supn(\overline{L}(\overline{0})) = \overline{0} = \overline{L}(\supn(\overline{0}))$ and $\supn(\overline{L}(0\bu)) = 01\overline{0} = \overline{L}(\supn(0\bu))$ for $\bu \ne \overline{0}$, $\overline{L}$ commutes with $\supn$ on $0\{0,1\}^\infty$, hence $\bsigma$ commutes with $\supn$ on $0\{0,1\}^\infty$ for all $\bsigma \in \{L,M,R\}^\infty$.
Again, the case of $\infn$ and $\bu \in 1\{0,1\}^\infty$ is symmetric.
\end{proof}

We have the following relation between $\Omega_{\sigma(\ba),\sigma(\bb)}$ and $\sigma(\Omega_{\ba,\bb})$.

\begin{lemma} \label{l:Omegasigma}
If $\bu \in \Omega_{\sigma(\ba),\sigma(\bb)} \setminus \{\overline{0}, \overline{1}\}$, $\sigma \in \{L,M,R\}$, $\ba \in 0\{0,1\}^\infty$, $\bb \in 1\{0,1\}^\infty$, then $\bu = w\, \sigma(\bv)$ for some $\bv \in \Omega_{\ba,\bb}$, $w \in \{0,1\}^*$.

If $\bu \in \Omega_{\sigma(\ba),\sigma(\bb)} \setminus \{\overline{0}, \overline{1}\}$, $\sigma \,{\in}\, \{L,R\}^*$, $\ba, \bb \,{\in}\, \{0,1\}^\infty$ with $\ba < 0\overline{1}$ and $\bb > 1 \overline{0}$, then $\bu = w\, \sigma(\bv)$ for some $\bv \in \Omega_{\ba,\bb} \setminus \{\overline{0}, \overline{1}\}$, $w \in \{0,1\}^*$.
\end{lemma}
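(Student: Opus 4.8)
The plan is to prove a single-substitution \emph{desubstitution} statement for $\sigma \in \{L,M,R\}$, and then to deduce the case $\sigma \in \{L,R\}^*$ by iterating it. I will use repeatedly that each $\sigma \in \{L,M,R\}$ is injective on $\{0,1\}^\infty$, strictly order-preserving (if $\bu < \bv$ share a maximal common prefix $w$, then $\sigma(\bu)$ and $\sigma(\bv)$ share the prefix $\sigma(w)$ and then $\sigma(0)$ precedes $\sigma(1)$ lexicographically), hence order-reflecting, i.e.\ $\sigma(\bu) \le \sigma(\bv) \iff \bu \le \bv$; and that its image is cut out locally: $L(\{0,1\}^\infty)$, $R(\{0,1\}^\infty)$ and $M(\{0,1\}^\infty)$ are the words with no factor $11$, the words with no factor $00$, and the words with $u_{2k-1} \ne u_{2k}$ for all $k \ge 1$, respectively.

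The core claim is: if $\bu \in \Omega_{\sigma(\ba),\sigma(\bb)} \setminus \{\overline0,\overline1\}$, then some suffix of $\bu$ lies in $\sigma(\{0,1\}^\infty)$. I would prove it from the fact that every suffix $\bu_m$ of $\bu$ satisfies $\bu_m \le \sigma(\ba)$ or $\bu_m \ge \sigma(\bb)$, together with $\sigma(\ba)$ beginning with $\sigma(0)$ and $\sigma(\bb)$ with $\sigma(1)$. For $\sigma = L$: $\bu$ has no factor $011$ (else the corresponding suffix exceeds $L(\ba) = 0\,L(a_2 a_3 \cdots)$, since an $L$-image begins with $0$ or $10$), and as the initial run of $1$'s of $\bu$ is finite, deleting it leaves a suffix with no $11$, hence one in $L(\{0,1\}^\infty)$. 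The case $\sigma = R$ is symmetric. For $\sigma = M$: beyond the initial run $\bu$ has no run of length $\ge 3$ (such a run gives a suffix beginning $011$ or $100$, hence above $M(\ba)$ or below $M(\bb)$), so after deleting the finite initial run $\bu = c^s \bu_{s+1}$ all runs of $\bu_{s+1}$ have length $\le 2$; one further shows that all length-$2$ runs of $\bu_{s+1}$ sit at positions of a single parity, which forces $\bu_{s+1}$ or its one-step shift into $M(\{0,1\}^\infty)$.

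The parity statement is the main obstacle. The mechanism to exploit: if $u_n u_{n+1} = 01$ then $u_{n+2} u_{n+3} \cdots \le M(a_2 a_3 \cdots)$, and each time this suffix begins with $10$ one may remove a block and replace $M(a_j a_{j+1} \cdots)$ by $M(a_{j+1} a_{j+2} \cdots)$; applied to a suffix beginning $01\,(10)^j\,11$ this reaches a suffix beginning $11$, which cannot be $\le$ a word beginning $01$ or $10$, a contradiction. One then checks that two length-$2$ runs of the same letter that are consecutive among all length-$2$ runs occupy positions of opposite parity and create exactly such a suffix — the smallest forbidden patterns being $0110110$ and $1001001$ — whereas two consecutive length-$2$ runs of opposite letters occupy the same parity; hence the length-$2$ runs alternate in letter and all share one parity. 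This is the single point where a short case analysis on run-length patterns seems unavoidable. Granting the core claim, pick $n_0$ with $u_{n_0} u_{n_0+1} \cdots \in \sigma(\{0,1\}^\infty)$, and set $w = u_1 \cdots u_{n_0-1}$ and $\bv = \sigma^{-1}(u_{n_0} u_{n_0+1} \cdots)$ (well defined by injectivity), so $\bu = w\,\sigma(\bv)$. Every suffix of $\bv$ is some $\bv_k$, and $\sigma(\bv_k)$ is a suffix of $\sigma(\bv)$ and hence of $\bu$; thus $\sigma(\bv_k) \le \sigma(\ba)$ or $\sigma(\bv_k) \ge \sigma(\bb)$, and order-reflection gives $\bv_k \le \ba$ or $\bv_k \ge \bb$, so $\bv \in \Omega_{\ba,\bb}$. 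This proves the first assertion.

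For the second, write $\sigma = \tau_1 \cdots \tau_r$ with $\tau_i \in \{L,R\}$, and assume $\ba < 0\overline1$, $\bb > 1\overline0$. These strict inequalities are preserved by $L$ and $R$ (using $L(0\overline1) = 0\overline{10}$, $R(0\overline1) = 0\overline1$, $L(1\overline0) = 1\overline0$, $R(1\overline0) = 1\overline{01}$, together with injectivity and order-preservation), hence $\tau_{j+1} \cdots \tau_r(\ba) < 0\overline1$ and $\tau_{j+1} \cdots \tau_r(\bb) > 1\overline0$ for all $j$. Moreover, for a single $\tau \in \{L,R\}$ under these bounds, the word $\bv$ produced above cannot be $\overline0$ or $\overline1$: if $\bv = \overline0$ then $\bu = w\,\tau(\overline0)$ gives $\infn(\bu) \le \tau(1\overline0) < \tau(\bb)$, and if $\bv = \overline1$ then $\bu = w\,\tau(\overline1)$ gives $\supn(\bu) \ge \tau(0\overline1) > \tau(\ba)$, both contradicting $\bu \in \Omega_{\tau(\ba),\tau(\bb)}$. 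Now apply the single-substitution case $r$ times: $\bu = w_1\,\tau_1(\bv^{(1)})$, $\bv^{(1)} = w_2\,\tau_2(\bv^{(2)})$, \dots, $\bv^{(r-1)} = w_r\,\tau_r(\bv^{(r)})$, where each intermediate $\bv^{(j)} \notin \{\overline0,\overline1\}$ (so that the next step is legitimate) and $\bv^{(r)} \in \Omega_{\ba,\bb} \setminus \{\overline0,\overline1\}$; composing, $\bu = w\,\sigma(\bv^{(r)})$ with the finite prefix $w = w_1\,\tau_1(w_2)\,\tau_1\tau_2(w_3) \cdots \tau_1 \cdots \tau_{r-1}(w_r)$, as required.
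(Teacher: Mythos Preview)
Your proof is correct and follows the same overall architecture as the paper's: desubstitute for a single $\sigma\in\{L,M,R\}$, then iterate for $\sigma\in\{L,R\}^*$. The execution differs in two places. First, for the desubstitution step the paper simply records the worst-case bounds $\supn(\bu)\le\sigma(0\overline1)$ and $\infn(\bu)\ge\sigma(1\overline0)$ (so for $\sigma=M$ these are $01\overline{10}$ and $10\overline{01}$) and asserts the conclusion; you instead carry out an explicit forbidden-factor and run-parity analysis, which is longer but makes the $M$ case fully explicit (minor wording slip: your ``suffix beginning $011$ or $100$'' should read $0111$ or $1000$ for the inequality with $M(\ba),M(\bb)$ to go through, though your intent is clear). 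Second, to show $\bv\in\Omega_{\ba,\bb}$ the paper invokes Lemma~\ref{l:infsup} (commutation of $\supn,\infn$ with $\sigma$), whereas you bypass it entirely by using that each suffix $\bv_k$ of $\bv$ yields a suffix $\sigma(\bv_k)$ of $\bu$ and then applying order-reflection of~$\sigma$; this is a cleaner and more self-contained argument. Your treatment of $\bv\notin\{\overline0,\overline1\}$ via $\infn(\bu)\le\tau(1\overline0)<\tau(\bb)$ and $\supn(\bu)\ge\tau(0\overline1)>\tau(\ba)$ is equivalent to the paper's, just phrased slightly differently.
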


\begin{proof}
If $\sigma = L$, then $\supn(\bu) \le \overline{01}$, thus $\bu = 1^k L(\bv')$ for some $k \ge 0$, $\bv' \in \{0,1\}^\infty$.
If $\sigma = R$, then $\infn(\bu) \ge \overline{10}$  implies that $\bu = 0^k R(\bv')$ for some $k \ge 0$, $\bv' \in \{0,1\}^\infty$.
If $\sigma = M$, then $\supn(\bu) \le 01\overline{10}$ and $\infn(\bu) \ge 10\overline{01}$, thus $\bu = 0^k M(\bv')$ or $\bu = 1^k M(\bv')$ for some $k \ge 0$, $\bv' \in \{0,1\}^\infty$.

To show that $\bv'$ ends with some $\bv \in \Omega_{\ba,\bb}$, assume first that $\bv'$ starts with~$0$.
Then $\sigma(\supn(\bv')) = \supn(\sigma(\bv')) \le \sigma(\ba)$, thus $\supn(\bv') \le \ba$, and $\bv' = \overline{0} \in \Omega_{\ba,\bb}$ or $\bv' = 0^n \bv$ for some $\bv \in 1\{0,1\}^\infty$.
Then $\sigma(\infn(\bv)) = \infn(\sigma(\bv)) \ge \sigma(\bb)$, thus $\infn(\bv) \ge \bb$.
Since $\supn(\bv) \le \supn(\bv')$ (or $\bv = \overline{1}$), we obtain that $\bv \in \Omega_{\ba,\bb}$.
Since the case of $\bv'$ starting with~$1$ is symmetric, this proves the first statement.

Moreover, if $\sigma = L$, then $\ba < 0\overline{1}$, $\bb > 1 \overline{0}$ imply that $L(\ba) < \overline{01}$, $L(\bb) > 1 \overline{0}$, which gives together with $\bu \notin \{\overline{0}, \overline{1}\}$ that $\bv \notin \{\overline{0}, \overline{1}\}$.
In case $\sigma = R$, we also have $\bv \notin \{\overline{0}, \overline{1}\}$ by symmetry.

Let now $\sigma_1 \cdots \sigma_n \in \{L,R\}^*$, $n \ge 1$, $\bu \in \Omega_{\sigma_1 \cdots \sigma_n(\ba),\sigma_1\cdots\sigma_n(\bb)} \setminus \{\overline{0}, \overline{1}\}$, $\ba < 0\overline{1}$, $\bb > 1 \overline{0}$.
We have proved that $\bu \in \{0,1\}^* (\Omega_{\sigma_2\cdots\sigma_n(\ba),\sigma_2\cdots\sigma_n(\bb)} \setminus \{\overline{0}, \overline{1}\})$, and we obtain inductively that $\bu \in \{0,1\}^* (\Omega_{\ba,\bb} \setminus \{\overline{0}, \overline{1}\})$.
\end{proof}

The following lemma shows that the map
\begin{equation*}
\begin{aligned}
s:\ & \{0,1\}^\infty \to \{L,M,R\}^\infty \setminus \{L,M,R\}^*\{L\overline{R}, R\overline{L}\}, \\
& \mathbf{u} \mapsto \bsigma \quad \text{if $\bsigma(\overline{0}) \le \bu \le \bsigma(0\overline{1})$ or $\bsigma(1\overline{0}) \le \bu \le \bsigma(\overline{1})$},
\end{aligned}
\end{equation*}
is well-defined and monotonically increasing on $0\{0,1\}^\infty$ as well as on $1\{0,1\}^\infty$, where sequences in $\{L,M,R\}^\infty$ are ordered lexicographically with $L<M<R$.
Some of the values of $s$ are shown in Figure~\ref{f:IJ}.
Note that $\bsigma(i_1i_2\cdots)$ depends only on~$i_1$ when the length of $\sigma_1 \cdots \sigma_n(i_1)$ is unbounded.
In particular, we have $\bsigma(\overline{0}) \,{=}\, \bsigma(0\overline{1})$ and $\bsigma(1\overline{0}) \,{=}\, \bsigma(\overline{1})$ for primitive~$\bsigma$, $\sigma\overline{R}(\overline{0}) \,{=}\, \sigma\overline{R}(0\overline{1})$ and  $\sigma\overline{L}(1\overline{0}) \,{=}\, \sigma\overline{L}(\overline{1})$ for all $\sigma \in \{L,M,R\}^*$, and
\begin{equation} \label{e:LR}
\begin{aligned}
\overline{L}(\overline{0}) & = \overline{0}, & \overline{L}(\overline{1}) = \overline{L}(1\overline{0}) & = 1\overline{0}, & \overline{L}(0\overline{1}) & = 01\overline{0}, \\
\overline{R}(\overline{1}) & = \overline{1}, & \overline{R}(\overline{0}) = \overline{R}(0\overline{1}) & = 0\overline{1}, & \overline{R}(1\overline{0}) & = 10\overline{1}.
\end{aligned}
\end{equation}

\begin{figure}[ht]
\centerline{\begin{tikzpicture}[scale=.85]
\draw(-1,.1)--(-1,0)node[above right,rotate=45]{$\overline{0}$};
\draw(0,.1)--(0,0)node[above right,rotate=45]{$01\overline{0}$};
\draw(1,.1)--(1,0)node[above right,rotate=45]{$\overline{010}$};
\draw(2,.1)--(2,0)node[above right,rotate=45]{$010100\overline{010}$};
\draw(3,.1)--(3,0)node[above right,rotate=45]{$01\overline{010}$};
\draw(4,.1)--(4,0)node[above right,rotate=45]{$\overline{01}$};
\draw(5,.1)--(5,0)node[above right,rotate=45]{$0110\overline{01}$};
\draw(6,.1)--(6,0)node[above right,rotate=45]{$\overline{0110}$};
\draw(7,.1)--(7,0)node[above right,rotate=45]{$01101001\overline{0110}$};
\draw(8,.1)--(8,0)node[above right,rotate=45]{$0110\overline{1001}$};
\draw(9,.1)--(9,0)node[above right,rotate=45]{$01\overline{10}$};
\draw(10,.1)--(10,0)node[above right,rotate=45]{$\overline{011}$};
\draw(11,.1)--(11,0)node[above right,rotate=45]{$01110\overline{101}$};
\draw(12,.1)--(12,0)node[above right,rotate=45]{$01\overline{110}$};
\draw(13,.1)--(13,0)node[above right,rotate=45]{$0\overline{1}$};
\draw[dotted](0,0)--(1,0) (2,0)--(4,0) (5,0)--(6,0) (7,0)--(10,0) (11,0)--(13,0);
\draw(-1,0)--(0,0) (1,0)--(2,0) (4,0)--(5,0)  (6,0)--(7,0) (10,0)--(11,0);
\node[below] at (-.5,0){$\overline{L}$};
\node[below] at (1.5,0){$LM\overline{L}$};
\node[below] at (3,0){$LM\overline{R}$};
\node[below] at (4.5,0){$M\overline{L}$};
\node[below] at (6.5,0){$MM\overline{L}$};
\node[below] at (7.95,0){$MM\overline{R}$};
\node[below] at (9.05,0){$M\overline{R}$};
\node[below] at (10.5,0){$RM\overline{L}$};
\node[below] at (12,0) {$RM\overline{R}$};
\node[below] at (13,0){$\overline{R}$};
\draw[dotted](0,-.9)--(1,-.9) (3,-.9)--(4,-.9) (9,-.9)--(10,-.9) (12,-.9)--(13,-.9);
\draw(0,-.9)--(0,-.8) (1,-.8)--(1,-.9)--(3,-.9)--(3,-.8) (4,-.8)--(4,-.9)--(9,-.9)--(9,-.8) (10,-.8)--(10,-.9)--(12,-.9)--(12,-.8) (13,-.8)--(13,-.9);
\node[below] at (1,-.9){$LM(\overline{0})$};
\node[below] at (2.8,-.9){$LM(0\overline{1})$};
\node[below] at (4.2,-.9){$M(\overline{0})$};
\node[below] at (8.8,-.9){$M(0\overline{1})$};
\node[below] at (10.2,-.9){$RM(\overline{0})$};
\node[below] at (12,-.9){$RM(0\overline{1})$};
\begin{scope}[shift={(-1,-3.75)}]
\draw(0,.1)--(0,0)node[above right,rotate=45]{$1\overline{0}$};
\draw(1,.1)--(1,0)node[above right,rotate=45]{$10\overline{001}$};
\draw(2,.1)--(2,0)node[above right,rotate=45]{$10001\overline{010}$};
\draw(3,.1)--(3,0)node[above right,rotate=45]{$\overline{100}$};
\draw(4,.1)--(4,0)node[above right,rotate=45]{$10\overline{01}$};
\draw(5,.1)--(5,0)node[above right,rotate=45]{$1001\overline{0110}$};
\draw(6,.1)--(6,0)node[above right,rotate=45]{$10010110\overline{1001}$};
\draw(7,.1)--(7,0)node[above right,rotate=45]{$\overline{1001}$};
\draw(8,.1)--(8,0)node[above right,rotate=45]{$1001\overline{10}$};
\draw(9,.1)--(9,0)node[above right,rotate=45]{$\overline{10}$};
\draw(10,.1)--(10,0)node[above right,rotate=45]{$10\overline{101}$};
\draw(11,.1)--(11,0)node[above right,rotate=45]{$101011\overline{101}$};
\draw(12,.1)--(12,0)node[above right,rotate=45]{$\overline{101}$};
\draw(13,.1)--(13,0)node[above right,rotate=45]{$10\overline{1}$};
\draw(14,.1)--(14,0)node[above right,rotate=45]{$\overline{1}$};
\draw[dotted](0,0)--(2,0) (3,0)--(6,0) (7,0)--(8,0) (9,0)--(11,0) (12,0)--(13,0);
\draw(2,0)--(3,0) (6,0)--(7,0)  (8,0)--(9,0) (11,0)--(12,0) (13,0)--(14,0);
\node[below] at (0,0){$\overline{L}$};
\node[below] at (1,0){$LM\overline{L}$};
\node[below] at (2.5,0){$LM\overline{R}$};
\node[below] at (4,0){$M\overline{L}$};
\node[below] at (5,0){$MM\overline{L}$};
\node[below] at (6.5,0){$MM\overline{R}$};
\node[below] at (8.5,0){$M\overline{R}$};
\node[below] at (10,0){$RM\overline{L}$};
\node[below] at (11.5,0) {$RM\overline{R}$};
\node[below] at (13.5,0){$\overline{R}$};
\draw[dotted](0,-.9)--(1,-.9) (3,-.9)--(4,-.9) (9,-.9)--(10,-.9) (12,-.9)--(13,-.9);
\draw(0,-.8)--(0,-.9) (1,-.8)--(1,-.9)--(3,-.9)--(3,-.8) (4,-.8)--(4,-.9)--(9,-.9)--(9,-.8) (10,-.8)--(10,-.9)--(12,-.9)--(12,-.8) (13,-.8)--(13,-.9);
\node[below] at (1,-.9){$LM(1\overline{0})$};
\node[below] at (2.8,-.9){$LM(\overline{1})$};
\node[below] at (4.2,-.9){$M(1\overline{0})$};
\node[below] at (8.8,-.9){$M(\overline{1})$};
\node[below] at (10.2,-.9){$RM(1\overline{0})$};
\node[below] at (12,-.9){$RM(\overline{1})$};
\end{scope}
\end{tikzpicture}}
\caption{Some values of~$s$ and some intervals $[\sigma(\overline{0}), \sigma(0\overline{1})]$, $[\sigma(1\overline{0}), \sigma(\overline{1})]$, $\sigma \in \{L,R\}^*M$.} \label{f:IJ}
\end{figure}

\begin{lemma} \label{l:partition}
We have the partitions (with lexicographic intervals)
\begin{equation*}
\begin{aligned}
{}[\overline{0}, 0\overline{1}] \ & = \hspace{-4em} \bigcup_{\bsigma\in\{L,M,R\}^\infty \setminus \{L,M,R\}^*\{L\overline{R},R\overline{L}\}} \hspace{-4em} [\bsigma(\overline{0}), \bsigma(0\overline{1})], & (01\overline{0}, 0\overline{1}) & = \hspace{-1em} \bigcup_{\sigma\in\{L,R\}^*M} \hspace{-1em} [\sigma(\overline{0}), \sigma(0\overline{1})] \ \cup \hspace{-2.5em} \bigcup_{\bsigma\in\{L,R\}^\infty\,\text{primitive}} \hspace{-2.5em} \{\bsigma(\overline{0})\}, \\
{}[1\overline{0}, \overline{1}] \ & = \hspace{-4em} \bigcup_{\bsigma\in\{L,M,R\}^\infty \setminus \{L,M,R\}^*\{L\overline{R},R\overline{L}\}} \hspace{-4em} [\bsigma(1\overline{0}), \bsigma(\overline{1})], & (1\overline{0}, 10\overline{1}) & = \hspace{-1em} \bigcup_{\sigma\in\{L,R\}^*M} \hspace{-1em} [\sigma(1\overline{0}), \sigma(\overline{1})] \ \cup \hspace{-2.5em} \bigcup_{\bsigma\in\{L,R\}^\infty\,\text{primitive}} \hspace{-2.5em} \{\bsigma(\overline{1})\}. \end{aligned}
\end{equation*}
For $\bsigma, \btau \in \{L,M,R\}^\infty \setminus \{L,M,R\}^*\{L\overline{R},R\overline{L}\}$ with $\bsigma < \btau$, we have $\bsigma(0\overline{1}) < \btau(\overline{0})$ and $\bsigma(\overline{1}) < \btau(1\overline{0})$.
\end{lemma}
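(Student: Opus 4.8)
The plan is to prove the refinement of $[\overline0,0\overline1]$, the refinement of the open interval $(01\overline0,0\overline1)$, and the separation inequality together by a single recursive construction; the statements for $[1\overline0,\overline1]$ and $(1\overline0,10\overline1)$ then follow by the $0\leftrightarrow1$, $L\leftrightarrow R$ symmetry used in Lemma~\ref{l:infsup}. First I would record the facts that drive everything. Every $\rho\in\{L,M,R\}^*$ and every limit map $\bsigma$, $\bsigma\in\{L,M,R\}^\infty$, is strictly order-preserving and injective on $\{0,1\}^\infty$, since $L(0)=0$, $M(0)=R(0)=01$ all begin with $0$ while $L(1)=M(1)=10$, $R(1)=1$ all begin with $1$. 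Writing $I_\rho:=[\rho(\overline0),\rho(0\overline1)]$, one has $I_{\rho\sigma}\subseteq I_\rho$, and, combining \eqref{e:LR} with the coincidences $L\overline R(\overline0)=M(\overline0)=R\overline L(\overline0)$, $R\overline L(\overline1)=M(\overline1)=L\overline R(\overline1)$ and their reflections, the one-step pictures are $L(\overline0)=\overline0$, $L(0\overline1)=M(\overline0)=R(\overline0)=\overline{01}$, $M(0\overline1)=R(01\overline0)=01\overline{10}$, $R(0\overline1)=0\overline1$, $L(01\overline0)=01\overline0$, $\overline L(0\overline1)=01\overline0$, $\overline R(\overline0)=\overline R(0\overline1)=0\overline1$. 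The coincidences $L(0\overline1)=M(\overline0)=R(\overline0)=\overline{01}$ and $M(0\overline1)=R(01\overline0)=01\overline{10}$ are exactly why the tails $L\overline R$ and $R\overline L$ are deleted from the index set: at each of them the canonical representative is the middle one, beginning with $M$.

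To prove the separation inequality (which simultaneously yields pairwise disjointness of all the intervals occurring in the four decompositions), take $\bsigma<\btau$ in the index set, write $\bsigma=\rho\sigma\bsigma'$, $\btau=\rho\tau\btau'$ with $\rho$ the longest common prefix and $\sigma<\tau$ in $\{L,M,R\}$; by order-preservation and injectivity of $\rho$ it suffices to prove $\sigma(\bsigma'(0\overline1))<\tau(\btau'(\overline0))$. The exclusion of $L\overline R$, $R\overline L$ forces $\bsigma'\ne\overline R$ if $\sigma=L$ and $\btau'\ne\overline L$ if $\tau=R$. For $(\sigma,\tau)\in\{(L,M),(L,R)\}$ one has $\sigma(\bsigma'(0\overline1))<L(0\overline1)=\overline{01}=M(\overline0)=R(\overline0)\le\tau(\btau'(\overline0))$, strictly in the $(L,R)$ case because $\btau'\ne\overline L$ gives $\btau'(\overline0)>\overline0$. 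The delicate case is $(M,R)$, since then $M(\bsigma'(0\overline1))$ may equal $M(0\overline1)=01\overline{10}>\overline{01}=R(\overline0)$; here one descends one step: being $\ne\overline L$, $\btau'$ begins with $L^k\kappa$ for some $k\ge0$ and $\kappa\in\{M,R\}$, and $L$ fixes $01\overline0$, so $\btau'(\overline0)\ge L^k(\kappa(\overline0))=L^k(\overline{01})>L^k(01\overline0)=01\overline0$, whence $R(\btau'(\overline0))>R(01\overline0)=01\overline{10}\ge M(0\overline1)\ge M(\bsigma'(0\overline1))$.

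For covering of $[\overline0,0\overline1]$ I build $\bsigma=s(\bu)$ greedily: starting from the empty prefix, whenever the current $\rho$ satisfies $\bu\in I_\rho$, append $L$ if $\bu<\rho(\overline{01})$, append $M$ if $\rho(\overline{01})\le\bu\le\rho(01\overline{10})$, append $R$ if $\bu>\rho(01\overline{10})$; since $\rho(\overline0)\le\rho(\overline{01})\le\rho(01\overline{10})\le\rho(0\overline1)$ these three ranges partition $I_\rho$, and the one-step picture transported by $\rho$ shows $\bu$ remains in the new interval. If the appended letters are eventually all $L$, then $\bsigma$ ends with $\overline L$ and $\bigcap_n I_{\sigma_1\cdots\sigma_n}$ is the non-degenerate interval $[\bsigma(\overline0),\bsigma(0\overline1)]\ni\bu$; moreover the letter before the $\overline L$-tail is never $R$, because appending $R$ requires $\bu>\rho(01\overline{10})$ while appending $L$ forever afterwards would force $\bu\le\rho R\overline L(0\overline1)=\rho(01\overline{10})$, a contradiction, so $\bsigma$ is $\overline L$ or of the form $\rho M\overline L$, hence in the index set. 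Symmetrically if the letters are eventually all $R$, giving $\overline R$ or $\rho M\overline R$. Otherwise infinitely many appended letters differ from $L$, so $\bsigma$ is primitive, the common prefix of $\sigma_1\cdots\sigma_n(\overline0)$ and $\sigma_1\cdots\sigma_n(0\overline1)$ contains $\sigma_1\cdots\sigma_n(0)$, whose length tends to infinity (each $\sigma_n\in\{M,R\}$ strictly lengthens it), and $\bigcap_n I_{\sigma_1\cdots\sigma_n}=\{\bsigma(\overline0)\}=\{\bu\}$. Thus $\bu$ lies in exactly one listed set, which together with disjointness proves the partition of $[\overline0,0\overline1]$, and the one of $[1\overline0,\overline1]$ follows by symmetry.

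Finally, for $(01\overline0,0\overline1)$ one runs the same greedy rule along $L$ and $R$ only: at $\rho\in\{L,R\}^*$ with $\bu\in I_\rho$, append $L$ if $\bu<\rho(\overline{01})$, stop with output $\rho M\in\{L,R\}^*M$ if $\rho(\overline{01})\le\bu\le\rho(01\overline{10})$ (then $\bu\in I_{\rho M}$), append $R$ if $\bu>\rho(01\overline{10})$. Since $01\overline0=\overline L(0\overline1)$ and $0\overline1=\overline R(\overline0)$, the two endpoints are precisely the words whose would-be index $\overline L$ or $\overline R$ is absent from $\{L,R\}^*M\cup\{\bsigma\in\{L,R\}^\infty\text{ primitive}\}$; for $\bu$ strictly inside, the procedure either stops (giving a leaf $I_{\rho M}$, $\rho\in\{L,R\}^*$) or never stops, producing a primitive $\bsigma\in\{L,R\}^\infty$ with $\bigcap_n I_{\sigma_1\cdots\sigma_n}=\{\bsigma(\overline0)\}$; it cannot end in $\overline L$ or $\overline R$, because reaching such a tail would force a stop at the $M$-branch one step earlier, by the same contradiction as before. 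Disjointness is again the separation inequality restricted to $\{L,R\}$-indices, and the partition of $(1\overline0,10\overline1)$ is the reflection. I expect the main obstacle to be pinning down the branching thresholds --- the coincidence $M(0\overline1)=R(01\overline0)$ at the triple point $\overline{01}$ is precisely what makes the naive ``apply $L$, $M$, or $R$ to the whole interval'' decomposition fail --- and then checking, in the covering step, that the greedy rule never outputs one of the forbidden tails $L\overline R$ or $R\overline L$; the $(M,R)$ case of the separation inequality is the other place where extra care is needed.
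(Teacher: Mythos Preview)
Your proof is correct and rests on the same one-step picture as the paper's (the intervals $I_{\rho L}=[\rho(\overline0),\rho(\overline{01})]$, $I_{\rho M}=[\rho(\overline{01}),\rho(01\overline{10})]$, $I_{\rho R}=[\rho(\overline{01}),\rho(0\overline1)]$, together with the coincidences $L(0\overline1)=M(\overline0)$ and $M(0\overline1)=R(01\overline0)$), but the logical order is reversed. The paper first writes down the explicit five-piece splitting~\eqref{e:partJ} of $(\sigma(01\overline0),\sigma(0\overline1))$, iterates it, and reads off both the partition and the separation inequality as consequences of the ordered tree structure; it then obtains the $\{L,R\}^*M$ partition by \emph{merging} all leaves with a common $\{L,R\}^*M$ prefix. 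You instead prove the separation inequality \emph{first}, by a direct case analysis on the first differing letter (your handling of the $(M,R)$ case, descending to $\btau'=L^k\kappa\btau''$ with $\kappa\in\{M,R\}$ to force $\btau'(\overline0)>01\overline0$, is a genuine computation the paper leaves implicit), and then establish covering by a greedy construction of $s(\bu)$ together with a verification that the greedy rule never produces the forbidden tails $R\overline L$ or $L\overline R$. Your approach buys an explicit, self-contained argument for the strict inequality $\bsigma(0\overline1)<\btau(\overline0)$; the paper's buys a cleaner treatment of the $\{L,R\}^*M$ partition (your line ``disjointness is the separation inequality restricted to $\{L,R\}$-indices'' is correct but needs the remark that $\sigma,\sigma'\in\{L,R\}^*M$ extend to $\sigma\overline R,\sigma'\overline L$ in the index set, neither a prefix of the other since both end in~$M$, so the infinite-sequence separation applies).
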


\begin{proof}
Since $L$, $M$ and~$R$ are strictly monotonically increasing on $\{0,1\}^\infty$, we have, for any $\sigma \in \{L,M,R\}^*$, the partition of the open interval
\begin{equation} \label{e:partJ}
\begin{aligned}
\hspace{-2em}
\big(\sigma(01\overline{0}), \sigma(0\overline{1})\big) & = \underbrace{\big(\sigma(01\overline{0}), \sigma(\overline{01})\big)}_{(\sigma L(01\overline{0}), \sigma L(0\overline{1}))} \, \cup \, \underbrace{\big[\sigma(\overline{01}), \sigma(0110\overline{01})\big]}_{[\sigma M\overline{L}(\overline{0}), \sigma M\overline{L}(0\overline{1})]} \\
& \hspace{-2em} \cup \, \underbrace{\big(\sigma(0110\overline{01}), \sigma(01\overline{10})\big)}_{(\sigma M(01\overline{0}), \sigma M(0\overline{1}))} \, \cup \hspace{-.5em} \underbrace{\big\{\sigma(01\overline{10})\big\}}_{[\sigma M\overline{R}(\overline{0}), \sigma M\overline{R}(0\overline{1})]} \hspace{-.5em} \cup \, \underbrace{\big(\sigma(01\overline{10}), \sigma(0\overline{1})\big)}_{(\sigma R(01\overline{0}), \sigma R(0\overline{1}))}.
\end{aligned}
\end{equation}
Starting from $\sigma = \mathrm{id}$ and iterating this partition for $\sigma \in \{L,M,R\}^*$, we get that
\begin{equation*}
(01\overline{0}, 0\overline{1}) = \hspace{-1em} \bigcup_{\sigma\in\{L,M,R\}^*} \hspace{-1em} \big([\sigma M\overline{L}(\overline{0}), \sigma M\overline{L}(0\overline{1})] \ \cup \ [\sigma M\overline{R}(\overline{0}), \sigma M\overline{R}(0\overline{1})]\big) \ \cup \hspace{-2.5em} \bigcup_{\bsigma\in\{L,M,R\}^\infty\,\text{primitive}} \hspace{-2.5em} \{\bsigma(\overline{0})\}.
\end{equation*}
Here, we have used that all sequences outside the union over $\{L,M,R\}^*$ are in $\bigcap_{n\ge1} (\sigma_1\cdots\sigma_n(01\overline{0}), \sigma_1\cdots\sigma_n(0\overline{1}))$ for some $\bsigma = (\sigma_n)_{n\ge1} \in \{L,M,R\}^\infty$; this intersection is empty when $\bsigma$ ends with $\overline{L}$ or $\overline{R}$ because the intervals are open and converge to their left or right endpoint, and it consists of $\bsigma(\overline{0})$ when $\bsigma$ is primitive.
Since $[\overline{L}(\overline{0}), \overline{L}(0\overline{1})] = [\overline{0}, 01\overline{0}]$, $[\overline{R}(\overline{0}), \overline{R}(0\overline{1})] = \{0\overline{1}\}$, and
\begin{equation*}
\begin{aligned}
& \{L,M,R\}^\infty \setminus \{L,M,R\}^* \{L\overline{R},R\overline{L}\} \\
& \quad = \{\bsigma \in \{L,M,R\}^\infty \,:\, \bsigma\ \text{primitive}\} \cup \{L,M,R\}^* M \{\overline{L},\overline{R}\} \cup \{\overline{L},\overline{R}\},
\end{aligned}
\end{equation*}
this shows that the intervals $[\bsigma(\overline{0}), \bsigma(0\overline{1})]$ form a partition of $[\overline{0}, 0\overline{1}]$, and that $\bsigma(0\overline{1}) < \btau(\overline{0})$ for all $\bsigma, \btau \in \{L,M,R\}^\infty \setminus \{L,M,R\}^*\{L\overline{R},R\overline{L}\}$ with $\bsigma < \btau$.

From~\eqref{e:partJ}, we also see that $[\sigma_1\cdots\sigma_n(\overline{0}), \sigma_1\cdots\sigma_n(0\overline{1})]$, $\sigma_1\cdots\sigma_n \in \{L,R\}^*M$, is the union over all $[\btau(\overline{0}), \btau(0\overline{1})]$, $\btau \in \{L,M,R\}^\infty \setminus \{L,M,R\}^*\{L\overline{R},R\overline{L}\}$ with $\tau_1 \cdots \tau_n = \sigma_1 \cdots \sigma_n$, which proves the partition of $(01\overline{0}, 0\overline{1})$.

The proofs for sequences starting with~$1$ are symmetric, by exchanging $0$ and~$1$, $L$ and~$R$, as well as the left and right endpoints of the intervals.
\end{proof}

Using the map~$s$, we can write (\ref{i:lex3})--(\ref{i:lex5}) of Theorem~\ref{t:lex} in a simpler way.

\begin{proposition} \label{p:lex}
Let $\ba \in 0\{0,1\}^\infty$, $\bb \in 1\{0,1\}^\infty$. Then we have the following.
\begin{enumerate}[\upshape (i')]
\itemsep.5ex
\setcounter{enumi}{2}
\item \label{i:plex3}
$\Omega_{\ba,\bb}$ is uncountable with positive entropy if and only if $s(\ba) > s(\bb)$.
\item \label{i:plex4}
$\Omega_{\ba,\bb}$ is uncountable with zero entropy if and only if $s(\ba) = s(\bb)$ is primitive.
\item \label{i:plex5}
$\Omega_{\ba,\bb}$ is countable if and only if $s(\ba) < s(\bb)$ or $s(\ba) = s(\bb)$ ends with $\overline{L}$ or~$\overline{R}$.
\end{enumerate}
\end{proposition}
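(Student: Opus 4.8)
The plan is to deduce Proposition~\ref{p:lex} from parts~(iii)--(v) of Theorem~\ref{t:lex} by translating the lexicographic inequalities on $\ba$ and~$\bb$ into order relations between $s(\ba)$ and~$s(\bb)$. By Lemma~\ref{l:partition}, for $\ba \in 0\{0,1\}^\infty$ there is a unique $\bsigma = s(\ba)$ in the image of~$s$ with $\bsigma(\overline{0}) \le \ba \le \bsigma(0\overline{1})$, for $\bb \in 1\{0,1\}^\infty$ a unique $s(\bb)$ with $s(\bb)(1\overline{0}) \le \bb \le s(\bb)(\overline{1})$, and the intervals $[\bsigma(\overline{0}),\bsigma(0\overline{1})]$, resp.\ $[\bsigma(1\overline{0}),\bsigma(\overline{1})]$, are arranged in the lexicographic order of~$\bsigma$, with $\bsigma(0\overline{1}) < \btau(\overline{0})$ and $\bsigma(\overline{1}) < \btau(1\overline{0})$ whenever $\bsigma < \btau$. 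From this one reads off, for every $\btau$ in the image of~$s$, the equivalences $\ba \ge \btau(\overline{0}) \Leftrightarrow s(\ba) \ge \btau$, $\ba \le \btau(0\overline{1}) \Leftrightarrow s(\ba) \le \btau$, $\ba < \btau(\overline{0}) \Leftrightarrow s(\ba) < \btau$, $\ba > \btau(0\overline{1}) \Leftrightarrow s(\ba) > \btau$, together with the mirror statements involving $\btau(1\overline{0})$, $\btau(\overline{1})$ and~$\bb$.

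Next I would rewrite, via the identities~\eqref{e:LR}, the comparison words occurring in Theorem~\ref{t:lex}. One has $\sigma M(\overline{0}) = \sigma M\overline{L}(\overline{0})$, $\sigma M(1\overline{0}) = \sigma M\overline{L}(1\overline{0})$, $\sigma M(0\overline{1}) = \sigma M\overline{R}(0\overline{1})$, $\sigma M(\overline{1}) = \sigma M\overline{R}(\overline{1})$, and $\sigma M\overline{L}$, $\sigma M\overline{R}$ always lie in the image of~$s$; substituting into the condition of Theorem~\ref{t:lex}(iii) turns it into: there is $\sigma \in \{L,M,R\}^*$ with $s(\bb) < \sigma M\overline{L} \le s(\ba)$ or $s(\bb) \le \sigma M\overline{R} < s(\ba)$. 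Likewise for part~(v) one has $\sigma(01\overline{0}) = \sigma\overline{L}(0\overline{1})$, $\sigma(1\overline{0}) = \sigma\overline{L}(1\overline{0})$, $\sigma(0\overline{1}) = \sigma\overline{R}(0\overline{1})$, $\sigma(10\overline{1}) = \sigma\overline{R}(1\overline{0})$. The delicate point here is that $\sigma\overline{L}$, resp.\ $\sigma\overline{R}$, need not be in the image of~$s$; in that case I would use the word identities $R\overline{L}(0\overline{1}) = M\overline{R}(0\overline{1})$, $R(1\overline{0}) = M(\overline{1})$, resp.\ $L(0\overline{1}) = M(\overline{0})$, $L(10\overline{1}) = M(1\overline{0})$, to re-express the offending endpoint through a sequence ending in $M\overline{R}$, resp.\ $M\overline{L}$, which does lie in the image.

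Using in addition that $s$ of an endpoint $\sigma(01\overline{0})$ equals $s$ of the matching endpoint $\sigma(1\overline{0})$, and analogously for $\sigma(0\overline{1})$ and $\sigma(10\overline{1})$, one concludes that Theorem~\ref{t:lex}(v) holds if and only if there is a sequence $\btau$ in the image of~$s$, ending with $\overline{L}$ or~$\overline{R}$, such that $s(\ba) \le \btau \le s(\bb)$. Finally Theorem~\ref{t:lex}(iv) translates directly: for primitive~$\bsigma$ the intervals $[\bsigma(\overline{0}),\bsigma(0\overline{1})]$ and $[\bsigma(1\overline{0}),\bsigma(\overline{1})]$ are singletons, so ``$\ba = \bsigma(\overline{0})$, $\bb = \bsigma(\overline{1})$ with $\bsigma$ primitive'' is the same as ``$s(\ba) = s(\bb)$ primitive'', which is statement~(iv$'$).

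It then remains to verify two purely combinatorial equivalences for sequences in $\{L,M,R\}^\infty$ ordered by $L<M<R$: (a)~there is $\sigma$ with $s(\bb) < \sigma M\overline{L} \le s(\ba)$ or $s(\bb) \le \sigma M\overline{R} < s(\ba)$ if and only if $s(\ba) > s(\bb)$; and (b)~there is $\btau$ in the image of~$s$ ending with $\overline{L}$ or~$\overline{R}$ with $s(\ba) \le \btau \le s(\bb)$ if and only if $s(\ba) < s(\bb)$, or $s(\ba) = s(\bb)$ ends with $\overline{L}$ or~$\overline{R}$. In both, the only nonobvious implication is the construction of a separator when $s(\ba) \ne s(\bb)$: writing $\rho$ for the longest common prefix of $s(\ba)$ and $s(\bb)$ and $\{c,c'\}$ for the pair of first letters on which they differ, one has $\{c,c'\}$ equal to $\{L,M\}$, $\{L,R\}$ or~$\{M,R\}$, and in each case $\rho M\overline{L}$ or $\rho M\overline{R}$ does the job; the other implications and the case $s(\ba)=s(\bb)$ are immediate. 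Combining (a), (b) and~(iv$'$) with Theorem~\ref{t:lex}(iii)--(v)---and noting that the three resulting conditions on $(s(\ba),s(\bb))$ are mutually exclusive and jointly exhaustive, matching the corresponding trichotomy for~$\Omega_{\ba,\bb}$---gives Proposition~\ref{p:lex}. The step I expect to be the main obstacle is purely organisational: tracking strict versus non-strict inequalities through the endpoint translations, and in particular dealing with the exceptional trailing-$L$ and trailing-$R$ cases of part~(v), where the naive comparison word lies outside the image of~$s$ and must be replaced by an equivalent one inside it.
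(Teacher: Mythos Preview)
Your approach is correct, but it takes a different route from the paper. You treat Theorem~\ref{t:lex}(iii)--(v) as already established and then translate each of its word-inequality conditions into an order relation between $s(\ba)$ and $s(\bb)$, carefully rewriting the comparison words via~\eqref{e:LR} and replacing the problematic sequences $\sigma R\overline{L}$, $\sigma L\overline{R}$ by $\sigma M\overline{R}$, $\sigma M\overline{L}$ so that everything lands in the image of~$s$. This works, and your separator argument in~(a) and~(b) is clean.

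The paper instead proves Theorem~\ref{t:lex} and Proposition~\ref{p:lex} in one pass: it first establishes the ``if'' directions of (iii)--(v) directly (constructing positive-entropy subshifts, countable orbits, etc.), and then, to obtain the ``only if'' directions, it argues exhaustiveness by reading off $s(\ba)$ and $s(\bb)$ and splitting into the three cases $s(\ba)>s(\bb)$, $s(\ba)<s(\bb)$, and $s(\ba)=s(\bb)$. In each case the paper lands in condition (iii), (v), or (iv) respectively, and since the three outcomes for $\Omega_{\ba,\bb}$ are mutually exclusive this simultaneously closes both Theorem~\ref{t:lex}(iii)--(v) and Proposition~\ref{p:lex}. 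The upshot is that the paper never needs your endpoint translations or the handling of $\sigma\overline{L}$, $\sigma\overline{R}$ outside the image of~$s$; that bookkeeping is the price you pay for treating Theorem~\ref{t:lex} as a black box. Conversely, your argument is more modular and would survive unchanged if Theorem~\ref{t:lex} were imported from elsewhere.
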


\begin{proof}[Proof of Theorem~\ref{t:LM}]
For all $\bu \in \{0,1\}^\infty$, $\ba \le \bu \le \bb$ is equivalent to $0\bu \le 0\bb$ and $1\bu \ge 1\ba$, thus $\bu \in \Sigma_{\ba,\bb}$ implies that $\bu$, $0\bu$ and $1\bu$ are in $\Omega_{0\bb,1\ba}$.
Moreover, $0\bu \in \Omega_{0\bb,1\ba}$ implies that $00\bu \le 0\bu \le \ba$, thus $00\bu \in \Omega_{0\bb,1\ba}$, similarly $1\bu \in \Omega_{0\bb,1\ba}$ gives that $11\bu \in \Omega_{0\bb,1\ba}$.
On the other hand, $01\bu \in \Omega_{0\bb,1\ba}$ implies that $1\bu \in \Sigma_{\ba,\bb}$.
Indeed, for $1\bu = i_1i_2\cdots$, we see that $\ba \le i_ni_{n+1}\cdots \le \bb$ for all $n \ge 1$ by induction on~$n$.
This holds for $n=1$ because $01\bu \ge 0\bb$ implies that $\ba \le 1\bu \le \bb$;
if it holds for $n$, then $i_n = 1$ implies that $\ba \le i_{n+1} i_{n+2} \cdots \le i_n i_{n+1} \cdots \le \bb$ since $i_n i_{n+1}\cdots \ge 1\ba$, and $i_n = 0$ implies that $\ba \le i_n i_{n+1}\cdots \le i_{n+1} i_{n+2} \cdots \le \bb$ since $i_n i_{n+1}\cdots \le 0\bb$, thus it holds for $n{+}1$. 
Similarly, $10\bu \in \Omega_{0\bb,1\ba}$ implies that $0\bu \in \Sigma_{\ba,\bb}$.
This shows that $\Omega_{0\bb,1\ba} = 0^* \Sigma_{\ba,\bb} \cup 1^* \Sigma_{\ba,\bb} \cup \{\overline{0}, \overline{1}\}$, hence $\Sigma_{\ba,\bb} = \emptyset$ if and only if $\Omega_{0\bb,1\ba} = \{\overline{0}, \overline{1}\}$, and the countability of $\Sigma_{\ba,\bb}$ and $\Omega_{0\bb,1\ba}$ are equivalent.
Since
\[
A_n(\Sigma_{\ba,\bb}) \le  A_n(\Omega_{0\bb,1\ba}) \le 2\sum_{k=0}^n A_k(\Sigma_{\ba,\bb}) \le 2(n{+}1) A_n(\Sigma_{\ba,\bb}),
\]
$\Sigma_{\ba,\bb}$ and $\Omega_{0\bb,1\ba}$ have the same entropy; see also \cite[Lemma~2.5]{KomKonLi2017}.
\end{proof}

\begin{proof}[Proof of Theorem~\ref{t:lex} and Proposition~\ref{p:lex}]
We first prove (\ref{i:lex1}) and (\ref{i:lex2}).
Let $\sigma \in \{L,R\}^*$.
Then $\supn(\sigma M(\overline{0})) = \sigma M(\overline{0})$, $\infn(\sigma M(\overline{0})) \,{=}\, \infn(\sigma M(\overline{1})) \,{=}\, \sigma M(\overline{1})$ by \eqref{e:infsup}, thus $\sigma M(\overline{0}) \in \Omega_{\sigma M(\overline{0}),\sigma M(\overline{1})} \subseteq \Omega_{\ba,\bb}$ for all $\ba \ge \sigma M(\overline{0})$, $\bb \le \sigma M(\overline{1})$.
On the other hand, by Lemma~\ref{l:Omegasigma}, each $\bu \in \Omega_{\sigma M(\overline{0}),\sigma M(1\overline{0})} \setminus \{\overline{0}, \overline{1}\}$ ends with $\sigma(\bv)$ for some $\bv \in \Omega_{M(\overline{0}),M(1\overline{0})} \setminus \{\overline{0}, \overline{1}\}$ and thus with $\sigma M(\overline{0})$ since $\Omega_{\overline{0},1\overline{0}} = 1^* \{\overline{0}, \overline{1}\}$.
Therefore, we have $\Omega_{\ba,\bb} = \{\overline{0}, \overline{1}\}$ for all $\ba < \sigma M(\overline{0})$, $\bb \ge \sigma M(1\overline{0})$.
By symmetry, $\Omega_{\ba,\bb}$ is also trivial for all $\ba \le \sigma M(0\overline{1})$, $\bb > \sigma M(\overline{1})$.

If $\bsigma \in \{L,R\}^\infty$ is primitive, then $\supn(\bsigma(\overline{0})) = \bsigma(\overline{0})$, $\infn(\bsigma(\overline{0})) = \infn(\bsigma(\overline{1})) = \bsigma(\overline{1})$ by \eqref{e:infsup} and since $X_{\bsigma(\overline{0})} = X_{\bsigma(\overline{1})}$ by \cite[Theorem~5.2]{BerDel2014}.
This implies that $\bsigma(\overline{0}) \in \Omega_{\bsigma(\overline{0}),\bsigma(\overline{1})} \subseteq \Omega_{\ba,\bb}$ for all $\ba \ge \bsigma(\overline{0})$, $\bb \le \bsigma(\overline{1})$.
Since $0\overline{1} \in \Omega_{0\overline{1},\bb}$ and  $1\overline{0} \in \Omega_{\ba, 1\overline{0}}$ for all $\ba, \bb \in \{0,1\}^\infty$, we have proved the ``if'' parts of (\ref{i:lex1}) and~(\ref{i:lex2}).

For the ``only if'' parts, since $\Omega_{\ba,\bb}$ is either trivial or not, it suffices to show that all $\ba\in 0\{0, 1\}^\infty, \bb \in 1\{0,1\}^\infty$ satisfy the conditions of (\ref{i:lex1}) or (\ref{i:lex2}).
If $\ba = 0\overline{1}$ or $\bb =1\overline{0}$, then we are in case~(\ref{i:lex1}).
For $\ba < 0\overline{1}$, $\bb > 1\overline{0}$, consider $s(\ba) \,{=}\, (\sigma_n)_{n\ge 1} \,{<}\, \overline{R}$, $s(\bb) \,{=}\, (\tau_n)_{n\ge1} \,{>}\, \overline{L}$.
If $\sigma_1 \cdots \sigma_k = \tau_1 \cdots \tau_k \in \{L,R\}^*$ and $\sigma_{k+1} \ge M \ge \tau_{k+1}$ for some $k \ge 0$ (where $\sigma_1 \cdots \sigma_k$ is the identity if $k=0$), then $\ba \ge \sigma_1 \cdots \sigma_k M(\overline{0})$, $\bb \le \sigma_1 \cdots \sigma_k M(\overline{1})$ by Lemma~\ref{l:partition}, and we are in case~(\ref{i:lex1}).
Otherwise, we have $\sigma_1 \cdots \sigma_k = \tau_1 \cdots \tau_k \in \{L,R\}^*$ and $\sigma_{k+1} < \tau_{k+1}$ for some $k \ge 0$, thus $\ba < \sigma_1 \cdots \sigma_k M(\overline{0})$, $\bb \ge \sigma_1 \cdots \sigma_k M(1\overline{0})$ or $\ba \le \sigma_1 \cdots \sigma_n M(0\overline{1})$, $\bb > \sigma_1 \cdots \sigma_n M(\overline{1})$ by Lemma \ref{l:partition}, i.e., we are in case~(\ref{i:lex2}).
This concludes the proof of (\ref{i:lex1}) and~(\ref{i:lex2}).

\smallskip
It remains to prove (\ref{i:lex3})--(\ref{i:lex5}) and (\ref{i:plex3}')--(\ref{i:plex5}').
Let $\sigma \,{\in}\, \{L,M,R\}^*$.
If $\bb < \sigma M(1\overline{0}) = \sigma(10\overline{01})$, then $\bb < \sigma(1\overline{0(01)^k})$ for some $k \ge 0$.
For all $\bu \in \{0(01)^k, 0(01)^{k+1}\}^\infty$, we have $\infn(\sigma(\bu)) \ge \infn(\sigma(1\bu)) = \sigma(\infn(1\bu)) \ge \sigma(1\overline{0(01)^k})$  and $\supn(\sigma(\bu)) = \sigma(\supn(\bu)) < \sigma(\overline{01})$, thus $\sigma(\bu) \in \Omega_{\sigma M(\overline{0}),\bb}$.
Therefore, the entropy of $h(\Omega_{\ba,\bb})$ is positive for all $\ba \,{\ge}\, \sigma M(\overline{0})$, $\bb \,{<}\, \sigma M(1\overline{0})$.
For $\ba > \sigma M(0\overline{1})$, $\bb \le \sigma M(\overline{1})$, we obtain symmetrically that $h(\Omega_{\ba,\bb}) > 0$.
This proves the ``if'' part of~(\ref{i:lex3}).

\smallskip
The set $\Omega_{01\overline{0},1\overline{0}} = \{\overline{0}, \overline{1}\} \cup 1^*0^*1\overline{0}$ is countable and $\bu \in \Omega_{\sigma_1\cdots\sigma_n(01\overline{0}),\sigma_1\cdots\sigma_n(1\overline{0})}$, $\sigma_1 \cdots \sigma_n \in \{L,M,R\}^n$, implies by Lemma~\ref{l:Omegasigma} that $\bu \in \{0,1\}^* \sigma_1\cdots\sigma_n(\Omega_{01\overline{0},1\overline{0}})$ or $\bu \in \{0,1\}^* \sigma_1\cdots\sigma_k(\{\overline{0}, \overline{1}\})$, $0 \le k < n$.
Therefore, $\Omega_{\ba,\bb}$ is countable for all $\ba \,{\le}\, \sigma(01\overline{0})$, $\bb \,{\ge}\, \sigma(1\overline{0})$, $\sigma \,{\in}\, \{L,M,R\}^*$.
The case $\ba \,{\le}\, \sigma(0\overline{1})$, $\bb \,{\ge}\, \sigma(10\overline{1})$ is symmetric, thus the ``if'' part of~(\ref{i:lex5}) holds.

\smallskip
Let now $\bsigma \in \{L,M,R\}^\infty$ be primitive, which implies that $X_{\bsigma(\overline{0})} \subseteq \Omega_{\bsigma(\overline{0}), \bsigma(\overline{1})}$.
If $\bsigma = (\sigma_n)_{n\ge1}$ ends with~$\overline{M}$, then $\bsigma(\overline{0})$ is an image of the Thue--Morse word, thus $X_{\bsigma(\overline{0})}$ is uncountable.
If $\bsigma$ does not end with~$\overline{M}$, then there exists an increasing sequence of integers $(n_k)_{k\ge0}$ with $n_0=0$ such that $\sigma_{n_{k+1}} \ne M$ and $\sigma_{n_k+1} \cdots \sigma_{n_{k+1}}(j)$ contains $01$ and $10$ for all $j \in \{0,1\}$, $k \ge 0$.
Then, for all $i \in \{0,1\}$, $k \ge 0$, there is a word $w_{i,k} \in \{0,1\}^*$ such that both $\sigma_{n_k+1} \cdots \sigma_{n_{k+1}}(0)$ and $\sigma_{n_k+1} \cdots \sigma_{n_{k+1}}(1)$ end with $(1{-}i)\, i\, w_{i,k}$; note that $\sigma_{n_k+1} \cdots \sigma_{n_{k+1}}(0)$ ends with $\sigma_{n_k+1} \cdots \sigma_{n_{k+1}}(1)$ or vice versa because $\sigma_{n_{k+1}} \in \{L,R\}$.
For each sequence $(i_k)_{k\ge1} \in \{0,1\}^\infty$, we have
\begin{equation*}
i_0 w_{i_0,1}\, \sigma_1 \cdots \sigma_{n_1}(i_1 w_{i_1,1}) \, \sigma_1 \cdots \sigma_{n_2}(i_2 w_{i_2,2})\, \cdots \in X_{\bsigma(\overline{0})}
\end{equation*}
because $\sigma_1 \cdots \sigma_{n_{k+1}}(1{-}i_{k+1})$ ends with $\sigma_1 \cdots \sigma_{n_k}((1{-}i_k) i_k w_{i_k,k})$, and different sequences give different elements of $X_{\bsigma(\overline{0})}$ because $\sigma_1 \cdots \sigma_{n_k}(i_k)$ starts with~$i_k$.
Therefore, $\Omega_{\bsigma(\overline{0}),\bsigma(\overline{1})}$ is uncountable.
For all $\ba < \bsigma(\overline{0})$, we have $\ba \le \sigma_1 \cdots \sigma_n(01\overline{0})$ for some $n \ge 0$; since $\bsigma(\overline{1}) \ge \sigma_1 \cdots \sigma_n(1\overline{0})$, we have seen above that $\Omega_{\ba,\bsigma(\overline{1})}$ is countable by~(\ref{i:lex5}).
By Theorem~\ref{t:LM}, this implies that $\Sigma_{\ba,\bb}$ is countable for $0\bb < \bsigma(\overline{0})$, $1\ba = \bsigma(\overline{1})$, thus $h(\Sigma_{\ba,\bb}) = 0$.
Since $h(\Sigma_{\ba,\bb})$ is a continuous function of $\bb$ by \cite[Theorem~4]{LabMor2006}, we also have $h(\Sigma_{\ba,\bb}) = 0$ for $0\bb = \bsigma(\overline{0})$, thus $h(\Omega_{\bsigma(\overline{0}),\bsigma(\overline{1})}) = 0$.
This proves the ``if'' part of~(\ref{i:lex4}).

For the ``only if'' parts of (\ref{i:lex3})--(\ref{i:lex5}), since $\Omega_{\ba,\bb}$ is either countable or uncountable, with zero or positive entropy, it suffices to show that all $\ba, \bb$ satisfy the conditions of (\ref{i:lex3}), (\ref{i:lex4}) or (\ref{i:lex5}).
Let $s(\ba) = (\sigma_n)_{n\ge1}$, $s(\bb) = (\tau_n)_{n\ge1}$.
If $s(\ba) > s(\bb)$, i.e., $\sigma_1 \cdots \sigma_k = \tau_1 \cdots \tau_k$, $\sigma_{k+1} > \tau_{k+1}$ for some $k \ge 0$, then $\ba \ge \sigma_1 \cdots \sigma_k M(\overline{0})$, $\bb < \sigma_1 \cdots \sigma_k M(1\overline{0})$, or $\ba > \sigma_1 \cdots \sigma_k M(0\overline{1})$, $\bb \le \sigma_1 \cdots \sigma_k M(\overline{1})$, thus we are in case~(\ref{i:lex3}).
If $s(\ba) < s(\bb)$, i.e., $\sigma_1 \cdots \sigma_k = \tau_1 \cdots \tau_k$, $\sigma_{k+1} < \tau_{k+1}$ for some $k \ge 0$, then $\ba < \sigma_1 \cdots \sigma_k M(\overline{0}) \le \sigma_1 \cdots \sigma_k M(01\overline{0})$, $\bb \ge \sigma_1 \cdots \sigma_k M(1\overline{0})$, or $\ba \le \sigma_1 \cdots \sigma_k M(0\overline{1})$, $\bb > \sigma_1 \cdots \sigma_k M(\overline{1}) \ge \sigma_1 \cdots \sigma_k M(10\overline{1})$, thus we are in case~(\ref{i:lex5}).
If $s(\ba) = s(\bb) = \sigma \overline{L}$, $\sigma \in \{L,M,R\}^*$, then $\ba \le \sigma\overline{L}(0\overline{1}) = \sigma(01\overline{0})$, $\bb \ge \sigma\overline{L}(1\overline{0}) = \sigma(1\overline{0})$, thus we are in case~(\ref{i:lex5}).
The case $s(\bb) \,{=}\, s(\ba) \,{=}\, \sigma \overline{R}$ is symmetric, and the case of primitive $s(\ba) \,{=}\, s(\bb)$ is~(\ref{i:lex4}).
This concludes the proof of (\ref{i:lex3})--(\ref{i:lex5}) as well as (\ref{i:plex3}')--(\ref{i:plex5}').
\end{proof}

\section{The maps $g,\tilde{g}$} \label{s:g}
Recall that $\ba_{q_0,q_1}$ and $\bb_{q_0,q_1}$ are defined in Section~\ref{s:lex}, and that $g_{\bu}(q_0) > 1$, $\tilde{g}_{\bv}(q_0) > 1$ (when defined) satisfy $f_{\bu}(q_0,g_{\bu}(q_0)) = 0$, $\tilde{f}_{\bv}(q_0,\tilde{g}_{\bv}(q_0)) = 0$, with
\begin{equation*}
f_{\bu}(q_0, q_1) := q_0\, (q_1\, \pi_{q_0,q_1}(\bu) - 1), \quad \tilde{f}_{\bv}(q_0,q_1) := q_1\, (q_0\, \tilde{\pi}_{q_0,q_1}(\bv) - 1).
\end{equation*}
We set $g_{\bu}(q_0) := 1$ when $f_{\bu}(q_0, q_1) = 0$ has no solution $q_1 > 1$.

We are only interested in $\bu \in W$ and $\bv \in \tilde{W}$, with
\begin{align*}
W & := \big\{\bu \in \{0,1\}^\infty \setminus \{0,1\}^* \{\overline{0}, \overline{1}\} \,:\, \supn(\bu) = \bu\big\}, \\
\tilde{W} & := \big\{\bv \in \{0,1\}^\infty \setminus \{0,1\}^* \{\overline{0}, \overline{1}\} \,:\, \infn(\bv) = \bv\big\}.
\end{align*}

\begin{lemma} \label{l:functiong}
Let $\bu \in W$.
Then the following holds.
\begin{enumerate}[\upshape(i)]
\item \label{i:g1}
For $q_0 > 1$, $q_1 \ge 1$, the function $f_{\bu}(q_0, q_1)$ is continuous and strictly decreasing in both variables $q_0,q_1$, and there is a unique $q_0 > 1$ such that $f_{\bu}(q_0,1) = 0$; we denote this $q_0$ by~$q_{\bu}$.
\item \label{i:g3}
The function $g_{\bu}(q_0)$ is continuous on $(1,\infty)$ and strictly decreasing on $(1,q_{\bu})$, with $\lim_{q_0\to 1} g_{\bu}(q_0) = \infty$ and $g_{\bu}(q_0) = 1$ for all $q_0 \ge q_{\bu}$.
\item \label{i:g5}
For $q_0, q_1 > 1$ with $q_0+q_1 \geq q_0q_1$, we have $\ba_{q_0,q_1} > \bu$ if and only if $q_1 > g_{\bu}(q_0)$.
For $q_0 \in (1,q_{\bu})$, we have $\ba_{q_0, g_{\bu}(q_0)} = \bu$.
\end{enumerate}
\end{lemma}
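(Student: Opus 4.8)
The plan is to extract everything from the series $\pi_{q_0,q_1}(\bu)=\sum_{k\ge1}u_k/(q_{u_1}\cdots q_{u_k})$. Since $\bu\in W$ satisfies $\supn(\bu)=\bu$ and is not eventually constant, a short lexicographic argument shows that $\bu$ begins with $01$, has infinitely many $0$'s and infinitely many $1$'s, and that the runs of $1$'s in $\bu$ have bounded length (at most that of the first run). Writing the nonzero terms in order, the $m$-th has denominator $q_{u_1}\cdots q_{u_{k_m}}$ with exactly $m$ factors $q_1$ and $d_m:=k_m-m\to\infty$ factors $q_0$; thus $\pi_{q_0,q_1}(\bu)\le\sum_{m\ge1}q_1^{-m}$ for $q_0\ge1$, $q_1>1$, while $\pi_{q_0,1}(\bu)\le\sum_{m\ge1}q_0^{-d_m}<\infty$ for $q_0>1$ (bounded runs force $d_m\ge m/R$ for some $R$). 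These bounds give local uniform convergence and hence joint continuity of $f_{\bu}$ on $[1,\infty)^2\setminus\{(1,1)\}$; in particular $\pi_{1,q_1}(\bu)=\sum_{m\ge1}q_1^{-m}=\frac{1}{q_1-1}$, so $f_{\bu}(1,q_1)=\frac{1}{q_1-1}>0$ for all $q_1>1$. For monotonicity, rewrite $f_{\bu}(q_0,q_1)=q_1\,\pi_{q_0,q_1}(\sigma\bu)-q_0$ (legitimate because $u_1=0$); every term of $q_1\,\pi_{q_0,q_1}(\sigma\bu)$ is a product of reciprocals of $q_0$'s and $q_1$'s, hence weakly decreasing in $q_0$ and in $q_1$, and since $\bu$ has infinitely many $0$'s and infinitely many $1$'s at least one term strictly decreases in $q_0$ and at least one in $q_1$; with the strictly decreasing summand $-q_0$ this proves the monotonicity part of~(i). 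Finally $f_{\bu}(q_0,1)=\pi_{q_0,1}(\sigma\bu)-q_0\to+\infty$ as $q_0\to1^+$ (infinitely many terms each tending to~$1$) and $\to-\infty$ as $q_0\to\infty$, so continuity and strict monotonicity yield a unique $q_{\bu}>1$ with $f_{\bu}(q_{\bu},1)=0$, completing~(i).

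For~(ii), fix $q_0>1$. The function $q_1\mapsto f_{\bu}(q_0,q_1)$ is continuous and strictly decreasing on $[1,\infty)$, and $q_1\,\pi_{q_0,q_1}(\bu)\to q_0^{-1}$ as $q_1\to\infty$ (only the first nonzero term, at position~$2$, survives multiplication by $q_1$), so $f_{\bu}(q_0,q_1)\to1-q_0<0$. By~(i), $f_{\bu}(q_0,1)>0$ iff $q_0<q_{\bu}$; hence for $q_0\in(1,q_{\bu})$ there is a unique zero $g_{\bu}(q_0)\in(1,\infty)$, whereas for $q_0\ge q_{\bu}$ one has $f_{\bu}(q_0,q_1)<0$ for every $q_1>1$, so $g_{\bu}(q_0)=1$ by the stated convention. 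Strict decrease of $g_{\bu}$ on $(1,q_{\bu})$ is the usual comparison: $q_0<q_0'<q_{\bu}$ gives $f_{\bu}(q_0',g_{\bu}(q_0))<f_{\bu}(q_0,g_{\bu}(q_0))=0=f_{\bu}(q_0',g_{\bu}(q_0'))$, so $g_{\bu}(q_0')<g_{\bu}(q_0)$. Continuity of $g_{\bu}$ on $(1,q_{\bu})$ follows from this monotonicity together with continuity of $f_{\bu}$ (or from the implicit function theorem); at $q_{\bu}$ the decreasing $g_{\bu}$ has a left limit $\ell\ge1$ with $f_{\bu}(q_{\bu},\ell)=0=f_{\bu}(q_{\bu},1)$, forcing $\ell=1=g_{\bu}(q_{\bu})$; and beyond $q_{\bu}$ it is constant. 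Finally $g_{\bu}(q_0)\to\infty$ as $q_0\to1^+$, because for any fixed $q_1>1$ we have $f_{\bu}(q_0,q_1)\to f_{\bu}(1,q_1)=\frac{1}{q_1-1}>0$, so $g_{\bu}(q_0)>q_1$ once $q_0$ is close enough to~$1$.

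For~(iii), observe that $f_{\bu}(q_0,q_1)$ has the same sign as $\pi_{q_0,q_1}(\bu)-\frac{1}{q_1}$, and recall that $\ba_{q_0,q_1}$ is the quasi-greedy $(q_0,q_1)$-expansion of $\frac{1}{q_1}$. Assume $q_0+q_1\ge q_0q_1$. If $q_1>g_{\bu}(q_0)$, then by~(i)--(ii) $\pi_{q_0,q_1}(\bu)<\frac{1}{q_1}$, so the quasi-greedy expansion $\bu'$ of $\pi_{q_0,q_1}(\bu)$ satisfies $\bu'<\ba_{q_0,q_1}$ by Lemma~\ref{l:qgql}; since $\bu\in W$ is a $(q_0,q_1)$-expansion of $\pi_{q_0,q_1}(\bu)$ not ending in $\overline{0}$, we get $\bu\le\bu'<\ba_{q_0,q_1}$. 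For the converse, Lemma~\ref{l:ab} shows that $q_1\mapsto\ba_{q_0,q_1}$ is strictly increasing on the valid range, so it suffices to prove the companion identity $\ba_{q_0,g_{\bu}(q_0)}=\bu$ for $q_0\in(1,q_{\bu})$: then for $1<q_1<g_{\bu}(q_0)$ (still valid, since $g_{\bu}(q_0)<\frac{q_0}{q_0-1}$) we get $\ba_{q_0,q_1}<\ba_{q_0,g_{\bu}(q_0)}=\bu$, and for $q_1=g_{\bu}(q_0)$ equality, whence $\ba_{q_0,q_1}>\bu$ forces $q_1>g_{\bu}(q_0)$; the case $q_0\ge q_{\bu}$ is immediate since then $g_{\bu}(q_0)=1<q_1$. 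To prove the identity, put $q_1:=g_{\bu}(q_0)>1$, so that $\pi_{q_0,q_1}(\bu)=\frac{1}{q_1}$; as $\bu\ne0\overline{1}$ (it is not eventually constant) one checks $\frac{1}{q_1}=\pi_{q_0,q_1}(\bu)<\pi_{q_0,q_1}(0\overline{1})=\frac{1}{q_0(q_1-1)}$, so $q_0q_1<q_0+q_1$, $g_{\bu}(q_0)<\frac{q_0}{q_0-1}$, and $\ba_{q_0,q_1}$, being the largest expansion of $\frac{1}{q_1}$ not ending in $\overline{0}$, satisfies $\ba_{q_0,q_1}\ge\bu$. It remains to show $\ba_{q_0,q_1}\le\bu$, i.e.\ that $\bu$ is itself the quasi-greedy expansion of $\frac{1}{q_1}$; this is the crux, and the main obstacle of the proof. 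I would establish it through the self-similar decompositions $\bu=01\,\sigma^2\bu$ and $\ba_{q_0,q_1}=01\,\sigma^2\ba_{q_0,q_1}$ with $\pi_{q_0,q_1}(\sigma^2\bu)=\pi_{q_0,q_1}(\sigma^2\ba_{q_0,q_1})=q_0-1$: if $\bu$ and $\ba_{q_0,q_1}$ first differed at position~$N$, then equality of the common prefixes would force $\pi_{q_0,q_1}(\sigma^{N-1}\bu)=\pi_{q_0,q_1}(\sigma^{N-1}\ba_{q_0,q_1})\ge\frac{1}{q_1}$ (the quasi-greedy algorithm placed a~$1$ there, whereas $\bu$ has a~$0$), while $\supn(\bu)=\bu$ bounds $\sigma^{N-1}\bu\le\bu$; combined with the recalled order and admissibility properties of quasi-greedy and quasi-lazy expansions from \cite{KomLuZou2022}, this should yield a contradiction. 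Once $\bu$ is known to be quasi-greedy, uniqueness of quasi-greedy expansions gives $\bu=\ba_{q_0,q_1}$, and~(iii) follows.
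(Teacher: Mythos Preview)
Your arguments for parts~(\ref{i:g1}) and~(\ref{i:g3}) are correct and follow essentially the same route as the paper: write $\bu = 01 i_1 i_2 \cdots$, reduce $f_{\bu}$ to $1 - q_0 + \pi_{q_0,q_1}(i_1 i_2 \cdots)$, use bounded runs of~$1$'s for convergence at $q_1 = 1$, and read off monotonicity and limits term by term.

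Part~(\ref{i:g5}) has a genuine gap. You prove the implication $q_1 > g_{\bu}(q_0) \Rightarrow \ba_{q_0,q_1} > \bu$ correctly, and you rightly identify that everything else reduces to the identity $\ba_{q_0,g_{\bu}(q_0)} = \bu$. But your attempt to show that $\bu$ is itself quasi-greedy at $q_1 = g_{\bu}(q_0)$ is only a sketch: the sentence ``combined with the recalled order and admissibility properties \dots\ this should yield a contradiction'' is not a proof, and the contradiction is not immediate from what you have written (knowing $\sigma^{N-1}\bu \le \bu$ and $\pi_{q_0,q_1}(\sigma^{N-1}\bu) \ge \tfrac{1}{q_1}$ does not by itself force anything, since $\pi_{q_0,q_1}$ is not order-preserving on arbitrary sequences).

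The paper closes this gap in one line by invoking the \emph{characterization} of quasi-greedy expansions from \cite[Theorem~1.2]{KomLuZou2022}: a sequence not ending in~$\overline{0}$ is the quasi-greedy $(q_0,q_1)$-expansion of its value if and only if every shift beginning with~$0$ is $\le \ba_{q_0,q_1}$. Since $\supn(\bu) = \bu$, the single inequality $\bu \le \ba_{q_0,q_1}$ already forces all such shifts to be $\le \ba_{q_0,q_1}$, so $\bu$ is quasi-greedy whenever $\bu \le \ba_{q_0,q_1}$. The paper uses this in the direction you did not attempt: if $\bu < \ba_{q_0,q_1}$ then $\bu$ is quasi-greedy, hence $\pi_{q_0,q_1}(\bu) < \pi_{q_0,q_1}(\ba_{q_0,q_1}) = \tfrac{1}{q_1}$ by Lemma~\ref{l:qgql}, i.e.\ $q_1 > g_{\bu}(q_0)$. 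This gives the full biconditional directly, after which the identity $\ba_{q_0,g_{\bu}(q_0)} = \bu$ drops out (not~$>$ by the biconditional, not~$<$ by maximality of~$\ba$), making your detour through Lemma~\ref{l:ab} unnecessary.
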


\begin{proof}
(\ref{i:g1}) Writing $\bu = 01i_1i_2\cdots$, we have
\begin{equation*}
f_{\bu}(q_0,q_1) = 1-q_0+\sum_{k=1}^\infty \frac{i_k}{q_{i_1}q_{i_2}\cdots q_{i_k}}.
\end{equation*}
The continuity follows from the local uniform convergence of this series; for $q_1 = 1$, note that the $0$s occur in~$\bu$ with bounded distance since $\supn(\bu) = \bu < 0\overline{1}$.
We have $\frac{\partial}{\partial q_0}f_{\bu}(q_0,q_1) < 0$ and, since $i_1i_2 \cdots \ne \overline{0}$, $\frac{\partial}{\partial q_1}f_{\bu}(q_0,q_1) < 0$ for all $q_0,q_1>1$.
Since $\lim_{q_0\to1} f_{\bu}(q_0,1) = \infty$ and $\lim_{q_0\to\infty} f_{\bu}(q_0,1) = -\infty$, the equation $f_{\bu}(q_0,1) = 0$ has a unique solution $q_0 > 1$.

\medskip\noindent
(\ref{i:g3}) The existence, uniqueness and monotonicity of $g_{\bu}(q_0)$ on $(1,q_{\bu})$ follows from the continuity and monotonicity of $f_{\bu}(q_0,q_1)$, from $\lim_{q_1\to\infty} f_{\bu}(q_0,q_1) = 1 - q_0 < 0$ and $f_{\bu}(q_0,1) > f_{\bu}(q_{\bu},1) = 0$ for all $q_0 \in (1,q_{\bu})$.
The continuity of $g_{\bu}(q_0)$ follows from the monotone version of the implicit function theorem (see e.g.\ \cite[p.~423]{Fic1992}) and $f_{\bu}(q_{\bu},1) = 0$.
If $\bu$ starts with $010^k1$, $k \ge 0$, then $f_{\bu}(q_0,q_1) \ge 1-q_0 + \frac{1}{q_0^kq_1}$ and thus $g_{\bu}(q_0) \ge \frac{1}{q_0^k(q_0-1)}$, which implies that $\lim_{q_0\to1} g_{\bu}(q_0) = \infty$.

\medskip\noindent
(\ref{i:g5}) If $\bu < \ba_{q_0,q_1}$, then $\bu$ is a quasi-greedy $(q_0,q_1)$-expansion by \cite[Theorem~1.2]{KomLuZou2022}, and the monotonicity of quasi-greedy expansions gives $\pi_{q_0,q_1}(\bu) < \frac{1}{q_1}$, which is equivalent to $q_1 > g_{\bu}(q_0)$ by (\ref{i:g1}) and~(\ref{i:g3}).
On the other hand, if $\pi_{q_0,q_1}(\bu) < \frac{1}{q_1}$, then the quasi-greedy $(q_0,q_1)$-expansion of $\pi_{q_0,q_1}(\bu)$ is strictly smaller than~$\ba_{q_0,q_1}$, which implies that $\bu < \ba_{q_0,q_1}$.
If $q_1 = g_{\bu}(q_0)$, i.e., $\pi_{q_0,q_1}(\bu) = \frac{1}{q_1}$, then $\ba_{q_0,q_1} = \bu$.
\end{proof}

\begin{lemma} \label{l:functiongt}
Let $\bv \in \tilde{W}$.
Then the following holds.
\begin{enumerate}[\upshape(i)]
\item \label{i:gt1}
For $q_0 \ge 1$, $q_1 > 1$, the function $\tilde{f}_{\bv}(q_0, q_1)$ is continuous and strictly decreasing in both variables $q_0,q_1$.
\item \label{i:gt3}
The function $\tilde{g}_{\bv}(q_0)$ is continuous and strictly decreasing on $(1,\infty)$, with $1 < \lim_{q_0\to1} \tilde{g}_{\bv}(q_0) < \infty$ and $\lim_{q_0\to \infty} \tilde{g}_{\bv}(q_0) = 1$.
\item \label{i:gt5}
For $q_0, q_1 > 1$ with $q_0+q_1 \geq q_0q_1$, we have $\bb_{q_0,q_1} < \bv$ if and only if $q_1 > \tilde{g}_{\bv}(q_0)$, and $\bb_{q_0,\tilde{g}_{\bv}(q_0)} = \bv$.
\end{enumerate}
\end{lemma}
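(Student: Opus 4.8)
The plan is to reduce the whole statement to Lemma~\ref{l:functiong} by the reflection $0\leftrightarrow1$ that also swaps the two bases. For $\bv = i_1i_2\cdots \in \tilde{W}$, write $\overline{\bv} := (1{-}i_1)(1{-}i_2)\cdots$ for its complement. Since complementation reverses the lexicographic order and interchanges $\infn$ and $\supn$, and since $\bv \notin \{0,1\}^*\{\overline{0},\overline{1}\}$ forces $\overline{\bv} \notin \{0,1\}^*\{\overline{0},\overline{1}\}$, we have $\overline{\bv} \in W$. Moreover, from the identity $\tilde{\pi}_{q_0,q_1}(\bv) = \pi_{q_1,q_0}(\overline{\bv})$ and the definitions of $\tilde{f}_{\bv}$ and $f_{\overline{\bv}}$ one checks directly that
\[
\tilde{f}_{\bv}(q_0,q_1) = f_{\overline{\bv}}(q_1,q_0) \qquad\text{for all } q_0,q_1 > 1 .
\]
Hence $\tilde{f}_{\bv}$ is simply $f_{\overline{\bv}}$ with its two arguments interchanged, and $\tilde{g}_{\bv}$ solves, for the first argument of $f_{\overline{\bv}}$, the equation that $g_{\overline{\bv}}$ solves for the second argument. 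In particular, part~(i) is immediate: Lemma~\ref{l:functiong}(i) says that $f_{\overline{\bv}}$ is continuous and strictly decreasing in each variable on $\{(x,y):x>1,\,y\ge1\}$, and swapping the arguments turns this into the asserted statement for $\tilde{f}_{\bv}(q_0,q_1)$ on $\{(q_0,q_1):q_0\ge1,\,q_1>1\}$.

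For part~(ii), note first that by Lemma~\ref{l:functiong}(i)--(ii) the function $g_{\overline{\bv}}$ restricts to a continuous, strictly decreasing bijection from $(1,q_{\overline{\bv}})$ onto $(1,\infty)$: it is continuous and strictly decreasing there, with $g_{\overline{\bv}}(x)\to\infty$ as $x\to1^+$ and $g_{\overline{\bv}}(x)\to1$ as $x\to q_{\overline{\bv}}^-$, while $g_{\overline{\bv}}(x)=1$ for $x\ge q_{\overline{\bv}}$. Consequently, for fixed $q_0>1$ the equation $\tilde{f}_{\bv}(q_0,q_1)=f_{\overline{\bv}}(q_1,q_0)=0$ has exactly one solution $q_1>1$ — namely the preimage of $q_0$ under this bijection, which lies in $(1,q_{\overline{\bv}})$. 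Thus $\tilde{g}_{\bv}$ is well defined on all of $(1,\infty)$ and coincides with the inverse of $g_{\overline{\bv}}|_{(1,q_{\overline{\bv}})}$, so it is continuous and strictly decreasing, with $\lim_{q_0\to1}\tilde{g}_{\bv}(q_0)=q_{\overline{\bv}}$, which is finite and larger than~$1$ by Lemma~\ref{l:functiong}(i), and $\lim_{q_0\to\infty}\tilde{g}_{\bv}(q_0)=1$.

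For part~(iii), I would first record the reflection identity
\[
\bb_{q_0,q_1} = \overline{\ba_{q_1,q_0}} \qquad\text{whenever } q_0+q_1\ge q_0q_1 .
\]
Indeed, by \eqref{e:q1q0} the complement of a $(q_1,q_0)$-expansion of $\frac1{q_0}$ is a $(q_0,q_1)$-expansion of $\frac1{q_0(q_1-1)}$, and complementation maps ``lexicographically largest not ending in~$\overline{0}$'' to ``lexicographically smallest not ending in~$\overline{1}$''; hence the complement of the quasi-greedy $(q_1,q_0)$-expansion of $\frac1{q_0}$, which is $\overline{\ba_{q_1,q_0}}$, is the quasi-lazy $(q_0,q_1)$-expansion of $\frac1{q_0(q_1-1)}$, i.e.\ $\bb_{q_0,q_1}$. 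Using again that complementation reverses the order, $\bb_{q_0,q_1}<\bv$ is then equivalent to $\ba_{q_1,q_0}>\overline{\bv}$, which by Lemma~\ref{l:functiong}(iii) (applied to $\overline{\bv}\in W$ with the base pair $(q_1,q_0)$) is equivalent to $q_0>g_{\overline{\bv}}(q_1)$; by the bijection of part~(ii) (and trivially when $q_1\ge q_{\overline{\bv}}$, since then $g_{\overline{\bv}}(q_1)=1<q_0$ and $\tilde{g}_{\bv}(q_0)<q_{\overline{\bv}}\le q_1$) this is in turn equivalent to $q_1>\tilde{g}_{\bv}(q_0)$. Finally, if $q_1=\tilde{g}_{\bv}(q_0)$, then $q_1\in(1,q_{\overline{\bv}})$ and $q_0=g_{\overline{\bv}}(q_1)$, so Lemma~\ref{l:functiong}(iii) gives $\ba_{q_1,q_0}=\overline{\bv}$ and hence $\bb_{q_0,q_1}=\overline{\ba_{q_1,q_0}}=\bv$.

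I expect the only real obstacle to be bookkeeping rather than substance: establishing the identity $\bb_{q_0,q_1}=\overline{\ba_{q_1,q_0}}$ carefully (in particular checking that the regularity condition $q_0+q_1\ge q_0q_1$ is exactly what makes the relevant quasi-greedy and quasi-lazy expansions exist) and correctly treating the boundary regime $q_1\ge q_{\overline{\bv}}$ in part~(iii), where $g_{\overline{\bv}}$ degenerates to the constant~$1$; all the analytic input — continuity, strict monotonicity, the limits, and the well-definedness of $q_{\overline{\bv}}$ — is then inherited verbatim from Lemma~\ref{l:functiong}.
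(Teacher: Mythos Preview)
Your proposal is correct and follows essentially the same reflection strategy as the paper: both reduce to Lemma~\ref{l:functiong} via the identity $\tilde{f}_{\bv}(q_0,q_1)=f_{\overline{\bv}}(q_1,q_0)$, and your treatment of parts~(i) and~(ii) matches the paper's (with more detail supplied on your side, in particular your identification of $\tilde{g}_{\bv}$ as the inverse of $g_{\overline{\bv}}|_{(1,q_{\overline{\bv}})}$, which cleanly yields the limits).

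For part~(iii) there is a minor methodological difference. The paper does not establish the reflection identity $\bb_{q_0,q_1}=\overline{\ba_{q_1,q_0}}$; instead it re-runs the argument of Lemma~\ref{l:functiong}(\ref{i:g5}) directly on the quasi-lazy side, observing that $\bv>\bb_{q_0,q_1}$ forces $\bv$ itself to be a quasi-lazy $(q_0,q_1)$-expansion and hence $\tilde{\pi}_{q_0,q_1}(\bv)<\tfrac{1}{q_0}$. Your route---proving $\bb_{q_0,q_1}=\overline{\ba_{q_1,q_0}}$ from~\eqref{e:q1q0} and then invoking Lemma~\ref{l:functiong}(\ref{i:g5}) for $\overline{\bv}$ with the bases swapped---is a genuine reduction rather than a parallel argument, and it handles the boundary case $q_1\ge q_{\overline{\bv}}$ explicitly, which the paper leaves implicit. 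Both approaches are short and correct; yours has the advantage of making the symmetry fully formal.
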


\begin{proof}
Since $\tilde{f}_{i_1i_2\cdots}(q_0,q_1) = f_{(1-i_1)(1-i_2)\cdots}(q_1,q_0)$, the points (\ref{i:gt1}) and (\ref{i:gt3}) follow from Lemma~\ref{l:functiong} (\ref{i:g1}) and (\ref{i:g3}).
The proof of (\ref{i:gt5}) is similar to that of Lemma~\ref{l:functiong}~(\ref{i:g5}), in particular $\bv > \bb_{q_0,q_1}$ implies that~$\bv$ is a quasi-lazy $(q_0,q_1)$-expansion, thus $\tilde{\pi}_{q_0,q_1}(\bv) < \frac{1}{q_0}$, i.e., $\tilde{f}_{\bv}(q_0,q_1) < 0$.
\end{proof}

Next we study $g_{\bu}, \tilde{g}_{\bv}$ as functions of $\bu,\bv$.

\begin{lemma} \label{l:functiong2}
Let $q_0  > 1$.
Then the map
\begin{equation*}
g_{\cdot}(q_0):\, \{\bu \in W \,:\, q_{\bu} > q_0\} \to \big(1,\tfrac{q_0}{q_0-1}\big), \quad \bu \mapsto g_{\bu}(q_0),
\end{equation*}
is a continuous order-preserving bijection, and the map
\begin{equation*}
\tilde{g}_{\cdot}(q_0):\, \tilde{W} \to \big(1,\tfrac{q_0}{q_0-1}\big), \quad \bu \mapsto \tilde{g}_{\bu}(q_0),
\end{equation*}
is a continuous order-reversing bijection.
\end{lemma}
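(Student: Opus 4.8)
The plan is to exhibit $g_{\cdot}(q_0)$ as the inverse of the map $q_1\mapsto\ba_{q_0,q_1}$. The crucial input is Lemma~\ref{l:functiong}(\ref{i:g5}), which says $\ba_{q_0,g_{\bu}(q_0)}=\bu$ whenever $q_0<q_{\bu}$, together with the monotonicity of $\ba_{q_0,q_1}$ in $q_1$ supplied by Lemma~\ref{l:ab}. For the second map I will use that $\tilde f_{i_1i_2\cdots}(q_0,q_1)=f_{(1-i_1)(1-i_2)\cdots}(q_1,q_0)$ and that, by Lemma~\ref{l:functiongt}(\ref{i:gt5}), $\bb_{q_0,q_1}$ plays the role of $\ba_{q_0,q_1}$; the statement for $\tilde g_{\cdot}(q_0)$ then follows from that for $g_{\cdot}(q_0)$ via the involution $0\leftrightarrow 1$, $q_0\leftrightarrow q_1$, which reverses the lexicographic order (here $\tilde g_{\bv}(q_0)$ is defined for \emph{all} $q_0>1$ by Lemma~\ref{l:functiongt}(\ref{i:gt3}), which is why the domain is all of $\tilde W$). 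So I only discuss $g_{\cdot}(q_0)$, checking in turn: (a) it maps into $(1,\frac{q_0}{q_0-1})$; (b) it is injective and order-preserving; (c) it is onto; (d) it is continuous.

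For (a), $g_{\bu}(q_0)>1$ is the content of Lemma~\ref{l:functiong}(\ref{i:g3}) for $q_0<q_{\bu}$, and the upper bound will follow by checking $f_{\bu}(q_0,\frac{q_0}{q_0-1})<0$ and invoking monotonicity of $f_{\bu}(q_0,\cdot)$ (Lemma~\ref{l:functiong}(\ref{i:g1})): any $\bu\in W$ begins with $01$ and is not $0\overline 1$, so $\bu=01\,i_3i_4\cdots$ with $i_3i_4\cdots\ne\overline 1$, and since $q_0>1$ every sequence distinct from $\overline 1$ has $\pi$-value strictly below $\pi_{q_0,q_1}(\overline 1)=\frac{1}{q_1-1}$; hence $q_1\pi_{q_0,q_1}(\bu)=\frac{1}{q_0}(1+\pi_{q_0,q_1}(i_3i_4\cdots))<\frac{q_1}{q_0(q_1-1)}$, which is $1$ at $q_1=\frac{q_0}{q_0-1}$. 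For (b), if $\bu<\bu'$ are in the domain then $\ba_{q_0,g_{\bu}(q_0)}=\bu\ne\bu'=\ba_{q_0,g_{\bu'}(q_0)}$ forbids $g_{\bu}(q_0)=g_{\bu'}(q_0)$; and $g_{\bu}(q_0)>g_{\bu'}(q_0)$ is impossible, for then, both values lying in $(1,\frac{q_0}{q_0-1})$ so that $q_0+q_1\ge q_0q_1$, Lemma~\ref{l:ab} with first base fixed at $q_0$ gives $\ba_{q_0,g_{\bu'}(q_0)}<\ba_{q_0,g_{\bu}(q_0)}$, i.e.\ $\bu'<\bu$.

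For (c), fix $q_1\in(1,\frac{q_0}{q_0-1})$ and set $\bu:=\ba_{q_0,q_1}$; I must show $\bu\in W$ and $q_{\bu}>q_0$. It does not end with $\overline 0$ by the definition of the quasi-greedy expansion; it does not end with $\overline 1$ because if $\bu=w0\overline 1$ then replacing the tail $0\overline 1$ by $1\eta$, with $\eta$ the quasi-greedy expansion of the positive number $\frac{q_0-q_1(q_0-1)}{q_0(q_1-1)}\in(0,\frac{1}{q_1-1}]$, yields a lexicographically larger $(q_0,q_1)$-expansion of $\frac{1}{q_1}$ not ending with $\overline 0$, a contradiction; and $\supn(\bu)=\bu$ because every shift of $\bu$ beginning with $0$ is again a quasi-greedy expansion, of a value $\le\frac{1}{q_1}$ (in the quasi-greedy algorithm for $\frac{1}{q_1}$ a digit $0$ is emitted precisely when the running remainder is $\le\frac{1}{q_1}$, and the remainder after $n$ digits is $\pi_{q_0,q_1}(\sigma^n\bu)$), hence is $\le\ba_{q_0,q_1}=\bu$ by Lemma~\ref{l:qgql}. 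Since $\pi_{q_0,q_1}(\ba_{q_0,q_1})=\frac{1}{q_1}$, we get $f_{\bu}(q_0,q_1)=0$, so $g_{\bu}(q_0)=q_1$ by uniqueness (Lemma~\ref{l:functiong}(\ref{i:g1})--(\ref{i:g3})); as this is $>1$, Lemma~\ref{l:functiong}(\ref{i:g3}) gives $q_{\bu}>q_0$.

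For (d), let $\bu^{(n)}\to\bu$ in the domain. Fixing $L$ and taking $n$ large, $\bu^{(n)}$ agrees with $\bu$ on the first $L$ letters, so $\pi_{q_0,q_1}(\bu^{(n)})-\pi_{q_0,q_1}(\bu)$ equals a coefficient $q_0^{-p}q_1^{-r}$ (with $p=\#\{j\le L: u_j=0\}$, $r=\#\{j\le L: u_j=1\}$) times $\pi_{q_0,q_1}(\sigma^L\bu^{(n)})-\pi_{q_0,q_1}(\sigma^L\bu)$. Here $p\to\infty$ as $L\to\infty$ (as $\bu\in W$ has infinitely many zeros), so $q_0^{-p}q_1^{-r}\le q_0^{-p}\to 0$; and $\pi_{q_0,q_1}(\sigma^L\bu^{(n)})$ is bounded uniformly in $q_1\ge 1$ and in $n$, since $\supn(\bu^{(n)})=\bu^{(n)}$ forces every run of $1$s in $\bu^{(n)}$ (hence in its shifts) to have length at most that of the first run $t$ of $\bu$, which bounds $\pi_{q_0,1}(\cdot)$ by $\frac{tq_0}{q_0-1}$ and $\pi_{q_0,q_1}(\cdot)$ likewise for $q_1\ge 1$. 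Thus $f_{\bu^{(n)}}(q_0,\cdot)\to f_{\bu}(q_0,\cdot)$ uniformly on $[1,\frac{q_0}{q_0-1}]$; since $f_{\bu}(q_0,\cdot)$ is continuous, strictly decreasing, positive at $1$ (as $q_0<q_{\bu}$) and negative at $\frac{q_0}{q_0-1}$, its zero $g_{\bu}(q_0)$ is transverse, whence $g_{\bu^{(n)}}(q_0)\to g_{\bu}(q_0)$. The same works for $\tilde g$ (there $\tilde\pi_{q_0,q_1}(\cdot)\le\frac{1}{q_0-1}$ holds outright). The step I expect to be most delicate is (c), specifically $\ba_{q_0,q_1}\in W$: the facts that $\ba_{q_0,q_1}$ has no constant tail and is fixed by $\supn$ (and symmetrically for $\bb_{q_0,q_1}$ and $\infn$) rest on lexicographic/algorithmic properties of quasi-greedy expansions that might be more cleanly quoted from \cite{KomLuZou2022}; a secondary subtlety, which forces the uniform estimate in (d) down to the endpoint $q_1=1$, is that $g_{\bu}(q_0)$ may tend to $1$ when $q_{\bu}\downarrow q_0$.
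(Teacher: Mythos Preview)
Your proof is correct and follows essentially the same route as the paper, exhibiting $g_{\cdot}(q_0)$ as the inverse of $q_1\mapsto\ba_{q_0,q_1}$ and checking order-preservation and surjectivity via Lemmas~\ref{l:functiong} and~\ref{l:ab}. The one shortcut you miss is that continuity follows immediately from monotonicity and surjectivity (a monotone bijection onto an interval is automatically continuous in the order topology, which is coarser than the subspace topology on the domain), so your uniform-convergence argument in~(d) is unnecessary; conversely, your careful verification in~(c) that $\ba_{q_0,q_1}\in W$ fills in a step the paper simply asserts.
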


\begin{proof}
Let $\bu, \bu' \in W$ with $\bu > \bu'$ and $q_{\bu}, q_{\bu'} > q_0$.
By Lemma~\ref{l:functiong}~(\ref{i:g5}), we have $\bu = \ba_{q_0,g_{\bu}(q_0)}$ and thus $g_{\bu}(q_0) > g_{\bu'}(q_0)$, hence $g_{\cdot}(q_0)$ is order-preserving and thus injective.
The map $g_{\cdot}(q_0)$ is surjective because, for each $q_1 \in (1,\frac{q_0}{q_0-1})$, we have $\ba_{q_0,q_1} \in W$ and $g_{\ba_{q_0,q_1}}(q_0) = q_1$.
The continuity of $g_{\cdot}(q_0)$  follows from the monitonicity and surjectivity.
The proof for $\tilde{g}_{\cdot}(q_0)$ runs along the same lines.
\end{proof}

Recall that $\mu_{\bu,\bv}$ is defined as the unique solution $q_0 > 1$ of $g_{\bu}(q_0) = \tilde{g}_{\bv}(q_0)$, if this equation has a unique solution $>1$.
The following proposition shows that $\mu_{\bu,\bv}$ is well defined in all cases that are interesting to us.
Recall that limit words of primitive $\bsigma \in \{L,R\}^\infty$ are Sturmian sequences.

\begin{lemma} \label{l:mu}
Let $\bu, \bv \in \sigma M(\{0,1\}^\infty)$, $\sigma \in \{L,R\}^*$, such that $\bu \in W$, $\bv \in \tilde{W}$, or $\bu = \bsigma(\overline{0})$, $\bv = \bsigma(\overline{1})$ with primitive $\bsigma \in \{L,R\}^\infty$.
Then $g_{\bu}(x) = \tilde{g}_{\bv}(x)$ for a unique $x  > 1$.
We have $g_{\bu}(x) > \tilde{g}_{\bv}(x)$ for all $x \in (1, \mu_{\bu,\bv})$, $g_{\bu}(x) < \tilde{g}_{\bv}(x)$ for all $x > \mu_{\bu,\bv}$.
\end{lemma}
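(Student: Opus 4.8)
The plan is to study $h := g_{\bu} - \tilde{g}_{\bv}$ on $(1,\infty)$ and show that it is positive near $1$, eventually negative, and strictly decreasing at each of its zeros; this forces a unique zero, which is then $\mu_{\bu,\bv}$, and the two sign statements follow at once. For the endpoint behaviour I would invoke Lemmas~\ref{l:functiong} and~\ref{l:functiongt}: since $\bu\in W$ and $\bu=\sigma M(\bu')$ begins with a block $010^k1$, the function $g_{\bu}$ is continuous on $(1,\infty)$, strictly decreasing on $(1,q_{\bu})$ with $\lim_{x\to1}g_{\bu}(x)=\infty$ and $g_{\bu}\equiv1$ on $[q_{\bu},\infty)$; meanwhile $\tilde{g}_{\bv}$ is continuous and strictly decreasing on all of $(1,\infty)$ with $\tilde{g}_{\bv}>1$ and $\lim_{x\to\infty}\tilde{g}_{\bv}(x)=1$. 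Hence $h>0$ on a right neighbourhood of $1$ and $h(q_{\bu})=1-\tilde{g}_{\bv}(q_{\bu})<0$, so $h$ has a zero in $(1,q_{\bu})$ by the intermediate value theorem, and since $g_{\bu}\le1<\tilde{g}_{\bv}$ on $[q_{\bu},\infty)$ every zero of $h$ lies in $(1,q_{\bu})$. (In the Sturmian case $\bu=\bsigma(\overline{0})$, $\bv=\bsigma(\overline{1})$ with primitive $\bsigma\in\{L,R\}^\infty$, the same applies since $\bsigma(\overline{0})\in W$ and $\bsigma(\overline{0})$ starts with $01$.)

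Next I would record that $g_{\bu}$ and $\tilde{g}_{\bv}$ are real-analytic on $(1,q_{\bu})$ resp.\ $(1,\infty)$: the maps $f_{\bu},\tilde{f}_{\bv}$ are analytic in $(q_0,q_1)$ on $\{q_0,q_1>1\}$ (each term of the defining series is a monomial in $1/q_0,1/q_1$ and the series converges locally uniformly), and $\partial_{q_1}f_{\bu}<0$, $\partial_{q_1}\tilde{f}_{\bv}<0$ by Lemmas~\ref{l:functiong}\eqref{i:g1} and~\ref{l:functiongt}\eqref{i:gt1}, so the analytic implicit function theorem applies. Thus $h$ is differentiable at every zero, and a continuous function that is positive before, negative after, and has negative derivative at each zero has exactly one zero; so it suffices to prove the transversality statement: if $x\in(1,q_{\bu})$ satisfies $g_{\bu}(x)=\tilde{g}_{\bv}(x)=:y$, then
\[
g_{\bu}'(x)<\tilde{g}_{\bv}'(x),\qquad\text{i.e.,}\qquad \partial_{q_0}f_{\bu}\cdot\partial_{q_1}\tilde{f}_{\bv}\;>\;\partial_{q_1}f_{\bu}\cdot\partial_{q_0}\tilde{f}_{\bv}\ \ \text{at}\ (x,y),
\]
the equivalence using $g_{\bu}'=-\partial_{q_0}f_{\bu}/\partial_{q_1}f_{\bu}$, $\tilde{g}_{\bv}'=-\partial_{q_0}\tilde{f}_{\bv}/\partial_{q_1}\tilde{f}_{\bv}$ and the negativity of all four partials. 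At such a point one additionally has $y\in(1,\tfrac{x}{x-1})$ by Lemma~\ref{l:functiong2}, $\ba_{x,y}=\bu$, $\bb_{x,y}=\bv$, $\pi_{x,y}(\bu)=\tfrac1y$ and $\tilde{\pi}_{x,y}(\bv)=\tfrac1x$, which simplify the four partials considerably.

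The transversality inequality is the main obstacle, and it is exactly here that the hypothesis $\bu,\bv\in\sigma M(\{0,1\}^\infty)$ with $\sigma\in\{L,R\}^*$ is essential: it confines $\bu\le\sigma M(0\overline{1})$ and $\bv\ge\sigma M(1\overline{0})$, keeping the two words in ``matched'' ranges bounded away from the extremes $0\overline{1}$, $1\overline{0}$ (for which the curves $\{q_1=g_{\bu}(q_0)\}$, $\{q_1=\tilde{g}_{\bv}(q_0)\}$ degenerate towards the boundary $q_0+q_1=q_0q_1$ and uniqueness may fail). To verify the inequality I would analyse the partial derivatives of $\pi_{q_0,q_1}(\bu)$ and $\tilde{\pi}_{q_0,q_1}(\bv)$ at the crossing using the substitution structure, via the identities $\pi_{q_0,q_1}(L\bw)=q_0\,\pi_{q_0,q_0q_1}(\bw)$ and $\tilde{\pi}_{q_0,q_1}(R\bw)=q_1\,\tilde{\pi}_{q_0q_1,q_1}(\bw)$ together with the reflection relation~\eqref{e:q1q0} (so that $\tilde{g}_{\bv}(q_0)=q_1\iff g_{\overline{\bv}}(q_1)=q_0$ and $\pi_{q_0,q_1}(\bv)=\tfrac1{q_0(q_1-1)}$ along $\Gamma'_{\bv}$). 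In the base case $\sigma=\mathrm{id}$, writing $\bu=M\bu'$, $\bv=M\bv'$ with $\bu'=0i_2i_3\cdots$, $\bv'=1j_2j_3\cdots$, the substitution $Q=q_0q_1$ turns the defining equations into $q_0=\tfrac{Q+A_0(Q)}{Q-A_1(Q)}$, $q_1=\tfrac{Q+B_1(Q)}{Q-B_0(Q)}$ with $A_c(Q)=\sum_{n\ge2,\,i_n=c}Q^{-(n-2)}$, $B_c(Q)=\sum_{n\ge2,\,j_n=c}Q^{-(n-2)}$, so a crossing is exactly a root $Q>1$ of the single analytic equation $Q(Q-A_1(Q))(Q-B_0(Q))=(Q+A_0(Q))(Q+B_1(Q))$, which one checks has only simple roots on the relevant range using that each $A_c,B_c$ is strictly decreasing in $Q$ (for $\sigma=L^kM$ this recovers the explicit algebraic equations of Example~\ref{ex:Lk}); the general $\sigma\in\{L,R\}^*$ is handled by a bookkeeping argument of the same type. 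Finally, the Sturmian case $\bu=\bsigma(\overline{0})$, $\bv=\bsigma(\overline{1})$ with primitive $\bsigma=(\sigma_n)$ is obtained by approximation: $\bsigma(\overline{0})=\lim_n\sigma_1\cdots\sigma_n M(\overline{0})$, $\bsigma(\overline{1})=\lim_n\sigma_1\cdots\sigma_n M(\overline{1})$, each truncation falls under the case just treated, and the words agree on ever longer prefixes, so after extending $\pi$ as in Lemma~\ref{l:01} the convergence is uniform on compact subsets of $\{|q_0|,|q_1|>1\}$ and hence $C^1$ on compacts for $g$ and $\tilde{g}$; since the analytic function $g_{\bsigma(\overline{0})}-\tilde{g}_{\bsigma(\overline{1})}$ changes sign it is not identically zero, and a multiple zero is excluded by carrying a uniform positive lower bound on the transversality gap through the approximation (obtained from the explicit form in the base case). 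The hardest and most delicate step throughout is the combinatorial control of the partial derivatives underlying the transversality inequality.
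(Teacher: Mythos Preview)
Your overall framework is exactly the paper's: establish the endpoint behaviour via Lemmas~\ref{l:functiong} and~\ref{l:functiongt}, reduce uniqueness to the transversality statement $g_{\bu}'(x)<\tilde{g}_{\bv}'(x)$ at every crossing, and conclude. The divergence is entirely in how you propose to prove transversality, and your route there contains genuine gaps.

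The paper does not reduce via substitution identities, does not pass to the variable $Q=q_0q_1$, and does not treat the Sturmian case by approximation. Instead it writes $\bu=010^{n_1}10^{n_2-n_1}1\cdots$ and $\bv=101^{\tilde n_1}01^{\tilde n_2-\tilde n_1}0\cdots$, computes the four partials of $f_{\bu},\tilde f_{\bv}$ explicitly, and uses the constraints $f_{\bu}=\tilde f_{\bv}=0$ to reduce the transversality inequality to the termwise bound
\[
\Big(n_k+\tfrac{x}{x-1}\Big)\Big(\tilde n_\ell+\tfrac{y}{y-1}\Big)>k\ell\qquad(k,\ell\ge1),
\]
which follows from $(n_k+1)(\tilde n_\ell+1)\ge k\ell$. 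This last inequality is where the hypothesis is actually used: for $\bu,\bv\in\sigma M(\{0,1\}^\infty)$ with $\sigma\in\{L,R\}^*$, or $\bu=\bsigma(\overline0)$, $\bv=\bsigma(\overline1)$ Sturmian, the letter frequencies $\mathrm f_0,\mathrm f_1$ exist and are common to $\bu$ and $\bv$, and the mechanical-word description gives $n_k+1\ge\frac{\mathrm f_0}{\mathrm f_1}k$ and $\tilde n_\ell+1\ge\frac{\mathrm f_1}{\mathrm f_0}\ell$. One multiplication finishes the proof, uniformly for both the $\sigma M$ and the Sturmian case.

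Your proposed route, by contrast, has steps that are not justified as stated. The claim that the base-case equation in $Q$ ``has only simple roots on the relevant range using that each $A_c,B_c$ is strictly decreasing'' is not a valid inference: monotonicity of the coefficient functions does not by itself exclude tangential roots of a difference of products. The inductive step (``handled by a bookkeeping argument of the same type'') is asserted but not carried out; you would have to show that the parameter change induced by $L$ or $R$ preserves strict transversality, which is plausible but needs proof. Most seriously, the Sturmian limit step---``carrying a uniform positive lower bound on the transversality gap through the approximation''---is precisely the hard point: $C^1$-convergence of $g_{\sigma_{[1,n)}M(\overline0)}-\tilde g_{\sigma_{[1,n)}M(\overline1)}$ only yields $g_{\bsigma(\overline0)}'\le\tilde g_{\bsigma(\overline1)}'$ at a crossing, and upgrading to strict inequality requires a quantitative bound at each stage that does not degenerate as $n\to\infty$. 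The paper's frequency argument delivers exactly such a bound (indeed a strict termwise one) in a single stroke, with no induction and no limit.
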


\begin{proof}
Note that $\bsigma(\overline{0}) \in W$, $\bsigma(\overline{1}) \in \tilde{W}$ for all primitive $\bsigma \in \{L,R\}^\infty$ by Lemma~\ref{l:infsup}.
Therefore, by Lemmas~\ref{l:functiong} and~\ref{l:functiongt}, $g_{\bu}(x)$ and $\tilde{g}_{\bv}(x)$ are continuous functions with
\begin{equation*}
\lim_{q_0\to 1} (g_{\bu}(q_0)-\tilde{g}_{\bv}(q_0)) = \infty \quad \text{and} \quad
g_{\bu}(q_0) - \tilde{g}_{\bv}(q_0) < 0 \quad \text{for all}\ q_0 \ge q_{\bu}.
\end{equation*}
Moreover, the functions are differentiable on $(1,q_{\bu})$, with
\begin{equation*}
g_{\bu}'(x) = - \frac{\frac{\partial}{\partial x}f_{\bu}(x, g_{\bu}(x))}{\frac{\partial}{\partial y}f_{\bu}(x, g_{\bu}(x))}, \quad \tilde{g}_{\bv}'(x) = - \frac{\frac{\partial}{\partial x}\tilde{f}_{\bv}(x, \tilde{g}_{\bv}(x))}{\frac{\partial}{\partial y}\tilde{f}_{\bv}(x, \tilde{g}_{\bv}(x))}.
\end{equation*}
Therefore, it suffices to show that $g_{\bu}'(x) < \tilde{g}_{\bv}'(x)$ whenever $g_{\bu}(x)=\tilde{g}_{\bv}(x)$, i.e.,
\begin{equation} \label{e:partial}
\frac{\partial}{\partial x}f_{\bu}(x,y) \frac{\partial}{\partial y}\tilde{f}_{\bv}(x,y) - \frac{\partial}{\partial y}f_{\bu}(x,y) \frac{\partial}{\partial x}\tilde{f}_{\bv}(x,y) > 0
\end{equation}
whenever $f_{\bu}(x,y) = 0 = \tilde{f}_{\bv}(x,y)$, $1 < x < q_{\bu}$, $y > 1$.

Write
\begin{equation*}
\supn(\bu) = 010^{n_1}10^{n_2-n_1}10^{n_3-n_2}\cdots \quad \text{and} \quad \infn(\bv) = 101^{\tilde{n}_1}01^{\tilde{n}_2-\tilde{n}_1}01^{\tilde{n}_3-\tilde{n}_2}\cdots
\end{equation*}
with $0 \le n_1 \le n_2 \le \cdots$, $0 \le \tilde{n}_1 \le \tilde{n}_2 \le \cdots$.
Then we have
\begin{equation*}
f_{\bu}(x,y) = 1-x + \sum_{k=1}^\infty \frac{1}{y^kx^{n_k}},
\quad \tilde{f}_{\bv}(x,y) = 1-y+ \sum_{k=1}^\infty \frac{1}{x^ky^{\tilde{n}_k}},
\end{equation*}
hence \eqref{e:partial} means that
\begin{equation} \label{e:partial2}
\bigg(1 + \sum_{k=1}^\infty \frac{n_k}{y^kx^{n_k+1}}\bigg) \bigg(1 + \sum_{\ell=1}^\infty \frac{\tilde{n}_\ell}{x^\ell y^{\tilde{n}_\ell+1}}\bigg) - \sum_{k=1}^\infty \frac{k}{y^{k+1}x^{n_k}}\, \sum_{\ell=1}^\infty \frac{\ell}{x^{\ell+1}y^{\tilde{n}_\ell}} > 0.
\end{equation}
When $f_{\bu}(x,y) = 0 = \tilde{f}_{\bv}(x,y)$, we have $\frac{1}{x-1} \sum_{k=1}^\infty \frac{1}{y^kx^{n_k}} = 1 = \frac{1}{y-1} \sum_{\ell=1}^\infty \frac{1}{x^\ell y^{\tilde{n}_\ell}}$.
Inserting this into \eqref{e:partial2} and multiplying by $xy$ gives the inequality
\begin{equation} \label{e:partial3}
\sum_{k=1}^\infty \sum_{\ell=1}^\infty\frac{\big(n_k+\frac{x}{x-1}\big) \big(\tilde{n}_\ell+\frac{y}{y-1}\big) - k\,\ell}{x^{n_k+\ell}y^{k+\tilde{n}_\ell}} > 0.
\end{equation}

Let $\mathrm{f}_0, \mathrm{f}_1$ be the frequencies of the letters $0$ and $1$ in $\bu$ and~$\bv$.
These frequencies exist for Sturmian words, and for $\sigma M(\{0,1\}^\infty)$, $\sigma \in \{L,R\}^*$,  we have $\frac{\mathrm{f}_0}{\mathrm{f}_1} = \frac{|\sigma(10)|_0}{|\sigma(10)|_1}$, where $|w|_i$ denotes the number of occurrences of the letter $i$ in~$w$.
We show that
\begin{equation} \label{e:nf}
n_k + 1 \ge \frac{\mathrm{f}_0}{\mathrm{f}_1} k \quad \text{for all}\ k \ge 1.
\end{equation}

If $\bu = \bsigma(\overline{0})$ for primitive $\bsigma \in \{L,R\}^\infty$ or $\bu = \sigma M(\overline{0})$, $\sigma \in \{L,R\}^*$, then $\bu$ is a mechanical word with slope~$\mathrm{f}_1$; see e.g.\ \cite[Sections~2.1.2 and~2.2.2]{Lot2002} and note that $M(\overline{0}) = R(\overline{0})$.
Since $\bu = \supn(\bu)$, we have $\bu = (\lceil m \mathrm{f}_1 \rceil {-} \lceil (m{-}1) \mathrm{f}_1 \rceil)_{m\ge0}$.
There are $k$ ones among the first $n_k{+}k{+}1$ letters of~$\bu$, hence $\sum_{m=0}^{n_k+k} (\lceil m \mathrm{f}_1 \rceil {-} \lceil (m{-}1) \mathrm{f}_1 \rceil) = \lceil (n_k{+}k) \mathrm{f}_1 \rceil = k$, i.e., $\lceil n_k \mathrm{f}_1 -  k \mathrm{f}_0 \rceil = 0$.
Since $n_k$ is the maximal integer with this property, we obtain that $n_k = \lfloor \frac{\mathrm{f}_0}{\mathrm{f}_1} k\rfloor$, thus \eqref{e:nf} holds.

If $\bu = i_0i_1\cdots = \sigma M(j_0j_1\cdots)$, $\sigma \in \{L,R\}^*$, then consider $k = h |\sigma(01)|_1 {+} \ell$, $h \ge 0$, $0 \le \ell <|\sigma(01)|_1$.
Now, there are $\ell$ ones among the first $n_k{+}k{+}1{-}h|\sigma(01)|$ letters of $\sigma M(j_h)$.
If $j_h = 0$, then we get$\lceil (n_k{+}k{-}h|\sigma(01)|) \mathrm{f}_1 \rceil = \ell$, i.e., $\lceil (n_k{+}k) \mathrm{f}_1 \rceil = \ell {+} h\, |\sigma(01)|_1 = k$, thus $n_k = \lfloor \frac{\mathrm{f}_0}{\mathrm{f}_1} k\rfloor$.
If $j_h = 1$, then  $\sigma M(\overline{1}) = (\lfloor m \mathrm{f}_1 \rfloor {-} \lfloor (m{-}1) \mathrm{f}_1 \rfloor)_{m \ge 0}$ implies that $\sum_{m=0}^{n_k+k-h|\sigma(01)|} (\lfloor m \mathrm{f}_1 \rfloor {-} \lfloor (m{-}1) \mathrm{f}_1 \rfloor) = \lfloor (n_k{+}k{-}h|\sigma(01)|) \mathrm{f}_1 \rceil + 1 = \ell$, i.e., $\lfloor n_k \mathrm{f}_1  - k \mathrm{f}_0 \rfloor = -1$, thus $n_k = \lceil \frac{\mathrm{f}_0}{\mathrm{f}_1} k\rceil {-} 1$.
Therefore, \eqref{e:nf} holds also in this case.

By symmetry, we get that $\tilde{n}_\ell + 1 \ge \frac{\mathrm{f}_1}{\mathrm{f}_0} \ell$ for all $\ell \ge 1$, thus
\begin{equation*}
\big(n_k+\tfrac{x}{x-1}\big) \big(\tilde{n}_\ell+\tfrac{y}{y-1}\big) > (n_k+1) (\tilde{n}_\ell+1) \ge k\, \ell.
\end{equation*}
This proves \eqref{e:partial3}, thus \eqref{e:partial2} and \eqref{e:partial}, which concludes the proof of the lemma.
\end{proof}

Before proving the main results, we show that the formulas for $\mathcal{G}(q_0)$ in \eqref{e:ggr} and for $\mathcal{K}(q_0)$ in \eqref{e:klc} cover all $q_0 > 1$.

\begin{lemma} \label{l:cover}
We have the partitions
\begin{align}
(1, \infty) & = \bigcup_{\sigma\in\{L,R\}^*M} [\mu_{\sigma(\overline{0}),\sigma(1\overline{0})}, \mu_{\sigma(0\overline{1}),\sigma(\overline{1})}] \ \cup \hspace{-1em} \bigcup_{\bsigma\in\{L,R\}^\infty\,\text{primitive}} \hspace{-1em} \{\mu_{\bsigma(\overline{0}),\bsigma(\overline{1})}\}, \label{e:coverLR} \\
(1, \infty) & = \bigcup_{\substack{\bsigma\in \{L,M,R\}^*M\{\overline{L},\overline{R}\}\,\text{or} \\ \bsigma\in\{L,M,R\}^\infty\,\text{primitive}}} [\mu_{\bsigma(\overline{0}),\bsigma(1\overline{0})}, \mu_{\bsigma(0\overline{1}),\bsigma(\overline{1})}], \label{e:coverLMR}
\end{align}
with $\mu_{\bsigma(0\overline{1}),\bsigma(\overline{1})} < \mu_{\btau(\overline{0}),\btau(1\overline{0})}$ if $\bsigma < \btau$.
\end{lemma}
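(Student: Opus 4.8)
The plan is to transport the word-space partitions of Lemma~\ref{l:partition} to $(1,\infty)$ through the monotone correspondences $g_{\bu}$, $\tilde g_{\bv}$ and $\mu_{\bu,\bv}$. First I would record that every $\mu$ occurring in \eqref{e:coverLR} and \eqref{e:coverLMR} is well defined: using \eqref{e:infsup}, \eqref{e:LR} and the explicit substitutions $L,M,R$, one checks that $\bsigma(\overline 0)$ and $\bsigma(0\overline 1)$ lie in $W$, that $\bsigma(1\overline 0)$ and $\bsigma(\overline 1)$ lie in $\tilde W$, and that, after splitting off the longest $\{L,R\}^*$-prefix, each of these words lies in a common set $\sigma M(\{0,1\}^\infty)$; hence Lemma~\ref{l:mu} applies and $\mu_{\bu,\bv}$ is the unique zero of $x\mapsto g_{\bu}(x)-\tilde g_{\bv}(x)$, this difference being positive to the left of $\mu_{\bu,\bv}$ and negative to the right. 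It then remains to prove that the intervals (and singletons) are pairwise disjoint with the stated order, and that their union is all of $(1,\infty)$.

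Disjointness and order follow from a monotonicity property of $\mu$. By Lemma~\ref{l:functiong2}, for fixed $x$ the value $g_{\bu}(x)$ is nondecreasing in $\bu$ and $\tilde g_{\bv}(x)$ is nonincreasing in $\bv$ for the lexicographic order, so $g_{\bu}(x)-\tilde g_{\bv}(x)$ is nondecreasing in $\bu$ and in $\bv$; together with the sign-change description of Lemma~\ref{l:mu} this gives $\mu_{\bu,\bv}\le\mu_{\bu',\bv'}$ whenever $\bu\le\bu'$ and $\bv\le\bv'$, strictly if one inequality is strict (the strictness coming from the strict monotonicity of $g_{\cdot}(x)$, $\tilde g_{\cdot}(x)$ in the word variable near the crossing, where $\mu_{\bu,\bv}<q_{\bu}$ by Lemma~\ref{l:mu}). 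Combining this with the last sentence of Lemma~\ref{l:partition} --- namely $\bsigma(0\overline 1)<\btau(\overline 0)$ and $\bsigma(\overline 1)<\btau(1\overline 0)$ for $\bsigma<\btau$ --- yields $\mu_{\bsigma(0\overline 1),\bsigma(\overline 1)}<\mu_{\btau(\overline 0),\btau(1\overline 0)}$; and $\mu_{\bsigma(\overline 0),\bsigma(1\overline 0)}\le\mu_{\bsigma(0\overline 1),\bsigma(\overline 1)}$ because $\bsigma(\overline 0)\le\bsigma(0\overline 1)$ and $\bsigma(1\overline 0)\le\bsigma(\overline 1)$, so each interval is nonempty and the family is increasing in $\bsigma$. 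For \eqref{e:coverLR} one reads $\sigma\in\{L,R\}^*M$ as the infinite sequences $\sigma\overline L$, $\sigma\overline R$ and uses the identities \eqref{e:LR}.

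For the covering I would fix $x>1$ and use Lemma~\ref{l:functiong2} once more: $g_{\cdot}(x)$ is an order-preserving bijection of $\{\bu\in W:q_{\bu}>x\}$ onto $(1,\frac{x}{x-1})$ and $\tilde g_{\cdot}(x)$ is an order-reversing bijection of $\tilde W$ onto $(1,\frac{x}{x-1})$, so the word-partitions of Lemma~\ref{l:partition} are carried to partitions of $(1,\frac{x}{x-1})$ into $q_1$-intervals; by Lemma~\ref{l:mu}, $x$ lies in $[\mu_{\bsigma(\overline 0),\bsigma(1\overline 0)},\mu_{\bsigma(0\overline 1),\bsigma(\overline 1)}]$ exactly when the image of $[\bsigma(\overline 0),\bsigma(0\overline 1)]$ under $g_{\cdot}(x)$ and the image of $[\bsigma(1\overline 0),\bsigma(\overline 1)]$ under $\tilde g_{\cdot}(x)$ overlap. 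Chasing which block of the two partitions of $(1,\frac{x}{x-1})$ contains a common point determines the correct $\bsigma$, or else shows $x=\mu_{\bsigma(\overline 0),\bsigma(\overline 1)}$ for a primitive $\bsigma$; the fact that the word-intervals abut exactly --- encoded by $\overline L(0\overline 1)=01\overline 0$, $\overline R(\overline 0)=0\overline 1$ in \eqref{e:LR} and the recursion \eqref{e:partJ} --- together with the continuity of $g_{\cdot}(x)$ and $\tilde g_{\cdot}(x)$ in the word variable, forces the $q_1$-intervals to abut too, so no $q_1$ and hence no $x$ is missed. Finally, $\inf_{\sigma}\mu_{\sigma(\overline 0),\sigma(1\overline 0)}=1$ and $\sup_{\sigma}\mu_{\sigma(0\overline 1),\sigma(\overline 1)}=\infty$ follow from Example~\ref{ex:Lk} (take $\sigma=L^kM$, $k\to\infty$, and use the $0\leftrightarrow1$, $q_0\leftrightarrow q_1$ symmetry of Lemma~\ref{l:01} for $\sigma=R^kM$), so the family reaches both ends of $(1,\infty)$. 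The argument for \eqref{e:coverLMR} is the same, with $\{L,R\}^*M$ replaced by $\{L,M,R\}^*M\{\overline L,\overline R\}$ and primitive sequences taken in $\{L,M,R\}^\infty$; here there are no isolated Sturmian points, since these are absorbed into the closed intervals.

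The delicate point is the ``no gap'' assertion in the covering step: one must make sure that passing from the word-partitions to the $q_1$-partitions via $g_{\cdot}(x)$, $\tilde g_{\cdot}(x)$, and then to the $x$-intervals via $\mu$, does not open a gap at an accumulation point --- precisely where infinitely many Sturmian (for \eqref{e:coverLR}) resp.\ Thue--Morse--Sturmian (for \eqref{e:coverLMR}) limit words cluster against an endpoint of a ``rational'' interval. This is handled by upgrading the continuity of $g_{\cdot}(x)$, $\tilde g_{\cdot}(x)$ in the word argument (Lemma~\ref{l:functiong2}), via the monotone implicit function theorem already used in Lemmas~\ref{l:functiong}--\ref{l:functiongt}, to continuity of $\mu_{\bu,\bv}$ in $(\bu,\bv)$ along such limits, so that the endpoints of neighbouring pieces coincide with the corresponding $\mu$ of the limiting primitive sequence.
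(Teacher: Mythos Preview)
Your proposal is correct and follows essentially the same route as the paper: reduce membership of $q_0$ in $[\mu_{\bsigma(\overline{0}),\bsigma(1\overline{0})}, \mu_{\bsigma(0\overline{1}),\bsigma(\overline{1})}]$ to the overlap of the $q_1$-intervals $[g_{\bsigma(\overline{0})}(q_0), g_{\bsigma(0\overline{1})}(q_0)]$ and $[\tilde{g}_{\bsigma(\overline{1})}(q_0), \tilde{g}_{\bsigma(1\overline{0})}(q_0)]$, then invoke Lemmas~\ref{l:partition} and~\ref{l:functiong2} to see that these two families are oppositely ordered partitions of $(1,\tfrac{q_0}{q_0-1})$ for each fixed $q_0$. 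The paper is a bit more economical: once that overlap characterization is in place, the opposite orderings alone force a unique overlapping index at every $q_0$, so your separate ``no gap'' discussion, the continuity of $\mu$ in the word variable, and the endpoint check via Example~\ref{ex:Lk} are not needed.
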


\begin{proof}
For $q_0 > 1$, $\bsigma \in \{L,M,R\}^*M\{\overline{L},\overline{R}\}$ or primitive $\bsigma \in \{L,M,R\}^\infty$, we have
\begin{equation*}
\begin{aligned}
& q_0 \in [\mu_{\bsigma(\overline{0}),\bsigma(1\overline{0})}, \mu_{\bsigma(0\overline{1}),\bsigma(\overline{1})}] \ \Longleftrightarrow \ g_{\bsigma(\overline{0})}(q_0) \le \tilde{g}_{\bsigma(1\overline{0})}(q_0),\ g_{\bsigma(0\overline{1})}(q_0) \ge \tilde{g}_{\bsigma(\overline{1})}(q_0) \\
& \qquad \Longleftrightarrow \ [g_{\bsigma(\overline{0})}(q_0), g_{\bsigma(0\overline{1})}(q_0)] \cap [\tilde{g}_{\bsigma(\overline{1})}(q_0), \tilde{g}_{\bsigma(1\overline{0})}(q_0)] \ne \emptyset.
\end{aligned}
\end{equation*}
by Lemma~\ref{l:mu} and since $\tilde{g}_{\bsigma(\overline{1})}(q_0) > 1$.
Lemmas~\ref{l:partition} and~\ref{l:functiong2} give the partitions
\begin{equation*}
\big(1,\tfrac{q_0}{q_0-1}\big) \ = \hspace{-3em} \bigcup_{\substack{\bsigma\in \{L,M,R\}^*M\{\overline{L},\overline{R}\}\,\text{or} \\ \bsigma\in\{L,M,R\}^\infty\,\text{primitive}}} \hspace{-3em} \big([g_{\bsigma(\overline{0})}(q_0), g_{\bsigma(0\overline{1})}(q_0)] \setminus \{1\}\big) \ = \hspace{-3em} \bigcup_{\substack{\bsigma\in \{L,M,R\}^*M\{\overline{L},\overline{R}\}\,\text{or} \\ \bsigma\in\{L,M,R\}^\infty\,\text{primitive}}} \hspace{-3em} [\tilde{g}_{\bsigma(\overline{1})}(q_0), \tilde{g}_{\bsigma(1\overline{0})}(q_0)],
\end{equation*}
with $g_{\bsigma(0\overline{1})}(q_0) < g_{\btau(\overline{0})}(q_0)$ (if $g_{\btau(\overline{0})}(q_0) > 1$) and $\tilde{g}_{\bsigma(\overline{1})}(q_0) > \tilde{g}_{\btau(1\overline{0})}(q_0)$ when $\bsigma < \btau$.
Therefore, there is a unique $\bsigma$ such that the intervals $[g_{\bsigma(\overline{0})}(q_0), g_{\bsigma(0\overline{1})}(q_0)]$ and $[\tilde{g}_{\bsigma(\overline{1})}(q_0), \tilde{g}_{\bsigma(1\overline{0})}(q_0)]$ overlap, hence \eqref{e:coverLMR} is a partition.
If $\bsigma < \btau$, then we have
\begin{equation*}
g_{\bsigma(0\overline{1})}(\mu_{\btau(\overline{0}),\btau(1\overline{0})}) < g_{\btau(\overline{0})}(\mu_{\btau(\overline{0}),\btau(1\overline{0})}) = \tilde{g}_{\btau(1\overline{0})}(\mu_{\btau(\overline{0}),\btau(1\overline{0})}) < \tilde{g}_{\bsigma(\overline{1})}(\mu_{\btau(\overline{0}),\btau(1\overline{0})})
\end{equation*}
by Lemma~\ref{l:functiong2}, thus $\mu_{\bsigma(0\overline{1}),\bsigma(\overline{1})} < \mu_{\btau(\overline{0}),\btau(1\overline{0})}$ by Lemma~\ref{l:mu}.

To see that \eqref{e:coverLR} is a partition, it suffices to note that merging all intervals $[\mu_{\bsigma(\overline{0}),\bsigma(1\overline{0})}, \mu_{\bsigma(0\overline{1}),\bsigma(\overline{1})}]$ in \eqref{e:coverLMR} such that $\bsigma$ starts with the same $\sigma \in \{L,R\}^*M$ gives the interval $[\mu_{\sigma\overline{L}(\overline{0}),\sigma\overline{L}(1\overline{0})}, \mu_{\sigma\overline{R}(0\overline{1}),\sigma\overline{R}(\overline{1})}] = [\mu_{\sigma(\overline{0}),\sigma(1\overline{0})}, \mu_{\sigma(0\overline{1}),\sigma(\overline{1})}]$.
\end{proof}

\section{Proof of the main results} \label{sec:proof-main-results}
In this section, we prove Theorems \ref{t:1}--\ref{t:3}.
We start with Theorem~\ref{t:2}.

\begin{proposition} \label{p:K}
For $q_0 > 1$, we have
\begin{equation} \label{e:Kq0}
\mathcal{K}(q_0) = \begin{cases}\tilde{g}_{\bsigma(\overline{1})}(q_0) & \text{if}\ q_0 \in [\mu_{\bsigma(\overline{0}),\bsigma(\overline{1})}, \mu_{\bsigma(0\overline{1}),\bsigma(\overline{1})}], \\ & \text{$\bsigma \in \{L,M,R\}^*M\overline{L}$ or $\bsigma \in \{L,M,R\}^\infty$ primitive}, \\ g_{\bsigma(\overline{0})}(q_0) & \text{if}\ q_0 \in [\mu_{\bsigma(\overline{0}),\bsigma(1\overline{0})}, \mu_{\bsigma(\overline{0}),\bsigma(\overline{1})}], \\ & \text{$\bsigma \in \{L,M,R\}^*M\overline{R}$ or $\bsigma \in \{L,M,R\}^\infty$ primitive},\end{cases}
\end{equation}
which is equivalent to~\eqref{e:klc},
Moreover, $\mathcal{K}(q_0)$ is the unique $q_1\in (1, \frac{q_0}{q_0-1}\big)$ such that $s(\ba_{q_0,q_1}) = s(\bb_{q_0, q_1})$.
\end{proposition}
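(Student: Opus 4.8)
The plan is to pass to the subshift $\Omega_{\ba_{q_0,q_1},\bb_{q_0,q_1}}$ and then extract $\mathcal{K}(q_0)$ from the partitions already at our disposal. First I would note that for $q_1>\frac{q_0}{q_0-1}$ we have $U_{q_0,q_1}=\{0,1\}^\infty$, which is uncountable, while for $q_1\le\frac{q_0}{q_0-1}$ Lemma~\ref{l:UV}\,(\ref{i:UU2}) reduces uncountability of $U_{q_0,q_1}$ to that of $V_{q_0,q_1}=\Omega_{\ba_{q_0,q_1},\bb_{q_0,q_1}}$; hence $\mathcal{K}(q_0)=\inf\{q_1\in(1,\frac{q_0}{q_0-1}): \Omega_{\ba_{q_0,q_1},\bb_{q_0,q_1}}\ \text{uncountable}\}$. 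By Proposition~\ref{p:lex} this occurs exactly when $s(\ba_{q_0,q_1})>s(\bb_{q_0,q_1})$ or $s(\ba_{q_0,q_1})=s(\bb_{q_0,q_1})$ is primitive, and $\Omega_{\ba_{q_0,q_1},\bb_{q_0,q_1}}$ is countable exactly when $s(\ba_{q_0,q_1})<s(\bb_{q_0,q_1})$ or the common value ends with $\overline L$ or $\overline R$. So it remains to track $s(\ba_{q_0,q_1})$ and $s(\bb_{q_0,q_1})$ as $q_1$ grows.

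By Lemma~\ref{l:ab}, $q_1\mapsto\ba_{q_0,q_1}\in0\{0,1\}^\infty$ is strictly increasing and $q_1\mapsto\bb_{q_0,q_1}\in1\{0,1\}^\infty$ strictly decreasing on $(1,\frac{q_0}{q_0-1})$, so since $s$ is monotone on $0\{0,1\}^\infty$ and on $1\{0,1\}^\infty$, $q_1\mapsto s(\ba_{q_0,q_1})$ is non-decreasing and $q_1\mapsto s(\bb_{q_0,q_1})$ non-increasing. Combining the level-set partition of $s$ from Lemma~\ref{l:partition} with the bijections of Lemma~\ref{l:functiong2} and the identities $\ba_{q_0,g_{\bu}(q_0)}=\bu$, $\bb_{q_0,\tilde g_{\bv}(q_0)}=\bv$ (Lemmas~\ref{l:functiong}\,(\ref{i:g5}) and~\ref{l:functiongt}\,(\ref{i:gt5})), one obtains --- exactly as in the proof of Lemma~\ref{l:cover} --- two partitions of $(1,\frac{q_0}{q_0-1})$ into closed intervals indexed by the same set (the $\bsigma\in\{L,M,R\}^*M\{\overline L,\overline R\}$ together with the primitive $\bsigma$): the set $J_{\bsigma}:=\{q_1:s(\ba_{q_0,q_1})=\bsigma\}=[g_{\bsigma(\overline0)}(q_0),g_{\bsigma(0\overline1)}(q_0)]\setminus\{1\}$, which increases with $\bsigma$, and $\tilde J_{\bsigma}:=\{q_1:s(\bb_{q_0,q_1})=\bsigma\}=[\tilde g_{\bsigma(\overline1)}(q_0),\tilde g_{\bsigma(1\overline0)}(q_0)]$, which decreases with $\bsigma$.

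Now the decisive step: $\{q_1:s(\ba_{q_0,q_1})=s(\bb_{q_0,q_1})\}=\bigcup_{\bsigma}(J_{\bsigma}\cap\tilde J_{\bsigma})$, and by Lemma~\ref{l:cover} this equals $J_{\bsigma_0}\cap\tilde J_{\bsigma_0}$ for the unique $\bsigma_0$ with $q_0\in[\mu_{\bsigma_0(\overline0),\bsigma_0(1\overline0)},\mu_{\bsigma_0(0\overline1),\bsigma_0(\overline1)}]$. Since $\bsigma_0$ is primitive (then $J_{\bsigma_0}$ and $\tilde J_{\bsigma_0}$ are both singletons, and equal because they meet), or ends with $\overline L$ (then $\bsigma_0(1\overline0)=\bsigma_0(\overline1)$ by~\eqref{e:LR}, so $\tilde J_{\bsigma_0}$ is a singleton), or ends with $\overline R$ (then $\bsigma_0(\overline0)=\bsigma_0(0\overline1)$, so $J_{\bsigma_0}$ is a singleton), the intersection is a single point $q_1^*$ --- this is the uniqueness statement. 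At $q_1=q_1^*$ we have $s(\ba_{q_0,q_1^*})=\bsigma_0=s(\bb_{q_0,q_1^*})$, and the monotonicity together with $J_{\bsigma_0}\cap\tilde J_{\bsigma_0}=\{q_1^*\}$ forces $s(\ba_{q_0,q_1})<s(\bb_{q_0,q_1})$ for $q_1<q_1^*$ and $s(\ba_{q_0,q_1})>s(\bb_{q_0,q_1})$ for $q_1>q_1^*$. Hence $\Omega_{\ba_{q_0,q_1},\bb_{q_0,q_1}}$ (equivalently $U_{q_0,q_1}$) is countable for $q_1\le q_1^*$ when $\bsigma_0$ ends in $\overline L$ or $\overline R$, uncountable with zero entropy at $q_1^*$ when $\bsigma_0$ is primitive, and uncountable with positive entropy for every $q_1\in(q_1^*,\frac{q_0}{q_0-1})$; since that last interval is nonempty ($q_1^*\in J_{\bsigma_0}\subseteq(1,\frac{q_0}{q_0-1})$), in all cases $\mathcal{K}(q_0)=q_1^*$. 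Reading off $q_1^*$ --- it is $g_{\bsigma_0(\overline0)}(q_0)$ when $J_{\bsigma_0}=\{q_1^*\}$ and $\tilde g_{\bsigma_0(\overline1)}(q_0)$ when $\tilde J_{\bsigma_0}=\{q_1^*\}$ --- yields~\eqref{e:Kq0}, and unwinding the three shapes $\bsigma_0\in\{L,M,R\}^*M\overline L$, $\{L,M,R\}^*M\overline R$, or primitive through the evaluations~\eqref{e:LR} of $\overline L,\overline R$ on $\overline0,\overline1,0\overline1,1\overline0$ rewrites the index set as $\{L,M,R\}^*M$ with the words $\sigma(1\overline0)$, $\sigma(0\overline1)$, $\sigma(\overline0)$ of~\eqref{e:klc}, giving the equivalence of the two formulas.

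The step I expect to demand the most care is the decisive one: verifying that $\{J_{\bsigma}\}$ and $\{\tilde J_{\bsigma}\}$ are genuinely partitions of $(1,\frac{q_0}{q_0-1})$ indexed by the same ordered set and running in opposite directions, so that they overlap in exactly one index $\bsigma_0$ (the content already packaged in Lemma~\ref{l:cover}), and that this unique overlap is a single point because a non-primitive $\bsigma_0$ collapses one of the two intervals. Matching, in each of the three shapes of $\bsigma_0$, the point $q_1^*$ with the correct $g_{\bu}$ or $\tilde g_{\bv}$ in~\eqref{e:Kq0} and then with~\eqref{e:klc} is the remaining bookkeeping.
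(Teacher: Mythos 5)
Your argument is correct and follows essentially the same route as the paper's proof: both rest on the monotonicity of $q_1\mapsto\ba_{q_0,q_1},\bb_{q_0,q_1}$ (Lemma~\ref{l:ab}), the collapse of one of the intervals $[\bsigma(\overline{0}),\bsigma(0\overline{1})]$, $[\bsigma(1\overline{0}),\bsigma(\overline{1})]$ for non-primitive $\bsigma$ in the range of~$s$, Proposition~\ref{p:lex} for the countability dichotomy, and Lemma~\ref{l:cover} (with Lemmas~\ref{l:functiong}, \ref{l:functiongt}, \ref{l:functiong2}, \ref{l:mu}) to locate and identify the unique crossing value. Your reformulation via the level sets $J_{\bsigma}$, $\tilde J_{\bsigma}$ is just a repackaging of the paper's two-step verification, so no essential difference or gap.
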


\begin{proof}
We show first that $s(\ba_{q_0,q_1}) = s(\bb_{q_0, q_1})$, $q_0 > 1$, $q_1\in (1, \frac{q_0}{q_0-1}\big)$, implies that $q_1 = \mathcal{K}(q_0)$.
Indeed, let $\bsigma = s(\ba_{q_0,q_1}) = s(\bb_{q_0, q_1})$.
For $q'_1 \in \big(q_1,\frac{q_0}{q_0-1}\big)$, we have $\ba_{q_0, q_1'} > \ba_{q_0, q_1}$ and $\bb_{q_0, q_1'} < \bb_{q_0, q_1}$ by Lemma~\ref{l:ab}, thus \mbox{$s(\bb_{q_0,q'_1}) \le \bsigma \le s(\ba_{q_0,q'_1})$}.
At least one of the inequalities is strict since $\bsigma(\overline{0}) \,{=}\, \bsigma(0\overline{1})$ if $\bsigma$ is primitive or ends with~$\overline{R}$, $\bsigma(1\overline{0}) \,{=}\, \bsigma(\overline{1})$ if $\bsigma$ is primitive or ends with~$\overline{L}$, hence $U_{q_0,q'_1}$ is uncountable by Lemma~\ref{l:UV} (iii) and Proposition~\ref{p:lex}.
Similarly, we obtain for $q'_1 \in (1,q_1)$ that $s(\ba_{q_0,q'_1}) < s(\bb_{q_0,q'_1})$ and $U_{q_0,q'_1}$ is countable.
This proves that $q_1 = \mathcal{K}(q_0)$.
In particular, there is at most one $q_1\in (1, \frac{q_0}{q_0-1}\big)$ with $s(\ba_{q_0,q_1}) = s(\bb_{q_0, q_1})$.

Next we show that $s(\ba_{q_0,q_1}) = s(\bb_{q_0, q_1})$ for $q_1$ as in \eqref{e:Kq0}.
Let $\bsigma \in \{L,M,R\}^*M\overline{R}$ or $\bsigma \in \{L,M,R\}^\infty$ primitive, $q_0 \in [\mu_{\bsigma(\overline{0}),\bsigma(1\overline{0})}, \mu_{\bsigma(\overline{0}),\bsigma(\overline{1})}]$, $q_1 = g_{\bsigma(\overline{0})}(q_0)$.
Then $\ba_{q_0, q_1} = \bsigma(\overline{0})$ by  Lemma~\ref{l:functiong}, hence $s(\ba_{q_0, q_1}) = \bsigma$.
Since $q_0 \ge \mu_{\bsigma(\overline{0}),\bsigma(1\overline{0})}$, we have $q_1 \le \tilde g_{\bsigma(1\overline{0})}(q_0)$ by Lemma~\ref{l:mu}, hence $\bb_{q_0,q_1} \ge \bsigma(1\overline{0})$ by Lemmas~\ref{l:ab} and~\ref{l:functiongt}.
Similarly, $q_0 \le \mu_{\bsigma(\overline{0}),\bsigma(\overline{1})}$ implies that $\bb_{q_0,q_1} \le \bsigma(\overline{1})$, thus $s(\bb_{q_0,q_1}) = \bsigma$.
By~symmetry, we obtain that $s(\ba_{q_0,q_1}) = s(\bb_{q_0, q_1})$ for $q_0 \in [\mu_{\bsigma(\overline{0}),\bsigma(\overline{1})}, \mu_{\bsigma(0\overline{1}),\bsigma(\overline{1})}]$, $q_1 = \tilde{g}_{\bsigma(\overline{1})}(q_0)$, $\bsigma \in \{L,M,R\}^*M\overline{L}$ or $\bsigma \in \{L,M,R\}^\infty$ primitive.
Therefore, \eqref{e:Kq0} holds.

By Lemma~\ref{l:cover}, the cases in \eqref{e:Kq0} cover all $q_0 > 1$, thus $\mathcal{K}(q_0)$ is the unique $q_1\in (1, \frac{q_0}{q_0-1}\big)$ such that $s(\ba_{q_0,q_1}) = s(\bb_{q_0, q_1})$.
Using \eqref{e:LR} and that $\bsigma(\overline{0}) = \bsigma(0\overline{1})$, $\bsigma(\overline{1}) = \bsigma(1\overline{0})$ for primitive~$\bsigma$, we obtain that \eqref{e:Kq0} is equivalent to~\eqref{e:klc}.
\end{proof}

\begin{proposition} \label{p:G}
For $q_0 > 1$, $\mathcal{G}(q_0)$ is the unique $q_1 \in (1, \frac{q_0}{q_0-1})$ such that
\begin{equation} \label{e:Gchar}
\begin{aligned}
& \ba_{q_0,q_1} \,{=}\, \sigma(\overline{0}), \sigma(1\overline{0}) \,{\le}\, \bb_{q_0,q_1} \,{\le}\, \sigma(\overline{1}), \, \text{or} \ \sigma(\overline{0}) \,{\le}\, \ba_{q_0,q_1} \,{\le}\, \sigma(1\overline{0}), \bb_{q_0,q_1} \,{=}\, \sigma(\overline{1}), \\
& \sigma \in \{L,R\}^*M, \, \text{or} \ \ba_{q_0,q_1} \,{=}\, \bsigma(\overline{0}), \bb_{q_0,q_1} \,{=}\, \bsigma(\overline{1}),\, \bsigma \in \{L,R\}^\infty\ \text{primitive}.
\end{aligned}
\end{equation}
Moreover, \eqref{e:ggr} holds.
\end{proposition}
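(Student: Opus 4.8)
The plan is to follow the scheme of the proof of Proposition~\ref{p:K}, but working with the trivial/non-trivial dichotomy of Theorem~\ref{t:lex}(\ref{i:lex1})--(\ref{i:lex2}) in place of the countable/uncountable one.

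\emph{Step 1: reduction to $V_{q_0,q_1}$ and the lexicographic characterisation.} Using Lemma~\ref{l:ab} one sees that $\{q_1 : U_{q_0,q_1}\ne\{\overline0,\overline1\}\}$ and $\{q_1 : V_{q_0,q_1}\ne\{\overline0,\overline1\}\}$ are both upward closed in $\{q_1 : q_0+q_1\ge q_0q_1\}$ and have the same infimum, which is therefore $\mathcal G(q_0)$. For $q_1\in(1,\frac{q_0}{q_0-1})$ we have $\ba_{q_0,q_1}<0\overline1$ and $\bb_{q_0,q_1}>1\overline0$, since $\ba_{q_0,q_1}$ starts with $0$ and the equality $\ba_{q_0,q_1}=0\overline1$ forces $\frac1{q_1}=\pi_{q_0,q_1}(0\overline1)=\frac1{q_0(q_1-1)}$, i.e.\ $q_1=\frac{q_0}{q_0-1}$, and symmetrically for $\bb$. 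Hence Theorem~\ref{t:lex}(\ref{i:lex1}) says that for such $q_1$, $V_{q_0,q_1}=\Omega_{\ba_{q_0,q_1},\bb_{q_0,q_1}}\ne\{\overline0,\overline1\}$ holds iff $q_1\ge\Phi_\tau(q_0):=\max\{g_{\tau(\overline0)}(q_0),\,\tilde g_{\tau(\overline1)}(q_0)\}$ for some $\tau\in\{L,R\}^*M$, or $q_0=\mu_{\btau(\overline0),\btau(\overline1)}$ and $q_1=g_{\btau(\overline0)}(q_0)$ for some primitive $\btau\in\{L,R\}^\infty$; here $\ba_{q_0,q_1}\ge\tau(\overline0)\Leftrightarrow q_1\ge g_{\tau(\overline0)}(q_0)$ and $\bb_{q_0,q_1}\le\tau(\overline1)\Leftrightarrow q_1\ge\tilde g_{\tau(\overline1)}(q_0)$ by Lemmas~\ref{l:functiong}(\ref{i:g5}) and~\ref{l:functiongt}(\ref{i:gt5}). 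Thus $\mathcal G(q_0)$ is the infimum of the $\Phi_\tau(q_0)$, together with the at most one value $g_{\btau(\overline0)}(q_0)$ coming from a primitive $\btau$ with $q_0=\mu_{\btau(\overline0),\btau(\overline1)}$.

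\emph{Step 2: identifying the minimiser via the partition, which gives \eqref{e:ggr}.} Fix $q_0>1$ and apply the partition \eqref{e:coverLR} of Lemma~\ref{l:cover}. If $q_0\in[\mu_{\sigma(\overline0),\sigma(1\overline0)},\mu_{\sigma(0\overline1),\sigma(\overline1)}]$ for the (unique) $\sigma\in\{L,R\}^*M$, no primitive term occurs and I claim the infimum is attained at $\tau=\sigma$. By Lemma~\ref{l:partition} the elements of $\{L,R\}^*M$ are linearly ordered so that both the intervals $[\tau(\overline0),\tau(0\overline1)]$ and the intervals $[\tau(1\overline0),\tau(\overline1)]$ increase along this order; hence $\tau$ above $\sigma$ gives $\tau(\overline0)>\sigma(0\overline1)>\sigma(\overline0)$, and $\tau$ below $\sigma$ gives $\tau(\overline1)<\sigma(1\overline0)<\sigma(\overline1)$. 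Combining this with the monotonicity of $g_\cdot(q_0),\tilde g_\cdot(q_0)$ in the word (Lemma~\ref{l:functiong2}) and the inequalities $g_{\sigma(\overline0)}(q_0)\le\tilde g_{\sigma(1\overline0)}(q_0)$ and $\tilde g_{\sigma(\overline1)}(q_0)\le g_{\sigma(0\overline1)}(q_0)$, valid on $[\mu_{\sigma(\overline0),\sigma(1\overline0)},\mu_{\sigma(0\overline1),\sigma(\overline1)}]$ by Lemma~\ref{l:mu}, one gets $\Phi_\tau(q_0)>\Phi_\sigma(q_0)$ for every $\tau\ne\sigma$, where $\Phi_\sigma(q_0)=g_{\sigma(\overline0)}(q_0)$ if $q_0\le\mu_{\sigma(\overline0),\sigma(\overline1)}$ and $\Phi_\sigma(q_0)=\tilde g_{\sigma(\overline1)}(q_0)$ if $q_0\ge\mu_{\sigma(\overline0),\sigma(\overline1)}$ (these agree at $q_0=\mu_{\sigma(\overline0),\sigma(\overline1)}$). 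This is the first two lines of \eqref{e:ggr}, and $\mathcal G(q_0)<\frac{q_0}{q_0-1}$ by Lemma~\ref{l:functiong2}. If instead $q_0=\mu_{\bsigma(\overline0),\bsigma(\overline1)}$ for a primitive $\bsigma\in\{L,R\}^\infty$, the primitive term equals $g_{\bsigma(\overline0)}(q_0)=\tilde g_{\bsigma(\overline1)}(q_0)$; and since by Lemma~\ref{l:partition} the singleton $\{\bsigma(\overline0)\}$ lies outside every interval $[\tau(\overline0),\tau(0\overline1)]$, for each $\tau\in\{L,R\}^*M$ we have $\tau(\overline0)>\bsigma(\overline0)$ or $\tau(\overline1)>\bsigma(\overline1)$, whence $\Phi_\tau(q_0)>g_{\bsigma(\overline0)}(q_0)$. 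So $\mathcal G(q_0)=g_{\bsigma(\overline0)}(q_0)$, which is the third line of \eqref{e:ggr}.

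\emph{Step 3: verifying \eqref{e:Gchar} and uniqueness.} Reading the equalities/inequalities of Step~2 backwards through Lemmas~\ref{l:functiong}(\ref{i:g5}) and~\ref{l:functiongt}(\ref{i:gt5}) shows that at $q_1=\mathcal G(q_0)$ exactly one of the three situations of \eqref{e:Gchar} holds: $\ba_{q_0,q_1}=\sigma(\overline0)$ with $\sigma(1\overline0)\le\bb_{q_0,q_1}\le\sigma(\overline1)$ (when $q_0\le\mu_{\sigma(\overline0),\sigma(\overline1)}$), the symmetric statement (when $q_0\ge\mu_{\sigma(\overline0),\sigma(\overline1)}$), or $\ba_{q_0,q_1}=\bsigma(\overline0)$, $\bb_{q_0,q_1}=\bsigma(\overline1)$ (in the primitive case). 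Conversely, any $q_1\in(1,\frac{q_0}{q_0-1})$ satisfying a clause of \eqref{e:Gchar} has $V_{q_0,q_1}\ne\{\overline0,\overline1\}$ by Theorem~\ref{t:lex}(\ref{i:lex1}), so $q_1\ge\mathcal G(q_0)$; and the first clause with a substitution $\tau$ forces $q_1=g_{\tau(\overline0)}(q_0)$ and, via the side conditions on $\bb_{q_0,q_1}$, $q_0\in[\mu_{\tau(\overline0),\tau(1\overline0)},\mu_{\tau(\overline0),\tau(\overline1)}]$; similarly the second clause forces $q_1=\tilde g_{\tau(\overline1)}(q_0)$, $q_0\in[\mu_{\tau(\overline0),\tau(\overline1)},\mu_{\tau(0\overline1),\tau(\overline1)}]$, and the third forces $q_1=g_{\btau(\overline0)}(q_0)$, $q_0=\mu_{\btau(\overline0),\btau(\overline1)}$. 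By the partition \eqref{e:coverLR} each of these constraints pins $\tau$ (resp.\ $\btau$) to the one determined by $q_0$, hence $q_1=\mathcal G(q_0)$; this gives uniqueness.

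\emph{Main obstacle.} The technical heart is Step~2: showing that the competing substitutions $\tau\ne\sigma$ and the competing primitive words really do yield strictly larger values of $\Phi_\tau$. This needs the compatibility of the ``$0$-side'' and ``$1$-side'' orderings in Lemma~\ref{l:partition} with the monotonicity of $g_\cdot,\tilde g_\cdot$ (Lemma~\ref{l:functiong2}) and with the ordering of the thresholds $\mu_{\cdot,\cdot}$ from Lemmas~\ref{l:mu} and~\ref{l:cover}; everything else is routine translation between the word inequalities and the functions $g,\tilde g$.
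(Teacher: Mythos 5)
Your Steps 1 and 3 are sound, and the overall plan (describe the non-triviality set of $V_{q_0,q_1}=\Omega_{\ba_{q_0,q_1},\bb_{q_0,q_1}}$ via Theorem~\ref{t:lex}~(\ref{i:lex1}) as a union of rays $[\Phi_\tau(q_0),\frac{q_0}{q_0-1})$ with $\Phi_\tau(q_0)=\max\{g_{\tau(\overline{0})}(q_0),\tilde{g}_{\tau(\overline{1})}(q_0)\}$, plus at most one primitive point, and identify the minimiser through the partition of Lemma~\ref{l:cover}) can be made to work. The gap is exactly at the step you yourself defer as the technical heart: the inequality $\Phi_\tau(q_0)>\Phi_\sigma(q_0)$ for $\tau\ne\sigma$ does not follow from the ingredients as you combine them. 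For $\tau>\sigma$ your chain needs $g_{\tau(\overline{0})}(q_0)>g_{\sigma(\overline{0})}(q_0)$ (resp.\ $>g_{\sigma(0\overline{1})}(q_0)$), but Lemma~\ref{l:functiong2} is order-preserving only on $\{\bu\in W:q_{\bu}>q_0\}$; if $g_{\tau(\overline{0})}(q_0)=1$ this inference fails, and then the other term of the maximum goes the wrong way, since $\tau(\overline{1})>\sigma(\overline{1})$ gives $\tilde{g}_{\tau(\overline{1})}(q_0)<\tilde{g}_{\sigma(\overline{1})}(q_0)\le\Phi_\sigma(q_0)$. Excluding $g_{\tau(\overline{0})}(q_0)=1$ is therefore the actual content of the ``compatibility'' you postulate; it can be done --- e.g.\ $g_{\tau(\overline{0})}(q_0)=1$ would force $\ba_{q_0,q_1}>\tau(\overline{0})$ for every $q_1>1$ with $q_0+q_1\ge q_0q_1$ by Lemma~\ref{l:functiong}~(\ref{i:g5}), whereas at $q_1=g_{\sigma(\overline{0})}(q_0)$, $g_{\sigma(0\overline{1})}(q_0)$ or $g_{\bsigma(\overline{0})}(q_0)$ (each $>1$ in the relevant range by Lemmas~\ref{l:mu} and~\ref{l:functiongt}) one has $\ba_{q_0,q_1}\le\sigma(0\overline{1})<\tau(\overline{0})$; alternatively one can invoke the ordering of the thresholds $\mu$ in Lemma~\ref{l:cover} --- but some such argument is needed and is absent. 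In addition, your primitive-case dichotomy is mis-stated: when $\bsigma(\overline{0})>\tau(0\overline{1})$ the correct consequence is $\tau(\overline{1})<\bsigma(\overline{1})$, which yields $\tilde{g}_{\tau(\overline{1})}(q_0)>\tilde{g}_{\bsigma(\overline{1})}(q_0)$; from ``$\tau(\overline{1})>\bsigma(\overline{1})$'' as written, the conclusion $\Phi_\tau(q_0)>g_{\bsigma(\overline{0})}(q_0)$ does not follow.

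For comparison, the paper avoids any comparison between different substitutions: it shows directly that \eqref{e:Gchar} at $(q_0,q_1)$ forces $q_1=\mathcal{G}(q_0)$, because by Lemma~\ref{l:ab} increasing $q_1$ keeps a non-triviality clause of Theorem~\ref{t:lex}~(\ref{i:lex1}), while decreasing $q_1$ puts $(\ba_{q_0,q_1'},\bb_{q_0,q_1'})$ into a triviality clause of Theorem~\ref{t:lex}~(\ref{i:lex2}) for the \emph{same} $\sigma$ (with a short extra argument via the map $s$ in the primitive case); then, as in Proposition~\ref{p:K}, each $q_1$ from \eqref{e:ggr} is checked to satisfy \eqref{e:Gchar}, and Lemma~\ref{l:cover} gives coverage and uniqueness. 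Your proposal never uses the triviality criterion Theorem~\ref{t:lex}~(\ref{i:lex2}), which is precisely why it must carry the cross-substitution comparison; once Step 2 is completed as indicated your route also works, but as submitted the decisive inequality is asserted rather than proved.
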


\begin{proof}
If $q_0 > 1$, $q_1\in (1, \frac{q_0}{q_0-1}\big)$ satisfy \eqref{e:Gchar}, then $V_{q_0,q_1} \ne \{\overline{0}, \overline{1}\}$ by Theorem~\ref{t:lex}, thus $U_{q_0,q'_1} \ne \{\overline{0}, \overline{1}\}$ for all $q'_1 \in (q_1,\frac{q_0}{q_0-1})$ by Lemma~\ref{l:ab}; for all $q'_1 \in (1,q_1)$, we have, by Lemma~\ref{l:ab}, $\ba_{q_0,q'_1} < \sigma(\overline{0})$ or $\bb_{q_0,q'_1} > \sigma(\overline{1})$, or $\ba_{q_0,q'_1} < \bsigma(\overline{0})$, $\bb_{q_0,q'_1} > \bsigma(\overline{1})$, thus $V_{q_0,q_1} = \{\overline{0}, \overline{1}\}$ by Theorem~\ref{t:lex}.
(In case of primitive $\bsigma$, we use that $s(\ba_{q_0,q'_1}) < \bsigma$, thus $\ba_{q_0,q'_1} \le \sigma_1 \cdots \sigma_n M(0\overline{1})$ and $\bb_{q_0,q'_1} > \sigma_1 \cdots \sigma_n M(\overline{1})$, where $n$ is the largest integer such that $s(\ba_{q_0,q'_1})$ starts with $\sigma_1,\dots,\sigma_n$.)
This means that $q_1 = \mathcal{G}(q_0)$.

Similarly to the proof of Proposition~\ref{p:K}, each $q_1$ as in \eqref{e:ggr} satisfies \eqref{e:Gchar}, thus \eqref{e:ggr} holds.
Since the cases in \eqref{e:ggr} cover all $q_0 > 1$ by Lemma~\ref{l:cover}, $\mathcal{G}(q_0)$ is the unique $q_1\in (1, \frac{q_0}{q_0-1}\big)$ satisfying \eqref{e:Gchar}.
\end{proof}

Next, we prove statement (\ref{i:t11}) of Theorem~\ref{t:1}.

\begin{proposition} \label{p:cont}
The functions $\mathcal{G}$ and $\mathcal{K}$ are continuous, strictly decreasing on $(1,\infty)$, and almost everywhere differentiable.
\end{proposition}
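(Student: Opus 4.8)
The plan is to read off continuity and strict monotonicity from the explicit description of $\mathcal{G}$ and $\mathcal{K}$ in Propositions~\ref{p:G} and~\ref{p:K} together with the monotonicity coming from Lemma~\ref{l:ab}. First, Lemma~\ref{l:ab} gives $U_{q_0,q_1}\subseteq V_{q_0,q_1}\subseteq U_{q_0',q_1'}$ whenever $q_0\le q_0'$, $q_1\le q_1'$ (and $U_{q_0',q_1'}=\{0,1\}^\infty$ outside the regular region), so both ``$U_{q_0,q_1}\ne\{\overline0,\overline1\}$'' and ``$U_{q_0,q_1}$ uncountable'' are monotone in each base; passing to infima shows that $\mathcal{G}$ and $\mathcal{K}$ are non-increasing. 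Next I would record that on each interval $[\mu_{\sigma(\overline0),\sigma(1\overline0)},\mu_{\sigma(0\overline1),\sigma(\overline1)}]$, $\sigma\in\{L,R\}^*M$, formula~\eqref{e:ggr} presents $\mathcal{G}$ as $g_{\sigma(\overline0)}$ on $[\mu_{\sigma(\overline0),\sigma(1\overline0)},\mu_{\sigma(\overline0),\sigma(\overline1)}]$ followed by $\tilde g_{\sigma(\overline1)}$ on $[\mu_{\sigma(\overline0),\sigma(\overline1)},\mu_{\sigma(0\overline1),\sigma(\overline1)}]$, the two pieces agreeing at $\mu_{\sigma(\overline0),\sigma(\overline1)}$ by definition of $\mu$; similarly \eqref{e:klc} (in the form of Proposition~\ref{p:K}) presents $\mathcal{K}$ on each interval of the partition~\eqref{e:coverLMR} as a single $g_{\bu}$ or $\tilde g_{\bv}$. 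By Lemmas~\ref{l:functiong} and~\ref{l:functiongt} (the bound $q_0<q_{\bu}$ needed for $g_{\bu}$ holds because $\mathcal{G},\mathcal{K}>1$), every such piece is continuous and strictly decreasing, hence so are $\mathcal{G}$ and $\mathcal{K}$ on each of these closed intervals.

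It then remains to prove continuity at the countably many remaining points: the endpoints of the intervals above and the Sturmian (resp.\ Thue--Morse--Sturmian) points $\mu_{\bsigma(\overline0),\bsigma(\overline1)}$. Each of these equals $\mu_{\bu,\bv}$ for one of the pairs occurring in~\eqref{e:ggr} resp.\ \eqref{e:klc}, namely $(\sigma(\overline0),\sigma(1\overline0))$, $(\sigma(0\overline1),\sigma(\overline1))$ and $(\bsigma(\overline0),\bsigma(\overline1))$ for $\mathcal{G}$, and the analogous pairs over $\{L,M,R\}$ together with $(\sigma(01\overline0),\sigma(1\overline0))$ and $(\sigma(0\overline1),\sigma(10\overline1))$ for $\mathcal{K}$. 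For such a pair I would establish the sandwich
\[
\min\{g_{\bu}(q_0),\tilde g_{\bv}(q_0)\}\ \le\ \mathcal{G}(q_0)\ \le\ \max\{g_{\bu}(q_0),\tilde g_{\bv}(q_0)\}
\]
(and the same for $\mathcal{K}$) for $q_0$ near $\mu_{\bu,\bv}$. The upper bound is obtained by evaluating at $q_1=g_{\bu}(q_0)$ when $q_0\le\mu_{\bu,\bv}$ (so $g_{\bu}(q_0)\ge\tilde g_{\bv}(q_0)$ by Lemma~\ref{l:mu}): then $\ba_{q_0,q_1}=\bu$ by Lemma~\ref{l:functiong} and $\bb_{q_0,q_1}\le\bv$ by Lemma~\ref{l:functiongt}, and the relevant case of Theorem~\ref{t:lex} shows $V_{q_0,q_1}$ is non-trivial resp.\ uncountable --- positive entropy via Theorem~\ref{t:lex}(\ref{i:lex3}) for the endpoint pairs, and $\bu=\supn(\bu)\in\Omega_{\bu,\bb_{q_0,q_1}}$ (since $\supn(\bu)\le\bu$ and $\mathrm{inf}_1(\bu)=\bv\ge\bb_{q_0,q_1}$) for the primitive pairs; this gives $\mathcal{G}(q_0)\le q_1$, and symmetrically at $q_1=\tilde g_{\bv}(q_0)$ when $q_0\ge\mu_{\bu,\bv}$. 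The lower bound is obtained because for $q_1<\min\{g_{\bu}(q_0),\tilde g_{\bv}(q_0)\}$ we have $\ba_{q_0,q_1}<\bu$ and $\bb_{q_0,q_1}>\bv$, and Theorem~\ref{t:lex}(\ref{i:lex2}) resp.\ Theorem~\ref{t:lex}(\ref{i:lex5}) shows $V_{q_0,q_1}$ is trivial resp.\ countable --- here one uses crucially that the substitution word attached to $\bu,\bv$ lies in $\{L,R\}^*M$ resp.\ is non-primitive in $\{L,M,R\}^\infty$, i.e.\ contains an $M$, which rules out the possibility that $s(\ba_{q_0,q_1})<s(\bu)\le s(\bv)<s(\bb_{q_0,q_1})$ collapses to an equality of primitive sequences. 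Since $g_{\bu}$ and $\tilde g_{\bv}$ are continuous and agree at $\mu_{\bu,\bv}$, the sandwich forces $\mathcal{G}$ (resp.\ $\mathcal{K}$) to be continuous at that point.

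For strict monotonicity I would argue by contradiction. A non-increasing function constant on a non-degenerate interval $[p,p']$ cannot be strictly decreasing on any subinterval, so by the second paragraph $[p,p']$ meets the interior of each interval $[\mu_{\sigma(\overline0),\sigma(1\overline0)},\mu_{\sigma(0\overline1),\sigma(\overline1)}]$ in at most one point; moreover it contains at most one Sturmian point, because between any two primitive $\bsigma<\btau$ in $\{L,R\}^\infty$ (differing first at some position $n$ with $\sigma_n=L$, $\tau_n=R$) one has $\bsigma<\sigma\overline L$ and $\sigma\overline R<\btau$ for $\sigma=\sigma_1\cdots\sigma_{n-1}M$, and then the ordering in Lemma~\ref{l:cover} gives a non-degenerate interval $[\mu_{\sigma(\overline0),\sigma(1\overline0)},\mu_{\sigma(0\overline1),\sigma(\overline1)}]$ (non-degeneracy by Lemma~\ref{l:mu}) lying strictly between $\mu_{\bsigma(\overline0),\bsigma(\overline1)}$ and $\mu_{\btau(\overline0),\btau(\overline1)}$; the rest of $[p,p']$ lies in the countable set of endpoints of these intervals. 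Hence $[p,p']$ would be countable, which is absurd, so $\mathcal{G}$ is strictly decreasing; the identical argument with $\{L,M,R\}$ in place of $\{L,R\}$ handles $\mathcal{K}$. Finally, almost everywhere differentiability is immediate from monotonicity by the Lebesgue differentiation theorem. I expect the main obstacle to be the sandwich estimate of the third paragraph: correctly matching each boundary point with its pair $(\bu,\bv)$ and invoking the right clause of Theorem~\ref{t:lex} requires a somewhat delicate lexicographic case analysis built on Lemmas~\ref{l:functiong},~\ref{l:functiongt} and~\ref{l:mu}, together with the observation that Sturmian words contain no occurrence of the substitution letter~$M$.
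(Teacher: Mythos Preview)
Your proposal is correct in outline and would go through, but it takes a more circuitous route than the paper's proof. The paper, like you, verifies piecewise continuity and strict monotonicity on each closed interval of the partition via Lemmas~\ref{l:functiong} and~\ref{l:functiongt}. The difference lies in how one-sided continuity and strict monotonicity are obtained at the interval endpoints and at the (Thue--Morse--)Sturmian points. Rather than re-invoking Theorem~\ref{t:lex} to bound $\mathcal{K}$ (resp.\ $\mathcal{G}$), the paper uses the formula already established in Proposition~\ref{p:K} (resp.~\ref{p:G}): for $q_0$ in a neighbouring interval $[\mu_{\btau(\overline{0}),\btau(1\overline{0})}, \mu_{\btau(0\overline{1}),\btau(\overline{1})}]$ with $\btau<\bsigma$ one knows $\mathcal{K}(q_0)\in\{g_{\btau(0\overline{1})}(q_0),\tilde g_{\btau(\overline{1})}(q_0)\}$, and Lemma~\ref{l:functiong2} (monotonicity of $g_{\cdot}(q_0)$ and $\tilde g_{\cdot}(q_0)$ in the sequence argument) immediately gives the strict sandwich
\[
\tilde g_{\bsigma(1\overline{0})}(q_0)<\tilde g_{\btau(\overline{1})}(q_0)\le g_{\btau(0\overline{1})}(q_0)<g_{\bsigma(\overline{0})}(q_0).
\]
This is exactly your sandwich, but obtained without the lexicographic case analysis you flag as the main obstacle, and with strict inequalities---so strict monotonicity at the boundary points falls out for free and your separate contradiction argument becomes unnecessary. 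One small slip: the ``remaining points'' are not countably many, since the primitive $\bsigma\in\{L,R\}^\infty$ (and a fortiori $\{L,M,R\}^\infty$) are uncountable; this does not affect your argument, which treats each such point individually.
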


\begin{proof}
For all $\bsigma \in \{L,M,R\}^*M\{\overline{L},\overline{R}\}$, the function~$\mathcal{K}$ is continuous, strictly decreasing and differentiable on $[\mu_{\bsigma(\overline{0}),\bsigma(1\overline{0})}, \mu_{\bsigma(0\overline{1}),\bsigma(\overline{1})}]$ by Proposition~\ref{p:K}, Lemmas~\ref{l:functiong} and~\ref{l:functiongt}, and the proof of Lemma~\ref{l:mu};
here, the properties are to be understood one-sided at the endpoints of the interval.
For left-sided properties at $\mu_{\bsigma(\overline{0}),\bsigma(1\overline{0})}$, $\bsigma \in \{L,M,R\}^*M\{\overline{L},\overline{R}\}$ or $\bsigma \in \{L,M,R\}^\infty$ primitive, consider $q_0 \in [\mu_{\btau(\overline{0}),\btau(1\overline{0})}, \mu_{\btau(0\overline{1}),\btau(\overline{1})}]$ with $\btau < \bsigma$.
We have
\begin{equation*}
\tilde{g}_{\bsigma(1\overline{0})}(q_0) < \tilde{g}_{\btau(\overline{1})}(q_0) \le g_{\btau(0\overline{1})}(q_0) < g_{\bsigma(\overline{0})}(q_0)
\end{equation*}
by Lemmas~\ref{l:functiong2} and~\ref{l:mu}, $\mathcal{K}(q_0) \in \{g_{\btau(0\overline{1})}(q_0), \tilde{g}_{\btau(\overline{1})}(q_0)\}$ by Proposition~\ref{p:K}, thus
\begin{equation*}
\tilde{g}_{\bsigma(1\overline{0})}(q_0) < \mathcal{K}(q_0) < g_{\bsigma(\overline{0})}(q_0) \quad \text{for all}\ q_0 \in (1, \mu_{\bsigma(\overline{0}),\bsigma(1\overline{0})}).
\end{equation*}
Since $\tilde{g}_{\bsigma(1\overline{0})}(\mu_{\bsigma(\overline{0}),\bsigma(1\overline{0})}) =  g_{\bsigma(\overline{0})}(\mu_{\bsigma(\overline{0}),\bsigma(1\overline{0})})$ and $\tilde{g}_{\bsigma(1\overline{0})}$, $g_{\bsigma(\overline{0})}$ are continuous and strictly decreasing, $\mathcal{K}$ is left-sided continuous and strictly decreasing at $\mu_{\bsigma(\overline{0}),\bsigma(1\overline{0})}$.
Symmetrically, we obtain that
\begin{equation*}
g_{\bsigma(0\overline{1})}(q_0) < \mathcal{K}(q_0) < \tilde{g}_{\bsigma(\overline{1})}(q_0) \quad \text{for all}\ q_0 > \mu_{\bsigma(0\overline{1}),\bsigma(\overline{1})},
\end{equation*}
thus $\mathcal{K}$ is right-sided continuous and strictly decreasing at $\mu_{\bsigma(0\overline{1}),\bsigma(\overline{1})}$.
Therefore, $\mathcal{K}$ is continuous and strictly decreasing on $(1,\infty)$.
The almost everywhere differentiability follows from the monotonicity by a theorem of Lebesgue; see e.g.\ \cite[p.~5]{RieSz1955}.

The proof for the function $\mathcal{G}$ runs along the same lines.
\end{proof}

\begin{proposition} \label{p:GK}
The statements (\ref{i:t12})--(\ref{i:14_2}) of Theorem~\ref{t:1} and the statement (\ref{i:t31}) of Theorem~\ref{t:3} are true.
\end{proposition}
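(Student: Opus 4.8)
The plan is to derive everything from the explicit descriptions of $\mathcal{G},\mathcal{K}$ already obtained; throughout put $\psi(q_0):=(q_0{-}1)(\mathcal{G}(q_0){-}1)$ and, for a finite word $u$, $\pi^{\mathrm{fin}}(u):=\sum_{k\le|u|}u_k/(q_{u_1}\cdots q_{u_k})$ (with $\tilde\pi^{\mathrm{fin}}$ the analogue using $1{-}u_k$).

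\emph{Step 1 (involutions).} Since uncountable implies nontrivial, $\mathcal{G}(q_0)\le\mathcal{K}(q_0)$ for all $q_0$. For $\mathcal{G}\circ\mathcal{G}=\mathrm{id}$ and $\mathcal{K}\circ\mathcal{K}=\mathrm{id}$ I would use reflection symmetry: by \eqref{e:q1q0}, the morphism $\rho:0\mapsto1,\,1\mapsto0$ induces a cardinality-, shift- and entropy-preserving bijection $U_{q_0,q_1}\to U_{q_1,q_0}$, and one computes $\rho(\ba_{q_0,q_1})=\bb_{q_1,q_0}$, $\rho(\bb_{q_0,q_1})=\ba_{q_1,q_0}$ (from $\pi_{q_0,q_1}(\ba_{q_0,q_1})=1/q_1$, $\tilde\pi_{q_0,q_1}(\bb_{q_0,q_1})=1/q_0$ and Lemma~\ref{l:ab}). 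As $\{L,R\}$ and $\{L,M,R\}$ are stable under $\tau\mapsto\rho\tau\rho$ with $\rho M\rho=M$, $\rho L\rho=R$, the characterization \eqref{e:Gchar} of $\mathcal{G}$ and the characterization $s(\ba_{q_0,q_1})=s(\bb_{q_0,q_1})$ of $\mathcal{K}$ (Proposition~\ref{p:K}) are invariant under $(q_0,q_1)\leftrightarrow(q_1,q_0)$, whence $\mathcal{G}(q_0)=q_1\Leftrightarrow\mathcal{G}(q_1)=q_0$ and likewise for $\mathcal{K}$; in particular $\psi(\mathcal{G}(q_0))=\psi(q_0)$.

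\emph{Step 2 (the set $E=\{q_0:\mathcal{G}(q_0)=\mathcal{K}(q_0)\}$).} Fix $q_0$, set $q_1=\mathcal{G}(q_0)$, $\ba=\ba_{q_0,q_1}$, $\bb=\bb_{q_0,q_1}$; by Proposition~\ref{p:K} and $\mathcal{G}\le\mathcal{K}$, $q_0\in E$ iff $s(\ba)=s(\bb)$, else $\mathcal{G}(q_0)<\mathcal{K}(q_0)$. Run through the three cases of \eqref{e:Gchar}. If $\ba=\sigma(\overline0)$ and $\sigma(1\overline0)\le\bb\le\sigma(\overline1)$, $\sigma\in\{L,R\}^*M$, then $s(\ba)=\sigma\overline{L}$ by \eqref{e:LR}, while $[\sigma\overline{L}(1\overline0),\sigma\overline{L}(\overline1)]=\{\sigma(1\overline0)\}$, so $s(\bb)=s(\ba)$ iff $\bb=\sigma(1\overline0)$, i.e.\ $q_0=\mu_{\sigma(\overline0),\sigma(1\overline0)}$; the second case is symmetric ($q_0=\mu_{\sigma(0\overline1),\sigma(\overline1)}$); in the third case $\ba=\bsigma(\overline0)$, $\bb=\bsigma(\overline1)$, $\bsigma\in\{L,R\}^\infty$ primitive, so $s(\ba)=s(\bb)=\bsigma$ is primitive and $q_0=\mu_{\bsigma(\overline0),\bsigma(\overline1)}$. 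This is the substitutional description of $E$ in Theorem~\ref{t:3}\,(i).

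\emph{Step 3 (products).} The computational input is that for $\sigma=\tilde\sigma M\in\{L,R\}^*M$ the words $\sigma(0)=\tilde\sigma(0)\tilde\sigma(1)$ and $\sigma(1)=\tilde\sigma(1)\tilde\sigma(0)$ begin with $0$ resp.\ $1$ and, being a standard Sturmian pair, differ only by the transposition of their first two letters: $\sigma(0)=01w$, $\sigma(1)=10w$ for a common word $w$. Writing $G$ for the base-product over $w$ and $Q=q_0q_1G$ for that over $01w$, at $q_0=\mu_{\sigma(\overline0),\sigma(1\overline0)}$ (with $q_1=\mathcal{G}(q_0)$) the relations $\pi_{q_0,q_1}(\overline{01w})=1/q_1$ and $\tilde\pi_{q_0,q_1}(10w\,\overline{01w})=1/q_0$ reduce, by periodicity, to $\pi^{\mathrm{fin}}(w)=(q_0{-}1)-q_0/Q$ and $\tilde\pi^{\mathrm{fin}}(w)=(q_1{-}1)-(2q_1{-}1)/Q$; inserting these into the finite-word instance of \eqref{e:q1q0}, $(q_1{-}1)\pi^{\mathrm{fin}}(w)+(q_0{-}1)\tilde\pi^{\mathrm{fin}}(w)+1/G=1$, and using $1/G=q_0q_1/Q$ together with $2q_0q_1-2q_0-2q_1+1=2(q_0{-}1)(q_1{-}1)-1$, everything collapses to $\bigl(2(q_0{-}1)(q_1{-}1)-1\bigr)(Q-1)=0$, i.e.\ $\psi(q_0)=\tfrac12$. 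The point $\mu_{\sigma(0\overline1),\sigma(\overline1)}$ is symmetric, and the Sturmian points $\mu_{\bsigma(\overline0),\bsigma(\overline1)}$ follow by continuity of $g,\tilde g,\mu$ in $\bsigma$ (Lemma~\ref{l:mu}). For $q_0\notin E$ one checks, via the implicit derivative as in the proof of Lemma~\ref{l:mu}, that $\psi$ is strictly monotone on each $g_{\sigma(\overline0)}$- and $\tilde g_{\sigma(\overline1)}$-arc of \eqref{e:ggr}, so $\psi(q_0)<\tfrac12$ there; this gives the equivalence $\psi(q_0)=\tfrac12\Leftrightarrow q_0\in E$ in Theorems~\ref{t:1}\,(iii) and~\ref{t:3}\,(i) and the bound $\psi\le\tfrac12$ of~\ref{t:1}\,(ii). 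At the arc-joint $q_0=\mu_{\sigma(\overline0),\sigma(\overline1)}$ (where $\ba=\overline{01w}$, $\bb=\overline{10w}$) the same manipulation gives $Q=\tfrac{q_0}{q_0-1-\pi^{\mathrm{fin}}(w)}=\tfrac{q_1}{q_1-1-\tilde\pi^{\mathrm{fin}}(w)}$ and $\psi(q_0)=\tfrac{Q-1}{2Q-1}$; since $\pi^{\mathrm{fin}}(w),\tilde\pi^{\mathrm{fin}}(w)\ge0$ we get $Q\ge\max\{\tfrac{q_0}{q_0-1},\tfrac{q_1}{q_1-1}\}$, hence $\psi(q_0)\ge\max\{\tfrac1{q_0+1},\tfrac1{q_1+1}\}$, with equality in the first (resp.\ second) bound iff $w\in 0^*$ (resp.\ $1^*$), i.e.\ $\sigma=L^jM$ (resp.\ $R^jM$); by the formulas of Example~\ref{ex:Lk} this means $q_0^{j+2}=q_0{+}1$ (resp.\ $q_0=\tfrac{q_1^2}{q_1^2-1}$ with $q_1^{j+2}=q_1{+}1$). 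Together with $\psi(\mathcal{G}(q_0))=\psi(q_0)$ this yields all of Theorem~\ref{t:1}\,(ii),(iii) (the remaining inequalities being a comparison of $\psi$ with the relevant hyperbolae on each arc); the $\mathcal{K}$-analogues go the same way from \eqref{e:klc}, with $\mathcal{G}\le\mathcal{K}$ forcing $(q_0{-}1)(\mathcal{K}(q_0){-}1)\ge\psi(q_0)$ and equality on $E$.

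\emph{Step 4 (Hausdorff dimension).} By Step 2, $E$ is a countable set together with $S:=\{\mu_{\bsigma(\overline0),\bsigma(\overline1)}:\bsigma\in\{L,R\}^\infty\text{ primitive}\}$, and by \eqref{e:coverLR}, $(1,\infty)\setminus S$ is a countable union of the non-degenerate, pairwise non-overlapping closed intervals $I_\sigma=[\mu_{\sigma(\overline0),\sigma(1\overline0)},\mu_{\sigma(0\overline1),\sigma(\overline1)}]$, $\sigma\in\{L,R\}^*M$, ordered as $\{L,R\}^*M$ is. Iterating this nested partition and using that a primitive $\bsigma=L^{a_1}R^{a_2}L^{a_3}\cdots$ encodes the continued fraction of a Sturmian slope, together with the (on compacts) bi-Lipschitz dependence of $\mu$ on finite truncations of $\bsigma$ (Lemmas~\ref{l:functiong2},~\ref{l:mu}), a covering argument shows that the total length of the level-$n$ gaps tends to $0$ fast enough that $\dim_H S=0$; hence $\dim_H E=0$ (Theorem~\ref{t:1}\,(iv)), so $\mathcal{G}(q_0)<\mathcal{K}(q_0)$ outside the zero-dimensional set $E$.

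I expect the main obstacle to be the explicit computation of Step 3 — obtaining $\psi\equiv\tfrac12$ on $E$ and the arc-minimum formula $\psi=\tfrac{Q-1}{2Q-1}$ — which rests on the standard-pair structure $\sigma(0)=01w$, $\sigma(1)=10w$ and on the arc-wise monotonicity of $\psi$; the zero-dimensionality of the Sturmian parameter set $S$ is the second delicate point.
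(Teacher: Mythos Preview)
Your overall architecture matches the paper's (involutions from the $\rho$-symmetry, the identity $\sigma(0)=01w$, $\sigma(1)=10w$ for $\sigma\in\{L,R\}^*M$, the use of~\eqref{e:q1q0}), and your computation that $\psi=\tfrac12$ at the endpoints $\mu_{\sigma(\overline0),\sigma(1\overline0)}$, $\mu_{\sigma(0\overline1),\sigma(\overline1)}$ is correct. But Step~3 has genuine gaps. First, the claim that $\psi$ is strictly monotone on each $g_{\sigma(\overline0)}$- or $\tilde g_{\sigma(\overline1)}$-arc is not justified by ``the implicit derivative as in Lemma~\ref{l:mu}'': that lemma controls $g'_\bu-\tilde g'_\bv$ at a single crossing, not the sign of $\tfrac{d}{dq_0}[(q_0{-}1)(g_\bu(q_0){-}1)]$ along an arc. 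The paper bypasses monotonicity entirely: on the arc $q_1=g_{\sigma(\overline0)}(q_0)$ one has $q_0{-}1=\pi_{q_0,q_1}(\bu)$ exactly and $q_1{-}1\le\tilde\pi_{q_0,q_1}(\bu)$ (from $q_1\le\tilde g_{\sigma(1\overline0)}(q_0)$, with $\bu=\overline{w01}$), so $2\psi\le(q_1{-}1)\pi(\bu)+(q_0{-}1)\tilde\pi(\bu)=1$ directly, with equality only at the endpoint. The lower bound $\psi\ge\max\{1/(q_0{+}1),1/(q_1{+}1)\}$ likewise must hold for \emph{all} $q_0$ in the interval, not just at the midpoint; the paper obtains it from $q_1\ge g_{\sigma(\overline0)}(q_0)$ \emph{and} $q_1\ge\tilde g_{\sigma(\overline1)}(q_0)$ throughout, combined with the lexicographic inequality $\overline{w01}\ge\overline{0w1}$ and Lemma~\ref{l:qgql}. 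Second, the $\mathcal{K}$-case does \emph{not} go ``the same way'': for $\sigma\in\{L,M,R\}^*M\setminus\{L,R\}^*M$ one has $\sigma(0)=01v$, $\sigma(1)=10w$ with $v\ne w$ (only $v\ge w$), so your periodic-word identity fails; the paper uses precisely this inequality $v\ge w$ together with Lemma~\ref{l:qgql} to compare $\pi(\bu)$ with $\pi(\bv)$, and that is what separates $(q_0{-}1)(\mathcal{K}(q_0){-}1)\ge\tfrac12$ from $\psi\le\tfrac12$. Your inequality $(q_0{-}1)(\mathcal{K}(q_0){-}1)\ge\psi(q_0)$ is too weak, since $\psi(q_0)\le\tfrac12$.

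For Step~4, ``bi-Lipschitz dependence of $\mu$ on finite truncations of $\bsigma$'' is nowhere established, and a covering argument based on continued-fraction coding would have to control uniformly how the sub-intervals shrink, which degenerates as $q_0\to1$. The paper's route is different and concrete: since $E$ lies on the curve $(q_0{-}1)(q_1{-}1)=\tfrac12$ (by Step~3), one studies $\hat\ba_{q_0}:=\ba_{q_0,1+1/(2(q_0-1))}$; for $q_0\in E$ this is (a shift of) a mechanical word, so only polynomially many length-$n$ prefixes occur, while an explicit estimate of $T_{q'_{i_n},i_n}\cdots T_{q'_{i_1},i_1}(q'_0{-}1)-T_{q_{i_n},i_n}\cdots T_{q_{i_1},i_1}(q_0{-}1)$ along the curve (done separately on each interval $[\mu_{L^kM(0\overline1),L^kM(\overline1)},\mu_{L^{k-1}M(\overline0),L^{k-1}M(1\overline0)}]$) shows the set of $q_0$ sharing a given prefix has length $\le C_k\beta_k^{-n}$. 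This dynamical expansion estimate is what actually delivers $\dim_H E=0$.
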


\begin{proof}
The functions $\mathcal{G}$ and $\mathcal{K}$ are involutions because of the bijection between $U_{q_0,q_1}$ and $U_{q_1,q_0}$ given in Lemma~\ref{l:01}.

\smallskip
To get an upper bound for~$\mathcal{G}$, let $q_0 \in [\mu_{\sigma(\overline{0}),\sigma(1\overline{0})}, \mu_{\sigma(\overline{0}),\sigma(\overline{1})}]$, $\sigma \in \{L,R\}^*M$.
Then $\sigma(0) = 01w$ and $\sigma(1) = 10w$ for some word~$w$. (This is true for $\sigma = M$; if it holds for $\sigma$, then it also holds for $L\sigma$ and~$R\sigma$.)
This implies that $\sigma(\overline{0}) = 01\bu$ and $\sigma(1\overline{0}) = 10\bu$, with $\bu = \overline{w01}$.
For $q_1 = \mathcal{G}(q_0) = g_{\sigma(\overline{0})}(q_0)$, we get that
\begin{equation} \label{e:u1}
q_0 = q_0 q_1 \pi_{q_0,q_1}(\sigma(\overline{0})) = 1 + \pi_{q_0,q_1}(\bu).
\end{equation}
Since $q_1 = g_{\sigma(\overline{0})}(q_0) \le \tilde{g}_{\sigma(1\overline{0})}(q_0)$, we have $\tilde{f}_{\sigma(1\overline{0})}(q_0,q_1) \ge 0$ by Lemma~\ref{l:functiongt}~(\ref{i:gt1}), thus
\begin{equation} \label{e:u2}
q_1 \le q_0 q_1\, \tilde{\pi}_{q_0,q_1}(\sigma(1\overline{0})) = 1 + \tilde{\pi}_{q_0,q_1}(\bu).
\end{equation}
By \eqref{e:u1}, \eqref{e:u2}, and \eqref{e:q1q0}, we obtain that
\begin{equation*}
2\, (q_0-1) (q_1-1) \le (q_1-1)\, \pi_{q_0,q_1}(\bu) + (q_0-1)\, \tilde{\pi}_{q_0,q_1}(\bu) = 1,
\end{equation*}
with equality if and only if $q_1 = \tilde{g}_{\sigma(1\overline{0})}(q_0)$, i.e., $q_0 = \mu_{\sigma(\overline{0}),\sigma(1\overline{0})}$.
The case $q_0 \in [\mu_{\sigma(\overline{0}),\sigma(\overline{1})}, \mu_{\sigma(0\overline{1}),\sigma(\overline{1})}]$ is symmetric, with $(q_0{-}1) (\mathcal{G}(q_0){-}1) = \frac{1}{2}$ if and only if $q_0 = \mu_{\sigma(0\overline{1}),\sigma(\overline{1})}$.
If $q_0 \,{=}\, \mu_{\bsigma(\overline{0}),\bsigma(\overline{1})}$ for a primitive $\bsigma \,{\in}\, \{L,R\}^\infty$, then $(q_0{-}1) (\mathcal{G}(q_0){-}1) \,{=}\, \frac{1}{2}$ by continuity or by using that $\bsigma(\overline{0}) = 01\bu$, $\bsigma(\overline{1}) = 10\bu$ for some $\bu \in \{0,1\}^\infty$.

\smallskip
Next, we prove lower bounds for~$\mathcal{G}$.
For $q_0 = \mu_{\bsigma(\overline{0}),\bsigma(\overline{1})}$, primitive $\bsigma \in \{L,R\}^\infty$, we have $(q_0{-}1) (\mathcal{G}(q_0){-}1) \,{=}\, \frac{1}{2} \,{>}\, \max\{\frac{1}{q_0+1}, \frac{1}{\mathcal{G}(q_0)+1}\}$.
If $q_0 \,{\in}\, [\mu_{\sigma(\overline{0}),\sigma(1\overline{0})}, \mu_{\sigma(0\overline{1}),\sigma(\overline{1})}]$, $\sigma \in \{L,R\}^*M$, $q_1 \,{=}\, \mathcal{G}(q_0)$, then we have $q_1 \,{\ge}\, g_{\sigma(\overline{0})}(q_0)$ and $q_1 \,{\ge}\, \tilde{g}_{\sigma(\overline{1})}(q_0)$, thus
\begin{equation*}
q_0 \ge q_0 q_1 \pi_{q_0,q_1}(\sigma(\overline{0})) = 1 + \pi_{q_0,q_1}(\overline{w01}), \quad q_1 \ge q_0 q_1\, \tilde{\pi}_{q_0,q_1}(\sigma(\overline{1})) = 1 + \tilde{\pi}_{q_0,q_1}(\overline{w10})
\end{equation*}
for some~$w$.
We have $\overline{w01} \ge \overline{0w1}$, thus $\pi_{q_0,q_1}(\overline{w01}) \ge \pi_{q_0,q_1}(\overline{0w1})$ by Lemma~\ref{l:qgql}; note that $\overline{w01}$ is a quasi-greedy $(q_0,q_1)$-expansion because $\sigma(\overline{0})$ is quasi-greedy, that $\sigma(\overline{0})$ ends with $\sigma(\overline{1})$ because $M(\overline{0}) = \overline{01}$, $M(\overline{1}) = \overline{10}$, thus $\overline{0w1}$ is also quasi-greedy.
Hence, we have $q_0 {-} 1 \ge \pi_{q_0,q_1}(\overline{0w1}) = q_0\, \pi_{q_0,q_1}(\overline{w10})$, thus $(q_0{+}1) (q_0{-}1) (q_1{-}1) \ge 1$.
Now, equality holds if and only if $q_0 = \mu_{\sigma(\overline{0}),\sigma(\overline{1})}$ and $w01 = 0w1$, i.e., $w = 0^k$ for some $k \ge 0$, which means that $\sigma = L^kM$.
By Example~\ref{ex:Lk}, we have $q_0 = \mu_{L^kM(\overline{0}),L^kM(\overline{1})}$ if and only if $q_0^{k+2} = q_0+1$.
Since $\mathcal{G}(q_1) = q_0$ for $q_1 = \mathcal{G}(q_0)$, we also have $(q_1{+}1) (q_0{-}1) (q_1{-}1) \ge 1$, with equality if and only if $q_1^{k+2} = q_1+1$, $k \ge 0$; note that $(q_0{-}1)(q_1^2{-}1) = 1$ means that $q_0 = \frac{q_1^2}{q_1^2-1}$.

\smallskip
For a lower bound on~$\mathcal{K}$, let $q_0 \in [\mu_{\bsigma(\overline{0}),\bsigma(1\overline{0})}, \mu_{\bsigma(\overline{0}),\bsigma(\overline{1})}]$, $\bsigma \in \{L,M,R\}^*M\overline{R}$ or $\bsigma \in \{L,M,R\}^\infty$ primitive, $q_1 = \mathcal{K}(q_0) = g_{\bsigma(\overline{0})}(q_0)$.
For $\sigma \in \{L,M,R\}^* M$, we have $\sigma(0) = 01v$, $\sigma(1) = 10w$ for words $v \ge w$, with $v = w$ if and only if $\sigma \in \{L,R\}^* M$.
This implies that $\bsigma(\overline{0}) = 01\bu$, $\bsigma(\overline{1}) = 10\bv$ with $\bu \ge \bv$, and $\bu = \bv$ if and only if $\bsigma \in \{L,R\}^*M\overline{R}$ or $\bsigma \in \{L,R\}^\infty$.
From $q_1 = g_{\bsigma(\overline{0})}(q_0) \ge \tilde{g}_{\sigma(1\overline{0})}(q_0)$, we get that
\begin{equation*}
q_0 = q_0 q_1 \pi_{q_0,q_1}(\bsigma(\overline{0})) = 1 + \pi_{q_0,q_1}(\bu), \quad q_1 \ge q_0 q_1\, \tilde{\pi}_{q_0,q_1}(\bsigma(\overline{1})) = 1 + \tilde{\pi}_{q_0,q_1}(\bv).
\end{equation*}
Since $\bsigma(\overline{0})$ is a quasi-greedy $(q_0,q_1)$-expansion and $\bsigma(\overline{1}) \in X_{\bsigma(\overline{0})}$, $\bsigma(\overline{1})$ is also quasi-greedy, thus $\pi_{q_0,q_1}(\bu) \ge \pi_{q_0,q_1}(\bv)$ by Lemma~\ref{l:qgql}.
This implies $2 (q_0{-}1) (q_1{-}1) \ge 1$, with equality if and only if $q_0 = \mu_{\bsigma(\overline{0}),\bsigma(\overline{1})}$, $\bsigma \in \{L,R\}^*M\overline{R}$ or $\bsigma \in \{L,R\}^\infty$.
By symmetry, we have $(q_0{-}1) (q_1{-}1) \ge \frac{1}{2}$ for all $q_0 \in [\mu_{\sigma(\overline{0}),\sigma(1\overline{0})}, \mu_{\sigma(01\overline{0}),\sigma(1\overline{0})}]$, $\sigma \in \{L,M,R\}^*M$, with equality if and only if $q_0 = \mu_{\sigma(\overline{0}),\sigma(1\overline{0})}$, $\sigma \in \{L,R\}^*M$.

\smallskip
We have shown that $(q_0{-}1) (\mathcal{G}(q_0){-}1) = \frac{1}{2}$ if and only if $(q_0{-}1) (\mathcal{K}(q_0){-}1) = \frac{1}{2}$, and $(q_0{-}1) (\mathcal{G}(q_0){-}1) < \frac{1}{2} < (q_0{-}1) (\mathcal{K}(q_0){-}1)$ otherwise, thus $\mathcal{G}(q_0) = \mathcal{K}(q_0)$ if and only if $\mathcal{G}(q_0)$ or $\mathcal{K}(q_0)$ equals $\frac{2q_0-1}{2(q_0-1)}$.

\smallskip
The upper bounds for~$\mathcal{K}$ are proved similarly to the lower bounds for~$\mathcal{G}$.
Let $q_1 = \mathcal{K}(q_0)$.
If $(q_0{-}1) (q_1{-}1) = \frac{1}{2}$, then $(q_0{-}1) (q_1{-}1) < \min\{\frac{q_0}{q_0+1}, \frac{q_1}{q_1+1}\}$.
If $q_0 \in [\mu_{\sigma(\overline{0}),\sigma(1\overline{0})}, \mu_{\sigma(0\overline{1}),\sigma(\overline{1})}]$, $\sigma \in \{L,R\}^*M$, then $s(\ba_{q_0,q_1}) = s(\bb_{q_0,q_1})$ starts with~$\sigma$, thus $q_1 \le g_{\sigma(0\overline{1})}(q_0)$ and $q_1 \le \tilde{g}_{\sigma(1\overline{0})}(q_0)$, with at least one of the inequalities being strict.
Since $\sigma(0) = 01w$ and $\sigma(1) = 10w$ for some~$w$, we obtain that
\begin{equation} \label{e:u3}
\hspace{-.4em} q_0 \,{\le}\, q_0 q_1 \pi_{q_0,q_1}(\sigma(0\overline{1})) \,{=}\, 1 {+} \pi_{q_0,q_1}(\overline{w10}),  q_1 \,{\le}\, q_0 q_1 \tilde{\pi}_{q_0,q_1}(\sigma(1\overline{0})) \,{=}\, 1 {+} \tilde{\pi}_{q_0,q_1}(\overline{w01}). \hspace{-.6em}
\end{equation}
We have $\overline{w10} \le \overline{1w0}$, and the quasi-greedy $(q_0,q_1)$-expansion $\sigma(\overline{0})$ ends with both words, thus $\pi_{q_0,q_1}(\overline{w10}) \le \pi_{q_0,q_1}(\overline{1w0})$.
Since $\tilde{\pi}_{q_0,q_1}(\overline{w01}) = q_1\,\tilde{\pi}_{q_0,q_1}(\overline{1w0})$ and one of the inequalities in \eqref{e:u3} is strict, \eqref{e:q1q0} gives $(q_1{+}1) (q_0{-}1)(q_1{-}1) < q_1$.
Since $\mathcal{K}(q_1) = q_0$ for $q_1 = \mathcal{K}(q_0)$, we also have $(q_0{+}1) (q_0{-}1)(q_1{-}1) < q_0$.

To show that the Hausdorff dimension of $E := \{q_0 > 1 \,:\, \mathcal{G}(q_0) = \mathcal{K}(q_0)\}$ is zero, we proceed similarly to \cite[Theorem~3]{BakSte2017}.
We have already shown that
\begin{equation*}
q_0 \in E\ \Longleftrightarrow \ \hat{\ba}_{q_0} := \ba_{q_0,1+\frac{1}{2(q_0-1)}} \in \hspace{-1em} \bigcup_{\sigma\in\{L,R\}^*M} \hspace{-1em} \{\sigma(\overline{0}), \sigma(0\overline{1})\} \ \cup \ \hspace{-2em} \bigcup_{\bsigma\in\{L,R\}^\infty\,\text{primitive}} \hspace{-2em} \{\bsigma(\overline{0})\}.
\end{equation*}
Since these words are of the form $01\bu$ with mechanical words~$\bu$, the number of different $i_1 \cdots i_n \in \{0,1\}^n$ such that $\hat{\ba}_{q_0}$ starts with $01i_1\cdots i_n$ for some $q_0 \in E$ grows polynomially in~$n$; see e.g.\ \cite[Theorem~2.2.36]{Lot2002}.
We show that the size of the interval of numbers $q_0 \in E$ such that $\hat{\ba}_{q_0}$ starts with a given word $01i_1\cdots i_n$ decreases exponentially in~$n$.
However, contrary to \cite{BakSte2017}, this holds only locally. 

Since $T_{q_{i_n},i_n} \cdots T_{q_{i_1},,i_1}(q_0{-}1) \in [0,q_0/q_1]$ if $\ba_{q_0,q_1}$ starts with $01i_1\cdots i_n$, with $T_{q,d}$ as in Section~\ref{s:lex}, we estimate $T_{q'_{i_n},i_n} \cdots T_{q'_{i_1},i_1}(q'_0{-}1) - T_{q_{i_n},i_n} \cdots T_{q_{i_1},i_1}(q_0{-}1)$ for $q_0 < q'_0$, $q_1 = 1{+}\frac{1}{2(q_0-1)}$, $q'_1 = 1{+}\frac{1}{2(q'_0-1)}$. 
Since $q_1 > q'_1$, this is more difficult than in the single base case. 
We have 
\begin{equation*} 
T_{q'_1,1} T_{q'_0,0}^k(y) - T_{q_1,1} T_{q_0,0}^k(x) = q'_0{\!}^k q'_1 y - q_0^k q_1 x = q'_0{\!}^k q'_1\, (y{-}x) + (q'_0{\!}^k q'_1 {-} q_0^k q_1)\, x.
\end{equation*}
The derivative of the function $q_0 \mapsto q_0^k (1{+}\frac{1}{2(q_0-1)})$ is $\frac{q_0^k}{q_0{-}1} (k{-}\frac{k}{2q_0}{-}\frac{1}{2(q_0{-}1)})$, hence $q'_0{\!}^k q'_1 {-} q_0^k q_1 = \frac{q''_0{}^k}{q''_0{-}1} (k{-}\frac{k}{2q''_0}{-}\frac{1}{2(q''_0{-}1)}) (q'_0{-}q_0)$ for some $q''_0 \in [q_0,q'_0]$ by the mean value theorem.
For $0 \le x \le q_0{-}1$, we obtain that
\begin{equation*}
\begin{aligned}
q'_0{\!}^k q'_1 y - q_0^k q_1 x & = q'_0{\!}^k q'_1 (y-x) + \frac{q''_0{}^k}{q''_0-1} \Big(k-\frac{k}{2q''_0}-\frac{1}{2(q''_0-1)}\Big)\, (q'_0-q_0)\, x \\
& \ge q'_0{\!}^k q'_1 (y-x) + \min\Big\{0, q'_0{\!}^k \Big(k - \frac{k}{2q_0} - \frac{1}{2(q_0-1)}\Big) (q'_0-q_0)\Big\}.
\end{aligned}
\end{equation*}

Let now $q_0, q'_0 \in [\mu_{L^kM(0\overline{1}),L^kM(\overline{1})}, \mu_{L^{k-1}M(\overline{0}),L^{k-1}M(1\overline{0})}] \cap E$, $k \ge 1$.
Then $\hat{\ba}_{q_0}$ and~$\hat{\ba}_{q'_0}$ are images of~ $L^kR$ and thus in $01\{0^k1,0^{k+1}1\}^\infty$. 
By Example~\ref{ex:Lk}, we have $\mu_{L^kM(0\overline{1}),L^kM(\overline{1})} = 2^{1/(k+1)} \ge 1{+}\frac{2}{3k+2}$ and $\mu_{L^{k-1}M(\overline{0}),L^{k-1}M(1\overline{0})}$ is a root of $2X^k{-}X^{k-1}{-}2$, thus $\mu_{L^{k-1}M(\overline{0}),L^{k-1}M(1\overline{0})} \le 1{+}\frac{3}{4k+2}$. 
For $1{+}\frac{2}{3k+2} \le q_0 \le q'_0 \le 1{+}\frac{3}{4k+2}$, we have
\begin{equation*}
q'_1 + k -\frac{k}{2q_0} - \frac{1}{2(q_0{-}1)} \ge 1 + \frac{2k{+}1}{3} + k - \frac{k\,(3k{+}2)}{2\,(3k{+}4)} - \frac{3k{+}2}{4} = \frac{(5k{+}4)(3k{+}10)}{12\,(3k{+}4)} > 1. 
\end{equation*}
For $0 \le x \le q_0-1$, $y-x \ge q'_0-q_0$, this implies that
\begin{equation*}
T_{q'_1,1} T_{q'_0,0}^k(y) - T_{q_1,1} T_{q_0,0}^k(x) \ge q'_0{\!}^k \min\Big\{q'_1, \frac{(5k{+}4)(3k{+}10)}{12\,(3k{+}4)}\Big\} (y-x),
\end{equation*}
and, similarly, 
\begin{equation*}
T_{q'_1,1} T_{q'_0,0}^{k+1}(y) - T_{q_1,1} T_{q_0,0}^{k+1}(x) \ge q'_0{\!}^{k+1} \min\Big\{q'_1, \frac{15k^2{+}80k{+}76}{12\,(3k+4)}\Big\} (y-x). 
\end{equation*}
Therefore, we have some $\beta_k>1$, $C_k > 0$ (depending only on~$k$) such that
\begin{equation*}
T_{q'_{i_n},i_n} \cdots T_{q'_{i_1},i_1}(q'_0{-}1) - T_{q_{i_n},i_n} \cdots T_{q_{i_1},i_1}(q_0{-}1) \ge C_k \beta_k^n\, (q'_0-q_0)
\end{equation*}
for all $i_1 \cdots i_n$ at the beginning of some word in $\{0^k1,0^{k+1}1\}^\infty$.
For $\hat{\ba}_{q_0}, \hat{\ba}_{q'_0}$ starting with $i_1 \cdots i_n$, we have $T_{q'_{i_n},i_n} \cdots T_{q'_{i_1},i_1}(q'_0{-}1) - T_{q_{i_n},i_n} \cdots T_{q_{i_1},i_1}(q_0{-}1) \le q'_0/q'_1$, thus the size of the interval of those $q_0$ is bounded by $C'_k \beta_k^{-n}$ for some~$C'_k$. 
This means that $[\mu_{L^kM(0\overline{1}),L^kM(\overline{1})}, \mu_{L^{k-1}M(\overline{0}),L^{k-1}M(1\overline{0})}] \cap E$ is covered, for each~$n$, by a polynomial number of intervals of size $C'_k \beta_k^{-n}$, hence the Hausdorff dimension of this set is zero.
Since $(1,2) \cap E$ is the union over $k\ge 1$ of these sets, $(1,2) \cap E$ also has zero Hausdorff dimension.
Finally, $(3/2,\infty) \cap E$ is the image of $(1,2) \cap E$ by the map $q_0 \mapsto 1{+}\frac{1}{2(q_0-1)}$, which is locally bi-Lipschitz, thus $E$ has zero Hausdorff dimension.
\end{proof}

The following proposition concludes the proof of our main results.

\begin{proposition}\label{p:GL2}
The statements (\ref{i:t14}) and (\ref{i:t15}) of Theorem~\ref{t:1} as well as the statements (\ref{i:t32}) and (\ref{i:t33}) of Theorem~\ref{t:3} are true.
\end{proposition}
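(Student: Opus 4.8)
The plan is to deduce the four statements from the results already established, using Lemma~\ref{l:UV} as the bridge between the closed subshift $V_{q_0,q_1}=\Omega_{\ba_{q_0,q_1},\bb_{q_0,q_1}}$ and the survivor set $U_{q_0,q_1}$. For (\ref{i:t14}): by Lemma~\ref{l:ab} (and because $U_{q_0,q_1}=\{0,1\}^\infty$ when $q_0+q_1<q_0q_1$), the set of $q_1>1$ with $U_{q_0,q_1}\ne\{\overline{0},\overline{1}\}$ is an up-set, hence contains $(\mathcal{G}(q_0),\infty)$; so for $q_1>\mathcal{G}(q_0)$ the set $U_{q_0,q_1}$ is non-trivial, thus infinite by Lemma~\ref{l:UV} (i). For the entropy part of (\ref{i:t15}), given $q_1>\mathcal{K}(q_0)$ we may assume $q_0+q_1\ge q_0q_1$ and pick $q_1'\in(\mathcal{K}(q_0),q_1)$; as in the proof of Proposition~\ref{p:K}, $s(\ba_{q_0,q_1'})>s(\bb_{q_0,q_1'})$, so $V_{q_0,q_1'}$ has positive entropy by Theorem~\ref{t:lex} (\ref{i:lex3}), and since $V_{q_0,q_1'}\subseteq U_{q_0,q_1}$ by Lemma~\ref{l:ab} and entropy is monotone under inclusion, $h(U_{q_0,q_1})>0$.

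For the Hausdorff-dimension part of (\ref{i:t15}), keep $q_1'\in(\mathcal{K}(q_0),q_1)$ with $q_0+q_1\ge q_0q_1$ (when $q_0+q_1<q_0q_1$, $\pi_{q_0,q_1}(U_{q_0,q_1})$ is a non-degenerate interval). The proof of Theorem~\ref{t:lex} (\ref{i:lex3}) produces words $w_1=\tilde\sigma(0(01)^k)$ and $w_2=\tilde\sigma(0(01)^{k+1})=w_1\,\tilde\sigma(01)$, $\tilde\sigma\in\{L,M,R\}^*$, with $\{w_1,w_2\}^\infty\subseteq V_{q_0,q_1'}\subseteq U_{q_0,q_1}$ (the symmetric case, with $0$ and $1$ interchanged, is treated identically). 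Set $v_1:=w_1w_2$, $v_2:=w_2w_1$: they have equal length and equal number of $0$'s, and $v_1\ne v_2$, since $v_1=v_2$ would make $w_1$ and $\tilde\sigma(01)$, hence $\tilde\sigma(0)$ and $\tilde\sigma(1)$, powers of a common word, which is impossible as $\tilde\sigma(0)$ begins with $0$ and $\tilde\sigma(1)$ with $1$. Now $\{v_1,v_2\}^\infty$ is an uncountable subset of $U_{q_0,q_1}$, and since $\pi_{q_0,q_1}(w\bu)=\pi_{q_0,q_1}(w\overline{0})+q_0^{-|w|_0}q_1^{-|w|_1}\,\pi_{q_0,q_1}(\bu)$, the compact set $\pi_{q_0,q_1}(\{v_1,v_2\}^\infty)$ is the attractor of two similarities of $\mathbb{R}$ with common ratio $r=q_0^{-|v_1|_0}q_1^{-|v_1|_1}\in(0,1)$ and distinct translation parts (distinct, else $\pi_{q_0,q_1}$ would be constant on $\{v_1,v_2\}^\infty$, against its injectivity on $U_{q_0,q_1}$); such an attractor contains an affine copy of the positive-dimensional self-similar set $\{\sum_{n\ge1}\varepsilon_nr^{n-1}:\varepsilon\in\{0,1\}^\infty\}$, so $\pi_{q_0,q_1}(U_{q_0,q_1})$ has positive Hausdorff dimension.

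For (\ref{i:t32}) and (\ref{i:t33}) we insert the boundary words given by Propositions~\ref{p:G} and~\ref{p:K} into Theorem~\ref{t:lex}/Proposition~\ref{p:lex} and then read off the $q_0$-conditions from Lemma~\ref{l:cover}, \eqref{e:LR} and \eqref{e:coverLR}--\eqref{e:coverLMR}. By Proposition~\ref{p:G}, \eqref{e:coverLR} and Lemma~\ref{l:mu}, when $q_0$ is not of the form $\mu_{\bsigma(\overline{0}),\bsigma(\overline{1})}$ with primitive $\bsigma\in\{L,R\}^\infty$, the pair $(\ba_{q_0,\mathcal{G}(q_0)},\bb_{q_0,\mathcal{G}(q_0)})$ is $(\sigma(\overline{0}),\bb)$ with $\bb\ge\sigma(1\overline{0})$ or $(\ba,\sigma(\overline{1}))$ with $\ba\le\sigma(0\overline{1})$ for some $\sigma\in\{L,R\}^*M$, hence $V_{q_0,\mathcal{G}(q_0)}\subseteq\Omega_{\sigma(\overline{0}),\sigma(1\overline{0})}$ resp.\ $\Omega_{\sigma(0\overline{1}),\sigma(\overline{1})}$; the proof of Theorem~\ref{t:lex} (\ref{i:lex2}) shows that every non-constant element of these subshifts ends with $\sigma(\overline{0})$ resp.\ $\sigma(\overline{1})$, so $U_{q_0,\mathcal{G}(q_0)}=\{\overline{0},\overline{1}\}$. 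When $q_0=\mu_{\bsigma(\overline{0}),\bsigma(\overline{1})}$ with primitive $\bsigma\in\{L,R\}^\infty$, $V_{q_0,\mathcal{G}(q_0)}\supseteq X_{\bsigma(\overline{0})}$ is an uncountable Sturmian subshift, so $U_{q_0,\mathcal{G}(q_0)}$ is uncountable by Lemma~\ref{l:UV} (ii), in particular non-trivial; and the set of these $q_0$ lies in $\{q_0:\mathcal{G}(q_0)=\mathcal{K}(q_0)\}$ by Theorem~\ref{t:3} (\ref{i:t31}), hence has zero Hausdorff dimension by Theorem~\ref{t:1} (\ref{i:14_2}). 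This proves (\ref{i:t32}). For (\ref{i:t33}), Proposition~\ref{p:K} gives $\bsigma:=s(\ba_{q_0,\mathcal{K}(q_0)})=s(\bb_{q_0,\mathcal{K}(q_0)})$, which is primitive or of the form $\sigma\overline{L}$ or $\sigma\overline{R}$ with $\sigma\in\{L,M,R\}^*M$. If $\bsigma$ is primitive, $V_{q_0,\mathcal{K}(q_0)}$ is uncountable with zero entropy by Proposition~\ref{p:lex} (\ref{i:plex4}'), hence so is $U_{q_0,\mathcal{K}(q_0)}$ by Lemma~\ref{l:UV} (ii)--(iii), and by \eqref{e:coverLMR} this happens exactly for $q_0=\mu_{\bsigma(\overline{0}),\bsigma(\overline{1})}$, $\bsigma\in\{L,M,R\}^\infty$ primitive. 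Otherwise $V_{q_0,\mathcal{K}(q_0)}$, and hence $U_{q_0,\mathcal{K}(q_0)}$, is countable by Proposition~\ref{p:lex} (\ref{i:plex5}'); using \eqref{e:LR} to identify $\ba_{q_0,\mathcal{K}(q_0)},\bb_{q_0,\mathcal{K}(q_0)}$, the argument above shows $U_{q_0,\mathcal{K}(q_0)}=\{\overline{0},\overline{1}\}$ exactly when the part of $\bsigma$ preceding its last $M$ lies in $\{L,R\}^*$, i.e.\ when $q_0\in\{\mu_{\sigma(\overline{0}),\sigma(1\overline{0})},\mu_{\sigma(0\overline{1}),\sigma(\overline{1})}\}$ for some $\sigma\in\{L,R\}^*M$; for every other $q_0$ one exhibits a non-constant element of $V_{q_0,\mathcal{K}(q_0)}$ — an image of $\overline{01}$ or $\overline{10}$ — not eventually equal to a boundary word, so $U_{q_0,\mathcal{K}(q_0)}$ is countably infinite.

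\textbf{Main obstacle.} The delicate point is the passage from $V_{q_0,q_1}$ to $U_{q_0,q_1}$ in (\ref{i:t32})--(\ref{i:t33}): since $U_{q_0,q_1}$ can be trivial while $V_{q_0,q_1}$ is infinite, Lemma~\ref{l:UV} alone does not settle triviality, and one must know precisely which elements of $\Omega_{\ba,\bb}$ fail to be eventually periodic of the boundary type. This rests on the structural description in the proof of Theorem~\ref{t:lex} (\ref{i:lex1})--(\ref{i:lex2}) — that for $\sigma\in\{L,R\}^*M$ every non-constant element of $\Omega_{\sigma(\overline{0}),\sigma(1\overline{0})}$ ends with $\sigma(\overline{0})$, a property lost once the letter $M$ occurs strictly before the last $M$ — together with the somewhat intricate bookkeeping, via Lemma~\ref{l:cover}, \eqref{e:LR} and \eqref{e:coverLR}--\eqref{e:coverLMR}, needed to match every structural case with the stated condition on $q_0$.
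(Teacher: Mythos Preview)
Your treatment of (\ref{i:t14}), the entropy half of (\ref{i:t15}), and (\ref{i:t32}) is essentially the paper's argument; your Hausdorff-dimension argument for (\ref{i:t15}) is a nice variant (equal-ratio IFS via $v_1=w_1w_2$, $v_2=w_2w_1$) that avoids checking the Open Set Condition the paper invokes, and it is correct.

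There is, however, a genuine gap in your handling of (\ref{i:t33}). You assert that ``the argument above shows $U_{q_0,\mathcal{K}(q_0)}=\{\overline{0},\overline{1}\}$ exactly when the part of $\bsigma$ preceding its last $M$ lies in $\{L,R\}^*$, i.e.\ when $q_0\in\{\mu_{\sigma(\overline{0}),\sigma(1\overline{0})},\mu_{\sigma(0\overline{1}),\sigma(\overline{1})}\}$''. These two conditions are \emph{not} equivalent: for $\sigma\in\{L,R\}^*M$ the directive sequence $\bsigma=\sigma\overline{L}$ corresponds, via Proposition~\ref{p:K} and \eqref{e:LR}, to the whole interval $q_0\in[\mu_{\sigma(\overline{0}),\sigma(1\overline{0})},\mu_{\sigma(01\overline{0}),\sigma(1\overline{0})}]$, not just to its left endpoint. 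On that interval $\bb_{q_0,\mathcal{K}(q_0)}=\sigma(1\overline{0})$ throughout, but $\ba_{q_0,\mathcal{K}(q_0)}=\sigma(\overline{0})$ only at the left endpoint; at interior points $\ba_{q_0,\mathcal{K}(q_0)}>\sigma(\overline{0})$, and then $\sigma(\overline{0})$ itself lies in $U_{q_0,\mathcal{K}(q_0)}$ (its tails satisfy $\supn=\sigma(\overline{0})<\ba$ and $\infn=\sigma(\overline{1})>\sigma(1\overline{0})=\bb$), so $U_{q_0,\mathcal{K}(q_0)}$ is non-trivial. Your ``argument above'' from (\ref{i:t32}) only forces triviality when the boundary word actually equals $\sigma(\overline{0})$ (resp.\ $\sigma(\overline{1})$), which pins down a single $q_0$, not an interval.

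The paper sidesteps this by observing that, outside the primitive case, $U_{q_0,\mathcal{K}(q_0)}$ is trivial if and only if $\mathcal{K}(q_0)=\mathcal{G}(q_0)$: one direction is (\ref{i:t32}), and the other is immediate from (\ref{i:t14}) since $\mathcal{K}(q_0)>\mathcal{G}(q_0)$ already forces $U_{q_0,\mathcal{K}(q_0)}\ne\{\overline{0},\overline{1}\}$. Then Theorem~\ref{t:3}~(\ref{i:t31}) identifies this set with $\bigcup_{\sigma\in\{L,R\}^*M}\{\mu_{\sigma(\overline{0}),\sigma(1\overline{0})},\mu_{\sigma(0\overline{1}),\sigma(\overline{1})}\}$, and countability of $V_{q_0,\mathcal{K}(q_0)}$ from Proposition~\ref{p:lex}~(\ref{i:plex5}') gives ``countably infinite'' for all remaining $q_0$. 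Replacing your $\bsigma$-criterion by this $\mathcal{K}=\mathcal{G}$ argument closes the gap.
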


\begin{proof}
For all $q_1 > \mathcal{G}(q_0)$, the set $U_{q_0,q_1}$ is infinite by Lemma~\ref{l:UV}~(\ref{i:UU1}) and because $U_{q_0,q_1} = \{0,1\}^\infty$ for $q_1 > \frac{q_0}{q_0-1}$, thus Theorem~\ref{t:1}~(\ref{i:t14}) holds.

\smallskip
Let now $q_1 = \mathcal{G}(q_0)$.
If $q_0 = \mu_{\bsigma(\overline{0}),\bsigma(\overline{1})}$ for some primitive $\bsigma \in \{L,R\}^\infty$, then $s(\ba_{q_0,q_1}) = \bsigma = s(\bb_{q_0,q_1})$, hence $V_{q_0,q_1}$ is uncountable (with zero entropy) by Proposition~\ref{p:lex}, thus $U_{q_0,q_1}$ is also uncountable (with zero entropy) by Lemma~\ref{l:UV}.
If $q_0 \in [\mu_{\sigma(\overline{0}),\sigma(1\overline{0})}, \mu_{\sigma(\overline{0}),\sigma(\overline{1})}]$, $\sigma \in \{L,R\}^*M$, then $\ba_{q_0,q_1} = \sigma(\overline{0})$, hence $s(\ba_{q_0,q_1}) = \sigma \overline{L} \le s(\bb_{q_0,q_1})$; by the proof of Theorem~\ref{t:lex}, each $\bu \in V_{q_0,q_1} \setminus \{\overline{0}, \overline{1}\}$ ends with $\sigma(\overline{0})$ and is therefore not in $U_{q_0,q_1}$.
Similarly, we have $U_{q_0,q_1} = \{\overline{0}, \overline{1}\}$ for $q_0 \in [\mu_{\sigma(\overline{0}),\sigma(\overline{1})}, \mu_{\sigma(0\overline{1}),\sigma(\overline{1})}]$.
We have already seen in Theorem~\ref{t:3}~(\ref{i:t31}) that $\{\mu_{\bsigma(\overline{0}),\bsigma(\overline{1})} \,:\, \bsigma \in \{L,R\}^\infty\, \mbox{primitive}\}$ has zero Hausdorff dimension, thus Theorem~\ref{t:3}~(\ref{i:t32}) holds.

\smallskip
Consider next $q_1 = \mathcal{K}(q_0)$.
If $q_0 \in \{\mu_{\sigma(\overline{0}),\sigma(1\overline{0})}, \mu_{\sigma(\overline{0}),\sigma(\overline{1})}\}$, $\sigma \in \{L,R\}^*M$, then $\mathcal{K}(q_0) = \mathcal{G}(q_0)$, thus $U_{q_0,q_1}$ is trivial by the preceding paragraph.
If $q_0 = \mu_{\bsigma(\overline{0}),\bsigma(\overline{1})}$ for some primitive $\bsigma \in \{L,M,R\}^\infty$, then $U_{q_0,q_1}$ is uncountable with zero entropy.
In all other cases, we have $s(\ba_{q_0,q_1}) = s(\bb_{q_0,q_1}) \in \{L,M,R\}^* \{\overline{L}, \overline{R}\}$ and $\mathcal{K}(q_0) > \mathcal{G}(q_0)$, thus $U_{q_0,q_1}$ is countably infinite by Proposition~\ref{p:lex} and the preceding paragraph.

Finally, let $q_1 > \mathcal{K}(q_0)$.
Then $s(\ba_{q_0,q_1}) > s(\bb_{q_0,q_1})$ by Proposition~\ref{p:K}, thus $U_{q_0, q_1}$ has positive entropy by Proposition~\ref{p:lex} and Lemma~\ref{l:UV}~(\ref{i:UU3}).
It remains to show that the Hausdorff dimension of $\pi_{q_0, q_1}(U_{q_0, q_1})$ is positive.
From the proof of Theorem~\ref{t:lex}, we see that $Y := \{\sigma(0(01)^k), \sigma(0(01)^{k+1})\}^\infty \subset U_{q_0, q_1}$ for some $\sigma \in \{L,M,R\}^*$, $k \ge 0$.
Then $\pi_{q_0, q_1}(Y)$ is the self-similar set generated by
\[
y_0(x) := r_0\, x + \pi_{q_0, q_1}(\sigma(0(01)^k)\overline{0}), \quad y_1(x) := r_1\, x + \pi_{q_0, q_1}(\sigma(0(01)^{k+1})\overline{0}),
\]
with $r_0 = q_0^{-|\sigma(0(01)^k)|_0} q_1^{-|\sigma(0(01)^k)|_1}$, $r_1 = q_0^{-|\sigma(0(01)^{k+1})|_0} q_1^{-|\sigma(0(01)^{k+1})|_1}$.
Since the elements of $Y$ are unique $(q_0,q_1)$-expansions, the iterated function system $\{y_0, y_1\}$ satisfies the Open Set Condition (OSC); see e.g.\ \cite[(9.12)]{Fal2003} for the definition of the OSC.
By applying \cite[Theorem~9.3]{Fal2003}, the Hausdorff dimension of $\pi_{q_0, q_1}(Y)$
is $\lambda>0$, where $\lambda$ satisfies $r_0^\lambda + r_1^\lambda = 1$.
\end{proof}

\section{Open problems} \label{sec:op}
We end this paper by formulating some open problems:
\begin{enumerate}[\upshape(i)]
\itemsep1ex
\item \label{op:1}
What is the growth rate of the number of possible prefixes of length $n$ of Thue--Morse--Sturmian words? 
Is it polynomial as for Sturmian words \cite[Theorem~2.2.36]{Lot2002}?
Using the proof of Proposition~\ref{p:GK}, this would imply that $U_{q_0,\mathcal{K}(q_0)}$ is countably infinite for all $q_0 > 1$ except a set of zero Hausdorff dimension, confirming the conjecture after Theorem~\ref{t:3}.
\item 
Is it possible to give a formula for the Hausdorff dimension of $\pi_{q_0, q_1}(U_{q_0, q_1})$ (in terms of the topological entropy $h(U_{q_0, q_1})$)?  
Are these functions continuous in $q_0$ and $q_1$?
\item
For fixed $q_0 > 1$, what are the maximal intervals (entropy plateaus) such that $h(U_{q_0, q_1})$ is constant?
We know from Theorems~\ref{t:1} that the first entropy plateau is $(1, \mathcal{K}(q_0)]$.
\item
For alphabet-systems $\mathcal{S} = \{(d_0,q_0),(d_1,q_1),\ldots, (d_m, q_m)\}$ with $m \ge 2$, what can be said about critical values?
\end{enumerate}

\subsection*{Acknowledgements}
The authors thank Ai-Hua Fan for valuable discussions on Bowen's entropy.

\end{document}